\title{The Algebra of Filters of a Cubic Algebra}
\date{\dateheader}
\author{Colin G. Bailey}
\address{School of Mathematics,  Statistics \& Operations Research\\
Victoria University of Wellington\\
Wellington\\
New Zealand}
\email{Colin.Bailey@vuw.ac.nz}
\date{\dateheader}
\author{Joseph S. Oliveira}
\address{
Pacific Northwest National Laboratories\\
Richland, Washington\\
U.S.A.}
\email{Joseph.Oliveira@pnl.gov}
\subjclass{06A06, 06E99}
\keywords{Boolean algebras, Implication algebras, Cubes, Congruences, Filters}
\date{\today}
\let\rsf\mathscr
\def\caret{\mathbin{\hat{\hphantom{m}}}} 
\def\zero{{\mathbf 0}}
\def\one{{\mathbf 1}}
\def\leftGen{{[\kern-1.1pt[}}
\def\rightGen{{]\kern-1.1pt]}}
\providecommand{\meet}{\mathbin{\wedge}}
\providecommand{\join}{\mathbin{\vee}}
\newcommand{\comp}[1]{\overline{#1}}
     \def\restrict{\hbox{\rm\kern0.166em\accent"12\kern-0.536em$\vert$\kern0.3em}}%
     \def\restrict{\upharpoonright}%
\def\twoSet#1#2{\left\{%
\vphantom{#2}#1\thinspace\right|\nolinebreak[3]\left.%
  #2%
  \vphantom{#1}%
  \right\}%
}
\def\oneSet#1{\left\lbrace#1\right\rbrace}
\newif\if@nstr
\def\setstrfalse{\let\if@nstr=\iffalse}
\def\setstrtrue{\let\if@nstr=\iftrue}
\def\@nstr #1#2{
\def\@@nstr ##1#1##2##3\@@nstr{\ifx
\@nstr ##2\setstrfalse \else \setstrtrue \fi }
\@@nstr #2#1\@nstr \@@nstr}
\def\@separate#1|#2@{\setFront{#1}\setBack{#2}}
\def\lb#1\rb{\@nstr|{#1} \if@nstr \@separate#1 @ \twoSet{\@setFront}{\@setBack}%
\else \@separate |{#1 }@ \oneSet{\@setBack}\fi%
}
\def\setFront#1{\def\@setFront{#1}}
\def\setBack#1{\def\@setBack{#1}}
\def\Set#1{\lb{#1}\rb}
\def\oneBrk#1{\left\langle#1\right\rangle}
\def\twoBrk#1#2{\left\langle%
\vphantom{#2}#1\thinspace\right|\nolinebreak[3]\left.%
  #2%
  \vphantom{#1}%
  \right\rangle%
}
\def\brk<#1>{\@nstr|{#1} \if@nstr \@separate#1 @ \twoBrk{\@setFront}{\@setBack}%
\else \@separate |{#1 }@ \oneBrk{\@setBack}\fi%
}
\def\thmref#1{\normalfont{theorem}~\ref{#1}}
\def\lemref#1{\normalfont{lemma}~\ref{#1}}
\def\corref#1{\normalfont{corollary}~\ref{#1}}
\theoremstyle{plain}
\newtheorem{thm}{Theorem}[section]
\newtheorem{lem}[thm]{Lemma}
\newtheorem{cor}[thm]{Corollary}
\newtheorem{prop}[thm]{Proposition}
\newtheorem{defn}[thm]{Definition}
\theoremstyle{remark}
{}
{}
{}
{}
\begin{document}
	\begin{abstract}
		In this paper we discuss the inclusion ordering on the filters of a 
		filter algebra,  a special type of Metropolis-Rota 
		algeba. Using embeddings into interval 
		algebras we show that the notion of ``untwisted'' gives rise 
		to a congruence relation on the group of g-filters. 
		We also show that there is a natural reflection operator on the class 
		of filters with an easily definable enveloping cubic subalgebra. 
	\end{abstract}
\maketitle
\section{Introduction}
One of the many variants on the Stone representation theorem for 
Boolean algebras shows that the poset of filters for a Boolean 
algebra is isomorphic to the poset of closed sets of its Stone space. 
This is another way to get a representation of Boolean algebras. 

In this paper we consider a similar idea as applied to cubic and 
MR-algebras,  in particular the subclass of filter algebras. 
We look at the class of all filters on a filter algebra 
$\mathcal L$ which is naturally ordered by inclusion. 

On this class we can define the relation ``relatively untwisted'' 
which simply says two filters sit the same way in a representation 
of $\mathcal L$ as a subalgebra of an interval algebra. 
We will show that this relation is a congruence 
relation on the group of g-filters of $\mathcal L$. 

As an extension of the ideas used in that proof we will show that 
there is a binary operation $\Delta$ definable on the class of all 
filters. Under reverse inclusion this class is almost cubic -- the 
failure is because we are actually working in a Heyting algebra. 

By defining two notions of Boolean elements we can construct an 
interval algebra from this class into which $\mathcal L$ 
embeds as a full subalgebra. The strongest notion of Boolean-ness 
gives rise to a cubic algebra in which the vertices are exactly the 
g-filters of $\mathcal L$ and the inner automorphisms are exactly the 
filter automorphisms of $\mathcal L$. 

Before we begin in earnest we recall some of the basic definitions and 
results of cubic algebra. 

\begin{defn}
    A \emph{cubic algebra} is a join semi-lattice with one and a binary 
    operation $\Delta$ satisfying the following axioms:
    \begin{enumerate}[a.]
        \item  if $x\le y$ then $\Delta(y, x)\join x = y$;
        
        \item  if $x\le y\le z$ then $\Delta(z, \Delta(y, x))=\Delta(\Delta(z, 
        y), \Delta(z, x))$;
        
        \item  if $x\le y$ then $\Delta(y, \Delta(y, x))=x$;
        
        \item  if $x\le y\le z$ then $\Delta(z, x)\le \Delta(z, y)$;
        
        \item[] Let $xy=\Delta(1, \Delta(x\join y, y))\join y$ for any $x$, $y$ 
        in $\mathcal L$. Then:
        
        \item  $(xy)y=x\join y$;
        
        \item  $x(yz)=y(xz)$;
    \end{enumerate}
\end{defn} 

\begin{defn}
    An \emph{MR-algebra} is a cubic algebra satisfying the MR-axiom:\\
    if $a, b<x$ then 
    \begin{gather*}
        \Delta(x, a)\join b<x\text{ iff }a\meet b\text{ does not exist.}
    \end{gather*}
\end{defn}

% \begin{defn}\label{def:caret}
%     Let $\mathcal L$ be a cubic algebra. Then for any $x, y\in\mathcal L$ 
%     we define the (partial) operation $\caret$ (\emph{caret}) by:
%     $$
%         x\caret y=x\meet\Delta(x\join y, y)
%     $$
%     whenever this meet exists. 
% \end{defn}
% 
% \begin{lem}
%     If $\mathcal L$ is a cubic algebra then 
%     $\mathcal L$ is an MR-algebra iff the caret operation is total.
% \end{lem}
% \begin{proof}
%     See \cite{BO:caret} lemma 2.4 and theorem 2.6.
% \end{proof}
% 
% Closely related to the operation caret is the relation $\preccurlyeq$. 
One way to think about cubic algebras is as  a family of connected 
implication algebras ``joined'' by the symmmetry group generated by 
$\Delta$. The relation $\preccurlyeq$ is one way we use to describe 
the way different implication algebras in this collection are related 
to one another. In some sense it is the spread out version of the 
partial order.  

\begin{defn}
    Let $\mathcal L$ be a cubic algebra and $a, b\in\mathcal L$. Then
    \begin{align*}
        a\preceq b &\text{ iff }\Delta(a\join b, a)\le b\\
        a\simeq b &\text{ iff }\Delta(a\join b, a)=b.
    \end{align*}
\end{defn}

\begin{lem}
    Let $\mathcal L$, $a$, $b$ be as in the definition. Then
    $$
    a\preceq b\text{ iff }b=(b\join a)\meet(b\join\Delta(1, a)).
    $$
\end{lem}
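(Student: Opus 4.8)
The plan is to prove the two implications separately, using the cubic‑algebra axioms (a)–(c) together with the elementary arithmetic of the derived implication operation $xy$. Throughout write $p = \Delta(1, a)$, so that $p$ is the ``complement'' of $a$ inside the interval $[a\join p, 1] = [1,1]$ — more precisely $a\join p = 1$ and $\Delta(1,a) = p$, $\Delta(1,p) = a$ by axiom (c). The statement to be shown is
\begin{gather*}
   \Delta(a\join b, a)\le b \quad\Longleftrightarrow\quad b = (b\join a)\meet (b\join p).
\end{gather*}
Note first that the right‑hand side presupposes the meet $(b\join a)\meet(b\join p)$ exists; part of the work in the forward direction is therefore to exhibit this meet and identify it with $b$.

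For the forward direction, assume $a\preceq b$, i.e. $\Delta(a\join b, a)\le b$. Since trivially $b\le b\join a$ and $b\le b\join p$, we already have $b$ as a lower bound of the two elements $b\join a$ and $b\join p$; the task is to show it is the greatest lower bound. I would work inside the implication algebra (interval) above $a\join b\join p$: set $c = a\join b$, and observe $\Delta(c,a)\join a = c$ by axiom (a), while the hypothesis says $\Delta(c,a)\le b$. This forces $c = \Delta(c,a)\join a \le b\join a = c$, so in fact $b\join a = c = a\join b$ — which is automatic — but more usefully it pins down where $\Delta(c,a)$ sits relative to $b$. The key computation is to take an arbitrary lower bound $d$ of $b\join a$ and $b\join p$ and show $d\le b$; here one uses that in the implication algebra $[e,1]$ (with $e$ small enough), the elements $b\join a$ and $b\join p$ behave like $b$ joined with complementary Boolean pieces, so their meet collapses to $b$ exactly when $b$ is ``aligned'' with the $a$/$p$ splitting, which is what $a\preceq b$ encodes. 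I expect to need the identities $(xy)y = x\join y$ and $x(yz) = y(xz)$ from the definition to manipulate these joins, and axiom (b) to push $\Delta$ through nested expressions.

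For the reverse direction, assume $b = (b\join a)\meet(b\join p)$. I would compute $\Delta(a\join b, a)$ directly and show it lies below $b$. Since $a\le a\join b$, axiom (c) gives $\Delta(a\join b,\Delta(a\join b,a)) = a$, and axiom (a) gives $\Delta(a\join b,a)\join a = a\join b$. So $\Delta(a\join b,a)$ is the element of $[{\le}\, a\join b]$ that joins with $a$ to give $a\join b$ and is ``opposite'' to $a$ there; intuitively it is $b$ with its $a$‑component deleted, i.e. it should be $\le b\join p$ and $\le b\join a = a\join b$, hence $\le (b\join a)\meet(b\join p) = b$. Making ``its $a$‑component deleted'' precise is the crux: I would show $\Delta(a\join b,a)\le b\join p$ by using that $\Delta(a\join b,a)$ and $a$ partition $a\join b$ and that $p$ is the complement of $a$ in the top interval, transporting this down via axiom (d) (monotonicity of $\Delta(z,-)$) and axiom (b).

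The main obstacle I anticipate is the forward direction, specifically establishing that the meet $(b\join a)\meet(b\join p)$ exists and equals $b$ rather than merely that $b$ is a lower bound — cubic algebras are not lattices, so meets must be produced by hand. I expect the cleanest route is to reduce everything to a single implication algebra (the principal filter above a suitable element such as $\Delta(1,a)\meet$‑relatives, or just above $0$ if a bottom is available in the relevant interval), where $b\join a$ and $b\join p$ become $b$ adjoined with complementary elements of a Boolean algebra, making the meet computation a routine Boolean identity; the work is then in justifying that this localization is legitimate, for which the implication‑algebra axioms $(xy)y = x\join y$ and $x(yz)=y(xz)$ together with axiom (a) should suffice.
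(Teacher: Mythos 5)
The paper gives no internal proof of this lemma --- it simply cites lemmas 2.7 and 2.12 of the reference \cite{BO:eq} --- so there is no in-paper argument to compare against. Judged on its own terms, your proposal is an outline rather than a proof: at exactly the points where the content of the lemma lives, you write ``I would show\dots'', ``I expect to need\dots'', and appeal to the intuition that $b\join a$ and $b\join\Delta(\one,a)$ ``behave like $b$ joined with complementary Boolean pieces.'' That intuition is correct in the interval model $\rsf I(B)$, but the lemma is asserted for an arbitrary cubic algebra, and transporting the Boolean picture requires precisely the computations you defer.

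Concretely, two gaps remain. In the forward direction you must show that an \emph{arbitrary} lower bound $d$ of $b\join a$ and $b\join\Delta(\one,a)$ satisfies $d\le b$; your localization strategy only controls lower bounds lying in a fixed interval $[e,\one]$, and you never argue that a general $d$ can be pushed into such an interval, nor do you carry out the Boolean identity once there. (The preliminary observation $c=\Delta(c,a)\join a\le b\join a=c$ is, as you note yourself, vacuous and does not advance the argument.) In the reverse direction the entire burden is the inequality $\Delta(a\join b,a)\le b\join\Delta(\one,a)$ --- which, combined with the trivial $\Delta(a\join b,a)\le a\join b=b\join a$ and the hypothesis that $b$ is the meet, would finish the proof --- but you flag this as ``the crux'' and give only the heuristic that $\Delta(a\join b,a)$ is ``$b$ with its $a$-component deleted.'' This inequality does hold unconditionally (one can check it in $\rsf I(B)$ and it is presumably among the cited lemmas of \cite{BO:eq}), so your plan is salvageable, but as written neither direction is established.
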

\begin{proof}
    See \cite{BO:eq} lemmas 2.7 and 2.12.
\end{proof}

\begin{prop}\label{prop:triv}
    Let $\mathcal L$ be a cubic algebra, and $p, q$ in $\mathcal L$ 
    are such that
    $p\preccurlyeq q$ and $p\meet q$ exists. Then $p\le q$.
\end{prop}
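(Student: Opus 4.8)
The plan is to argue directly from the cubic-algebra axioms rather than through the characterisation lemma preceding the statement; in fact the result should fall out of axioms (a) and (d) alone. Write $r=p\join q$ and, using the hypothesis, let $m=p\meet q$. The one idea needed is that $m$, lying below both $p$ and $q$, lies below $r$, so that $\Delta(r,m)$ is legitimate and the axioms pin it down.

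First I would record the order relations $m\le p\le r$, $m\le q\le r$ and (trivially) $q\le r$; these are immediate from $m=p\meet q$ and $r=p\join q$, and they guarantee that $\Delta(r,m)$, $\Delta(r,p)$ and $\Delta(r,q)$ are all defined.

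Second, I would unwind $p\preccurlyeq q$ to its defining form $\Delta(p\join q,p)\le q$, i.e.\ $\Delta(r,p)\le q$, and feed the chain $m\le p\le r$ into the monotonicity axiom (d) to get $\Delta(r,m)\le\Delta(r,p)$; combining, $\Delta(r,m)\le q$.

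Finally, since $m\le q$ and $\Delta(r,m)\le q$, the join $\Delta(r,m)\join m$ is $\le q$; but axiom (a) applied to $m\le r$ says precisely that $\Delta(r,m)\join m=r$. Hence $r\le q$, and together with $q\le r$ this forces $q=p\join q$, that is, $p\le q$. The only points to watch are that every $\Delta$-term used is legitimate (secured in the first step) and that the $\preccurlyeq$ of the statement is the $\preceq$ of the definition. I would deliberately steer clear of the tempting but circular alternative of trying to compute $p\meet q$ from the representation $q=(q\join p)\meet(q\join\Delta(1,p))$ furnished by the lemma, since that route leads back to the very conclusion sought.
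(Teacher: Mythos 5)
Your argument is correct and is essentially identical to the paper's own proof: both set $m=p\meet q$, use axiom (d) to get $\Delta(p\join q,m)\le\Delta(p\join q,p)\le q$, and then apply axiom (a) to conclude $p\join q=m\join\Delta(p\join q,m)\le q$. The only differences are notational.
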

\begin{proof}
    We have $\Delta(p\join q, p)\le q$ as $p\preccurlyeq q$. Let
    $a=p\meet q$. Then
    \begin{align*}
        a&\le q\\
        \Delta(p\join q, a)&\le\Delta(p\join q, p)\le q\\
        \text{Hence }\qquad
        p\join q&=a\join\Delta(p\join q, a)\\
        &\le q
        \end{align*}
        and so $p\le q$.
\end{proof}

\begin{cor}\label{cor:triv}
    Let $\mathcal L$ be a cubic algebra, and $p, q$ in $\mathcal L$ 
    are such that
    $p\simeq q$ and $p\meet q$ exists. Then $p= q$.    
\end{cor}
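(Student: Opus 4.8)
The plan is to derive the corollary from Proposition~\ref{prop:triv} by applying it twice, once in each direction. First I would note that $p\simeq q$ means $\Delta(p\join q, p)=q$, which in particular gives $\Delta(p\join q, p)\le q$, i.e. $p\preccurlyeq q$. Since $p\meet q$ is assumed to exist, Proposition~\ref{prop:triv} immediately yields $p\le q$.

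For the reverse inequality I would show that $\simeq$ is symmetric. Starting from $\Delta(p\join q, p)=q$ and applying the operation $\Delta(p\join q, \cdot)$ to both sides, axiom~(c) — applicable because $p\le p\join q$ — gives $\Delta(p\join q, \Delta(p\join q, p))=p$ on the left, so $\Delta(p\join q, q)=p$. Rewriting $p\join q$ as $q\join p$, this says precisely $q\simeq p$, hence $q\preccurlyeq p$. Since $q\meet p=p\meet q$ exists, a second application of Proposition~\ref{prop:triv} gives $q\le p$, and together with $p\le q$ this forces $p=q$.

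An alternative, more computational route avoids symmetry: once $p\le q$ has been established as above we have $p\join q=q$, so $p\simeq q$ reads $\Delta(q, p)=q$; applying $\Delta(q, \cdot)$ and axiom~(c) gives $p=\Delta(q, q)$, and it then remains only to invoke the identity $\Delta(x, x)=x$. That identity follows from the axioms: from~(a) one gets $\Delta(x, x)\le x$; from~(b) with $z=y=x$ together with~(c) one gets $x=\Delta(\Delta(x, x), \Delta(x, x))$; and applying~(a) once more gives $x\le\Delta(x, x)$, so $\Delta(x, x)=x$ and hence $p=q$.

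I do not anticipate a genuine obstacle: this is a short corollary. The only point needing care is recognising that a single application of Proposition~\ref{prop:triv} is not enough — information must also be transported in the opposite direction, either through the symmetry of $\simeq$ or through the $\Delta(x, x)=x$ identity — and checking that the hypotheses of axiom~(c) are met at each use, which they are since everything in sight lies below $p\join q$.
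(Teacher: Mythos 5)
Your proposal is correct and matches the paper's (implicit) intended derivation: the corollary is stated without proof precisely because $p\simeq q$ gives both $p\preccurlyeq q$ and, by the symmetry of $\simeq$ via axiom~(c), $q\preccurlyeq p$, so Proposition~\ref{prop:triv} applies twice to yield $p\le q$ and $q\le p$. Your verification of the symmetry step and the auxiliary identity $\Delta(x,x)=x$ is sound, so no gap remains.
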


\begin{lem}\label{lem:MRprec}
	If $\mathcal L$ is a cubic algebra then 
    $\mathcal L$ is an MR-algebra iff for all $x, y$ there is some $z$ 
    such that $z\preccurlyeq x$ and $z\preccurlyeq y$.
\end{lem}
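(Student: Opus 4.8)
The plan is to prove both directions separately. For the forward direction, assume $\mathcal L$ is an MR-algebra and fix $x, y \in \mathcal L$. I would first reduce to the case where $x$ and $y$ have a common upper bound by working inside $\mathcal L_{\le w}$ for $w = x \join y$, which is a Boolean-algebra-like implication algebra where meets of elements below $w$ behave well. The key is to produce $z$ with $z \preccurlyeq x$ and $z \preccurlyeq y$. By the characterization lemma above, $z \preccurlyeq x$ means $x = (x \join z) \meet (x \join \Delta(1,z))$, and similarly for $y$. A natural candidate is to take $z$ to be something like $\Delta(w, x \meet y)$ if that meet exists, or more generally to use the MR-axiom directly: if $x \meet y$ fails to exist, then $\Delta(w, x) \join y < w$, and I would set $z = \Delta(w,x)\join y$ or a related element and verify the two $\preccurlyeq$ relations by direct computation with the axioms, splitting on whether $x \meet y$ exists.

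For the reverse direction, assume that for all $x, y$ there is $z$ with $z \preccurlyeq x$, $z \preccurlyeq y$, and I must verify the MR-axiom: for $a, b < x$, show $\Delta(x,a) \join b < x$ iff $a \meet b$ does not exist. One direction of the MR-axiom (if $a\meet b$ exists then $\Delta(x,a)\join b = x$) holds in every cubic algebra, so the real content is: if $\Delta(x,a)\join b = x$ then $a \meet b$ exists. Given the hypothesis, take $z \preccurlyeq a$ and $z \preccurlyeq b$; the goal is to show $z$ (or its image under a suitable $\Delta$) actually \emph{is} the meet $a \meet b$, or at least that the meet exists. Here I would use Proposition~\ref{prop:triv} and Corollary~\ref{cor:triv}: once I know a candidate $c \le a$ and $c \le b$ with the right $\preccurlyeq$ relationship to any other lower bound, triviality of $\preccurlyeq$ in the presence of existing meets pins it down. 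The assumption $\Delta(x,a)\join b = x$ should force the common $\preccurlyeq$-lower-bound $z$ to lie below both $a$ and $b$ via Proposition~\ref{prop:triv} (since $z \meet a$ and $z\meet b$ will exist in the Boolean interval below $x$), and then one checks it is the greatest such.

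I expect the main obstacle to be the forward direction: constructing the witness $z$ explicitly and verifying $z \preccurlyeq x$ and $z \preccurlyeq y$ simultaneously, particularly handling the case split on existence of $x \meet y$ and making sure the element produced by the MR-axiom (which only tells us a certain join is strictly below $w$) can be massaged into something satisfying both $\preccurlyeq$-relations. The computations will lean heavily on axioms (b), (c), (d) for $\Delta$ and on the characterization $a \preccurlyeq b \iff b = (b\join a)\meet(b\join\Delta(1,a))$; I would also likely need the fact that in the interval $\mathcal L_{\le w}$, the operation $x \mapsto \Delta(w,x)$ is a well-behaved complementation-like map, so that $\Delta(w,x)$ and $\Delta(w,y)$ interact predictably. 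If a direct construction proves awkward, an alternative is to cite the known equivalence between the MR-axiom and the existence of $\preccurlyeq$-infima from \cite{BO:eq} and simply assemble the two implications from results already available there.
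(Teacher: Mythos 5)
Your overall skeleton (two directions; the observation that one half of the MR-axiom holds in every cubic algebra; Proposition~\ref{prop:triv} as the tool for upgrading $\preccurlyeq$ to $\le$ once a meet is known to exist) matches the intended argument, but both directions have a gap at exactly the point where the work is. In the forward direction the witness you actually name, $z=\Delta(w,x)\join y$, cannot work: it lies above $y$, so $z\meet y=y$ exists, and then $z\preccurlyeq y$ together with Proposition~\ref{prop:triv} forces $z=y$, i.e.\ $\Delta(w,x)\le y$, which fails in general. The correct witness --- the paper's $x\caret y$ --- is $x\meet y$ when that meet exists, and otherwise $\Delta(w,x)\meet y$ with $w=x\join y$; note that even the \emph{existence} of this meet is not free: you must first apply the MR-axiom to the pair $\Delta(w,x)$, $y$ below $w$, using $\Delta(w,\Delta(w,x))\join y=x\join y=w$, before you can begin verifying $\Delta(w,x)\meet y\preccurlyeq x$. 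None of this is in your sketch, and you yourself flag this direction as the obstacle.

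In the reverse direction the step ``since $z\meet a$ and $z\meet b$ will exist in the Boolean interval below $x$'' assumes precisely what has to be proved. A common $\preccurlyeq$-lower bound $z$ of $a$ and $b$ need not lie below $x$ and need not have a meet with $a$ at all; Proposition~\ref{prop:triv} cannot be invoked until such a meet is known to exist, and the whole point of ``twisting'' in this paper is that $z\preccurlyeq a$ does not place $z$ and $a$ in a common Boolean interval. The paper's route is different and shorter: from $z\preccurlyeq x$ and $z\preccurlyeq y$ it places $x$ and $y$ in the implication algebra $\mathcal L_z=\{w \mid z\preccurlyeq w\}$, where the operation $\caret$ is available, and then leans on the known equivalence (from the cited earlier work) between the MR-axiom and the existence of $x\caret y$ for all pairs. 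If you are permitted to cite that equivalence --- your own suggested fallback --- the lemma becomes routine; without it, you still owe a concrete derivation of the existence of $a\meet b$ from the hypothesis $\Delta(x,a)\join b=x$ together with the existence of $z$.
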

\begin{proof}
	For the left to right implication use $z=x\caret y$. For the other 
	direction if $x, y\in\mathcal L$ then let $z\preccurlyeq x$ and 
	$z\preccurlyeq y$. Then $x, y\in\mathcal L_{z}$ and so $x\caret y$ 
	exists -- as it exists in $\mathcal L_{x}$. 
\end{proof}

There are several kinds of embeddings that are of interest in studying 
cubic algebras. In this paper we will use the notion of full embedding:
\begin{defn}\label{def:fullEmbedd}
	Let $f\colon\mathcal L\to\mathcal M$ be a cubic embedding. Then
	$f$ is \emph{full} iff for all $m<\one$ in $\mathcal M$ there is a 
	$l<\one$ in $\mathcal L$ with $m\le f(l)$. 
\end{defn}

\subsection{Implication Algebras}
Cubic algebras abound. The simplest construction is using an 
implication algebra. 

Let $\mathcal I$ be an implication algebra. We define
$$
\rsf I(\mathcal I)=\Set{\brk<a, b> | a, b\in\mathcal I, a\join 
b=1\text{ and }a\meet b\text{ exists}}
$$
ordered by 
$$
\brk<a, b>\le\brk<c, d>\text{ iff }a\le c\text{ and }b\le d. 
$$
This is a partial order that is an upper semi-lattice with join 
defined by
$$
\brk<a, b>\join\brk<c, d>=\brk<a\join c, b\join d>
$$
and a maximum element $\one=\brk<1, 1>$. 

We can also define a $\Delta$ function by
$$
\text{if }\brk<c, d>\le\brk<a, b>\text{ then }
\Delta(\brk<a, b>, \brk<c, d>)=\brk<a\meet(b\to d), b\meet(a\to c)>. 
$$

There is a natural embedding of $\mathcal I$ into $\rsf I(\mathcal 
I)$ given by
$$
e_{\mathcal I}(a)=\brk<1, a>. 
$$

For later reference we note that
$$
\Delta(\one, \brk<a, b>)=\brk<1\meet(1\to b), 1\meet(1\to a)>=\brk<b, a>
$$
for all $\brk<a, b>$. 

Generally an isomorphism from one cubic algebra $\mathcal L$ to 
another of the form $\rsf I(\mathcal I)$ for some implication algebra 
$\mathcal I$ is not canonical. For this reason we 
have the notion of presentation. 

\begin{defn}\label{def:present}
	Let $\mathcal L$ be a cubic algebra. A \emph{presentation of 
	$\mathcal L$} is an pair $\brk<\mathcal I, \varphi>$ where 
	$\varphi\colon\mathcal L\to\rsf I(\mathcal I)$ is a cubic isomorphism. 
\end{defn}

\subsection{Filters and Filter Algebras}
\subsubsection{Filter Algebras}
In the special case that $\rsf F$ is an implication lattice 
there is a canonical Boolean algebra $B_{\rsf F}$ in which $\rsf 
F$ sits as an ultrafilter -- defined by 
\begin{align*}
	B_{\rsf F}&=\rsf F\times\Set{0, 1}\\
	\brk<a, i>\join\brk<b, j>&=
	\begin{cases}
		\brk<a\join b, 1> & \text{ if }i=j=1  \\
		\brk<a\meet b, 0> & \text{ if }i=j=0  \\
		 \brk<b\to a, 1> & \text{ if }i=1,\ j=0  \\
		 \brk<a\to b, 1> & \text{ if }i=0, \ j=1
	\end{cases}\\
	\brk<a, i>\meet\brk<b, j>&=
	\begin{cases}
		\brk<a\meet b, 1> & \text{ if }i=j=1  \\
		\brk<a\join b, 0> & \text{ if }i=j=0  \\
		 \brk<a\to b, 0> & \text{ if }i=1,\ j=0  \\
		 \brk<b\to a, 0> & \text{ if }i=0, \ j=1
	\end{cases}\\
	\comp{\brk<a, i>}&=\brk<a, 1-i>\\
	1&=\brk<1, 1>\\
	0&=\brk<1, 0>. 
\end{align*}

The mapping $\iota_{\rsf F}\colon f\mapsto\brk<f, 1>$ embeds $\rsf F$ as an ultrafilter of
$B_{\rsf F}$. This mapping is also an implication embedding and so 
there is a natural embedding 
from $\rsf I(\rsf F)$ into $\rsf I(B_{\rsf F})$ given by
$$
\brk<a, b>\mapsto\brk<{\brk<a, 1>}, {\brk<b, 1>}>. 
$$
Note that there is a commutative diagram:
\begin{diagram}
	\rsf F & \rTo^{\iota_{\rsf F}} &  B_{\rsf F} \\
	 \dTo^{e_{\rsf F}} &  & \dTo_{e_{B_{\rsf F} }}  \\
	\rsf I(\rsf F) & \rTo_{\rsf I(\iota_{\rsf F})} & \rsf I(B_{\rsf F})
\end{diagram}

\begin{defn}\label{def:filAlg}
	A cubic algebra $\mathcal L$ is a \emph{filter algebra} iff
	$\mathcal L$ is isomorphic to $\rsf I(\rsf F)$ for some implication 
	lattice $\rsf F$. 
\end{defn}

\subsubsection{Filters in Cubic Algebras}
Most of the filters we will look at arise in cubic algebras. 

On these filters there are several interesting 
constructions that lead to cubic operations. First intersection. 
\begin{lem}\label{lem:interTwoFil}
	Let $\mathcal L$ be a cubic algebra and $\rsf F$ and $\rsf G$ be two 
	filters. Then
	$$
	\rsf F\cap\rsf G=\Set{f\join g | f\in\rsf F\text{ and }g\in\rsf G}. 
	$$
\end{lem}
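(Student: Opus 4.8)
The plan is to prove the two inclusions separately, and in both directions the work reduces to the fact that a filter of a cubic algebra is, in particular, an up-set for $\le$ (this is part of the definition recalled earlier, together with the standard facts about filters of implication algebras). No genuinely cubic machinery — $\Delta$, $\preccurlyeq$, $\caret$, or the implication term $xy$ — should be needed.

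For the inclusion $\rsf F\cap\rsf G\subseteq\Set{f\join g | f\in\rsf F,\ g\in\rsf G}$, I would take an arbitrary $x\in\rsf F\cap\rsf G$ and simply write $x=x\join x$. Since $x\in\rsf F$ and $x\in\rsf G$, this exhibits $x$ as the join of an element of $\rsf F$ with an element of $\rsf G$, so $x$ belongs to the right-hand set; idempotence of $\join$ is doing all the work. For the reverse inclusion, given $f\in\rsf F$ and $g\in\rsf G$, the join $f\join g$ exists because $\mathcal L$ is a join semi-lattice, and it satisfies $f\le f\join g$ and $g\le f\join g$. Upward closure of $\rsf F$ then gives $f\join g\in\rsf F$, and upward closure of $\rsf G$ gives $f\join g\in\rsf G$; hence $f\join g\in\rsf F\cap\rsf G$.

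There is no real obstacle here: the only point deserving a moment's attention is confirming that the notion of filter in force is upward closed, after which the statement is just the combination of that closure with the identity $x=x\join x$. One could, if desired, add the remark that this description makes it transparent that $\rsf F\cap\rsf G$ is itself again a filter, but that observation is not required for the set equality itself.
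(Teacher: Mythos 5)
Your proof is correct and follows the same route as the paper: the containment of the right-hand set in $\rsf F\cap\rsf G$ via upward closure under $f\le f\join g$ and $g\le f\join g$, and the reverse containment via the idempotence identity $x=x\join x$. You have merely spelled out the upward-closure step that the paper dismisses as ``clearly''.
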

\begin{proof}
	The RHS set is clearly a subset of both $\rsf F$ and $\rsf G$. 
	
	And if $z\in\rsf F\cap\rsf G$ then $z=z\join z$ is in the RHS set. 
\end{proof}

\begin{defn}\label{def:cup}
	Let $\rsf F, \rsf G$ be two $\mathcal L$-filters. Then
	$\rsf F\vee\rsf G$ is defined iff $\rsf F\cup\rsf G$ has fip, in 
	which case it is the filter generated by $\rsf F\cup\rsf G$. 	
\end{defn}

\begin{lem}\label{lem:cup}
	If $\rsf F\vee\rsf G$ exists then it is equal to
	$\Set{f\meet g | f\in\rsf F\text{ and }g\in\rsf G}$. 
\end{lem}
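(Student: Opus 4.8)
The plan is to show set-theoretic equality between $\rsf F\vee\rsf G$ and $S:=\Set{f\meet g | f\in\rsf F,\ g\in\rsf G}$ by proving each containment and, for the nontrivial direction, checking that $S$ is in fact a filter. First I would observe that every element of $S$ is genuinely defined: since $\rsf F\vee\rsf G$ exists, $\rsf F\cup\rsf G$ has the finite intersection property, so for $f\in\rsf F$ and $g\in\rsf G$ the meet $f\meet g$ exists in $\mathcal L$. Each such $f\meet g$ lies below both $f$ and $g$, hence lies in any filter containing $\rsf F\cup\rsf G$; thus $S\subseteq\rsf F\vee\rsf G$.

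For the reverse containment I would verify that $S$ is an up-closed, meet-closed subset of $\mathcal L$ containing $\rsf F\cup\rsf G$, so that, being contained in the generated filter $\rsf F\vee\rsf G$ yet containing the generating set, it must equal $\rsf F\vee\rsf G$. Containment of $\rsf F\cup\rsf G$ is immediate, taking $g=\one$ (so $f\meet\one=f$ for $f\in\rsf F$) or $f=\one$. Up-closure: if $f\meet g\le z$ then, writing $z=z\join(f\meet g)$ and pushing the join through, $z=(z\join f)\meet(z\join g)$ lies in $S$ since $z\join f\in\rsf F$ and $z\join g\in\rsf G$ — here one must be a little careful that this distributive rewriting is legitimate in $\mathcal L$, but in a cubic algebra each $\mathcal L_x$ is an implication algebra (a distributive structure) and $f\meet g\le z$ forces all the elements in sight to lie above $f\meet g$, hence in the implication algebra $\mathcal L_{f\meet g}$, where the computation is valid. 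Meet-closure: given $f_1\meet g_1$ and $f_2\meet g_2$ in $S$, their meet exists because $\rsf F\cup\rsf G$ has fip, and it equals $(f_1\join f_2)\meet(g_1\join g_2)$ by associativity and commutativity of $\meet$ together with $f_i\meet g_i\le f_1\meet f_2$ etc.; again all relevant elements sit above a common lower bound so the rearrangement happens inside a single implication algebra.

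The main obstacle is precisely this bookkeeping about where the computations take place: meets in a cubic algebra are only partial, and the distributive-law manipulations used in both the up-closure and meet-closure steps are only valid within an implication algebra $\mathcal L_x$. The key device is to notice that in every case the element $f\meet g$ (or a common lower bound of two such elements, which exists by fip) lies below everything involved, so all the arithmetic can be relocated into $\mathcal L_{f\meet g}$, where $\meet$ is total and the lattice is distributive. Once that localisation is in place, each verification is a routine identity; I expect the write-up to spend most of its words justifying the moves into the appropriate principal subalgebra rather than on the algebra itself.
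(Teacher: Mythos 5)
Your strategy is essentially the paper's, and its core computation is identical: the up-closure step $z=z\join(f\meet g)=(z\join f)\meet(z\join g)$ is the entire content of the published proof, which simply takes $h\in\rsf F\vee\rsf G$, extracts $f\in\rsf F$ and $g\in\rsf G$ with $f\meet g\le h$ from the standard description of the generated filter, and applies that identity; your packaging (verify that $S$ is itself a filter containing $\rsf F\cup\rsf G$ and invoke minimality of $\rsf F\vee\rsf G$) is equivalent and only slightly longer. Two details need repair. First, your justification of $S\subseteq\rsf F\vee\rsf G$ is stated backwards: lying \emph{below} $f$ and $g$ does not place an element in an up-closed filter; the correct reason is that $f\meet g$ is the (existing, by fip) meet of two elements of the filter $\rsf F\vee\rsf G$, which is closed under such meets. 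Second, and more substantively, your meet-closure formula is wrong: $(f_1\meet g_1)\meet(f_2\meet g_2)$ is not $(f_1\join f_2)\meet(g_1\join g_2)$ --- the latter is an \emph{upper} bound of both elements, not their meet. What you want is $(f_1\meet g_1)\meet(f_2\meet g_2)=(f_1\meet f_2)\meet(g_1\meet g_2)$, which lies in $S$ because $f_1\meet f_2\in\rsf F$ and $g_1\meet g_2\in\rsf G$, all the meets existing by fip and the meet-closure of $\rsf F$ and $\rsf G$. With those two lines corrected the argument is complete; your care in localising the distributive manipulations to the implication algebra $\mathcal L_{f\meet g}$ is appropriate, though the paper takes it as read.
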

\begin{proof}
	Let $S$ be this set. It is clearly contained in $\rsf F\join\rsf G$. 
	
	If $h\in\rsf F\join\rsf G$ then there is some $f\in\rsf F$ and $g\in\rsf 
	G$ such that $f\meet g\le h$. Hence
	\begin{align*}
		h & =h\join(f\meet g)  \\
		 & =(h\join f)\meet(h\join g)  \\
		 & \in S. 
	\end{align*}
\end{proof}

It is easy to show that these operations are commutative, associative,  
idempotent
and satisfy absorption. 
Distributivity also holds in a weak way. 
\begin{lem}\label{lem:distrib}
	Let $\rsf G, \rsf H, \rsf K$ be subfilters of a filter $\rsf F$. Then
	$$
	\rsf G\cap(\rsf H\join\rsf K)=(\rsf G\join\rsf H)\cap(\rsf G\join\rsf 
	K). 
	$$
\end{lem}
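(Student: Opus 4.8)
The plan is to check the two inclusions separately, in each case replacing the filter operations by the elementwise descriptions of Lemmas~\ref{lem:interTwoFil} and~\ref{lem:cup}. The hypothesis that $\rsf G,\rsf H,\rsf K$ all lie in a single filter $\rsf F$ is used only to guarantee that the unions that arise have the finite intersection property, so that every join written in the statement --- and in the argument --- is actually defined and both sides are genuine filters.

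One inclusion is purely formal, coming from monotonicity together with the absorption/lattice identities recorded just before the lemma: a filter contained in $\rsf G$ and in $\rsf H\join\rsf K$ is contained in $\rsf G$, hence in each of $\rsf G\join\rsf H$ and $\rsf G\join\rsf K$, hence in their intersection. No elementwise work is needed here, so the whole content of the lemma is the opposite inclusion.

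For that I would take a typical element $x$ of $(\rsf G\join\rsf H)\cap(\rsf G\join\rsf K)$ and decompose it with Lemma~\ref{lem:cup}: its membership in $\rsf G\join\rsf H$ gives $x=g_{1}\meet h$ with $g_{1}\in\rsf G$, $h\in\rsf H$, and its membership in $\rsf G\join\rsf K$ gives $x=g_{2}\meet k$ with $g_{2}\in\rsf G$, $k\in\rsf K$. This exhibits $x$ as a common lower bound of $g_{1},g_{2},h,k$, so all of these --- and $x$ itself --- lie in one interval $[x,\one]$ of $\mathcal L$, which is a Boolean algebra (since $\mathcal L$ is a filter algebra, hence an MR-algebra, and intervals of an MR-algebra are Boolean). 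Working inside that Boolean interval I can re-bracket freely, rewriting $x$ so that the surviving factors are visibly of the shape ``an element of $\rsf G$ joined with an element of $\rsf H$'' --- so that, filters being up-closed, such a factor lies in both $\rsf G$ and $\rsf H$ --- and similarly with $\rsf K$; feeding these back through Lemmas~\ref{lem:cup} and~\ref{lem:interTwoFil} then places $x$ on the intended side.

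The step I expect to be the real obstacle is exactly this last manipulation: choosing the decomposition of $x$ and the order of re-bracketing so that every surviving term can be assigned cleanly to one of the three filters --- essentially the bookkeeping of which Boolean-interval factors land in $\rsf G$, which in $\rsf H$, and which in $\rsf K$. Everything else --- the existence of the interval $[x,\one]$ and of the meets used (via the MR property), the formal inclusion, and the verification along the way that the finite-intersection hypotheses hold because all three filters sit inside $\rsf F$ --- is routine once that decomposition has been pinned down.
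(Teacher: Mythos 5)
The hard inclusion you defer as ``bookkeeping'' cannot be completed, because the displayed identity is false as printed. Reading $\cap$ and $\join$ with the paper's own descriptions (\lemref{lem:interTwoFil} and \lemref{lem:cup}), on principal filters one has $[a,\one]\cap[b,\one]=[a\join b,\one]$ and $[a,\one]\join[b,\one]=[a\meet b,\one]$. Now take $p<q$ in $\rsf F=[p,\one]$ and set $\rsf G=[p,\one]$, $\rsf H=\rsf K=[q,\one]$: the left-hand side is $[p,\one]\cap[q,\one]=[q,\one]$, while the right-hand side is $[p\meet q,\one]\cap[p\meet q,\one]=[p,\one]$, which is strictly larger. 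Your own setup shows exactly where the plan breaks: from $x=g_{1}\meet h=g_{2}\meet k$ you only learn $x\le g_{1}$ and $x\le g_{2}$, and since $\rsf G$ is \emph{up}-closed this gives no purchase on $x\in\rsf G$; no re-bracketing inside the Boolean interval $[x,\one]$ will manufacture an element of $\rsf G$ lying \emph{below} $x$. So the step you flag as the ``real obstacle'' is not routine bookkeeping --- it is the point at which the asserted inclusion genuinely fails. (Your easy inclusion, $\rsf G\cap(\rsf H\join\rsf K)\subseteq\rsf G\subseteq\rsf G\join\rsf H$, is of course correct, but it is the trivial half.)

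What is true --- and what the paper's proof actually establishes, and what \thmref{thm:BooleanAlgebra} later needs --- is the distributive law with the two right-hand operations transposed: $\rsf G\cap(\rsf H\join\rsf K)=(\rsf G\cap\rsf H)\join(\rsf G\cap\rsf K)$. For that version your elementwise strategy via \lemref{lem:interTwoFil} and \lemref{lem:cup} works in one line in each direction, with no delicate re-bracketing: an element of the left side is $g\join(h\meet k)=(g\join h)\meet(g\join k)$, and $g\join h\in\rsf G\cap\rsf H$, $g\join k\in\rsf G\cap\rsf K$, so it lies in $(\rsf G\cap\rsf H)\join(\rsf G\cap\rsf K)$; conversely an element $(g_{1}\join h)\meet(g_{2}\join k)$ of the right side lies above $g_{1}\meet g_{2}\in\rsf G$ and above $h\meet k\in\rsf H\join\rsf K$, hence by upward closure lies in $\rsf G\cap(\rsf H\join\rsf K)$. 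I recommend you restate the lemma in this corrected form before relying on it.
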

\begin{proof}
	Let $x=g\join(h\meet k)\in\rsf G\cap(\rsf H\join\rsf K) $. 
	Then $x=(g\join h)\meet(g\join k)$ is in 
	$(\rsf G\join\rsf H)\cap(\rsf G\join\rsf K)$. 
	
	Conversely if $x=(g_{1}\join h)\meet(g_{2}\join k)$ is in 
	$(\rsf G\join\rsf H)\cap(\rsf G\join\rsf K)$ then 
	$x\geq g_{1}\meet g_{2}\in\rsf G$ and $x\geq h\meet k\in \rsf 
	H\join\rsf K$ so that $x\in\rsf G\cap(\rsf H\join\rsf K)$.
\end{proof}

We can also define a relative complement which we defer until section 
\ref{sect:seven}. 

Filters generate subalgebras of cubic algebras that are always filter 
algebras. This gives a way a defining when two filters are 
``similar'' -- they generate the same subalgebra.

\begin{defn}\label{def:GenFilter}
	Let $\rsf F$ be a filter in a cubic algebra $\mathcal L$. Then
	\begin{enumerate}[(a)]
		\item $\displaystyle\leftGen\rsf F\rightGen$ is the subalgebra of $\mathcal L$ 
		generated by $\rsf F$. 
	\vspace{1.2mm}
		\item $\displaystyle\widehat{\rsf F}=\Set{\Delta(x, y) | y\le x, \ x, y\in\rsf F}$. 
	\end{enumerate}
\end{defn}

\begin{thm}\label{thm:GenFil}
	Let $\rsf F$ be a filter in a cubic algebra $\mathcal L$. Then
	$$
	\leftGen\rsf F\rightGen=\widehat{\rsf F}. 
	$$
\end{thm}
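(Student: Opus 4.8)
The plan is to prove the two inclusions separately. The inclusion $\widehat{\rsf F}\subseteq\leftGen\rsf F\rightGen$ is immediate: each $\Delta(x,y)$ with $y\le x$ in $\rsf F$ is obtained by applying the cubic operation $\Delta$ to two members of $\rsf F\subseteq\leftGen\rsf F\rightGen$, and a subalgebra is closed under $\Delta$. For the reverse inclusion, since $\leftGen\rsf F\rightGen$ is by definition the least subalgebra of $\mathcal L$ containing $\rsf F$, it is enough to check that $\widehat{\rsf F}$ contains $\rsf F$ and is closed under $\join$ and under the partial operation $\Delta$.

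That $\rsf F\subseteq\widehat{\rsf F}$ follows once we know $\Delta(x,x)=x$ for every $x$: by axiom~(a) (with $y=x$) we get $\Delta(x,x)\le x$; by axiom~(c) (with $y=x$) we get $\Delta(x,\Delta(x,x))=x$; so axiom~(b) with $x=y=z$ gives $\Delta\bigl(\Delta(x,x),\Delta(x,x)\bigr)=x$, and writing $w=\Delta(x,x)$ we have $w\le x$ and $x=\Delta(w,w)\le w$ by axiom~(a) again, whence $w=x$ (this also appears in \cite{BO:eq}). Thus each $x\in\rsf F$ equals $\Delta(x,x)$ with $x\le x$ and $x\in\rsf F$, so $x\in\widehat{\rsf F}$; in particular $\one\in\widehat{\rsf F}$.

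The substance of the proof is closure under $\join$ and under $\Delta$. I would first record the reformulation, a direct consequence of axiom~(c): $w\in\widehat{\rsf F}$ if and only if there is an $x\in\rsf F$ with $w\le x$ and $\Delta(x,w)\in\rsf F$ (forwards, if $w=\Delta(x,y)$ with $y\le x$ in $\rsf F$ then $\Delta(x,w)=y\in\rsf F$; backwards, $w=\Delta(x,\Delta(x,w))$). For $\join$-closure, take $p=\Delta(a,b)$ and $q=\Delta(c,d)$ with $b\le a$, $d\le c$ in $\rsf F$; use the witness $x=a\join c\in\rsf F$ (legitimate since $p\join q\le a\join c$); observe that axiom~(a) gives $(p\join q)\join(b\join d)=(\Delta(a,b)\join b)\join(\Delta(c,d)\join d)=a\join c$; and then check, using monotonicity (axiom~(d)) and axiom~(b), that $\Delta(a\join c,\,p\join q)\ge b\join d$. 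Since $b\join d\in\rsf F$ and $\rsf F$ is upward closed, this gives $\Delta(a\join c,p\join q)\in\rsf F$, so $p\join q\in\widehat{\rsf F}$. For $\Delta$-closure, given $q\le p$ in $\widehat{\rsf F}$, pick $a\in\rsf F$ witnessing $p\in\widehat{\rsf F}$ (so $p\le a$ and $b:=\Delta(a,p)\in\rsf F$); then $q\le p\le a$, $\Delta(a,q)\le\Delta(a,p)=b$, and axioms~(b),(c),(d) yield
$$
\Delta(p,q)=\Delta\bigl(a,\Delta(b,\Delta(a,q))\bigr).
$$
Hence $\Delta$-closure reduces to the single fact that $\Delta(t,q)\in\widehat{\rsf F}$ whenever $t\in\rsf F$, $q\in\widehat{\rsf F}$ and $q\le t$ (apply it three times: with $t=a$, then $t=b$, then $t=a$). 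That fact I would prove by writing $q=\Delta(c,d)$ with $c,d\in\rsf F$, $d\le c$, and exhibiting an explicit $\rsf F$-witness for $\Delta(t,q)$ built from $t$, $c$ and $d$ using axioms~(a)--(d).

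The main obstacle is exactly these two closure statements — the $\join$-computation and the auxiliary $\Delta(t,q)$ fact. Each is a finite calculation in the cubic calculus, but the bookkeeping of which binary meets and joins are guaranteed to exist (this is where the finite-intersection property of the filter $\rsf F$ enters) is fiddly. One could instead run the computations inside a presentation $\mathcal L\cong\rsf I(\mathcal I)$, or even an interval algebra over a Boolean algebra, where the explicit formulas for $\join$ and $\Delta$ make the identities transparent; but some care is then needed, because a filter of $\rsf I(\mathcal I)$ need not be the image under $e_{\mathcal I}$ of a filter of $\mathcal I$, so the representation does not by itself settle the problem.
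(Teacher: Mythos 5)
The paper does not actually prove this theorem --- its ``proof'' is a citation to Theorem~4.16 of the companion paper \cite{BO:cubFil} --- so there is no in-paper argument to measure you against. Judged on its own terms, your outline has the right architecture: the inclusion $\widehat{\rsf F}\subseteq\leftGen\rsf F\rightGen$ is indeed trivial, the reduction of the converse to ``$\widehat{\rsf F}$ contains $\rsf F$ and is closed under $\join$ and $\Delta$'' is the correct strategy, your derivation of $\Delta(x,x)=x$ from the axioms is sound, the witness reformulation of membership in $\widehat{\rsf F}$ is correct, and the identity $\Delta(p,q)=\Delta\bigl(a,\Delta(b,\Delta(a,q))\bigr)$ together with the three-fold application of the auxiliary fact is a valid reduction of $\Delta$-closure. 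I checked that both of your deferred claims are true.

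The gap is that those two deferred claims --- the $\join$-closure inequality $\Delta(a\join c,\,p\join q)\geq b\join d$ and the fact that $\Delta(t,q)\in\widehat{\rsf F}$ for $t\in\rsf F$, $q\in\widehat{\rsf F}$, $q\le t$ --- carry essentially all of the mathematical content of the theorem, and they are asserted rather than proved. Worse, your parenthetical hint for the first one (``using monotonicity (axiom~(d)) and axiom~(b)'') is misleading: that inequality is \emph{not} a formal consequence of axioms (a)--(d). Working in an interval algebra one finds that, after expanding, the cross terms require precisely that the pairwise meets $a\meet d$, $b\meet c$, etc.\ exist; e.g.\ one needs $a_0\le d_1$, which comes from $a\meet d$ existing, i.e.\ from the fip of $\rsf F$. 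Your closing remark does acknowledge that fip must enter, but it enters at the heart of the computation, not as bookkeeping around it. A cleaner way to close the gap: every finite subset of $\rsf F$ has a meet $m\in\rsf F$, so $a,b,c,d$ (or $t,c,d$) lie in the principal filter $[m,\one]\subseteq\rsf F$; prove the theorem first for principal filters, where $\leftGen[m,\one]\rightGen\cong\rsf I([m,\one])$ makes both computations explicit, and then observe that $\widehat{[m,\one]}\subseteq\widehat{\rsf F}$ gives the general closure properties. As it stands, your proposal is a correct reduction of the theorem to two unproved (though true) lemmas, one of which needs more than the tools you name.
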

\begin{proof}
	See \cite{BO:cubFil} theorem 4.16. 
\end{proof}

\begin{defn}\label{def:gFilter}
	Let $\rsf F$ be a filter in a cubic algebra $\mathcal L$. Then
	$\rsf F$ is a \emph{generating filter} or a \emph{g-filter} iff
	$\leftGen\rsf F\rightGen=\mathcal L$. 
\end{defn}

\begin{defn}\label{def:similar}
	Let $\rsf F$ and $\rsf G$ be two filters in a cubic algebra $\mathcal L$.
	Then \emph{$\rsf F$ is similar to $\rsf G$} -- written $\rsf F\simeq 
	\rsf G$ -- iff $\leftGen\rsf F\rightGen=\leftGen\rsf G\rightGen$. 
\end{defn}

The fact that $\rsf F$ is a g-filter for $\leftGen\rsf F\rightGen$ 
will often be used in the following. The most important fact about 
g-filters is that they are naturally isomorphic as implication lattices. 

If $\rsf F$ is a g-filter for $\mathcal L$ then for all $x\in\mathcal 
L$ there are unique elements $\alpha_{\rsf F}(x)\geq \beta_{\rsf F}(x)$ 
of $\rsf F$ such that
$x=\Delta(\alpha_{\rsf F}(x),  \beta_{\rsf F}(x))$ see \cite{BO:cubFil} 
lemma 4.23 and theorem 4.29 which also shows that if 
$\rsf G$ is another generating filter then
$\alpha_{\rsf F}\restrict\rsf G$ is an implication homomorphism
and $\beta_{\rsf F}\restrict\rsf G$ is an implication isomorphism
from $\rsf G$ to $\rsf F$. 

Furthermore we have 
\begin{thm}\label{thm:isoFilAlg}
	Let $\rsf F$ be a g-filter for $\mathcal L$. Then the mapping 
	$$
	x\mapsto\brk<\Delta(\one, x)\join\beta_{\rsf F}(x), x\join\beta_{\rsf F}(x)>
	$$
	is an isomorphism from
	$\mathcal L$ to $\rsf I(\rsf F)$. 
\end{thm}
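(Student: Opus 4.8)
The plan is to recognise the displayed map as the one induced on $\mathcal L=\widehat{\rsf F}$ by the canonical embedding $e_{\rsf F}\colon\rsf F\to\rsf I(\rsf F)$, $f\mapsto\brk<1, f>$, and then to transport the cubic structure of $\rsf I(\rsf F)$ back along it. Note first that $\rsf F$ is an implication lattice, since $\leftGen\rsf F\rightGen$ is a filter algebra and g-filters are isomorphic as implication lattices. Fix $x\in\mathcal L$ and write $\beta=\beta_{\rsf F}(x)$, $\alpha=\alpha_{\rsf F}(x)$; since $x=\Delta(\alpha,\beta)$ with $\beta\le\alpha$, axiom (a) gives $\alpha=\Delta(\alpha,\beta)\join\beta=x\join\beta$, so the map sends $x$ to $\brk<\Delta(\one, x)\join\beta,\ \alpha>$. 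I would check it lands in $\rsf I(\rsf F)$: both coordinates dominate $\beta\in\rsf F$ and $\rsf F$ is upward closed, so both lie in $\rsf F$; their join is $\Delta(\one, x)\join x\join\beta=\one\join\beta=\one$ by axiom (a); and the meet of the two coordinates exists because $\rsf F$ is a lattice. Applying the cubic implication $uv=\Delta(\one,\Delta(u\join v, v))\join v$ with $u=\alpha$, $v=\beta$ (so $u\join v=\alpha$ and $\Delta(u\join v, v)=x$) identifies $\Delta(\one, x)\join\beta$ as $\alpha\beta=\alpha\to\beta$, the implication of $\rsf F$; thus the displayed map is $x\mapsto\brk<\alpha_{\rsf F}(x)\to\beta_{\rsf F}(x),\ \alpha_{\rsf F}(x)>$.

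Next, the one-line computation $\Delta(\brk<1,\alpha>,\brk<1,\beta>)=\brk<1\meet(\alpha\to\beta),\ \alpha\meet(1\to 1)>=\brk<\alpha\to\beta,\ \alpha>$, valid for $\beta\le\alpha$ in $\rsf F$, shows the map is exactly $\Delta(\alpha,\beta)\mapsto\Delta(e_{\rsf F}(\alpha), e_{\rsf F}(\beta))$. Since $\leftGen\rsf F\rightGen=\mathcal L$, \thmref{thm:GenFil} gives $\mathcal L=\widehat{\rsf F}$, so every $x\in\mathcal L$ has such a representation; the g-filter property makes it unique, with $\alpha=\alpha_{\rsf F}(x)$, $\beta=\beta_{\rsf F}(x)$, so the map -- call it $\varphi$ -- is well defined. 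On $\rsf F$ itself $\alpha_{\rsf F}(f)=\beta_{\rsf F}(f)=f$ (because $\Delta(f,f)=f$, a routine consequence of axioms (a)--(c)), so $\varphi\restrict\rsf F=e_{\rsf F}$, and $\varphi(\one)=\brk<\one\to\one,\ \one>=\brk<1,1>$.

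For bijectivity: given $\brk<a,b>\in\rsf I(\rsf F)$ put $z=\Delta(b,\ a\meet b)$ (legitimate since $\rsf F$ is a lattice, so $a\meet b\in\rsf F$ and $a\meet b\le b$); uniqueness gives $\alpha_{\rsf F}(z)=b$, $\beta_{\rsf F}(z)=a\meet b$, so $\varphi(z)=\brk<b\to(a\meet b),\ b>=\brk<b\to a,\ b>=\brk<a,b>$, using $b\to(a\meet b)=(b\to a)\meet(b\to b)=b\to a$ and $b\to a=a$ -- the last because $a\join b=(b\to a)\to a=\one$ forces $b\to a\le a$, while always $a\le b\to a$. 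For injectivity, $\varphi(z)=\varphi(w)$ forces $\alpha_{\rsf F}(z)=\alpha_{\rsf F}(w)=:b$ from the second coordinate and $b\to\beta_{\rsf F}(z)=b\to\beta_{\rsf F}(w)$ from the first; hitting both with $b\meet(-)$ and using the identity $b\meet(b\to c)=b\meet c$ in $\rsf F$ yields $\beta_{\rsf F}(z)=\beta_{\rsf F}(w)$, hence $z=\Delta(b,\beta_{\rsf F}(z))=w$.

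It remains to show $\varphi$ and $\varphi^{-1}$ preserve $\join$ and $\Delta$ (preservation of $\one$ being already noted), and this is the part that requires real work rather than bookkeeping. Here I would establish -- or cite from \cite{BO:cubFil} -- a closed description of the order and of $\Delta$ on any filter-generated subalgebra $\widehat{\rsf G}$ purely in terms of the pair $(\alpha_{\rsf G},\beta_{\rsf G})$ and the operations of $\rsf G$; concretely, $u\le v$ iff $\alpha_{\rsf G}(u)\le\alpha_{\rsf G}(v)$ and $\alpha_{\rsf G}(u)\to\beta_{\rsf G}(u)\le\alpha_{\rsf G}(v)\to\beta_{\rsf G}(v)$, with a similar though bulkier formula for $\Delta$, the verification resting on axioms (b)--(d), \corref{cor:triv} and properties of $\preccurlyeq$. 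Applying this inside $\rsf I(\rsf F)$ (where it recovers the given order) and inside $\mathcal L$, and using that $\varphi$ intertwines the parametrizing data -- $\alpha_{e_{\rsf F}(\rsf F)}\circ\varphi=e_{\rsf F}\circ\alpha_{\rsf F}$ and $\beta_{e_{\rsf F}(\rsf F)}\circ\varphi=e_{\rsf F}\circ\beta_{\rsf F}$, both read off the formula for $\varphi$ together with the identity $\Delta(\brk<1,b>,\brk<1,a\meet b>)=\brk<a,b>$ from the surjectivity step -- plus the fact that $e_{\rsf F}$ is an implication-lattice isomorphism onto its image, one concludes that $\varphi$ respects $\le$ both ways and respects $\Delta$, so $\varphi$ is a cubic isomorphism. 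The main obstacle is exactly pinning down that closed form for $\le$ and $\Delta$ on $\widehat{\rsf F}$; once it is in hand the theorem collapses to $e_{\rsf F}$ being an isomorphism of implication lattices.
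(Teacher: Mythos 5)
The preliminary work here is sound: the identification of the map as $x\mapsto\brk<\alpha_{\rsf F}(x)\to\beta_{\rsf F}(x), \alpha_{\rsf F}(x)>$, the check that its values lie in $\rsf I(\rsf F)$, the computation $\Delta(\brk<1, \alpha>, \brk<1, \beta>)=\brk<\alpha\to\beta, \alpha>$, and the bijectivity argument are all correct. But the proof has a genuine gap exactly where you flag it: you never establish that the bijection preserves and reflects the order (hence preserves $\join$), nor that it commutes with $\Delta$ on arbitrary comparable pairs of $\mathcal L$, as opposed to pairs of the special form $\brk<1, \beta>\le\brk<1, \alpha>$. The ``closed description'' of $\le$ and $\Delta$ on $\widehat{\rsf F}$ that you propose to ``establish or cite'' cannot simply be waved at: your stated order criterion ($u\le v$ iff $\alpha_{\rsf F}(u)\le\alpha_{\rsf F}(v)$ and $\alpha_{\rsf F}(u)\to\beta_{\rsf F}(u)\le\alpha_{\rsf F}(v)\to\beta_{\rsf F}(v)$) is precisely the assertion that $\varphi$ is an order isomorphism, i.e.\ it is the content of the theorem rather than an input to it. Deriving it from the cubic axioms, \corref{cor:triv} and the properties of $\preccurlyeq$ is where essentially all of the work lives, and none of that work appears.

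For comparison: the paper gives no proof of \thmref{thm:isoFilAlg} at all; it is imported from the unpublished reference \cite{BO:cubFil}, whose lemma 4.23 and theorem 4.29 are cited in the surrounding discussion for the existence and uniqueness of the decomposition $x=\Delta(\alpha_{\rsf F}(x), \beta_{\rsf F}(x))$ and the behaviour of $\beta_{\rsf F}$. So your bookkeeping records more than the paper does, but as a self-contained argument it stops short of the theorem's substance. A smaller quibble: you justify ``$\rsf F$ is an implication lattice'' by appealing to $\leftGen\rsf F\rightGen$ being a filter algebra, but that fact (\corref{cor:gFilEqFAlg}) is itself a corollary of the theorem you are proving; the non-circular route is simply that a filter has the finite intersection property, so it is closed under $\meet$ as well as $\join$ and is therefore a lattice.
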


This mapping will be called the \emph{$\rsf F$-presentation of 
$\mathcal L$}. 

\begin{cor}\label{cor:gFilEqFAlg}
	$\mathcal L$ is a filter algebra iff $\mathcal L$ has a g-filter. 
\end{cor}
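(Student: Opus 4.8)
The plan is to read this corollary off directly from Theorem~\ref{thm:isoFilAlg} and Definition~\ref{def:filAlg}, treating the two implications separately. The substantive content is small: in one direction I must check that a g-filter is genuinely an \emph{implication lattice} (not merely an implication algebra), and in the other I must exhibit an explicit g-filter inside $\rsf I(\rsf F)$ and verify it generates.

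For the implication ``$\mathcal L$ has a g-filter $\Rightarrow$ $\mathcal L$ is a filter algebra'', let $\rsf F$ be a g-filter for $\mathcal L$. As a filter in a cubic algebra, $\rsf F$ is upward closed and down-directed, and each of its principal filters is Boolean, so $\rsf F$ is an implication lattice. Theorem~\ref{thm:isoFilAlg} then provides the concrete isomorphism $x\mapsto\brk<\Delta(\one, x)\join\beta_{\rsf F}(x), x\join\beta_{\rsf F}(x)>$ from $\mathcal L$ onto $\rsf I(\rsf F)$, so $\mathcal L$ is a filter algebra by Definition~\ref{def:filAlg}.

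For the converse, suppose $\mathcal L$ is a filter algebra, so there is a cubic isomorphism $\varphi\colon\mathcal L\to\rsf I(\rsf F)$ with $\rsf F$ an implication lattice. I claim $\rsf G=\Set{\brk<1, a> | a\in\rsf F}$ (the image of $\rsf F$ under $e_{\rsf F}$) is a g-filter of $\rsf I(\rsf F)$; since $\varphi$ preserves filters and preserves generated subalgebras, $\varphi^{-1}(\rsf G)$ is then a g-filter of $\mathcal L$ and we are done. That $\rsf G$ is a filter is routine: $\brk<1, a>\join\brk<1, b>=\brk<1, a\join b>$, and when the meet exists $\brk<1, a>\meet\brk<1, b>=\brk<1, a\meet b>$, while the up-set of any $\brk<1, a>$ consists of the elements $\brk<1, d>$ with $d\geq a$, so $\rsf G$ is a down-directed up-set containing $\one=\brk<1, 1>$. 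To see $\rsf G$ generates, I use Theorem~\ref{thm:GenFil}: it suffices to show $\widehat{\rsf G}=\rsf I(\rsf F)$. Fix $\brk<p, q>\in\rsf I(\rsf F)$, so $p\join q=1$ and $p\meet q$ exists in $\rsf F$. Then $\brk<1, p\meet q>\le\brk<1, q>$ both lie in $\rsf G$, and the $\Delta$-formula for $\rsf I(\rsf F)$ gives $\Delta(\brk<1, q>, \brk<1, p\meet q>)=\brk<q\to(p\meet q), q>$. Since $p\meet q\le p, q\le 1$ and $p\join q=1$, the elements $p$ and $q$ are complementary in the Boolean interval $[p\meet q, 1]$, whence $q\to(p\meet q)=p$; thus $\Delta(\brk<1, q>, \brk<1, p\meet q>)=\brk<p, q>$, showing $\brk<p, q>\in\widehat{\rsf G}$.

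The only real obstacles are two pieces of bookkeeping. First, I must be sure that a g-filter counts as an implication lattice, which reduces to the standard facts (from the theory of filters in cubic algebras) that a filter is down-directed and that every principal filter of an implication algebra is Boolean. Second, the generation computation hinges on the small implication-algebra identity $q\to(p\meet q)=p$ valid whenever $p\join q=1$; this is exactly what forces every $\brk<p, q>$ into $\widehat{\rsf G}$, and once it is in hand the rest is mechanical application of the displayed formulas for $\rsf I(\rsf F)$.
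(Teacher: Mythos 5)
Your proof is correct and follows the route the paper intends: the paper states this corollary without proof, the forward implication being immediate from Theorem~\ref{thm:isoFilAlg} and the converse from the fact that $e_{\rsf F}[\rsf F]$ is a g-filter of $\rsf I(\rsf F)$, which is exactly the computation you carry out via Theorem~\ref{thm:GenFil} and the identity $q\to(p\meet q)=p$. No gaps.
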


Using this theorem we can easily define an extension of $\beta_{\rsf 
G}\colon\rsf F\to\rsf G$ to a cubic automorphism $\varphi_{\brk<\rsf 
F, \rsf G>}$ of $\mathcal L$ as
the composite 
\begin{diagram}
	\mathcal L=\leftGen\rsf F\rightGen & \rTo^{\sim} & \rsf I(\rsf F) & \rTo^{\rsf 
	I(\beta_{\rsf G})} & \rsf I(\rsf G) & \rTo^{\sim} & \leftGen\rsf 
	G\rightGen=\mathcal L
\end{diagram}

\begin{lem}\label{lem:interTwoFilAA}
	Let $\mathcal L$ be a cubic algebra and $\rsf F$ and $\rsf G$ be two 
	filters such that $\leftGen\rsf F\rightGen =\leftGen\rsf G\rightGen$. Then
	$$
	\rsf F\cap\rsf G=\Set{f\join \beta_{\rsf G}(f) | f\in\rsf F}. 
	$$
\end{lem}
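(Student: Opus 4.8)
The plan is to reduce the whole statement to one fact: the map $\beta_{\rsf G}$ fixes every element of $\rsf G$. Since $\leftGen\rsf F\rightGen=\leftGen\rsf G\rightGen$, write $\mathcal N$ for this common subalgebra; then $\rsf F$ and $\rsf G$ are both g-filters for $\mathcal N$, so $\beta_{\rsf G}$ is defined on all of $\mathcal N$ (in particular on $\rsf F$) and sends each element into $\rsf G$. One inclusion is then essentially free: for $f\in\rsf F$ the element $f\join\beta_{\rsf G}(f)$ lies above $f\in\rsf F$ and above $\beta_{\rsf G}(f)\in\rsf G$, hence lies in $\rsf F\cap\rsf G$ because both filters are up-sets (this is the instance $g=\beta_{\rsf G}(f)$ of lemma~\ref{lem:interTwoFil}).

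For the reverse inclusion I would first establish the key claim that $\beta_{\rsf G}(z)=z$ whenever $z\in\rsf G$. Writing $z=\Delta(\alpha_{\rsf G}(z),\beta_{\rsf G}(z))$ with $\beta_{\rsf G}(z)\le\alpha_{\rsf G}(z)$, both in $\rsf G$, the cubic axiom $\Delta(y,x)\join x=y$ gives $\alpha_{\rsf G}(z)=z\join\beta_{\rsf G}(z)$, and then the cubic axiom $\Delta(y,\Delta(y,x))=x$ gives $\Delta(z\join\beta_{\rsf G}(z),z)=\beta_{\rsf G}(z)$; that is, $z\simeq\beta_{\rsf G}(z)$ in the sense of the definition. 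Since $\rsf G$ is a g-filter of the filter algebra $\mathcal N$ it is an implication lattice, so $z\meet\beta_{\rsf G}(z)$ exists, and corollary~\ref{cor:triv} forces $z=\beta_{\rsf G}(z)$. Now if $z\in\rsf F\cap\rsf G$ then $z\in\rsf F$ and $z=z\join z=z\join\beta_{\rsf G}(z)$, exhibiting $z$ as a member of the right-hand set.

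The only substantive point is this pointwise-identity claim $\beta_{\rsf G}\restrict\rsf G=\mathrm{id}$; everything else is bookkeeping with up-sets and with lemma~\ref{lem:interTwoFil}. I expect the one place a reader would want care is the appeal to corollary~\ref{cor:triv}, which needs the binary meet $z\meet\beta_{\rsf G}(z)$ to actually exist — that is where the fact that a g-filter of a filter algebra is an implication lattice gets used. An alternative proof of the claim runs through the identity $\Delta(x,x)=x$ together with the uniqueness of the $(\alpha_{\rsf G},\beta_{\rsf G})$-decomposition, but routing it through $\simeq$ keeps everything inside results already available.
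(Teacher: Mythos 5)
Your proof is correct and follows essentially the same route as the paper's: the right-hand set sits above elements of both filters (hence lies in $\rsf F\cap\rsf G$ by upward closure), and conversely any $z\in\rsf F\cap\rsf G$ equals $z\join\beta_{\rsf G}(z)$ because $\beta_{\rsf G}$ fixes $\rsf G$ pointwise. The only difference is that you spell out, via $z\simeq\beta_{\rsf G}(z)$ and \corref{cor:triv}, the fact $\beta_{\rsf G}\restrict\rsf G=\mathrm{id}$ that the paper simply asserts (it follows immediately from $z=\Delta(z,z)$ and the uniqueness of the decomposition), so no gap.
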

\begin{proof}
	The RHS set is clearly a subset of both $\rsf F$ and $\rsf G$. 
	
	And if $z\in\rsf F\cap\rsf G$ then $z=\beta_{\rsf G}(z)$ is in the RHS set. 
\end{proof}

\section{Twisted Filters}
Suppose that $\rsf F$ and $\rsf G$ are two g-filters for $\mathcal L$. 
Then we have an isomorphism from $\mathcal L$ to $\rsf I(\rsf F)$. 
This 
representation of $\mathcal L$ has a ``direction'' of decreasing 
dimension given by the embedding of $\rsf F$ into $\rsf I(\rsf F)$. 
This is perhaps more clearly seen by embedding further into $\rsf 
I(B_{\rsf F})$ where the vertices (in some sense) correspond to 
choosing a basis. It is interesting to see how $\rsf G$ is mapped 
across.

\begin{defn}\label{def:twistedFilter}\index{filter!twisted}
	Let $\mathcal L=\rsf I(B)$ be an interval algebra and $\rsf F\subseteq\mathcal L$
	be a filter. Then $\rsf F$  is \emph{twisted} iff 	
	there is no $b\in B$ such that $\rsf F\subseteq[[b, b], \mathbf 1]$. 
\end{defn}

\begin{defn}\label{def:unTwisting}
	Let $B_{1}, B_{2}$ be two Boolean algebras and $\rsf F$ a
	filter in $\rsf I(B_{1})$.  Let $e\colon\rsf I(B_{1})\to\rsf I(B_{2})$
	be a cubic embedding.  Then $\rsf F$ is \emph{untwisted along $e$} iff
	$e[\rsf F]$ is not twisted. 
\end{defn}

\begin{defn}\label{def:relTwisting}
	Let $\mathcal L$ be a cubic algebra and $\rsf F, \rsf G\subseteq\mathcal L$
	be two filters with $\rsf F\subseteq\leftGen\rsf G\rightGen$. 
	Then $\rsf F$  is \emph{twisted relative to $\rsf G$} iff 
	$\rsf F$ is twisted under the natural embedding 
	$\leftGen\rsf G\rightGen\simeq\rsf I(\rsf G)\to\rsf 
	I(B_{\rsf G})$. 
\end{defn}

The rest of this section provides a characterization of those filters 
that are untwisted relative to some fixed g-filter $\rsf F$. This is 
then used to show that this relation is an equivalence relation on 
the class of g-filters.  

\begin{thm}\label{thm:unTwist}
	Let $\rsf F$ and $\rsf G$ be two g-filters for $\mathcal L$. Then
	$\rsf F$ is untwisted relative to $\rsf G$ iff one of
	$\rsf F\cap\rsf G$ and $\Delta(\one, \rsf F)\cap\rsf G$ is
	principal. 
\end{thm}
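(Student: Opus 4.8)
The plan is to work through the $\rsf G$-presentation of $\mathcal L$, which identifies $\leftGen\rsf G\rightGen = \mathcal L$ with $\rsf I(\rsf G)$ and then sits inside $\rsf I(B_{\rsf G})$. Under this identification a filter $\rsf F$ is twisted relative to $\rsf G$ precisely when its image in $\rsf I(B_{\rsf G})$ is not contained in any $[[b,b],\mathbf 1]$, i.e.\ when the two ``coordinate projections'' of $\rsf F$ in $B_{\rsf G}$ do not admit a common point below which $\rsf F$ lives. So the first step is to compute, for $f\in\rsf F$, the explicit image $\brk<\Delta(\one,f)\join\beta_{\rsf G}(f),\, f\join\beta_{\rsf G}(f)>$ of $f$ under the $\rsf F$-presentation (\thmref{thm:isoFilAlg}), and then push it into $\rsf I(B_{\rsf G})$ via the map $\brk<a,b>\mapsto\brk<\brk<a,1>,\brk<b,1>>$. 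This gives a concrete description of $e[\rsf F]$ as a filter in an interval algebra whose underlying Boolean algebra is $B_{\rsf G}$.

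The heart of the argument is then to translate the existence of a point $b\in B_{\rsf G}$ with $e[\rsf F]\subseteq[[b,b],\mathbf 1]$ into a statement about $\rsf F\cap\rsf G$ and $\Delta(\one,\rsf F)\cap\rsf G$. The intuition: the filter $\rsf G$ itself maps, under its own presentation, into the ``top vertex'' side $[[\mathbf 1,\mathbf 1],\mathbf 1]$ — more precisely $\iota_{\rsf G}$ embeds $\rsf G$ as an ultrafilter of $B_{\rsf G}$ — and $\Delta(\one,\rsf G)$ lands on the ``complementary'' side. An element $f\in\rsf F$ has two coordinates in $B_{\rsf G}$; the first coordinate of all $f$ being bounded below by some fixed $\brk<c,1>$ corresponds to $\Delta(\one,\rsf F)\cap\rsf G$ containing (being generated by) a single element, while the second coordinate being bounded below by a fixed element corresponds to $\rsf F\cap\rsf G$ being principal. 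Since a point $b$ of $B_{\rsf G}$ is $\brk<c,1>$ or $\brk<c,0>$ for some $c\in\rsf G$, asking for $b$ with $e[\rsf F]\subseteq[[b,b],\mathbf1]$ splits into exactly these two cases, according to which ``side'' $b$ is on, and in each case the containment unwinds — using that $\rsf G$ is an ultrafilter of $B_{\rsf G}$, so every element of $B_{\rsf G}$ comparable to a coordinate of $\rsf F$ can be replaced by an honest element of $\rsf G$ — to principality of the corresponding intersection. I would use \lemref{lem:interTwoFilAA} to present $\rsf F\cap\rsf G$ as $\Set{f\join\beta_{\rsf G}(f) | f\in\rsf F}$, and the analogous description of $\Delta(\one,\rsf F)\cap\rsf G$, so that ``principal'' becomes ``the map $f\mapsto f\join\beta_{\rsf G}(f)$ attains a minimum'', which is directly what the coordinate bound gives.

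For the forward direction, assuming $\rsf F$ untwisted relative to $\rsf G$, I fix a witness $b\in B_{\rsf G}$, write $b=\brk<c,i>$, and read off from $e[\rsf F]\subseteq[[b,b],\mathbf1]$ a uniform lower bound on one of the two coordinate families; chasing this back through \thmref{thm:isoFilAlg} produces a least element of either $\rsf F\cap\rsf G$ or $\Delta(\one,\rsf F)\cap\rsf G$. For the converse, if say $\rsf F\cap\rsf G$ is principal with generator $p$, then $\beta_{\rsf G}(p)$ (which lies in $\rsf G$) provides the point $b=\brk<\beta_{\rsf G}(p),1>$ — or perhaps its complement, depending on orientation conventions — and one checks directly that every $e(f)$ lands in $[[b,b],\mathbf1]$; the case where $\Delta(\one,\rsf F)\cap\rsf G$ is principal is symmetric, essentially applying the first case after composing with $\Delta(\one,-)$, which swaps the two coordinates as the excerpt notes.

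The main obstacle I anticipate is bookkeeping the two coordinates and their interaction with the Boolean completion $B_{\rsf G}$: elements of $B_{\rsf G}$ are pairs $\brk<a,j>$ with $j\in\{0,1\}$ and the join/meet formulas are case-split, so I must be careful to track which side of $B_{\rsf G}$ a putative witness point $b$ lies on and to verify that comparability in $B_{\rsf G}$ between an element of $\rsf G$ and a coordinate of $\rsf F$ can always be witnessed inside $\rsf G$ itself (this uses that $\iota_{\rsf G}[\rsf G]$ is an ultrafilter, so its ``trace'' is well-behaved). Getting the orientation right — which intersection corresponds to $b$ on the $\rsf G$-side versus the $\Delta(\one,\rsf G)$-side — is where the only real subtlety lies; once that dictionary is set up, both implications should be short computations with the interval-algebra and $B_{\rsf F}$ formulas given in the excerpt.
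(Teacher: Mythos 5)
Your framework is the same as the paper's: pass to the $\rsf G$-presentation, land in $\rsf I(B_{\rsf G})$, observe that untwistedness means some atom $\brk<\comp b,b>$ with $b=\brk<c,1>$ or $b=\brk<c,0>$ lies below every $\phi(f)$, and split into two cases according to which side of $B_{\rsf G}$ the witness lies on. The converse direction as you sketch it is also essentially the paper's (the only unstated point is that for $f\in\rsf F$ the meet $p\meet f$ exists, so $p\join\Delta(\one,f)=\one$, which is what makes the \emph{first} coordinate inequality hold).

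There is, however, a genuine gap in your forward direction. You assert that the second coordinate of $\phi[\rsf F]$ being bounded below by $\brk<c,1>$ ``is directly'' principality of $\rsf F\cap\rsf G$, i.e.\ that the map $f\mapsto f\join\beta_{\rsf G}(f)$ attains a minimum. But the coordinate bound only says that $c\in\rsf G$ is a \emph{lower bound} for the filter $\rsf F\cap\rsf G$; a filter with a lower bound need not be principal unless that lower bound actually belongs to the filter, and nothing in the second coordinate alone forces $c\in\rsf F$. The paper closes exactly this hole by using the \emph{other} coordinate condition as well: setting $c'=\beta_{\rsf F}(c)$ and specializing the first-coordinate inequality to $f=c'$ yields $\Delta(\one,c')\join c=\one$, whence by the MR-axiom $c'\meet c$ exists, and since $c'\simeq c$, Corollary \ref{cor:triv} gives $c'=c$, so $c\in\rsf F\cap\rsf G$ and $[c,\one]=\rsf F\cap\rsf G$. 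Your dictionary, which pairs each coordinate independently with one of the two intersections, obscures the fact that in each case \emph{both} coordinate conditions on the single witness atom are needed to establish principality of the \emph{one} relevant intersection; the appeal to ``$\rsf G$ is an ultrafilter of $B_{\rsf G}$'' does not supply the missing membership $c\in\rsf F$. Without this step the forward implication does not go through.
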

\begin{proof}
	Suppose that $\rsf F$ is untwisted relative to $\rsf G$. Let 
	$\phi\colon\mathcal L\to \rsf I(B_{\rsf G})$ be the natural 
	embedding induced by the $\rsf G$-presentation of $\mathcal L$. 
	
	First we recall the definition of $\phi$ -- 
	$\mathcal L\simeq\rsf I({\rsf G})$ by 
	\begin{equation}
		x\mapsto\brk<\Delta(\one, x)\join\beta_{\rsf G}(x), 
		x\join\beta_{\rsf G}(x)>. 
		\label{eq:oneA}
	\end{equation}
	And $\rsf G$ embeds into $B_{\rsf G}$ by $g\mapsto\brk<g, 1>$, so 
	that 
	\begin{equation}
		\phi(x)=\brk<{\brk<\Delta(\one, x)\join\beta_{\rsf G}(x), 1>}, 
		{\brk<x\join\beta_{\rsf G}(x), 1>}>
		\label{eq:twoA}
	\end{equation}
	
	Atoms in $\rsf I(B_{\rsf G})$ are of the form $\brk<\comp a, a>$ 
	for $a\in B_{\rsf G}$. Thus $\phi[\rsf F]$ is untwisted iff there 
	is some $a\in B_{\rsf G}$ such that
	\begin{equation}
		\brk<\comp a, a>\le\phi(f)\qquad\text{ for all }f\in\rsf F. 
		\label{eq:three}
	\end{equation}
	
	Now such an $a$ is either $\brk<g, 1>$ or $\brk<g, 0>$ for some 
	$g\in\rsf G$ -- so we can rewrite \eqref{eq:three} as
	\begin{subequations}
		\begin{align}
			\brk<g, 0> & \le\brk<\Delta(\one, f)\join\beta_{\rsf G}(f), 1> 
			\label{eq:fourA}  \\
			\brk<g, 1> & \le\brk<f\join\beta_{\rsf G}(f), 1> \label{eq:fourB} 
		\end{align}
	\end{subequations}
	or
	\begin{subequations}
		\begin{align}
			\brk<g, 1> & \le\brk<\Delta(\one, f)\join\beta_{\rsf G}(f), 1> 
			\label{eq:fiveA}  \\
			\brk<g, 0> & \le\brk<f\join\beta_{\rsf G}(f), 1> \label{eq:fiveB} 
		\end{align}
	\end{subequations}
	We will only look at equations (4) -- which are assumed to hold for 
	all $f\in\rsf F$. 
	
	First we recall from \lemref{lem:interTwoFilAA} that $\rsf F\cap\rsf G=\Set{x\join\beta_{\rsf G}(x) | 
	x\in\rsf F}$. Thus \eqref{eq:fourB} implies $g\le z$ for all $z\in\rsf F\cap\rsf G$. 
	
	Now
	\eqref{eq:fourA} holds iff 
	\begin{align*}
		 & \brk<g, 0> \le\brk<\Delta(\one, f)\join\beta_{\rsf G}(f), 1>  \\
		\text{ iff } & \brk<g, 0> \join\brk<\Delta(\one, f)\join\beta_{\rsf G}(f), 1> =
		\brk<\Delta(\one, f)\join\beta_{\rsf G}(f), 1> \\
		\text{ iff } & \brk<g, 1> \to\brk<\Delta(\one, f)\join\beta_{\rsf G}(f), 1> =
		\brk<\Delta(\one, f)\join\beta_{\rsf G}(f), 1>  \\
		\text{ iff } & g \to(\Delta(\one, f)\join\beta_{\rsf G}(f))=
		\Delta(\one, f)\join\beta_{\rsf G}(f). 
	\end{align*}
	As this holds for all $f\in\rsf F$ it also holds for $g'=\beta_{\rsf 
	F}(g)$ and in this case we have 
	$g\to(\Delta(\one, g')\join g)=\Delta(\one, g')\join g$. 
	As $g\le \Delta(\one, g')\join g$ this entails 
	$\Delta(\one, g')\join g=\one$. Since $\mathcal L$ is an MR-algebra 
	we then have $g'\meet g$ exists. But $g'\simeq g$ so we have $g'=g$. 
	
	Hence $[g, \one]=\rsf F\cap\rsf G$. 
	
	The version of this argument using equations (5) is much the same, 
	except we have $\Delta(\one, g')=g$ at the end, making 
	$\rsf G\cap\Delta(\one, \rsf F)$ principal. 
	
	Conversely,  suppose that
	$\rsf F\cap\rsf G=[g, \one]$ is principal. 
	
	Then clearly we have $\brk<g, 1>\le\brk<f\join\beta_{\rsf G}(f), 1>$ 
	for all $f\in\rsf F$. 
	
	Also, for any $f\in\rsf F$ the meet $g\meet f$ exists so that
	$g\join\Delta(\one, f)=\one$. Hence 
	$g\join\Delta(\one, f)\join\beta_{\rsf G}(f)=1$ and so
	$g\to(\Delta(\one, f)\join\beta_{\rsf G}(f))=\Delta(\one, f)\join\beta_{\rsf G}(f)$. 
	Thus 
	$\brk<g, 0>\le\brk<\Delta(\one, f)\join\beta_{\rsf G}(f), 1>$. 
	
	Hence $\brk<{\brk<g, 0>}, {\brk<g, 1>}>\le\phi(f)$ for all $f\in\rsf F$. 
	
	If $\Delta(\one, \rsf F)\cap\rsf G=[g, \one]$ is principal then we 
	obtain
	$\brk<{\brk<g, 1>}, {\brk<g, 0>}>\le\phi(f)$ for all $f\in\rsf F$. 
\end{proof}

\begin{cor}\label{cor:relTwistingSymm}
	Let $\rsf F$ and $\rsf G$ be two g-filters for $\mathcal L$. Then
	$\rsf F$ is untwisted relative to $\rsf G$ iff
	$\rsf G$ is untwisted relative to $\rsf F$. 
\end{cor}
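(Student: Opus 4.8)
The plan is to derive the symmetry purely from the characterization in \thmref{thm:unTwist}, so that no new direct analysis of the embeddings is needed. By that theorem, $\rsf F$ is untwisted relative to $\rsf G$ iff one of $\rsf F\cap\rsf G$ and $\Delta(\one,\rsf F)\cap\rsf G$ is principal, and (symmetrically) $\rsf G$ is untwisted relative to $\rsf F$ iff one of $\rsf G\cap\rsf F$ and $\Delta(\one,\rsf G)\cap\rsf F$ is principal. Since $\rsf F\cap\rsf G=\rsf G\cap\rsf F$, the first alternative already matches on both sides, so the whole content of the corollary is to show that $\Delta(\one,\rsf F)\cap\rsf G$ is principal iff $\Delta(\one,\rsf G)\cap\rsf F$ is principal. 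I would prove this by showing both are principal exactly when a single, manifestly symmetric condition holds.

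First I would apply the inner automorphism $\varphi=\varphi_{\brk<\rsf F,\rsf G>}$ (or rather $\Delta(\one,-)$ itself) to transport the question. The key observation is that $\rsf F':=\Delta(\one,\rsf F)$ is again a g-filter for $\mathcal L$, because $\Delta(\one,-)$ is a cubic automorphism of $\mathcal L$ and hence carries g-filters to g-filters; likewise $\rsf G':=\Delta(\one,\rsf G)$ is a g-filter. Now $\Delta(\one,\rsf F)\cap\rsf G=\rsf F'\cap\rsf G$, and applying the automorphism $\Delta(\one,-)$ to this set gives $\Delta(\one,\rsf F'\cap\rsf G)=\rsf F\cap\rsf G'=\rsf F\cap\Delta(\one,\rsf G)$. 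A cubic automorphism is in particular an order isomorphism, hence a poset isomorphism of the filter lattice, so it maps principal filters to principal filters. Therefore $\Delta(\one,\rsf F)\cap\rsf G$ is principal iff $\rsf F\cap\Delta(\one,\rsf G)=\Delta(\one,\rsf G)\cap\rsf F$ is principal, which is exactly the second alternative in the characterization of ``$\rsf G$ untwisted relative to $\rsf F$''.

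Combining the two observations: $\rsf F$ is untwisted relative to $\rsf G$ iff ($\rsf F\cap\rsf G$ principal) or ($\Delta(\one,\rsf F)\cap\rsf G$ principal) iff ($\rsf G\cap\rsf F$ principal) or ($\Delta(\one,\rsf G)\cap\rsf F$ principal) iff $\rsf G$ is untwisted relative to $\rsf F$, and we are done. The one point that needs care — and I expect it to be the only genuine obstacle — is justifying that $\Delta(\one,-)$ really does act as claimed on filters and preserves principality: that $\Delta(\one,-)$ is an automorphism follows from axiom (c) of cubic algebras (it is an involution) together with axiom (d) for monotonicity, and the remark in the excerpt that $\Delta(\one,\brk<a,b>)=\brk<b,a>$ in interval algebras confirms it restricts correctly under any presentation; preservation of principal filters is then immediate since an order isomorphism sends $[x,\one]$ to $[\varphi(x),\one]$. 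I would state this as a one-line lemma or simply fold it into the proof, since the heavy lifting has already been done in \thmref{thm:unTwist}.
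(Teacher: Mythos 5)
Your proof is correct and follows the same route as the paper: both reduce the corollary to the characterization in \thmref{thm:unTwist} and to the observation that $\Delta(\one,\rsf F)\cap\rsf G$ is principal iff $\Delta(\one,\rsf G)\cap\rsf F$ is principal. The paper asserts that last equivalence without comment, whereas you justify it by transporting along the order-automorphism $\Delta(\one,-)$; that justification is sound and is exactly the missing detail.
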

\begin{proof}
	This follows directly from the theorem as 
	$\Delta(\one, \rsf F)\cap\rsf G$ is principal iff
	$\Delta(\one, \rsf G)\cap\rsf F$ is principal. 
\end{proof}

\begin{defn}\label{def:equivTwist}
	Let $\rsf F$ and $\rsf G$ be two g-filters for $\mathcal L$. Then
	$$
	\rsf F\sim\rsf G\text{ iff }\rsf F\text{ is untwisted relative 
	to }\rsf G. 
	$$
\end{defn}

So far we have that $\sim$ is reflexive and symmetric. Now we will 
show that it is also transitive. 

Let $\rsf F, \rsf G$ and $\rsf H$ be three g-filters for $\mathcal 
L$ and suppose that both $\rsf F$ and $\rsf G$ are untwisted relative 
to $\rsf H$. 

\begin{lem}\label{lem:tranUntwist}
	Suppose that $a\le\phi[\rsf F]$ in $\rsf I(B_{\rsf H})$. Then the 
	diagram
	\begin{diagram}
		\rsf F & \rTo^{\phi} & \rsf I(B_{\rsf H})  \\
		\dTo^{\beta_{\rsf H}} &  & \dTo_{f_{a\zero}}  \\
		\rsf H & \rTo_{\phi} & \rsf I(B_{\rsf H})
	\end{diagram}
	commutes. 
\end{lem}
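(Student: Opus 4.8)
The claim is pointwise in nature: the square commutes exactly when $f_{a\zero}(\phi(f))=\phi(\beta_{\rsf H}(f))$ for every $f\in\rsf F$, so the plan is to fix an arbitrary $f\in\rsf F$ and evaluate the two legs from the explicit $\rsf H$-presentation embedding $\phi$ (the $\rsf H$-analogue of \eqref{eq:twoA}):
\[
\phi(f)=\brk<{\brk<\Delta(\one,f)\join\beta_{\rsf H}(f),\,1>},\ {\brk<f\join\beta_{\rsf H}(f),\,1>}>.
\]
First I would abbreviate $p=\Delta(\one,f)\join\beta_{\rsf H}(f)$ and $q=f\join\beta_{\rsf H}(f)=\alpha_{\rsf H}(f)$; then $p,q\in\rsf H$, $p\join q=\one$, $p\meet q$ exists (since $\brk<p,q>$ is the image of $f$ in $\rsf I(\rsf H)$), and $\phi(f)=\brk<{\brk<p,1>},{\brk<q,1>}>$. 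On the lower leg, $\beta_{\rsf H}$ is the identity on $\rsf H$ and $\Delta(\one,h)\join h=\one$ by cubic axiom~(a), so $\phi(\beta_{\rsf H}(f))=\brk<\one_{B_{\rsf H}},\ {\brk<\beta_{\rsf H}(f),1>}>$. The preliminary observation I would record next is the identity $\beta_{\rsf H}(f)=p\meet q$: the cubic isomorphism $\mathcal L\simeq\rsf I(\rsf H)$ of \thmref{thm:isoFilAlg} matches $\rsf H$ with the canonical g-filter $\{\brk<1,h>\mid h\in\rsf H\}$, hence intertwines $\beta_{\rsf H}$ with the canonical vertex map $\brk<p,q>\mapsto\brk<1,p\meet q>$ of $\rsf I(\rsf H)$; thus $\phi(\beta_{\rsf H}(f))=\brk<\one_{B_{\rsf H}},\ {\brk<p\meet q,1>}>$.

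Then the hypothesis enters. The assumption $a\le\phi[\rsf F]$ says the single vertex $a$ of $\rsf I(B_{\rsf H})$ lies below $\phi(f)$ for \emph{every} $f\in\rsf F$; reading this off on coordinates in $B_{\rsf H}$, exactly as in the chain of equivalences in \thmref{thm:unTwist}, it means that in $\phi(f)=\brk<{\brk<p,1>},{\brk<q,1>}>$ the first coordinate already dominates the first coordinate of $a$ and the second dominates the second. If one wants a name for $a$, one may take $a=\brk<{\brk<g,0>},{\brk<g,1>}>$ with $[g,\one]=\rsf F\cap\rsf H$, the remaining possibility being that $\Delta(\one,\rsf F)\cap\rsf H$ is the principal filter and $a$ its antipodal vertex, which is handled by the mirror‑image computation. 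I would then unwind the definition of $f_{a\zero}$ and verify that, on an element sitting above $a$ in this coordinatewise sense, it normalises the first coordinate to $\one_{B_{\rsf H}}$ and leaves the $B_{\rsf H}$‑meet of the two coordinates in the second slot; applied to $\phi(f)$ this yields
\[
f_{a\zero}(\phi(f))=\brk<\one_{B_{\rsf H}},\ {\brk<p,1>}\meet{\brk<q,1>}>=\brk<\one_{B_{\rsf H}},\ {\brk<p\meet q,1>}>,
\]
which by the preliminary identity is $\phi(\beta_{\rsf H}(f))$. Since $f\in\rsf F$ was arbitrary, the diagram commutes.

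The parts I regard as routine are the distributive‑lattice manipulations inside $\rsf H$ and inside the Boolean algebra $B_{\rsf H}$, together with keeping straight the two nested maps $\mathcal L\simeq\rsf I(\rsf H)\hookrightarrow\rsf I(B_{\rsf H})$. The step I expect to be the real work is marrying the abstract definition of $f_{a\zero}$ to the concrete element $\phi(f)$: one must check that the condition $a\le\phi(f)$ genuinely places $\phi(f)$ in the regime where $f_{a\zero}$ acts by that simple coordinate rule, and that this holds \emph{uniformly} over all $f\in\rsf F$ — which is precisely what having one vertex $a$ below all of $\phi[\rsf F]$ provides, and this is the only place the hypothesis (equivalently, untwistedness of $\rsf F$ relative to $\rsf H$) is really consumed. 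A secondary point needing care is the identity $\beta_{\rsf H}(f)=p\meet q$, which rests on how the $\rsf H$-presentation interacts with the vertex maps $\alpha_{\rsf H},\beta_{\rsf H}$.
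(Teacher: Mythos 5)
Your proposal is correct and follows essentially the same route as the paper: both verify the square pointwise by writing $\phi(f)=\brk<{\brk<\Delta(\one,f)\join\beta_{\rsf H}(f),1>},{\brk<f\join\beta_{\rsf H}(f),1>}>$, using the hypothesis $a\le\phi(f)$ only to put $\phi(f)$ in the regime where $f_{a\zero}$ acts by the coordinate rule (first slot to $\one$, Boolean meet of the two slots in the second), and then invoking the identity $(f\join\beta_{\rsf H}(f))\meet(\Delta(\one,f)\join\beta_{\rsf H}(f))=\beta_{\rsf H}(f)$. The only cosmetic difference is that you justify that identity via the vertex map of $\rsf I(\rsf H)$, while the paper asserts it directly from the $\preccurlyeq$-characterisation.
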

\begin{proof}
	First we check how $\phi$ acts upon $\rsf H$. 
	We have, for any $h\in\rsf H$
	\begin{diagram}
		h & \rMapsto & \brk<\Delta(\one, h)\join h, h\join h>=\brk<\one, 
		h> & \rMapsto & \brk<{\brk<1, 1>}, {\brk<h, 1>}>.   
	\end{diagram}
	We also recall that
	$$
	\begin{array}{c!{\text{ is }}l}
		0\text{ in }B_{\rsf H} & \brk<1, 0>  \\
		1\text{ in }B_{\rsf H} & \brk<1, 1>  \\
		\zero\text{ in }\rsf I(B_{\rsf H}) &  \brk<{\brk<1, 1>}, {\brk<1, 0>}> \\
		\one\text{ in }\rsf I(B_{\rsf H}) &  \brk<{\brk<1, 1>}, {\brk<1, 1>}>. 
	\end{array}
	$$
	Now let $f\in\rsf F$. Then
	\begin{align*}
		\phi(f) & =\brk<{\brk<\Delta(\one, f)\join\beta_{\rsf H}(f), 1>}, 
		{\brk<f\join\beta_{\rsf H}(f), 1>}>  \\
		f_{a\zero}(\phi(f)) & = (\phi(f)\join\zero)\meet(\Delta(\one, 
		\phi(f))\join\zero) \\
		\phi(f)\join\zero & = \brk<{\brk<\Delta(\one, f)\join\beta_{\rsf H}(f), 1>}, 
		{\brk<f\join\beta_{\rsf H}(f), 1>}>\join 
		\brk<{\brk<1, 1>}, {\brk<1, 0>}> \\
		 & = \brk<{\brk<\Delta(\one, f)\join\beta_{\rsf H}(f), 
		 1>\join\brk<1, 1>}, 
		{\brk<f\join\beta_{\rsf H}(f), 1>\join\brk<1, 0>}> \\
		 & = \brk<{\brk<1, 1>}, 
		{\brk<f\join\beta_{\rsf H}(f), 1>}>.  \\
		\Delta(\one, 
		\phi(f))\join\zero & =
		\brk<{\brk<f\join\beta_{\rsf H}(f), 1>}, 
		{\brk<\Delta(\one, f)\join\beta_{\rsf H}(f), 1>}>\join 
		\brk<{\brk<1, 1>}, {\brk<1, 0>}> \\
		 &  =
		\brk<{\brk<f\join\beta_{\rsf H}(f), 1>\join\brk<1, 1>}, 
		{\brk<\Delta(\one, f)\join\beta_{\rsf H}(f), 1>\join\brk<1, 0>}>\\
		 & = \brk<{\brk<1, 1>}, 
		{\brk<\Delta(\one, f)\join\beta_{\rsf H}(f), 1>}>.  \\
		\intertext{Thus }
		f_{a\zero}(\phi(f)) & = (\phi(f)\join\zero)\meet(\Delta(\one, 
		\phi(f))\join\zero) \\
		 & =\brk<{\brk<1, 1>}, {\brk<f\join\beta_{\rsf H}(f), 1>}>\meet
		 \brk<{\brk<1, 1>}, {\brk<\Delta(\one, f)\join\beta_{\rsf H}(f), 
		 1>}>\\
		 &= \brk<{\brk<1, 1>}, {\brk<f\join\beta_{\rsf H}(f), 1>\meet 
		 \brk<\Delta(\one, f)\join\beta_{\rsf H}(f), 1>}>\\
		 &= \brk<{\brk<1, 1>}, {\brk<(f\join\beta_{\rsf H}(f))\meet
		 (\Delta(\one, f)\join\beta_{\rsf H}(f)), 1>}>\\
		 &= \brk<{\brk<1, 1>}, {\brk<\beta_{\rsf H}(f), 1>}>\\
		 &= \phi(\beta_{\rsf H}(f)). 
	\end{align*}
\end{proof}

\begin{cor}\label{cor:ultraFF}
	$\phi[\rsf F]$ is an ultrafilter in $[a, \one]$. 
\end{cor}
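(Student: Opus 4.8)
The plan is to read the statement off \lemref{lem:tranUntwist} together with a concrete description, inside the interval algebra $\rsf I(B_{\rsf H})$, of the intervals $[a,\one]$ and $[\zero,\one]$ and of the map $f_{a\zero}$. The first thing to record is that, since $a$ and $\zero$ are \emph{atoms} of $\rsf I(B_{\rsf H})$, each of $[a,\one]$ and $[\zero,\one]$ is a Boolean algebra: in an interval algebra $\rsf I(B)$ the interval above the atom $\brk<\comp c,c>$ is $\Set{\brk<x,y> | \comp c\le x,\ c\le y}$, and $\brk<x,y>\mapsto(x\meet c,\,y\meet\comp c)$ identifies it with $[0,c]\times[0,\comp c]\cong B$; in particular $[\zero,\one]=\Set{\brk<1,z> | z\in B_{\rsf H}}$ is identified with $B_{\rsf H}$ via $\brk<1,z>\mapsto z$. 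So ``ultrafilter of $[a,\one]$'' is meaningful.

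Next I would pin down $f_{a\zero}$, which by \lemref{lem:tranUntwist} is the map $z\mapsto(z\join\zero)\meet(\Delta(\one,z)\join\zero)$. A computation in $\rsf I(B_{\rsf H})$ of exactly the kind already carried out in that proof shows that $f_{a\zero}$ is an order preserving surjection of $\rsf I(B_{\rsf H})$ onto $[\zero,\one]$ which restricts to an order isomorphism $[a,\one]\to[\zero,\one]$ (concretely, $f_{a\zero}(\brk<x,y>)=\brk<1,x\meet y>$, and writing an element of $[a,\one]$ via the product decomposition above one gets a bijective, order preserving, order reflecting correspondence with $[\zero,\one]$; alternatively these facts can be quoted from the earlier treatment of the canonical projection carrying the atom $a$ to the atom $\zero$). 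I also note, again from the computation of $\phi$ on $\rsf H$ in that proof, that $\phi(h)=\brk<{\brk<1,1>},{\brk<h,1>}>$, so $\phi[\rsf H]\subseteq[\zero,\one]$, and under the identification $[\zero,\one]\cong B_{\rsf H}$ it corresponds to $\iota_{\rsf H}[\rsf H]=\Set{\brk<h,1> | h\in\rsf H}$, which is an ultrafilter of $B_{\rsf H}$; hence $\phi[\rsf H]$ is an ultrafilter of $[\zero,\one]$.

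Then the conclusion is bookkeeping. By the choice of $a$ we have $a\le\phi(f)$ for all $f\in\rsf F$, so $\phi[\rsf F]\subseteq[a,\one]$. \lemref{lem:tranUntwist} gives $f_{a\zero}\circ\phi=\phi\circ\beta_{\rsf H}$ on $\rsf F$, and since $\beta_{\rsf H}\restrict\rsf F\colon\rsf F\to\rsf H$ is a bijection this says $f_{a\zero}[\phi[\rsf F]]=\phi[\rsf H]$. As $f_{a\zero}\restrict[a,\one]$ is injective, $\phi[\rsf F]$ is precisely the preimage of $\phi[\rsf H]$ under the order isomorphism $f_{a\zero}\restrict[a,\one]\colon[a,\one]\to[\zero,\one]$; and the preimage of an ultrafilter under an order isomorphism of Boolean algebras is again an ultrafilter, so $\phi[\rsf F]$ is an ultrafilter of $[a,\one]$ (properness is inherited, since it corresponds to the proper ultrafilter $\iota_{\rsf H}[\rsf H]$ of $B_{\rsf H}$).

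The one step with real content is identifying $f_{a\zero}$ as an order isomorphism $[a,\one]\to[\zero,\one]$ (equivalently, recognizing $[a,\one]$ and $[\zero,\one]$ as Boolean algebras on which $f_{a\zero}$ acts invertibly); everything else is immediate from the choice of $a$, from \lemref{lem:tranUntwist} plus surjectivity of $\beta_{\rsf H}$, and from the fact that $\rsf H$ sits in $B_{\rsf H}$ as an ultrafilter.
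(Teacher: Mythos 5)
Your argument is correct and is essentially the paper's own proof: the paper likewise observes that $\beta_{\rsf H}$ maps $\rsf F$ onto $\rsf H$, that $\phi[\rsf H]$ is an ultrafilter of $[\zero,\one]$, and that the commuting square of \lemref{lem:tranUntwist} together with the Boolean isomorphism between $[\zero,\one]$ and $[a,\one]$ (you use $f_{a\zero}\restrict[a,\one]$, the paper its inverse $f_{\zero a}$) transports that ultrafilter to $\phi[\rsf F]$. The extra detail you supply (the explicit formula $f_{a\zero}(\brk<x,y>)=\brk<1,x\meet y>$ and the identification $[\zero,\one]\cong B_{\rsf H}$) is consistent with the computations already in the proof of that lemma.
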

\begin{proof}
	Since we know that $\Set{\beta_{\rsf H}(f) | f\in\rsf F}=\rsf H$, 
	$\phi[\rsf H]$ is an ultrafilter in $[\zero, \one]$ and that
	$f_{\zero a}\colon[\zero, \one]\to[a, \one]$ is a Boolean isomorphism, 
	we know that 
	$\phi[\rsf F]= f_{\zero a}[\phi[\beta_{\rsf H}[\rsf F]]]= 
	f_{\zero a}[\phi[\rsf H]]$ is also an ultrafilter. 
\end{proof}

We have an implication isomorphism $\beta_{\rsf F}\colon\rsf H\to\rsf 
F$. This extends to a Boolean isomorphism 
$\psi\colon B_{\rsf H}\to B_{\rsf F}$ by $\brk<h, 
i>\mapsto\brk<\beta_{\rsf F}(h), i>$. 

\begin{lem}\label{lem:equalMappings}
	The mapping
	\begin{diagram}
		\rsf F & \rTo^{\phi_{\rsf H}} & \rsf I(B_{\rsf H})  &
		\rTo^{f_{a\zero}} & \rsf I(B_{\rsf H}) & \rTo^{\rsf I(\psi)} & \rsf 
		I(B_{\rsf F})
	\end{diagram}
	is equal to $\phi_{\rsf F}\colon\rsf F\to\rsf I(B_{\rsf F})$. 
\end{lem}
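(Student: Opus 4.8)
The plan is to compute both composite maps explicitly on an arbitrary $f\in\rsf F$ and check they agree. We already know from \lemref{lem:tranUntwist} that the composite of $\phi_{\rsf H}$ with $f_{a\zero}$ sends $f$ to $\phi_{\rsf H}(\beta_{\rsf H}(f))=\brk<{\brk<1,1>},{\brk<\beta_{\rsf H}(f),1>}>$, an element sitting inside $\phi_{\rsf H}[\rsf H]$. So the first step is to apply $\rsf I(\psi)$ to this: since $\psi\colon B_{\rsf H}\to B_{\rsf F}$ is $\brk<h,i>\mapsto\brk<\beta_{\rsf F}(h),i>$, and $\rsf I(\psi)$ acts coordinatewise on pairs, we get
$$
\rsf I(\psi)\bigl(\brk<{\brk<1,1>},{\brk<\beta_{\rsf H}(f),1>}>\bigr)
=\brk<{\brk<1,1>},{\brk<\beta_{\rsf F}(\beta_{\rsf H}(f)),1>}>.
$$
So the left-hand composite is $f\mapsto\brk<{\brk<1,1>},{\brk<\beta_{\rsf F}\beta_{\rsf H}(f),1>}>$.

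Next I would write out $\phi_{\rsf F}(f)$ from its definition: using the $\rsf F$-presentation $x\mapsto\brk<\Delta(\one,x)\join\beta_{\rsf F}(x),x\join\beta_{\rsf F}(x)>$ followed by the embedding $\rsf F\hookrightarrow B_{\rsf F}$, $g\mapsto\brk<g,1>$, we have
$$
\phi_{\rsf F}(f)=\brk<{\brk<\Delta(\one,f)\join\beta_{\rsf F}(f),1>},{\brk<f\join\beta_{\rsf F}(f),1>}>.
$$
Now the key point is that $f\in\rsf F$, so $\beta_{\rsf F}(f)=f$ and $\alpha_{\rsf F}(f)=f$, whence $\Delta(\one,f)\join\beta_{\rsf F}(f)=\Delta(\one,f)\join f=\one$ (the complement of a vertex of the generating filter, joined with it, is the top) and $f\join\beta_{\rsf F}(f)=f$. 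Thus $\phi_{\rsf F}(f)=\brk<{\brk<1,1>},{\brk<f,1>}>$. So it remains to verify that $\beta_{\rsf F}\beta_{\rsf H}(f)=f$ for all $f\in\rsf F$; but $\beta_{\rsf H}\colon\rsf F\to\rsf H$ and $\beta_{\rsf F}\colon\rsf H\to\rsf F$ are the mutually inverse implication isomorphisms coming from the fact that $\rsf F$ and $\rsf H$ are both g-filters (the composite $\beta_{\rsf F}\circ\beta_{\rsf H}$ fixes $\rsf F$ pointwise because $\beta_{\rsf F}(h)$ is by construction the unique element of $\rsf F$ with $h=\Delta(\alpha,\beta_{\rsf F}(h))$, and applying $\beta_{\rsf H}$ then $\beta_{\rsf F}$ returns the original). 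Hence both composites equal $f\mapsto\brk<{\brk<1,1>},{\brk<f,1>}>$, proving the lemma.

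The main obstacle I anticipate is bookkeeping rather than mathematical depth: I must make sure that applying $\rsf I(\psi)$ commutes past $f_{a\zero}$ correctly, i.e. that $\rsf I(\psi)$ carries the zero of $\rsf I(B_{\rsf H})$ to the zero of $\rsf I(B_{\rsf F})$ and respects joins and meets, so that the element $\brk<{\brk<1,1>},{\brk<\beta_{\rsf H}(f),1>}>$ produced in \lemref{lem:tranUntwist} is indeed what gets transported. This is immediate since $\psi$ is a Boolean isomorphism and $\rsf I(-)$ is functorial on cubic embeddings, but it is worth a sentence. The only genuinely substantive ingredient is the identity $\beta_{\rsf F}\circ\beta_{\rsf H}=\mathrm{id}_{\rsf F}$, which I would cite from the discussion following \thmref{thm:isoFilAlg} (equivalently \cite{BO:cubFil}, theorem 4.29) where it is shown that the $\beta$-maps between g-filters are inverse isomorphisms.
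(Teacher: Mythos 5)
Your proposal is correct and follows essentially the same route as the paper's proof: apply \lemref{lem:tranUntwist} to rewrite $f_{a\zero}\circ\phi_{\rsf H}$ as $\phi_{\rsf H}\circ\beta_{\rsf H}$, push the result through $\rsf I(\psi)$ coordinatewise, and conclude via $\beta_{\rsf F}\beta_{\rsf H}=\mathrm{id}_{\rsf F}$. The only difference is that you spell out the computation of $\phi_{\rsf F}(f)=\brk<{\brk<1,1>},{\brk<f,1>}>$ and the inverse-isomorphism identity, both of which the paper simply recalls.
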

\begin{proof}
	Recall that $\phi_{\rsf F}(f)=\brk<{\brk<1, 1>}, {\brk<f, 1>}>$ for 
	all $f\in\rsf F$. So we have 
	\begin{diagram}
		\rsf F & \rTo^{\phi_{\rsf H}} & \rsf I(B_{\rsf H}) &  &   \\
		\dTo^{\beta_{\rsf H}} &  & \dTo_{f_{a\zero}} &  &   \\
		\rsf H & \rTo_{\phi_{\rsf H}} & \rsf I(B_{\rsf H}) & \rTo_{\rsf I(\psi)}
		& \rsf I(B_{\rsf F}) 
	\end{diagram}
	so that any $f\in\rsf F$ is sent to
	\begin{align*}
		\rsf I(\psi)\circ f_{a\zero}\circ\phi_{\rsf H}(f) & =
		\rsf I(\psi)\circ\phi_{\rsf H}\circ\beta_{\rsf H}(f)\\
		& =\rsf I(\psi)(\brk<{\brk<1, 1>}, {\brk<\beta_{\rsf H}(f), 1>}>)  \\
		 & =\brk<{\brk<1, 1>, \brk<\beta_{\rsf F}\beta_{\rsf H}(f), 1>}>  \\
		 & =\brk<{\brk<1, 1>, \brk<f, 1>}>  \\
		 & =\phi_{\rsf F}(f). 
	\end{align*}
\end{proof}

\begin{thm}\label{thm:transTwist}
	Let $\rsf F, \rsf G$ and $\rsf H$ be three g-filters for $\mathcal 
	L$ and suppose that both $\rsf F$ and $\rsf G$ are untwisted relative 
	to $\rsf H$. Then $\rsf G$ is untwisted relative to $\rsf F$. 
\end{thm}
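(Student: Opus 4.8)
The plan is to transport the whole configuration into the interval algebra $\rsf I(B_{\rsf H})$ through the $\rsf H$-presentation, move it around there, and then read the conclusion back off through the $\rsf F$-presentation. Write $\phi_{\rsf H}\colon\mathcal L\to\rsf I(B_{\rsf H})$ and $\phi_{\rsf F}\colon\mathcal L\to\rsf I(B_{\rsf F})$ for the natural embeddings induced by the $\rsf H$- and $\rsf F$-presentations ($\phi_{\rsf H}$ is the map called $\phi$ in \lemref{lem:tranUntwist} and \corref{cor:ultraFF}). From the hypotheses, \defref{def:relTwisting} together with \defref{def:twistedFilter} supplies atoms $a,b$ of $\rsf I(B_{\rsf H})$ with $\phi_{\rsf H}[\rsf F]\subseteq[a,\one]$ and $\phi_{\rsf H}[\rsf G]\subseteq[b,\one]$ (by \corref{cor:ultraFF} these are even ultrafilters of those intervals, but only the inclusions are needed); here $a$ is the atom of \lemref{lem:tranUntwist}, and I let $\psi\colon B_{\rsf H}\to B_{\rsf F}$ be the Boolean isomorphism of \lemref{lem:equalMappings}, extending $\beta_{\rsf F}\colon\rsf H\to\rsf F$.

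The central step is to upgrade \lemref{lem:equalMappings} from the generating filter $\rsf F$ to all of $\mathcal L$. Set $\Theta:=\rsf I(\psi)\circ f_{a\zero}$; this is a cubic isomorphism of $\rsf I(B_{\rsf H})$ onto $\rsf I(B_{\rsf F})$, since the functor $\rsf I$ turns the Boolean isomorphism $\psi$ into a cubic isomorphism and $f_{a\zero}$ is a cubic automorphism of $\rsf I(B_{\rsf H})$ ($a$ and $\zero$ being atoms). Hence $\Theta\circ\phi_{\rsf H}$ is a cubic embedding $\mathcal L\to\rsf I(B_{\rsf F})$, and by \lemref{lem:equalMappings} it agrees with $\phi_{\rsf F}$ on $\rsf F$. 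Because $\rsf F$ is a g-filter, \thmref{thm:GenFil} gives $\mathcal L=\widehat{\rsf F}$, so every $x\in\mathcal L$ equals $\Delta(u,v)$ for some $u,v\in\rsf F$ with $v\le u$; as cubic homomorphisms preserve $\le$ and $\Delta$, two of them that agree on $\rsf F$ agree on all of $\mathcal L$, and therefore $\phi_{\rsf F}=\Theta\circ\phi_{\rsf H}$ on $\mathcal L$.

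I would then apply this to $\rsf G$: $\phi_{\rsf F}[\rsf G]=\Theta\bigl[\phi_{\rsf H}[\rsf G]\bigr]\subseteq\Theta\bigl[[b,\one]\bigr]$. Being a cubic isomorphism, $\Theta$ sends the atom $b$ to an atom $\Theta(b)$ of $\rsf I(B_{\rsf F})$ and, being an order isomorphism with $\Theta(\one)=\one$, sends $[b,\one]$ onto $[\Theta(b),\one]$; hence $\phi_{\rsf F}[\rsf G]\subseteq[\Theta(b),\one]$, so $\phi_{\rsf F}[\rsf G]$ is not twisted. Since $\phi_{\rsf F}$ is precisely the natural embedding $\leftGen\rsf F\rightGen\simeq\rsf I(\rsf F)\to\rsf I(B_{\rsf F})$ of \defref{def:relTwisting} and $\rsf G\subseteq\mathcal L=\leftGen\rsf F\rightGen$, this is exactly the assertion that $\rsf G$ is untwisted relative to $\rsf F$.

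The step I expect to be the main obstacle is the claim, in the central step, that $f_{a\zero}$ is a cubic automorphism of the whole of $\rsf I(B_{\rsf H})$: the folding formula $x\mapsto(x\join\zero)\meet(\Delta(\one,x)\join\zero)$ used in \lemref{lem:tranUntwist} is a cubic homomorphism only on the interval $[a,\one]$. The clean way around this is to take $f_{a\zero}$ to be the inner automorphism of $\rsf I(B_{\rsf H})$ carrying the atom $a$ to $\zero$ and to check that it agrees with that formula on $[a,\one]$ --- hence on $\phi_{\rsf H}[\rsf F]$, so that \lemref{lem:equalMappings} goes through verbatim. Once this is settled, the remaining ingredients --- that a cubic isomorphism carries atoms to atoms and intervals $[v,\one]$ to intervals $[v',\one]$, and that a cubic homomorphism is determined by its restriction to a g-filter --- are routine.
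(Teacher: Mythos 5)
Your proposal is correct and follows essentially the same route as the paper's (very terse) proof: push everything into $\rsf I(B_{\rsf H})$, use \lemref{lem:equalMappings} to identify $\rsf I(\psi)\circ f_{a\zero}\circ\phi_{\rsf H}$ with $\phi_{\rsf F}$, and transport the atom below $\phi_{\rsf H}[\rsf G]$ to an atom below $\phi_{\rsf F}[\rsf G]$. The two points you add --- extending the agreement of the two embeddings from $\rsf F$ to all of $\mathcal L$ via $\mathcal L=\widehat{\rsf F}$, and reading $f_{a\zero}$ as a global inner automorphism rather than just the folding formula on $[a,\one]$ --- are exactly the details the paper leaves implicit, and you handle them correctly.
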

\begin{proof}
	Suppose that $b\in\rsf I(B_{\rsf H})$ is such that
	$b\le\phi[\rsf G]$. Then we have 
	\begin{align*}
		\phi_{\rsf F}[\rsf G] & = \rsf I(\psi)\circ f_{a\zero}\circ\phi_{\rsf 
		H}[\rsf G] \\
		 & \geq \rsf I(\psi)\circ f_{a\zero}(b). 
	\end{align*}
\end{proof}

\begin{cor}\label{cor:transTwist}
	Let $\rsf F$ be untwisted relative to $\rsf G$ and 
	$\rsf G$ be untwisted relative to $\rsf H$. Then
	$\rsf F$ is untwisted relative to $\rsf H$.
\end{cor}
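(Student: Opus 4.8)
The plan is to deduce this corollary formally from \thmref{thm:transTwist} and the symmetry of the relation recorded in \corref{cor:relTwistingSymm}, so that no further computation in interval algebras is needed; \thmref{thm:transTwist} has already absorbed all of the geometric content, and for g-filters $\leftGen\rsf F\rightGen=\leftGen\rsf G\rightGen=\leftGen\rsf H\rightGen=\mathcal L$, so every instance of ``untwisted relative to'' below is defined.

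First I would rewrite the hypotheses in the phrasing ``$\rsf X$ is untwisted relative to $\rsf Y$'': we are given that $\rsf F$ is untwisted relative to $\rsf G$ and that $\rsf G$ is untwisted relative to $\rsf H$. Applying \corref{cor:relTwistingSymm} to the latter gives that $\rsf H$ is untwisted relative to $\rsf G$. Hence both $\rsf H$ and $\rsf F$ are untwisted relative to the single g-filter $\rsf G$.

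Now I would apply \thmref{thm:transTwist} with $\rsf G$ in the role of the common filter (the ``$\rsf H$'' of that statement), $\rsf H$ in the role of its ``$\rsf F$'', and $\rsf F$ in the role of its ``$\rsf G$''. The hypotheses of the theorem are exactly the two statements just obtained, so its conclusion gives that $\rsf F$ is untwisted relative to $\rsf H$, which is the claim. By \defref{def:equivTwist} this is precisely the transitivity of $\sim$; together with the reflexivity and symmetry already noted after \defref{def:equivTwist}, it follows that $\sim$ is an equivalence relation on the class of g-filters for $\mathcal L$.

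The only thing requiring care — and it is the ``main obstacle'' only in a trivial sense — is matching the three filters to the role-slots of \thmref{thm:transTwist}, whose statement is not literally symmetric in its arguments. In particular the superficially plausible chain $\rsf F\sim\rsf G\sim\rsf H$ does not meet the theorem's hypotheses directly; it is the single application of \corref{cor:relTwistingSymm} to the pair $\rsf G,\rsf H$ that brings $\rsf F$ and $\rsf H$ into the required ``both untwisted relative to $\rsf G$'' configuration.
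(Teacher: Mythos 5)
Your proof is correct and is essentially the paper's own argument: the paper's one-line proof ("since both $\rsf F$ and $\rsf H$ are untwisted relative to $\rsf G$ we can apply the theorem") implicitly performs exactly the symmetry step via \corref{cor:relTwistingSymm} and the role-matching in \thmref{thm:transTwist} that you spell out. No further comment is needed.
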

\begin{proof}
	Since both $\rsf F$ and $\rsf H$ are untwisted relative to $\rsf G$ 
	we can apply the theorem to get the result. 
\end{proof}

\begin{comment}
\section{The endomorphisms $\alpha$}
We earlier mentioned that if $\rsf F$ is a 
g-filter then the mapping $\alpha_{\rsf F}$ restricted to any 
implication subalgebra is an implication homomorphism. In particular 
this is so for any other g-filter $\rsf G$. Let us call this 
mapping $\alpha_{(\rsf G\rsf F)}$. 
\begin{lem}\label{lem:rngAlpha}
	The range of $\alpha_{(\rsf G\rsf F)}$ is exactly $\rsf G\cap\rsf F$. 
\end{lem}
\begin{proof}
	Let $x\in\rsf G$. Then we have $x=\Delta(\alpha_{\rsf F}(x), 
	\beta_{\rsf F}(x))$ and so $x\le\alpha_{\rsf F}(x)$. Thus 
	$\alpha_{(\rsf G\rsf F)}(x)\in\rsf G\cap\rsf F$. 
	Conversely if $x\in\rsf G\cap\rsf F$ then $x=\Delta(x, x)$ and so 
	(as $x\in\rsf F$) we have $\alpha_{\rsf F}(x)= \beta_{\rsf F}(x)=x$. 
\end{proof}

\begin{lem}\label{lem:shAlpha}
	The shell of $\alpha_{(\rsf G\rsf F)}$ is $\rsf G\cap\Delta(\one, 
	\rsf F)$. 
\end{lem}
\begin{proof}
	Let $\alpha_{\rsf F}(x)=\one$. Then $x=\Delta(\one, \beta_{\rsf F}(x))$ 
	is in $\Delta(\one, \rsf F)$ and in $\rsf G$. Conversely 
	if $x\in\rsf G\cap\Delta(\one, \rsf F)$ then $\Delta(\one, x)\in\rsf 
	F$,  $\one\in\rsf F$ and $x=\Delta(\one, \Delta(\one, x))$. Thus
	$\alpha_{\rsf F}(x)=\one$ and $\beta_{\rsf F}(x)=\Delta(\one, x)$. 
\end{proof}

Given the results of the last section we might expect this to be 
quite important. It is -- but we will take an indirect route to 
showing it. 
\end{comment}

\section{Connecting filters}
In this section we want to show that filters that are untwisted 
relative to one another satisfy another rather simple relation 
definable from $\Delta$. This gives us an easy method of producing all g-filters that 
are untwisted relative to some fixed g-filter $\rsf F$. 

Let $\rsf F$ be a filter in an MR-algebra $\mathcal L$ and let
$g\in\mathcal L$. 
\begin{lem}\label{lem:gFF}
	The set
	$$
	\rsf F_{g}=\Set{\Delta(g\join f, f) | f\in\rsf F}
	$$
	has fip and is upwards closed. 
\end{lem}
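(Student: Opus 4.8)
The plan is to understand the map $f\mapsto\Delta(g\join f,f)$ through two elementary facts and then read off both assertions. The first fact is that $\Delta(g\join f,f)\simeq f$ for every $f$: axiom~(a) gives $\Delta(g\join f,f)\join f=g\join f$ and axiom~(c) gives $\Delta(g\join f,\Delta(g\join f,f))=f$, so $\Delta(\Delta(g\join f,f)\join f,\Delta(g\join f,f))=f$, which is precisely $\Delta(g\join f,f)\simeq f$. The second fact is monotonicity: if $f\le f'$ then $\Delta(g\join f,f)\le\Delta(g\join f',f')$. Granting these, the finite intersection property is essentially immediate, and upward closure will follow from an explicit witness.

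For the finite intersection property, recall that a filter is downward directed, so finitely many $f_{1},\dots,f_{n}\in\rsf F$ have a common lower bound $f_{0}\in\rsf F$. By monotonicity $\Delta(g\join f_{0},f_{0})\le\Delta(g\join f_{i},f_{i})$ for each $i$, so $\Delta(g\join f_{0},f_{0})$ is a common lower bound of the corresponding elements of $\rsf F_{g}$; and since $\mathcal L$ is an implication algebra, any finite set of elements with a common lower bound $c$ in fact has a meet, computed inside the Boolean algebra $[c,\one]$ and easily seen to be the global infimum (if $e$ is any lower bound of the set then $e\le e\join c\le$ that meet). Hence every finite subset of $\rsf F_{g}$ has a meet, which is the fip.

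For upward closure, let $x=\Delta(g\join f,f)\in\rsf F_{g}$ and $x\le y$. From $y\join f\ge x\join f=g\join f\ge g$ we get $g\le w$, where $w:=y\join f$; put $f':=\Delta(w,y)$, so that $f'\join y=w$ and $\Delta(w,f')=y$ by axioms~(a) and~(c). I claim $f'\ge f$ and $g\join f'=w$; granting this, $\Delta(g\join f',f')=\Delta(w,f')=y$, and since $f'\ge f\in\rsf F$ and $\rsf F$ is upward closed we get $f'\in\rsf F$, whence $y\in\rsf F_{g}$. Now $\Delta(w,\cdot)$ is an order-preserving involution of the set of elements below $w$ (axioms~(c) and~(d)), hence an order-automorphism of it, so $f'\ge f$ is equivalent to $\Delta(w,f)\le\Delta(w,f')=y$, i.e.\ to $\Delta(y\join f,f)\le y$; and once $f'\ge f$ is known, $g\join f'=w$ reduces to $y\le g\join f'$, because $g\join f'\le w$ is immediate and $g\join f'\ge f$, so $g\join f'\ge y\join f=w$ as soon as $y\le g\join f'$.

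So the substance of the proof is the small cluster of $\Delta$-inequalities this reduction isolates: monotonicity of $f\mapsto\Delta(g\join f,f)$; the implication $\Delta(g\join f,f)\le y\Rightarrow\Delta(y\join f,f)\le y$; and $y\le g\join\Delta(y\join f,y)$ whenever $y\ge\Delta(g\join f,f)$. I expect the cleanest route is to pass to $\leftGen\rsf F\rightGen$, use the interval-algebra presentation of \thmref{thm:isoFilAlg} (enlarged so that $g$ too is represented), and verify each one as a routine Boolean computation in $\rsf I(B)$, where it comes down to the coordinate relation $a\join b=1$ for elements of $\rsf I(B)$ together with ordinary distributivity; the alternative is to grind them out from axioms~(a)--(d) and the MR-axiom directly. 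This last cluster is where the work really lies, and it is also where the MR hypothesis is doing its job --- it is what supplies the meets appearing in the finite intersection argument (via the directedness of $\rsf F$, compare \lemref{lem:MRprec}) and what keeps the joins $\Delta(\one,\cdot)\join(\cdot)$ under control in the interval-algebra bookkeeping --- so that $\mathcal L$ being an MR-algebra, and not merely cubic, is genuinely used.
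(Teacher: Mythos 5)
Your proof is correct, and the three interval facts you defer do all go through: monotonicity of $f\mapsto\Delta(g\join f,f)$ is a two-line coordinate check; the implication $\Delta(g\join f,f)\le y\Rightarrow\Delta(y\join f,f)\le y$ is exactly the statement that $f\preccurlyeq\Delta(g\join f,f)$ and $\Delta(g\join f,f)\le y$ give $f\preccurlyeq y$ (join $y$ onto both sides of the identity $x=(x\join f)\meet(x\join\Delta(\one,f))$); and $y\le g\join\Delta(y\join f,y)$ also checks out coordinatewise. However, your route differs from the paper's in both halves. For the fip the paper computes directly in intervals that $\Delta(g\join f_1,f_1)\meet\Delta(g\join f_2,f_2)$ exists for $f_1,f_2\in\rsf F$ (a computation that silently uses $x\le t$, i.e.\ that $f_1\meet f_2$ exists --- the very directedness you invoke); your version, which extracts the single common lower bound $\Delta(g\join f_0,f_0)$ from monotonicity and then appeals to local Booleanness, is tidier and delivers genuine finite meets rather than only pairwise compatibility. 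For upward closure the paper argues abstractly: $k\ge\Delta(g\join f,f)$ places $k$ in $\leftGen\rsf F\rightGen=\widehat{\rsf F}$, so $k\simeq k'$ for some $k'\in\rsf F$, hence $k\simeq\Delta(g\join k',k')$, and the already-established compatibility together with \corref{cor:triv} forces $k=\Delta(g\join k',k')$. You instead exhibit the witness explicitly as $f'=\Delta(y\join f,y)$ (which is the same element as the paper's $k'$) and verify the required identities by hand. The paper's version buys brevity by recycling the fip computation and leaning on the g-filter machinery; yours is more self-contained and constructive but pays for it in coordinate bookkeeping. One small misattribution at the end: the meets $f_1\meet\dots\meet f_n$ come from $\rsf F$ being a filter, not from the MR axiom; the real role of the ambient hypotheses is to let both proofs work inside an interval algebra via the $\rsf F$-presentation.
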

\begin{proof}
	We just need to check this for intervals. Suppose that
	$g=[g, h]$, $f_{0}=[x, y]\in\rsf F$ and $f_{1}=[s, t]\in\rsf F$. Then
	$$
	\Delta(g\join f_{1}, f_{1})=[(g_{0}\meet x)\join(g_{1}\meet\comp y), 
	(g_{0}\join\comp x)\meet(g_{1}\join y)]. 
	$$
	Thus 
	\begin{align*}
		 & \Delta(g\join f_{1}, f_{1})\meet\Delta(g\join f_{2}, f_{2})\text{ 
		 exists}  \\
		\text{iff } & (g_{0}\meet x)\join(g_{1}\meet\comp y)\le 
	(g_{0}\join\comp s)\meet(g_{1}\join t)  \\
		\text{ and } & (g_{0}\meet s)\join(g_{1}\meet\comp t)\le 
	(g_{0}\join\comp x)\meet(g_{1}\join y).  \\
	\intertext{We will only check the second inequality -- the other 
	follows by symmetry. }
		\text{Clearly } & (g_{0}\meet s)\join(g_{1}\meet\comp t)\le g_{1}\join y  \\
		\text{ and } & g_{0}\meet s\le (g_{0}\join\comp x)\meet(g_{1}\join 
		y) && \text{ as }g_{0}\le g_{1}.   \\
		 g_{1}\meet\comp t \le 
	 g_{0}\join\comp x & \text{ as }g_{1}\meet\comp t \le \comp 
	 t\le\comp x\le 
	 g_{0}\join\comp x. 
	\end{align*}
	
	To show upwards closure we note that if $k\geq\Delta(g\join f, f)$ 
	for some $f\in\rsf F$ then we have $k\in\leftGen\rsf F\rightGen$ and 
	so there is some $k'\in\rsf F$ with $k\simeq k'$. Then we have 
	$\Delta(g\join k', k')\simeq k$ and 
	(as above) $\Delta(g\join k', k')\meet k\geq \Delta(g\join k', 
	k')\meet \Delta(g\join f, f)$. Hence $k=\Delta(g\join k', k')$. 
\end{proof}

\begin{cor}\label{cor:gFF}
	The set
	$$
	\rsf F_{g}=\Set{\Delta(g\join f, f) | f\in\rsf F}
	$$
	is a filter and $\leftGen\rsf F_{g}\rightGen=\leftGen\rsf F\rightGen$. 
\end{cor}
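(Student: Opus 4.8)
The plan is to deduce everything from \lemref{lem:gFF} together with the cubic axioms, working inside $\mathcal L$. The filter assertion requires almost nothing: \lemref{lem:gFF} already shows $\rsf F_{g}$ is upward closed and has fip, and $\rsf F_{g}\ne\emptyset$ because $\Delta(g\join\one,\one)=\Delta(\one,\one)=\one\in\rsf F_{g}$. (If one's axioms for a filter also demand closure under existing binary meets, this follows by a routine extension of the interval computation in the proof of \lemref{lem:gFF}: the meet of $\Delta(g\join f_{1},f_{1})$ and $\Delta(g\join f_{2},f_{2})$, when it exists, equals $\Delta(g\join(f_{1}\meet f_{2}),f_{1}\meet f_{2})$.) So the content is the equality $\leftGen\rsf F_{g}\rightGen=\leftGen\rsf F\rightGen$, which I would prove by two containments, each coming down to the fact that a subalgebra is closed under $\join$ and under $\Delta$ on comparable pairs.

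For $\leftGen\rsf F_{g}\rightGen\subseteq\leftGen\rsf F\rightGen$ it is enough to show $\rsf F_{g}\subseteq\leftGen\rsf F\rightGen$. Fix $f\in\rsf F$. Since $f\le g\join f$ and $\rsf F$, being a filter, is upward closed, we have $g\join f\in\rsf F\subseteq\leftGen\rsf F\rightGen$; and since $f\in\leftGen\rsf F\rightGen$ with $f\le g\join f$, the element $\Delta(g\join f,f)$ lies in $\leftGen\rsf F\rightGen$. This is the step that uses that $\rsf F$ is a filter rather than an arbitrary subset.

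For $\leftGen\rsf F\rightGen\subseteq\leftGen\rsf F_{g}\rightGen$ it is enough to show $\rsf F\subseteq\leftGen\rsf F_{g}\rightGen$. Fix $f\in\rsf F$, so $\Delta(g\join f,f)\in\rsf F_{g}\subseteq\leftGen\rsf F_{g}\rightGen$. The cubic axiom $\Delta(y,x)\join x=y$, applied with $x=f\le g\join f=y$, gives $\Delta(g\join f,f)\join f=g\join f$, so in particular $\Delta(g\join f,f)\le g\join f$; since $\rsf F_{g}$ is upward closed by \lemref{lem:gFF}, $g\join f\in\rsf F_{g}\subseteq\leftGen\rsf F_{g}\rightGen$. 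Now the cubic axiom $\Delta(y,\Delta(y,x))=x$, with the same $x,y$, gives $f=\Delta\bigl(g\join f,\Delta(g\join f,f)\bigr)$, and the right-hand side lies in $\leftGen\rsf F_{g}\rightGen$ since that subalgebra contains both $g\join f$ and $\Delta(g\join f,f)$ and is closed under $\Delta$ on comparable pairs. Combining the two containments yields $\leftGen\rsf F_{g}\rightGen=\leftGen\rsf F\rightGen$.

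I do not expect a genuine obstacle; the one thing to watch is that upward closure gets used twice, for two different sets: of $\rsf F$ in the first containment (so that $g\join f$ returns to $\rsf F$) and of $\rsf F_{g}$ in the second (to supply the ``partner'' $g\join f$ that lets one recover $f$ from $\Delta(g\join f,f)$ by a second application of $\Delta$). If one prefers not to lean on the definition of $\leftGen\ \rightGen$ directly, \thmref{thm:GenFil} can be invoked in the first step to place $\Delta(g\join f,f)$ in $\widehat{\rsf F}$, but the argument above is shorter and uses no presentation.
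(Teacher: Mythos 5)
Your proposal is correct and follows essentially the same route as the paper: the filter property is read off from \lemref{lem:gFF} (upward closure and fip) plus a meet-closure check, and the equality of generated subalgebras comes from the fact that each $f\in\rsf F$ pairs with $\Delta(g\join f,f)\in\rsf F_{g}$ inside the interval $[f, g\join f]$. The paper compresses your two containments into the one-line observation that every element of either filter is $\simeq$ to an element of the other; your axiom-by-axiom unpacking of why that suffices (via closure of $\leftGen\cdot\rightGen$ under $\join$ and $\Delta$, or equivalently \thmref{thm:GenFil}) is exactly what that remark is standing in for.
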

\begin{proof}
	That $\rsf F_{g}$ is a filter follows from the lemma -- since if 
	$f_{1}, f_{2}\in\rsf F$ then we have $\Delta(g\join(f_{1}\meet f_{2}), 
	f_{1}\meet f_{2})
	\preccurlyeq f_{1}\meet f_{2}\preccurlyeq \Delta(g\join f_{i}, f_{i})$. 
	As the meet 
	$\Delta(g\join(f_{1}\meet f_{2}), f_{1}\meet f_{2})\meet 
	\Delta(g\join f_{i}, f_{i})$ exists we have 
	$\Delta(g\join(f_{1}\meet f_{2}), f_{1}\meet f_{2})\le 
	\Delta(g\join f_{i}, f_{i})$ and so 
	$\Delta(g\join(f_{1}\meet f_{2}), f_{1}\meet f_{2})\le 
	\Delta(g\join f_{1}, f_{1})\meet \Delta(g\join f_{2}, f_{2})$. 
	By upwards closure we then get 
	$\Delta(g\join f_{1}, f_{1})\meet \Delta(g\join f_{2}, f_{2})$ in 
	$\rsf F_{g}$. 
	
	By definition, 
	for each $f\in\rsf F$ there is a $f'\in\rsf F_{g}$ such that
	$f\simeq f'$, and conversely. Thus 
	$\leftGen\rsf F_{g}\rightGen=\leftGen\rsf F\rightGen$.
\end{proof}

Note that a special case of this is when $g=\one$ and we have
$\rsf F_{\one}=\Delta(\one, \rsf F)$ and that for a principal filter
$[h, \one]$ we have $[{h, \one}]_{g}=[\Delta(g\join h, h), \one]$. 

\begin{cor}\label{cor:extraFil}
	Let $\rsf F$ be a filter and $g\in\mathcal L$. Then
	$g\to\rsf F=\Set{g\to f | f\in\rsf F}$ is a filter. 
\end{cor}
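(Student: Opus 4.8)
The plan is to exploit the fact that $g\to\rsf F$ is the image of $\rsf F$ under the map $x\mapsto g\to x$, which is a well-behaved operation on the implication lattice $\leftGen\rsf F\rightGen$, and to reduce checking the filter axioms to the two things we need: closure under (existing) meets and upward closure. Since $\rsf F$ is a filter in an MR-algebra, the cleanest route is via the interval-algebra representation already used in \lemref{lem:gFF}: it suffices to verify the claim for $\mathcal L=\rsf I(B)$ an interval algebra and $\rsf F$ a filter there, because every relevant configuration embeds fully into such an algebra and the operations $\Delta$, $\join$, $\meet$ (when it exists) and $g\to(-)$ are all preserved.

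First I would pin down $g\to x$ concretely in $\rsf I(B)$. Writing $g=[g_0,g_1]$ and $x=[x_0,x_1]$ as intervals in the Boolean algebra $B$ (so $g_0\le g_1$ and $x_0\le x_1$), one computes $g\to x$ from the definition $xy=\Delta(\one,\Delta(x\join y,y))\join y$; this gives $g\to x$ as another interval, say $[(g\to x)_0,(g\to x)_1]$, expressible by Boolean polynomials in $g_0,g_1,x_0,x_1$. Having this explicit form in hand, upward closure of $g\to\rsf F$ is handled exactly as in \lemref{lem:gFF}: if $k\ge g\to f$ for some $f\in\rsf F$, then $k\in\leftGen\rsf F\rightGen=\leftGen g\to\rsf F\rightGen$, so there is $k'\in\rsf F$ with $k\simeq(g\to k')$; showing $(g\to k')\meet k$ exists and dominates $(g\to k')\meet(g\to f)$ forces $k=g\to k'\in g\to\rsf F$ by \corref{cor:triv}.

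For closure under meets, I would show $(g\to f_1)\meet(g\to f_2)=g\to(f_1\meet f_2)$ whenever $f_1\meet f_2$ exists in $\rsf F$. In the interval picture this is a direct Boolean identity between the endpoints; the key point is that $x\mapsto g\to x$ is monotone (axiom (g) type properties, or the explicit formula) and that it commutes with the meet that exists. Then $g\to(f_1\meet f_2)\in g\to\rsf F$ witnesses that the pair has a meet inside the set, and fip follows since $\rsf F$ has fip. Combined with upward closure this shows $g\to\rsf F$ is a filter.

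The main obstacle I anticipate is the bookkeeping in the explicit interval computation of $g\to x$ and the verification that it commutes with existing meets — getting the Boolean endpoint formulas right and correctly tracking when the relevant meets exist (i.e.\ the "$a\meet b$ exists iff $a_0\meet b_1$ and $a_1\meet b_0$ comparisons" conditions built into $\rsf I(B)$). Once that algebra is laid out, everything else is either a reprise of the \lemref{lem:gFF} argument or an appeal to \corref{cor:triv}; the conceptual content is small, but care is needed to ensure the embedding into $\rsf I(B)$ is chosen to be full so that the "is below $\one$" witnesses transfer back to $\mathcal L$.
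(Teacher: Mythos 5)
Your plan for meet-closure is workable: the identity $(g\to f_{1})\meet(g\to f_{2})=g\to(f_{1}\meet f_{2})$ does hold, and the interval computation you defer is short --- writing $g=[g_{0},g_{1}]$ and $f=[f_{0},f_{1}]$ as in the proof of \lemref{lem:gFF} one finds $g\to f=[\comp{g_{0}}\meet f_{0},\ \comp{g_{1}}\join f_{1}]$, from which both the identity and the fip of $g\to\rsf F$ are immediate. The genuine gap is in your upward-closure step. The transport-along-similarity argument of \lemref{lem:gFF} works there because each $\Delta(g\join f,f)$ is $\simeq f$, so $\rsf F_{g}$ generates the same subalgebra as $\rsf F$ and every element of that subalgebra is similar to a unique element of $\rsf F$. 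Here, by contrast, $g\to f=\Delta(\one,\Delta(g\join f,f))\join f\geq f$, so $g\to\rsf F$ is a \emph{subset} of $\rsf F$ rather than a twisted copy of it, and your claimed equality $\leftGen\rsf F\rightGen=\leftGen g\to\rsf F\rightGen$ is false in general: in $\rsf I(B)$ with $B$ the two-element Boolean algebra, $\rsf F$ the principal filter on the vertex $[0,0]$ and $g=[0,0]$, one gets $g\to\rsf F=\{\one\}$, which generates only $\{\one\}$. Worse, since $k\geq g\to f\geq f$ already places $k$ in $\rsf F$, and two similar elements of $\rsf F$ whose meet exists are equal by \corref{cor:triv}, your assertion ``there is $k'\in\rsf F$ with $k\simeq(g\to k')$'' is literally equivalent to $k\in g\to\rsf F$ --- it is the conclusion restated, not a step toward it.

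The repair inside your own framework is a different computation: $k\geq g\to f$ forces $k_{0}\le\comp{g_{0}}$ and $k_{1}\geq\comp{g_{1}}$, whence $g\to k=k$ and $k$ is its own witness (note $k\in\rsf F$ by upward closure of $\rsf F$). But you should compare this with the paper's proof, which needs none of the interval bookkeeping: since $\Delta(g\join f,f)\simeq f$, the element $\beta_{\rsf G}(f)$ for $\rsf G=\Delta(\one,\rsf F_{g})$ is exactly $\Delta(\one,\Delta(g\join f,f))$, so $g\to f=f\join\beta_{\rsf G}(f)$ and \lemref{lem:interTwoFilAA} identifies $g\to\rsf F$ with $\rsf F\cap\Delta(\one,\rsf F_{g})$, an intersection of two filters and hence a filter with no further verification.
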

\begin{proof}
	Recall that $g\to f=\Delta(\one, \Delta(g\join f, f))\join f$. 
	Hence if
	$\rsf G=\Delta(\one, \rsf F_{g})$ then
	\begin{align*}
		\rsf G\cap\rsf F & =\Set{f\join\beta_{\rsf G}(f) | f\in\rsf F}  \\
		 & =\Set{\Delta(\one, \Delta(g\join f, f))\join f | f\in\rsf F}  \\
		 & =\Set{g\to f | f\in\rsf F}. 
	\end{align*}
\end{proof}

\begin{lem}\label{lem:interSect}
	$$\rsf F\cap\rsf F_{g}=[g, \one]\cap\rsf F. $$
\end{lem}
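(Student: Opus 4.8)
The plan is to prove the two inclusions separately, using the explicit descriptions of $\rsf F\cap\rsf F_g$ and of $\rsf F\cap[g,\one]$ as ``join-sets'' from \lemref{lem:interTwoFil}, together with the algebraic identities relating $\Delta$, $\preccurlyeq$ and $\join$. Since \corref{cor:gFF} tells us $\leftGen\rsf F_g\rightGen=\leftGen\rsf F\rightGen$, \lemref{lem:interTwoFilAA} gives $\rsf F\cap\rsf F_g=\Set{f\join\beta_{\rsf F_g}(f) | f\in\rsf F}$; but for this lemma I expect it is cleaner to use the symmetric description $\rsf F\cap\rsf F_g=\Set{f\join\Delta(g\join f',f') | f,f'\in\rsf F}$ coming from \lemref{lem:interTwoFil}. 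On the other side, $[g,\one]\cap\rsf F=\Set{f\join h | f\in\rsf F,\ h\ge g}$, and since this set is upward closed it equals $\Set{f | f\in\rsf F,\ f\ge g}$ intersected appropriately — more precisely $x\in[g,\one]\cap\rsf F$ iff $x\in\rsf F$ and $x\ge g$.

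First I would check $\rsf F\cap\rsf F_g\subseteq[g,\one]\cap\rsf F$. A typical element is $z=\Delta(g\join f,f)$ with $z\in\rsf F$ (after intersecting with $\rsf F$; an arbitrary element of the intersection is a join of such an element with something in $\rsf F$, hence still $\ge\Delta(g\join f,f)$ for some $f$ and still in $\rsf F$). So it suffices to show $\Delta(g\join f,f)\ge g'$ for some fixed generator $g'$ of the principal part — in fact I claim every element of $\rsf F_g$ lies above $g$ is false in general, but every element of $\rsf F_g$ that also lies in $\rsf F$ does. The key computation: in an interval-algebra representation, with $g=[g_0,g_1]$ and $f=[x,y]$, one has $\Delta(g\join f,f)=[(g_0\meet x)\join(g_1\meet\comp y),(g_0\join\comp x)\meet(g_1\join y)]$ (this formula is already displayed in the proof of \lemref{lem:gFF}); comparing with $g=[g_0,g_1]$ and using that $z\in\rsf F$ forces the relevant join/meet relations, one gets $z\ge g$. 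I would carry this out by reducing to interval algebras via the $\rsf F$-presentation, exactly as in \lemref{lem:gFF}.

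For the reverse inclusion $[g,\one]\cap\rsf F\subseteq\rsf F\cap\rsf F_g$: take $x\in\rsf F$ with $x\ge g$. Then $g\join x=x$, so $\Delta(g\join x,x)=\Delta(x,x)=x$ wait — that would give $x\in\rsf F_g$ directly — but $\Delta(x,x)$ need not equal $x$ unless $x\le x$, which it does, and $\Delta(x,x)\join x=x$ by axiom (a) with $y=x$; combined with $\Delta(x,\Delta(x,x))=x$ and the fact that $\Delta(x,x)\le x$, a short argument (or \corref{cor:triv}, since $\Delta(x,x)\simeq x$ in the appropriate sense and their meet exists) gives $\Delta(x,x)=x$. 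Hence $x=\Delta(g\join x,x)\in\rsf F_g$, and $x\in\rsf F$ by hypothesis, so $x\in\rsf F\cap\rsf F_g$. This direction I expect to be essentially immediate.

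The main obstacle is the first inclusion, specifically verifying that an element of $\rsf F_g$ which happens to lie in $\rsf F$ must dominate $g$. The subtlety is that $\rsf F_g$ itself contains elements not above $g$ (it is ``$\rsf F$ rotated by $g$''), so the hypothesis $z\in\rsf F$ is doing real work, and the cleanest route is to pass to the interval-algebra picture and use the displayed formula for $\Delta(g\join f,f)$, showing that the condition $z\in\rsf F$ — equivalently $z\ge f\meet\beta_{\rsf F}(z)$ for suitable $f$ — pins down the first coordinate to be $\ge g_0$ and the second to be $\ge g_1$. Alternatively, and perhaps more slickly, one can argue abstractly: $\Delta(g\join f,f)\in\rsf F$ together with $\Delta(g\join f,f)\preccurlyeq f$ and the existence of $\Delta(g\join f,f)\meet g$ (guaranteed in the MR-algebra once both lie in the filter $\leftGen\rsf F\rightGen$ and share enough structure) forces, via \propref{prop:triv}, the desired inequality $g\le\Delta(g\join f,f)$.
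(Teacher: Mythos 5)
Your inclusion $[g,\one]\cap\rsf F\subseteq\rsf F\cap\rsf F_{g}$ is fine and is exactly the paper's argument ($g\le x$ gives $x=\Delta(g\join x,x)\in\rsf F_{g}$; and $\Delta(x,x)=x$ does hold, e.g.\ by \corref{cor:triv} applied to $\Delta(x,x)\simeq x$ together with $\Delta(x,x)\le x$). The gap is in the other inclusion, which you correctly identify as the obstacle but do not close. The interval-algebra route is never carried out, and it is unclear how the hypothesis ``$z\in\rsf F$'' would be converted into coordinate inequalities. The ``abstract'' route misapplies \propref{prop:triv}: to conclude $g\le z$ from that proposition you need the two hypotheses $g\preccurlyeq z$ and ``$g\meet z$ exists''. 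Instead you cite $z\preccurlyeq f$, which is irrelevant to that conclusion, and you justify the existence of the meet by common membership in $\leftGen\rsf F\rightGen$ --- but lying in the same subalgebra does not produce meets, and $g$ is an arbitrary element of $\mathcal L$ that need not lie in $\leftGen\rsf F\rightGen$ at all. Worse, $g\preccurlyeq\Delta(g\join f,f)$ is false in general (an interval computation shows it already fails in an eight-element Boolean cube), so the hypothesis $z\in\rsf F$ must enter in an essential way that your sketch never pins down.

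The missing idea is to apply \corref{cor:triv} to a different pair of elements. Given $h\in\rsf F\cap\rsf F_{g}$, form $\Delta(g\join h,h)$; this lies in $\rsf F_{g}$ precisely because $h\in\rsf F$. Since $h$ itself also lies in the filter $\rsf F_{g}$, which has fip by \lemref{lem:gFF}, the meet $h\meet\Delta(g\join h,h)$ exists; and since $h\simeq\Delta(g\join h,h)$ (their join is $g\join h$ and axiom (a) applies), \corref{cor:triv} forces $h=\Delta(g\join h,h)$. Then $h=h\join h=\Delta(g\join h,h)\join h=g\join h$, i.e.\ $g\le h$. This is the same device used for upward closure at the end of the proof of \lemref{lem:gFF}; no passage to an interval representation is needed.
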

\begin{proof}
	If $f\in\rsf F\cap[g, \one]$ then $g\join f=f$ and so
	$\Delta(g\join f, f)=\Delta(f, f)= f\in\rsf F_{g}$. 
	
	Conversely, if $h\in\rsf F\cap\rsf F_{g}$ then we have 
	$h\meet\Delta(g\join h, h)$ exists and so $h= \Delta(g\join h, h)$. 
	Therefore $g\join h=h$ and $g\le h$. 
\end{proof}

\begin{defn}\label{def:principalizable}
	Let $\mathcal L$ be a cubic algebra, $\rsf F$ a filter in $\mathcal L$. 
	Then $\rsf F$ is \emph{weakly principal} iff there is some $g$ such 
	that
	$\rsf F\subseteq[g, \one]$. 
\end{defn}

This definition is a lot like $\rsf F$ being untwisted. 

\begin{cor}\label{cor:gFilter}
	If $g\in\rsf F$ then 
	$$\rsf F\cap\rsf F_{g}=[g, \one]. $$
\end{cor}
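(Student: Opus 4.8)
The plan is to derive this as an immediate consequence of \lemref{lem:interSect}, which already gives $\rsf F\cap\rsf F_{g}=[g,\one]\cap\rsf F$. Since we are now assuming the extra hypothesis $g\in\rsf F$, the only thing to verify is that under this hypothesis $[g,\one]\cap\rsf F=[g,\one]$, i.e.\ that $[g,\one]\subseteq\rsf F$. But that is exactly upward closure of $\rsf F$: every $x$ with $g\le x\le\one$ satisfies $x\ge g\in\rsf F$, so $x\in\rsf F$ because $\rsf F$ is a filter. Combining the two equalities yields $\rsf F\cap\rsf F_{g}=[g,\one]$.

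So the proof is essentially one line: invoke \lemref{lem:interSect} to reduce to $[g,\one]\cap\rsf F$, then observe $g\in\rsf F$ forces $[g,\one]\subseteq\rsf F$ by upward closure, hence the intersection collapses to $[g,\one]$. There is no real obstacle here; the content was already done in \lemref{lem:interSect}, and this corollary is just the specialization that makes the right-hand side fully explicit (a principal filter) rather than an intersection. One small point worth a sentence is that $[g,\one]$ is genuinely a subset of $\mathcal L$ and of $\rsf F$ — i.e.\ that $g\le\one$, which holds since $\one$ is the maximum of the join semilattice — so the interval notation is well-defined and the claim is nontrivial in the sense that it identifies $\rsf F\cap\rsf F_{g}$ with a concrete principal filter.

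If one wanted a marginally more self-contained argument not routing through \lemref{lem:interSect}, I would instead argue both inclusions directly: for $\supseteq$, take $f\in[g,\one]$; then $f\in\rsf F$ by upward closure since $g\le f$, and $f\in\rsf F_{g}$ because $g\join f=f$ gives $\Delta(g\join f,f)=\Delta(f,f)=f$, using that $f\in\rsf F$; for $\subseteq$, take $h\in\rsf F\cap\rsf F_{g}$, note $h\preccurlyeq\Delta(g\join h,h)$ and $h\simeq\Delta(g\join h,h)$ from \corref{cor:gFF} together with the fip computation, so $h\meet\Delta(g\join h,h)$ exists, whence $h=\Delta(g\join h,h)$ by \corref{cor:triv}, forcing $g\join h=h$ and thus $g\le h\le\one$. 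Either way the work is trivial given the machinery already in place, so I would present the short version built on \lemref{lem:interSect}.
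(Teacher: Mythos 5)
Your argument is correct and matches the paper's intent exactly: the paper's proof is simply ``Obvious,'' meaning precisely the reduction you give via \lemref{lem:interSect} plus upward closure of $\rsf F$. Nothing further is needed.
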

\begin{proof}
	Obvious
\end{proof}

Interestingly enough the converse of this result is also true. 
\begin{lem}\label{lem:gFilter}
	Suppose that $\leftGen\rsf G\rightGen=\leftGen\rsf F\rightGen$
	and $\rsf F\cap\rsf G=[g, \one]$. Then
	$$\rsf G=\rsf F_{g}.$$ 
\end{lem}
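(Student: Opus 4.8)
The plan is to show the two filters $\rsf G$ and $\rsf F_g$ coincide by exhibiting an element of one inside the other and then invoking the rigidity supplied by \corref{cor:triv}. Since $\leftGen\rsf G\rightGen=\leftGen\rsf F\rightGen=\leftGen\rsf F_g\rightGen$ (the last equality by \corref{cor:gFF}), all three filters are g-filters for the same subalgebra, which we may as well take to be all of $\mathcal L$; in particular every element of $\rsf G$ is $\simeq$ a unique element of $\rsf F$ and of $\rsf F_g$, and whenever a meet of two $\simeq$-related elements exists they must be equal.

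First I would check $\rsf F_g\subseteq\rsf G$. Take $f\in\rsf F$ and consider $\Delta(g\join f,f)\in\rsf F_g$. We know $g\in\rsf F\cap\rsf G$ since $\rsf F\cap\rsf G=[g,\one]$, so in particular $g\join f\in\rsf F$ and the meet $g\meet f$ exists in $\mathcal L$; as $\mathcal L$ is an MR-algebra this is equivalent to $\Delta(\one,g)\join f=\one$, equivalently $g\join\Delta(\one,f)=\one$. I would use this together with the fact that $g$ lies in $\rsf G$ and that $\rsf G$ is a filter to locate, for each $f$, an element $g'\in\rsf G$ with $g'\simeq\Delta(g\join f,f)$ — concretely $g'=\beta_{\rsf G}\big(\Delta(g\join f,f)\big)$, which lies in $\rsf G$ because $\rsf G$ is a g-filter. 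The key computation is then to show $g'\meet\Delta(g\join f,f)$ exists (using that $g'$ and $g$ are both in the filter $\rsf G$, so $g'\meet g$ exists, and pushing this through the formula for $\Delta(g\join f,f)$ as in \lemref{lem:gFF}); \corref{cor:triv} then forces $g'=\Delta(g\join f,f)$, so $\Delta(g\join f,f)\in\rsf G$.

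Conversely, to get $\rsf G\subseteq\rsf F_g$, I would take $h\in\rsf G$, let $f=\beta_{\rsf F}(h)\in\rsf F$ (so $h\simeq f$), and aim to identify $h$ with $\Delta(g\join f,f)\in\rsf F_g$. Since $h\in\rsf G$ and $g\in\rsf G$, the meet $g\meet h$ exists; combined with $h\simeq f$ this should give, after a short $\Delta$-manipulation, that $h\simeq\Delta(g\join f,f)$ as well, and that the meet $h\meet\Delta(g\join f,f)$ exists. Then \corref{cor:triv} again yields $h=\Delta(g\join f,f)\in\rsf F_g$. Both inclusions together give $\rsf G=\rsf F_g$.

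The main obstacle I anticipate is the bookkeeping showing the relevant meets exist — that is, translating ``$g\meet h$ exists in $\mathcal L$'' and ``$g$ is in both $\rsf F$ and $\rsf G$'' into the existence of $g'\meet\Delta(g\join f,f)$ (resp. $h\meet\Delta(g\join f,f)$). The cleanest route is probably to work inside the interval-algebra representation, exactly as in the proof of \lemref{lem:gFF}: write $g=[g_0,g_1]$, $f=[x,y]$, use that $g\meet f$ existing means $g_0\meet x$ and the complementary join condition hold, and verify the componentwise inequalities that witness the desired meet. Once the existence of meets is in hand, $\simeq$ plus \corref{cor:triv} does all the remaining work, so the argument is short modulo that one calculation.
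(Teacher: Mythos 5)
Your overall framework --- prove mutual inclusion, and in each direction exhibit a $\simeq$-related pair whose meet exists so that \corref{cor:triv} forces equality --- is the same rigidity argument the paper uses. The gap is in the two deferred ``key computations'': in both directions you claim the required meet ($g'\meet\Delta(g\join f,f)$, resp.\ $h\meet\Delta(g\join f,f)$) exists using only that $g\in\rsf F\cap\rsf G$ and that various elements lie in common filters. That cannot work, because those ingredients never use the actual hypothesis that $\rsf F\cap\rsf G$ \emph{equals} $[g,\one]$ rather than merely contains $g$ --- and under the weaker hypothesis the lemma is false. Concretely, take $\rsf G=\rsf F$ and any $g\in\rsf F$ with $\rsf F\not\subseteq[g,\one]$: then $g\in\rsf F\cap\rsf G$, every meet you invoke ($g'\meet g$, $g\meet h$) exists inside the filter $\rsf F$, yet $\rsf G=\rsf F\ne\rsf F_{g}$ since $\rsf F\cap\rsf F_{g}=[g,\one]\cap\rsf F\subsetneq\rsf F$ by \lemref{lem:interSect}; indeed, for $f\in\rsf F$ with $g\not\le f$ the meet $f\meet\Delta(g\join f,f)$ fails to exist, precisely because its existence would force $\Delta(g\join f,f)=f$ and hence $g\le f$. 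So the interval-algebra bookkeeping you defer to would not close the argument.

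The missing step is the one place the full hypothesis enters, and it is the heart of the paper's proof of $\rsf G\subseteq\rsf F_{g}$: for $h\in\rsf G$ with $h\le g$, set $h'=\beta_{\rsf F}(h)$; since $g,h'\in\rsf F$ the meet $g\meet h'$ exists and $h\le g$ gives $h'\preccurlyeq g$, hence $h'\le g$ by \propref{prop:triv}; then $h\join h'\in\rsf F\cap\rsf G=[g,\one]$ together with $h\join h'\le g$ forces $h\join h'=g$, whence $h=\Delta(h\join h',h')=\Delta(g\join h',h')\in\rsf F_{g}$. Arbitrary $h\in\rsf G$ then follows from upward closure of $\rsf F_{g}$ (since $h\geq h\meet g$), and the reverse inclusion becomes the easy rigidity argument you describe --- now legitimate, because $h\in\rsf F_{g}$ and its $\simeq$-partner in $\rsf G\subseteq\rsf F_{g}$ both lie in the single filter $\rsf F_{g}$, so their meet genuinely exists. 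You need to supply the $h\join h'=g$ computation (or some equivalent use of the equality $\rsf F\cap\rsf G=[g,\one]$) before either of your deferred meet-existence claims can be justified.
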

\begin{proof}
	Clearly $[g, \one]\subseteq\rsf G$. 
	Suppose that
	$h\le g$ and $h\in\rsf G$. Then we have $h'\in\rsf F$ such that
	$h\simeq h'$. As $g, h'$ are in $\rsf F$ the meet $g\meet h'$ exists 
	and $h\le g$ implies $h'\preccurlyeq g$. Hence $h'\le g$. 
	But then $h'\join h\in\rsf F\cap\rsf G=[g, \one]$ so we have 
	$h\join h'=g$. 
	Therefore 
	$h= \Delta(h\join h', h')= \Delta(g\join h', h')\in\rsf F_{g}$. 
	
	Now for arbitrary $h\in\rsf G$ we have $h\geq h\meet g\in\rsf F_{g}$ 
	and so $h\in\rsf F_{g}$. Thus $\rsf G\subseteq\rsf F_{g}$. 
	
	The reverse implication follows as 
	$\leftGen\rsf G\rightGen= \leftGen\rsf F\rightGen= \leftGen\rsf F_{g}\rightGen$
	and so if $h\in\rsf F_{g}$ there is some $h'\in\rsf G$ with $h\simeq 
	h'$. As $h\meet h'$ exists we have $h=h'\in\rsf G$. 
\end{proof}

\begin{cor}\label{cor:Idempotence}
	Let $g, h\in\rsf F$. Then 
	\begin{enumerate}[(a)]
		\item $\rsf F=(\rsf F_{g})_{g}$; 
	
		\item $(\rsf F_{g})_{h}=(\rsf F_{g})_{g\join h}$. 
	\end{enumerate}
\end{cor}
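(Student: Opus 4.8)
The plan is to derive both identities from \lemref{lem:gFilter}, which characterizes $\rsf F_g$ as the unique filter $\rsf G$ with $\leftGen\rsf G\rightGen = \leftGen\rsf F\rightGen$ and $\rsf F\cap\rsf G = [g,\one]$. For part (a), I would first note that $g\in\rsf F$ together with \corref{cor:gFilter} gives $\rsf F\cap\rsf F_g = [g,\one]$, and by \corref{cor:gFF} we have $\leftGen\rsf F_g\rightGen = \leftGen\rsf F\rightGen$; moreover $g\in\rsf F_g$ since $g = \Delta(g\join g, g)$. Thus $\rsf F_g$ plays the role of ``$\rsf F$'' and $\rsf F$ plays the role of ``$\rsf G$'' in \lemref{lem:gFilter} with distinguished element $g$: we have $\leftGen\rsf F\rightGen = \leftGen\rsf F_g\rightGen$ and $\rsf F_g\cap\rsf F = [g,\one]$, so the lemma yields $\rsf F = (\rsf F_g)_g$.

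For part (b), I would again use \lemref{lem:gFilter}, this time to identify $(\rsf F_g)_h$. Since $h\in\rsf F$ and $\rsf F_g = \{\Delta(g\join f, f)\mid f\in\rsf F\}$ with $g\in\rsf F$, one checks $h\join g\in\rsf F$ hence $\Delta(g\join(h\join g), h\join g) = \Delta(g\join h, g\join h) = g\join h$ lies in $\rsf F_g$; more simply, since $g, h\in\rsf F$ we get $g\join h\in\rsf F\subseteq\leftGen\rsf F\rightGen = \leftGen\rsf F_g\rightGen$, and as $g\join h\geq g$ with $g\in\rsf F_g$ and $\rsf F_g$ upward closed, $g\join h\in\rsf F_g$. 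Now apply \corref{cor:gFilter} inside $\rsf F_g$ with the element $g\join h\in\rsf F_g$: this gives $\rsf F_g\cap(\rsf F_g)_{g\join h} = [g\join h,\one]$, while \corref{cor:gFF} gives $\leftGen(\rsf F_g)_{g\join h}\rightGen = \leftGen\rsf F_g\rightGen$. On the other hand, I claim $h\in\rsf F_g$ need not hold, but $g\join h$ does, and \lemref{lem:interSect} applied to the filter $\rsf F_g$ and element $h$ gives $\rsf F_g\cap(\rsf F_g)_h = [h,\one]\cap\rsf F_g$; since $g\in\rsf F_g$, this equals $[g\join h,\one]$ as well. Hence $(\rsf F_g)_h$ and $(\rsf F_g)_{g\join h}$ are both filters with the same enveloping subalgebra as $\rsf F_g$ and the same intersection $[g\join h,\one]$ with $\rsf F_g$, so \lemref{lem:gFilter} forces $(\rsf F_g)_h = (\rsf F_g)_{g\join h}$.

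The main obstacle I anticipate is the bookkeeping in part (b): one must be careful that $h$ lies in $\leftGen\rsf F_g\rightGen$ (which it does, since that subalgebra equals $\leftGen\rsf F\rightGen\ni h$) so that $(\rsf F_g)_h$ makes sense as a filter with the right enveloping algebra, and one must verify the intersection computation $[h,\one]\cap\rsf F_g = [g\join h, \one]$ genuinely holds — this uses $g\in\rsf F_g$ so that every element of $\rsf F_g$ already lies above $g$, whence meeting with $[h,\one]$ is the same as the principal filter generated by $g\join h$. Everything else is a direct invocation of the uniqueness clause in \lemref{lem:gFilter} together with \corref{cor:gFF} and \corref{cor:gFilter}.
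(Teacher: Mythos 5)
Your strategy is the same as the paper's: both parts are reduced to the uniqueness statement of \lemref{lem:gFilter} by computing the intersection of $\rsf F_{g}$ with the candidate filter, and your part (a) is essentially verbatim the paper's argument. There is, however, one incorrect justification in part (b). To get $[h,\one]\cap\rsf F_{g}=[g\join h,\one]$ you assert that ``every element of $\rsf F_{g}$ already lies above $g$'', i.e.\ that $\rsf F_{g}\subseteq[g,\one]$. Since the reverse inclusion $[g,\one]=\rsf F\cap\rsf F_{g}\subseteq\rsf F_{g}$ always holds (\corref{cor:gFilter}), your assertion would force $\rsf F_{g}=[g,\one]$, and then part (a) would give $\rsf F=(\rsf F_{g})_{g}=([g,\one])_{g}=[\Delta(g\join g,g),\one]=[g,\one]$; so the claim can only hold when $\rsf F$ itself is principal. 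In general $\rsf F_{g}$ contains many elements incomparable with $g$ --- it is a reflection of $\rsf F$ across $g$, not a subfilter of $[g,\one]$.

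The intersection formula you want is nonetheless true, and the correct reason is the one the paper uses: $h\in\rsf F$ and upward closure of $\rsf F$ give $[h,\one]\subseteq\rsf F$, hence $[h,\one]\cap\rsf F_{g}=[h,\one]\cap\rsf F\cap\rsf F_{g}=[h,\one]\cap[g,\one]=[g\join h,\one]$. With that one-line repair the rest of your part (b) --- that $g\join h\in\rsf F_{g}$ by upward closure of $\rsf F_{g}$, that $\rsf F_{g}\cap(\rsf F_{g})_{g\join h}=[g\join h,\one]$ by \corref{cor:gFilter} applied inside $\rsf F_{g}$, that the generated subalgebras agree via \corref{cor:gFF}, and the final appeal to the uniqueness clause of \lemref{lem:gFilter} --- goes through and coincides with the paper's proof.
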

\begin{proof}
	\begin{enumerate}[(a)]
		\item Since $\rsf F\cap\rsf F_{g}=[g, \one]$and 
		$\leftGen\rsf F\rightGen= \leftGen\rsf F_{g}\rightGen$ the lemma 
		implies $\rsf F=(\rsf F_{g})_{g}$. 
	
		\item 
		\begin{align*}
			\rsf F_{g}\cap(\rsf F_{g})_{h} & = [h, \one]\cap\rsf F_{g}  \\
			 &  =[h, \one]\cap\rsf F\cap\rsf F_{g} \\
			 &  =[h, \one]\cap[g, \one] \\
			 &  =[h\join g, \one]. 
		\end{align*}
		The lemma now implies $(\rsf F_{g})_{h}=(\rsf F_{g})_{g\join h}$. 
	\end{enumerate}
\end{proof}

It is also interesting to determine the shape of $\rsf 
F\cap\Delta(\one, \rsf F_{g})$. Corollary \ref{cor:extraFil} already 
shows us that this is 
$g\to\rsf F= \Set{g\to f | f\in\rsf F}$. 
Lemma \ref{lem:gFilter} shows us that if this is principal ($=[h, \one]$) then it is 
contained in $\rsf F_{h}$.

\begin{comment}
\section{Congruences}
\begin{lem}\label{lem:congI}
	Let $\rsf F$ be a filter of a cubic algebra and $R$ an $\mathcal 
	L$-congruence.  Then 
	$\rsf F/R$ is weakly principal in $\mathcal L/R$ iff 
	there is some $g$ such that $g\to\rsf F\subseteq\mathcal S_{R}$. 
\end{lem}
\begin{proof}
	This is because 
	$[g]_{R}\le[f]_{R}$ for all $f\in\rsf F$ iff 
	$g\to f\in\mathcal S_{R}$ for all $f\in\rsf F$. 
\end{proof}

\begin{private}
	This could be developed more -- maybe in another paper. 
\end{private}
\end{comment}

\section{The Group of g-filters}
The set of all g-filters of a filter algebra $\mathcal L$ is a group -- 
where to define the operation we first fix a g-filter $\rsf F$ and let
\begin{align*}
	\rsf G*\rsf H
	&=\varphi_{\brk<\rsf F, \rsf G>}[\rsf H]\\
	&=\Set{\Delta(\beta_{\rsf G}(\alpha_{\rsf F}(h)), \beta_{\rsf 
	G}(h)) | h\in H}. 
\end{align*}

This group is a $2$-torsion group that has a natural topology -- 
the basic opens are of the form $\mathcal O_{m}=\Set{\rsf H | 
m\in\rsf H}$ for each $m\in\mathcal L$.  
We are interested in determining 
whether $\sim$ is a congruence on this group. 

It is clearly so in the case that $\mathcal L\simeq\rsf I(B)$ for some Boolean 
algebra -- as in that case all g-filters are of the form
$[a, \one]$ for some vertex $a$, and are all equivalent to one 
another and so there is only one equivalence class. 

\begin{lem}\label{lem:congOne}
	Let $\rsf H_{1}\cap\rsf H_{2}=[h, \one]$. Then 
	$(\rsf G*\rsf H_{1})\cap (\rsf G*\rsf H_{2})$ is principal. 
\end{lem}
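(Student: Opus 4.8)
The plan is to reduce the statement to an application of \lemref{lem:gFilter} together with the fact that $\varphi_{\brk<\rsf F, \rsf G>}$ is a cubic automorphism of $\mathcal L$. The key observation is that $\rsf G*\rsf H_{i}=\varphi_{\brk<\rsf F, \rsf G>}[\rsf H_{i}]$, so since $\varphi=\varphi_{\brk<\rsf F, \rsf G>}$ is an automorphism of the cubic algebra it sends the intersection of two filters to the intersection of their images: $(\rsf G*\rsf H_{1})\cap(\rsf G*\rsf H_{2})=\varphi[\rsf H_{1}]\cap\varphi[\rsf H_{2}]=\varphi[\rsf H_{1}\cap\rsf H_{2}]$. (That $\varphi$ commutes with $\cap$ on filters is immediate from \lemref{lem:interTwoFil}, since $\varphi$ preserves $\join$.)

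First I would record that $\rsf H_{1}\cap\rsf H_{2}=[h, \one]$ is principal by hypothesis, and that a cubic automorphism carries a principal filter $[h,\one]$ to the principal filter $[\varphi(h),\one]$: indeed $\varphi$ is an order isomorphism fixing $\one$, so it takes the least element $h$ of the filter to the least element of the image, and takes the up-set of $h$ to the up-set of $\varphi(h)$. Combining these two facts gives $(\rsf G*\rsf H_{1})\cap(\rsf G*\rsf H_{2})=[\varphi_{\brk<\rsf F, \rsf G>}(h), \one]$, which is principal, as required.

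The only point that needs a little care is the very first step — that $\varphi_{\brk<\rsf F, \rsf G>}[\rsf H]$ really is the group product $\rsf G*\rsf H$ and that this image is again a filter. The former is the definition of $*$ given immediately before the lemma; the latter holds because the image of a filter under any cubic automorphism is a filter (automorphisms preserve $\le$, $\join$, and $\one$, hence upward closure and the finite intersection property). I do not expect any real obstacle here: the whole argument is a transport-of-structure along $\varphi_{\brk<\rsf F, \rsf G>}$, and the substantive content (that principality of $\rsf H_1\cap\rsf H_2$ is what matters) has already been isolated in the hypothesis. If one wanted to avoid invoking automorphism-preservation of $\cap$ abstractly, one could instead compute directly using the formula $\rsf G*\rsf H=\Set{\Delta(\beta_{\rsf G}(\alpha_{\rsf F}(h)), \beta_{\rsf G}(h)) | h\in\rsf H}$ and \lemref{lem:interTwoFilAA}, but the conceptual route above is cleaner.
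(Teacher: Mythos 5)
Your proof is correct and is essentially the paper's own argument: the paper likewise computes $(\rsf G*\rsf H_{1})\cap(\rsf G*\rsf H_{2})=\varphi_{\brk<\rsf F, \rsf G>}[\rsf H_{1}]\cap\varphi_{\brk<\rsf F, \rsf G>}[\rsf H_{2}]=\varphi_{\brk<\rsf F, \rsf G>}[\rsf H_{1}\cap\rsf H_{2}]=[\varphi_{\brk<\rsf F, \rsf G>}(h),\one]$, relying on the same transport-of-structure along the automorphism. The opening reference to \lemref{lem:gFilter} is a harmless red herring --- you never actually need it --- and your extra justifications (commuting with $\cap$ via \lemref{lem:interTwoFil}, images of principal filters) only make explicit what the paper leaves tacit.
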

\begin{proof}
	\begin{align*}
		(\rsf G*\rsf H_{1})\cap (\rsf G*\rsf H_{2}) & =\varphi_{\brk<\rsf F, \rsf G>}
		[\rsf H_{1}]\cap \varphi_{\brk<\rsf F, \rsf G>}[\rsf H_{2}] \\
		 & =\varphi_{\brk<\rsf F, \rsf G>}[\rsf H_{1}\cap\rsf H_{2}]  \\
		 & =\varphi_{\brk<\rsf F, \rsf G>}[[\rsf H_{1}\cap\rsf H_{2}]h, \one]  \\
		 & =[\varphi_{\brk<\rsf F, \rsf G>}(h), \one]. 
	\end{align*}
\end{proof}

\begin{cor}\label{cor:congOne}
	Let $\rsf H_{1}\sim\rsf H_{2}$. Then 
	$(\rsf G*\rsf H_{1})\sim (\rsf G*\rsf H_{2})$. 
\end{cor}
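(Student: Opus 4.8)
The plan is to deduce Corollary~\ref{cor:congOne} from Lemma~\ref{lem:congOne} together with Theorem~\ref{thm:unTwist}, which characterizes $\sim$ in terms of principality of one of the two intersections $\rsf F\cap\rsf G$ and $\Delta(\one,\rsf F)\cap\rsf G$. First I would note that $\rsf H_1\sim\rsf H_2$ means, by definition and \thmref{thm:unTwist}, that one of $\rsf H_1\cap\rsf H_2$ and $\Delta(\one,\rsf H_1)\cap\rsf H_2$ is principal. In the first case Lemma~\ref{lem:congOne} applies directly: $(\rsf G*\rsf H_1)\cap(\rsf G*\rsf H_2)$ is principal, hence $\rsf G*\rsf H_1\sim\rsf G*\rsf H_2$ by \thmref{thm:unTwist} again (all three of $\rsf G*\rsf H_1$, $\rsf G*\rsf H_2$ are g-filters, being images of g-filters under the cubic automorphism $\varphi_{\brk<\rsf F,\rsf G>}$).

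The remaining case is where $\Delta(\one,\rsf H_1)\cap\rsf H_2$ is principal. Here I would like to run the same argument with $\rsf H_1$ replaced by $\Delta(\one,\rsf H_1)$, so the key auxiliary observation is that $\varphi_{\brk<\rsf F,\rsf G>}$ commutes with $\Delta(\one,-)$ in the appropriate sense, i.e. $\rsf G*\Delta(\one,\rsf H_1)=\Delta(\one,\rsf G*\rsf H_1)$. This holds because $\varphi_{\brk<\rsf F,\rsf G>}$ is a cubic automorphism and therefore commutes with the inner automorphism $\Delta(\one,-)$ (which is cubic-definable); explicitly $\varphi[\Delta(\one,\rsf H_1)]=\Delta(\varphi(\one),\varphi[\rsf H_1])=\Delta(\one,\varphi[\rsf H_1])$ since $\varphi(\one)=\one$. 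Granting this, applying Lemma~\ref{lem:congOne} to the pair $\Delta(\one,\rsf H_1)$ and $\rsf H_2$ gives that $(\rsf G*\Delta(\one,\rsf H_1))\cap(\rsf G*\rsf H_2)=\Delta(\one,\rsf G*\rsf H_1)\cap(\rsf G*\rsf H_2)$ is principal, and \thmref{thm:unTwist} then yields $\rsf G*\rsf H_1\sim\rsf G*\rsf H_2$.

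I expect the only real obstacle to be the bookkeeping in that second case: confirming that $\rsf G*\Delta(\one,\rsf H_1)=\Delta(\one,\rsf G*\rsf H_1)$ from the definition $\rsf G*\rsf H=\varphi_{\brk<\rsf F,\rsf G>}[\rsf H]$, and checking that Lemma~\ref{lem:congOne} is genuinely applicable to $\Delta(\one,\rsf H_1)$ — it is, since its proof only used that the $\rsf H_i$ are g-filters and that $\varphi_{\brk<\rsf F,\rsf G>}$ preserves intersections and principal filters, all of which $\Delta(\one,\rsf H_1)$ inherits. Everything else is a direct citation of \thmref{thm:unTwist}. I would write the corollary's proof in two or three lines, splitting on which of the two intersections witnesses $\rsf H_1\sim\rsf H_2$ and invoking Lemma~\ref{lem:congOne} (possibly after the harmless substitution $\rsf H_1\rightsquigarrow\Delta(\one,\rsf H_1)$) in each case.
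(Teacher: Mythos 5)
Your proposal is correct and follows essentially the same route as the paper: split on which of the two intersections in \thmref{thm:unTwist} is principal, apply Lemma~\ref{lem:congOne} directly in the first case, and in the second case use the identity $\varphi_{\brk<\rsf F,\rsf G>}[\Delta(\one,\rsf H)]=\Delta(\one,\varphi_{\brk<\rsf F,\rsf G>}[\rsf H])$ to reduce to the first. The paper's proof is exactly this, stated more tersely.
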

\begin{proof}
	If $\rsf H_{1}\cap\rsf H_{2}$ is principal we are done. 
	
	If $\rsf H_{1}\cap\Delta(\one, \rsf H_{2})=[h, \one]$ then 
	the result follows as above using 
	$\varphi_{\brk<\rsf F, \rsf G>}[\Delta(\one, \rsf H_{2})]=
	\Delta(\one, \varphi_{\brk<\rsf F, \rsf G>}[\rsf H_{2}])$. 
\end{proof}

\begin{thm}\label{thm:congOne}
	$\sim$ is a congruence relation. 
\end{thm}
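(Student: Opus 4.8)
The plan is to observe that by this point in the paper almost everything needed is already in hand, and the theorem is really just a bookkeeping step. From the remarks and corollaries of Section~2 --- reflexivity (noted there directly), symmetry (Corollary~\ref{cor:relTwistingSymm}), and transitivity (Corollary~\ref{cor:transTwist}) --- we know that $\sim$ is an equivalence relation on the set of g-filters of $\mathcal L$, which is exactly the carrier of the group. So the only thing that remains is compatibility of $\sim$ with the operation $*$: whenever $\rsf G_{1}\sim\rsf G_{2}$ and $\rsf H_{1}\sim\rsf H_{2}$ we must deduce $\rsf G_{1}*\rsf H_{1}\sim\rsf G_{2}*\rsf H_{2}$.

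First I would record that the group of g-filters is abelian. Since it is $2$-torsion, $\rsf K*\rsf K=\rsf F$ for every g-filter $\rsf K$ (here $\rsf F$ is the fixed g-filter, which is the identity of the group, since $\rsf F*\rsf H=\varphi_{\brk<\rsf F,\rsf F>}[\rsf H]=\rsf H$ and $\rsf G*\rsf F=\beta_{\rsf G}[\rsf F]=\rsf G$), hence $\rsf K^{-1}=\rsf K$ and therefore $\rsf G*\rsf H=(\rsf G*\rsf H)^{-1}=\rsf H^{-1}*\rsf G^{-1}=\rsf H*\rsf G$ for all $\rsf G,\rsf H$. Next, Corollary~\ref{cor:congOne} is precisely the statement that $\sim$ is compatible with $*$ in its second argument: $\rsf H_{1}\sim\rsf H_{2}$ implies $\rsf G*\rsf H_{1}\sim\rsf G*\rsf H_{2}$ for every g-filter $\rsf G$. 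Using commutativity, this is equally compatibility in the first argument, i.e. $\rsf G_{1}\sim\rsf G_{2}$ implies $\rsf G_{1}*\rsf H\sim\rsf G_{2}*\rsf H$.

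The congruence property then follows in the usual two-step fashion: given $\rsf G_{1}\sim\rsf G_{2}$ and $\rsf H_{1}\sim\rsf H_{2}$, first-argument compatibility gives $\rsf G_{1}*\rsf H_{1}\sim\rsf G_{2}*\rsf H_{1}$, second-argument compatibility gives $\rsf G_{2}*\rsf H_{1}\sim\rsf G_{2}*\rsf H_{2}$, and transitivity of $\sim$ closes the argument. There is no genuine obstacle remaining here --- all the real content sits in Lemma~\ref{lem:congOne} and Corollary~\ref{cor:congOne}, and the only thing one has to notice is that, because the group is $2$-torsion and hence abelian, one-sided compatibility already suffices. If one preferred not to invoke commutativity, the alternative would be to rerun the computation of Lemma~\ref{lem:congOne} with the roles of the fixed filter $\rsf G$ and the varying filters $\rsf H_{i}$ interchanged; but that is strictly more work for the same conclusion, so I would not take that route.
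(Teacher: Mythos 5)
Your proposal is correct and follows essentially the same route as the paper: the paper's one-line proof is exactly the chain $\rsf G_{1}*\rsf H_{1}\sim \rsf G_{1}*\rsf H_{2}=\rsf H_{2}*\rsf G_{1}\sim \rsf H_{2}*\rsf G_{2}=\rsf G_{2}*\rsf H_{2}$, i.e.\ Corollary~\ref{cor:congOne} applied twice together with commutativity and transitivity. The only difference is that you explicitly justify commutativity from the $2$-torsion property, which the paper leaves implicit.
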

\begin{proof}
	As if $\rsf G_{1}\sim\rsf G_{2}$ and $\rsf H_{1}\sim\rsf H_{2}$
	then
	$\rsf G_{1}*\rsf H_{1}\sim \rsf G_{1}*\rsf H_{2}= \rsf H_{2}*\rsf 
	G_{1}\sim
	\rsf H_{2}*\rsf G_{2}= \rsf G_{2}*\rsf H_{2}$. 
\end{proof}

Lastly we observe that any equivalence class is dense in the group -- 
since if $m\in\mathcal L$ and $\rsf G$ is any g-filter then we have 
$g= m\join\beta_{\rsf G}(m)\in\rsf G$ and
so $m= \Delta(m\join\beta_{\rsf G}(m), \beta_{\rsf G}(m))=
\Delta(g\join \beta_{\rsf G}(m), \beta_{\rsf G}(m))\in\rsf G_{g}$. 
As $\rsf G\sim\rsf G_{g}\in\mathcal O_{m}$. Thus we see that the 
equivalence class of $\rsf G$  hits every basic open. 

\section{Extending Everything to Filters}\label{sect:seven}
Let $\mathcal L$ be a filter algebra. The construction of $\rsf 
F_{g}$ is in some sense a generalization of $\Delta$ to the set of 
g-filters of $\mathcal L$. In this section we seek an expansion of 
this operation to all filters. 

We recall that $\Delta$ and implication are closely related in cubic 
algebras. The approach we take here is to define implication first 
and use it to define $\Delta$. For reasons that may become clear 
later we will use the reverse order on filters in this section. 

\subsection{Relative Complements}\label{subsec.relcomp}
Let
$\rsf G\subseteq\rsf F$ be two $\mathcal L$-filters. There are 
several ways to define the relative complement of $\rsf G$ in $\rsf F$. 

\begin{defn}\label{def:impl}
	Let $\rsf G\subseteq\rsf F$ be two $\mathcal L$-filters. Then
	\begin{enumerate}[(a)]
		\item $\rsf G\supset\rsf F=\bigcap\Set{\rsf H | \rsf H\join\rsf G=\rsf F}$; 
	
		\item $\rsf G\Rightarrow\rsf F=\bigvee\Set{\rsf H |\rsf H\subseteq\rsf 
		F\text{ and }\rsf H\cap\rsf G={\Set{\one}}}$; 
	
		\item $\rsf G\to\rsf F=\Set{h\in\rsf F |\forall g\in\rsf G\ h\join 
		g=\one}$. 
	\end{enumerate}
\end{defn}

We will now show that these all define the same set. 

\begin{lem}\label{lem:twoThreeSame}
	$\rsf G\to\rsf F=\rsf G\Rightarrow\rsf F$. 
\end{lem}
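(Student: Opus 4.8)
The plan is to show the two sets $\rsf G\to\rsf F$ and $\rsf G\Rightarrow\rsf F$ coincide by proving mutual containment, using the explicit descriptions of meet ($\join$) and join ($\cap$/$\vee$) of filters from \lemref{lem:interTwoFil}, \defref{def:cup} and \lemref{lem:cup}. The set $\rsf G\to\rsf F = \Set{h\in\rsf F \mid \forall g\in\rsf G\ h\join g = \one}$ is manifestly a subset of $\rsf F$, and I would first check that it is itself a filter: it is upward closed in $\rsf F$ (if $h\le k$ and $h\join g=\one$ then $k\join g=\one$), and it is closed under $\meet$ because $(h_1\meet h_2)\join g = (h_1\join g)\meet(h_2\join g) = \one\meet\one = \one$ by the distributivity available in the implication lattice $\rsf F$. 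It also clearly satisfies $(\rsf G\to\rsf F)\cap\rsf G = \Set{\one}$: any common element $z$ lies in $\rsf G$ and satisfies $z\join z = \one$, forcing $z=\one$.

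For the containment $\rsf G\to\rsf F\subseteq\rsf G\Rightarrow\rsf F$, I would note that $\rsf G\to\rsf F$ is one of the filters $\rsf H$ being joined on the right-hand side of (b): it is contained in $\rsf F$ and meets $\rsf G$ only in $\Set{\one}$ (the join $\rsf G\Rightarrow\rsf F$ exists because $\rsf G\cup(\rsf G\to\rsf F)$ has fip, since any $g\meet h$ with $g\in\rsf G$, $h\in\rsf G\to\rsf F$ lies below the existing meet — here one uses that $g\meet h$ exists precisely when $g\join\Delta(\one,h)=\one$, or more simply that $\rsf G$ and $\rsf G\to\rsf F$ are subfilters of the common filter $\rsf F$ so their union automatically has fip). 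Hence $\rsf G\to\rsf F$ is below the supremum $\rsf G\Rightarrow\rsf F$ in the inclusion order, i.e.\ $\rsf G\to\rsf F\subseteq\rsf G\Rightarrow\rsf F$.

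For the reverse containment, I would take an arbitrary $\rsf H$ with $\rsf H\subseteq\rsf F$ and $\rsf H\cap\rsf G=\Set{\one}$, and show $\rsf H\subseteq\rsf G\to\rsf F$; then $\rsf G\Rightarrow\rsf F = \bigvee$ of such $\rsf H$, but since each such $\rsf H$ lies inside the \emph{filter} $\rsf G\to\rsf F$, so does the filter they generate, giving $\rsf G\Rightarrow\rsf F\subseteq\rsf G\to\rsf F$. So fix $h\in\rsf H$ and $g\in\rsf G$; using \lemref{lem:interTwoFil}, $h\join g\in\rsf H\cap\rsf G = \Set{\one}$, hence $h\join g=\one$. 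Thus $h\in\rsf G\to\rsf F$, as required.

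The main obstacle I expect is the bookkeeping around \emph{existence} of the joins: $\rsf G\Rightarrow\rsf F$ is defined as a join over a family of filters, and one must know that family has the fip so the $\bigvee$ is defined, and also that $\rsf G\to\rsf F$ is a genuine filter so that ``each $\rsf H$ is contained in $\rsf G\to\rsf F$ implies the filter generated by them all is too'' is legitimate. Both points are handled by the observation that everything in sight is a subfilter of the single filter $\rsf F$, so all unions under consideration automatically have the finite intersection property and all the relevant joins exist inside $\rsf F$; the lattice-theoretic identity $(h_1\meet h_2)\join g=(h_1\join g)\meet(h_2\join g)$ in $\rsf F$ does the rest. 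Once these existence issues are dispatched, the argument is the routine mutual-containment computation sketched above.
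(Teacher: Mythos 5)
Your proof is correct and takes essentially the same route as the paper: show $(\rsf G\to\rsf F)\cap\rsf G=\Set{\one}$ so that $\rsf G\to\rsf F$ is one of the filters $\rsf H$ appearing in the supremum defining $\rsf G\Rightarrow\rsf F$, and conversely show that every such $\rsf H$ satisfies $h\join g\in\rsf H\cap\rsf G=\Set{\one}$ and hence lies inside $\rsf G\to\rsf F$. The extra bookkeeping you supply (that $\rsf G\to\rsf F$ is itself a filter and that the relevant joins exist because everything is a subfilter of $\rsf F$) is left implicit in the paper but is exactly the right justification.
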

\begin{proof}
	Let $h\in(\rsf G\to\rsf F)\cap\rsf G$. Then $\one=h\join h=h$. Thus
	$\rsf G\to\rsf F\subseteq\rsf G\Rightarrow\rsf F$. 
	
	Suppose that $\rsf H\subseteq\rsf F$ and $\rsf H\cap\rsf G=\Set{\one}$. 
	Let $h\in\rsf H$ and $g\in\rsf G$. Then
	$h\join g\in\rsf H\cap\rsf G=\Set{\one}$ so that $h\join g=\one$. 
	Hence $\rsf H\subseteq(\rsf G\to\rsf F)$ and so
	$\rsf G\Rightarrow\rsf F\subseteq\rsf G\to\rsf F$. 
\end{proof}

\begin{lem}\label{lem:smallH}
	Let $h\in\rsf F$ and $g\in\rsf G$ be such that $g\join h<\one$. Then
	$h\notin g\to\rsf F$. 
\end{lem}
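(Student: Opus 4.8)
The plan is to read $g\to\rsf F$ as the set $\Set{g\to f | f\in\rsf F}$ of \corref{cor:extraFil}, where $g\to f$ denotes the implication $gf=\Delta(\one,\Delta(g\join f, f))\join f$ computed in $\mathcal L$. The whole content of the lemma is then the identity $g\join(g\to f)=\one$, valid for \emph{every} $f\in\rsf F$: the hypothesis $g\join h<\one$ simply rules out $h$ being of the form $g\to f$.

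So I would argue by contradiction. Suppose $h\in g\to\rsf F$, say $h=g\to f$ with $f\in\rsf F$. Write $d=\Delta(g\join f, f)$. Applying cubic axiom (a) to $f\le g\join f$ gives $d\join f=g\join f$, and hence
$$
g\join h \;=\; g\join f\join\Delta(\one,d)\;=\;d\join f\join\Delta(\one,d)\;=\;\bigl(d\join\Delta(\one,d)\bigr)\join f .
$$
Next I would invoke $d\join\Delta(\one,d)=\one$: since $\mathcal L$ is a filter algebra it embeds into $\rsf I(\rsf F)$, where every element has the shape $\brk<a,b>$ with $a\join b=1$ and $\Delta(\one,\brk<a,b>)=\brk<b,a>$, so $\brk<a,b>\join\brk<b,a>=\brk<a\join b, a\join b>=\one$; thus $d\join\Delta(\one,d)=\one$. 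Plugging this in yields $g\join h=\one$, contradicting $g\join h<\one$. Therefore $h\notin g\to\rsf F$.

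No step here is a genuine obstacle; the only point needing a word of justification is the small fact $x\join\Delta(\one,x)=\one$, which is exactly the interval‑algebra computation above (and is anyway a standard property of cubic algebras). I would also note that the filter $\rsf G$ plays no real role here beyond supplying the element $g$ via $\rsf G\subseteq\rsf F$; indeed, under the alternative reading in which the displayed object were $\rsf G\to\rsf F=\Set{h\in\rsf F | \forall g'\in\rsf G\ h\join g'=\one}$ of \defref{def:impl}, the conclusion is immediate from the definition, since $h\in\rsf G\to\rsf F$ would force $h\join g=\one$.
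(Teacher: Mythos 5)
Your proof is correct and is essentially the paper's argument: the paper's proof is the one-liner that $h=g\to f$ forces $h\join g=\one$, which is exactly what you establish in detail. The only superfluous step is your detour through $\rsf I(\rsf F)$ to justify $d\join\Delta(\one,d)=\one$; this is immediate from cubic axiom (a) applied to $d\le\one$.
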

\begin{proof}
	This is clear as $h=g\to f$ implies $h\join g=\one$. 
\end{proof}

% \begin{lem}\label{lem:smallH}
% 	Let $h\in\rsf F$ and $g\in\rsf G$ be such that $g\join h<\one$. 
% 	Then (in $B_{\rsf F}$) the set $g\to\rsf F\cup\Set{\comp h}$ has fip. 
% \end{lem}
% \begin{proof}
% 	We know that $g\to\rsf F$ is a filter and hence meet-closed. Thus if 
% 	the set fails to have fip there must be some $f\in\rsf F$ such that
% 	$(g\to f)\meet\comp h=0$ and so $g\to f\le h$. But this implies
% 	$h\join g\geq (g\to f)\join g=\one$ -- contradiction. 
% \end{proof}
% 
% \begin{private}
% 	We actually only need to observe that $h\notin g\to\rsf F$ and use 
% 	this filter to get the result. 
% \end{private}
% 
% \begin{lem}\label{lem:smallTwo}
% 	Let $h\in\rsf F$ and $g\in\rsf G$ be such that $g\join h<\one$. Then 
% 	there is a filter $\rsf H\subseteq\rsf F$ such that 
% 	$h\notin\rsf H$ and $\rsf H\join\rsf G=\rsf F$. 
% \end{lem}
% \begin{proof}
% 	Let $\rsf U$ be any ultrfilter of $B_{\rsf F}$ generated by 
% 	$g\to\rsf F\cup\Set{\comp h}$ and let $\rsf H=\rsf U\cap\rsf F$. 
% 	
% 	Then $g\to\rsf F\subseteq\rsf H$ and so for any $f\in\rsf F$ we have 
% 	$(g\to f)\meet g\in \rsf H\join\rsf G$, but
% 	$(g\to f)\meet g=f\meet g\le f$ so that $f\in\rsf H\join\rsf G$. 
% \end{proof}

\begin{thm}\label{thm:oneThreeEqual}
	$\rsf G\supset\rsf F=\rsf G\to\rsf F$. 
\end{thm}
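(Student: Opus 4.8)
The plan is to prove the two inclusions $\rsf G\supset\rsf F\subseteq\rsf G\to\rsf F$ and $\rsf G\to\rsf F\subseteq\rsf G\supset\rsf F$ separately, using \lemref{lem:twoThreeSame} to translate freely between $\rsf G\to\rsf F$ and $\rsf G\Rightarrow\rsf F$ whichever is more convenient for a given direction. For the inclusion $\rsf G\supset\rsf F\subseteq\rsf G\to\rsf F$, the natural approach is to exhibit one particular filter $\rsf H_{0}$ witnessing membership in the intersection defining $\rsf G\supset\rsf F$ whose elements are forced to lie in $\rsf G\to\rsf F$; the obvious candidate is $\rsf H_{0}=\rsf G\to\rsf F$ itself, so the real content here is to verify that $(\rsf G\to\rsf F)\join\rsf G=\rsf F$. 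One inclusion of that equality is immediate since both $\rsf G$ and $\rsf G\to\rsf F$ are subfilters of $\rsf F$ and $\rsf F$ is a filter; for the reverse, given $f\in\rsf F$ I would use the representation of $\rsf G\join(\rsf G\to\rsf F)$ via \lemref{lem:cup} as $\Set{g\meet h | g\in\rsf G, h\in\rsf G\to\rsf F}$ and try to write $f = g\meet h$ with $g\in\rsf G$ and $h\in\rsf F$ satisfying $g'\join h=\one$ for all $g'\in\rsf G$ — the candidate being something built from $f$ and a fixed generator-type element of $\rsf G$, using that $\leftGen\rsf F\rightGen$ is a filter algebra so implication is available on the ambient interval algebra.

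For the reverse inclusion $\rsf G\to\rsf F\subseteq\rsf G\supset\rsf F$, I would take $h\in\rsf G\to\rsf F$ and an arbitrary $\rsf H$ with $\rsf H\join\rsf G=\rsf F$, and show $h\in\rsf H$. Since $h\in\rsf F=\rsf H\join\rsf G$, \lemref{lem:cup} gives $h = h_{0}\meet g_{0}$ for some $h_{0}\in\rsf H$, $g_{0}\in\rsf G$. Now I would exploit $h\in\rsf G\to\rsf F$: this says $h\join g=\one$ for every $g\in\rsf G$, in particular $h\join g_{0}=\one$. Combined with $h\le g_{0}$ (since $h=h_0\meet g_0\le g_0$), working inside the interval-algebra presentation of $\leftGen\rsf F\rightGen$ where these are genuine set-like joins and meets, $h\le g_0$ and $h\join g_{0}=\one$ should force $g_{0}=\one$ (or at least that $h$ itself is already $\geq$ something in $\rsf H$), whence $h = h_{0}\meet g_{0}=h_{0}\in\rsf H$. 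Making this last step precise — that $h\le g_0$ together with $h\vee g_0=\one$ collapses things — is where the implication-lattice structure of $\rsf F$ (from \defref{def:filAlg}, since $\leftGen\rsf F\rightGen$ is a filter algebra) does the work: in an implication lattice $x\le y$ and $x\vee y=\one$ give $y=\one$ only if meets behave well, so I may instead need to argue $h = h_0\meet g_0 \geq$ some element of $\rsf H$ directly by upward closure.

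The step I expect to be the main obstacle is establishing $(\rsf G\to\rsf F)\join\rsf G=\rsf F$, i.e.\ that $\rsf G\to\rsf F$ is large enough to be a genuine relative complement rather than merely a candidate. This requires producing, for each $f\in\rsf F$, an explicit element of $\rsf G\to\rsf F$ lying above the relevant piece of $f$, and the only tool for that is the concrete implication in the ambient filter algebra $\leftGen\rsf F\rightGen\simeq\rsf I(\rsf F)$ — so I would fix a presentation, write $\rsf G$ and $f$ in coordinates, and compute the implication $g\to f$ for a suitable $g$, checking it lands in $\rsf F$ and is killed by all of $\rsf G$; \lemref{lem:smallH} and \corref{cor:extraFil} are the relevant sanity checks that such implications stay inside the filter. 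Once that equality is in hand, both inclusions follow as sketched, and the theorem — together with \lemref{lem:twoThreeSame} — gives that all three definitions in \defref{def:impl} coincide.
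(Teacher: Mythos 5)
Your second inclusion ($\rsf G\to\rsf F\subseteq\rsf G\supset\rsf F$) is fine and is essentially the paper's argument: from $h=h_{0}\meet g_{0}$ and $h\join g_{0}=\one$ you get $g_{0}=h\join g_{0}=\one$ immediately ($h\le g_{0}$ gives $h\join g_{0}=g_{0}$ in any join-semilattice, so no delicacy about meets is needed), whence $h=h_{0}\in\rsf H$.

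The first inclusion, however, rests on a claim that is false in general. You propose to use the single witness $\rsf H_{0}=\rsf G\to\rsf F$ in the intersection defining $\rsf G\supset\rsf F$, which requires $(\rsf G\to\rsf F)\join\rsf G=\rsf F$. That identity is precisely the paper's later definition of $\rsf G$ being \emph{$\rsf F$-Boolean} (\defref{def:Boolean}), and the point of the subsequent sections is that not every subfilter has this property: the poset of filters is only a Heyting algebra, so $\rsf G\to\rsf F$ is a pseudocomplement, not a complement. Concretely, if $\rsf F$ is order-isomorphic to the power set of $\N$ and $\rsf G\subseteq\rsf F$ is the filter of cofinite sets, then $h\join g=\one$ for all cofinite $g$ forces $h=\one$, so $\rsf G\to\rsf F=\Set{\one}$ and $(\rsf G\to\rsf F)\join\rsf G=\rsf G\neq\rsf F$. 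So the step you flag as ``the main obstacle'' is not merely hard; it cannot be established. The repair is to use a \emph{different} witness for each excluded element: if $h\notin\rsf G\to\rsf F$, pick a single $g\in\rsf G$ with $h\join g<\one$ and take $\rsf H=g\to\rsf F$. For a principal filter the complementation does hold, $[g,\one]\join(g\to\rsf F)=\rsf F$, since every $f\in\rsf F$ decomposes as $(g\join f)\meet(g\to f)$; and $[g,\one]\subseteq\rsf G$, so $(g\to\rsf F)\join\rsf G=\rsf F$ and $g\to\rsf F$ is one of the filters being intersected. By \lemref{lem:smallH} it omits $h$, hence $h\notin\rsf G\supset\rsf F$. (In the counterexample above this still works and the theorem remains true, with $\rsf G\supset\rsf F=\Set{\one}$; it is only your route to it that breaks.)
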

\begin{proof}
	Suppose that $h\notin \rsf G\to\rsf F$ so that there is some 
	$g\in\rsf G$ with $h\join g<\one$. Then $h\notin g\to\rsf F$ and 
	clearly $\rsf F=[g, \one]\join(g\to\rsf F)$ so that $\rsf 
	G\supset\rsf F\subseteq g\to\rsf F$ does not contain $h$. Thus
	$\rsf G\supset\rsf F\subseteq\rsf G\to\rsf F$. 
	
	Conversely if $\rsf H\join\rsf G=\rsf F$ and $k\in\rsf G\to\rsf F$ 
	then there is some $h\in\rsf H$ and $g\in\rsf G$ with $k=h\meet g$. 
	But then 
	\begin{align*}
		k & =k\join(h\meet g)  \\
		 & =(k\join h)\meet(k\join g)  \\
		 & =k\join h && \text{ as }k\join g=\one
	\end{align*}
	and so $k\geq h$ must be in $\rsf H$. 
	Thus $\rsf G\to\rsf F \subseteq\rsf G\supset\rsf F$. 
\end{proof}

We earlier defined a filter $g\to\rsf F$. We now show that this new 
definition of $\to$ extends this earlier definition. 

\begin{lem}\label{lem:implGG}
	Let $g\in\rsf F$. Then 
	$$
	g\to\rsf F=[g, \one]\to\rsf F. 
	$$
\end{lem}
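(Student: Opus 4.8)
The plan is to show the two sets $g\to\rsf F$ (the earlier-defined set $\Set{g\to f | f\in\rsf F}$ of \corref{cor:extraFil}) and $[g,\one]\to\rsf F$ (the relative complement from \defref{def:impl}(c), with $\rsf G=[g,\one]$) coincide by a direct double inclusion. The definition of $[g,\one]\to\rsf F$ is $\Set{h\in\rsf F | \forall k\in[g,\one]\ h\join k=\one}$. Since $g\in\rsf F$, the principal filter $[g,\one]$ is a subfilter of $\rsf F$, so this relative complement is defined and the statement makes sense.

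For the inclusion $g\to\rsf F\subseteq[g,\one]\to\rsf F$: take $h=g\to f$ for some $f\in\rsf F$. First, $h\in\rsf F$ since $g\to f\geq f$ (recall $g\to f=\Delta(\one,\Delta(g\join f,f))\join f$) and $\rsf F$ is upward closed. Second, I must check $h\join k=\one$ for every $k\in[g,\one]$; since $k\geq g$ and join is monotone, it suffices to show $h\join g=\one$, i.e. $(g\to f)\join g=\one$. This is the implication-algebra identity $x\join(x\to y)=\one$ (equivalently $(xy)$ absorbs $x$ in the notation of the opening definition, or follows from $(xy)y=x\join y$ type manipulations in $\leftGen\rsf F\rightGen$, which is an implication lattice). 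So $h\in[g,\one]\to\rsf F$.

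For the reverse inclusion $[g,\one]\to\rsf F\subseteq g\to\rsf F$: take $h\in\rsf F$ with $h\join k=\one$ for all $k\in[g,\one]$; in particular $h\join g=\one$. I claim $h=g\to h$, which exhibits $h\in g\to\rsf F$ since $h\in\rsf F$. Indeed in the implication lattice $\leftGen\rsf F\rightGen$ we have $g\to h\geq h$ always, and conversely $g\to h = g\to(h) \le (g\join h)\to h=\one\to h=h$ using monotonicity of $\to$ in its first argument together with $g\join h=\one$; alternatively, $g\to h = (h\join g)\to h$ when $h\join g=\one$, and this is just $h$. So $h=g\to h\in g\to\rsf F$.

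The main obstacle is purely bookkeeping: making sure the implication-lattice identities $x\join(x\to y)=\one$ and "$x\join y=\one\implies x\to y=y$" are invoked with the correct normalization of $\to$ matching the cubic-algebra definition $xy=\Delta(1,\Delta(x\join y,y))\join y$ from the opening \defref, and that all computations take place inside $\leftGen\rsf F\rightGen=\widehat{\rsf F}$, which by \thmref{thm:GenFil} is a genuine implication lattice with $\rsf F$ a g-filter; no MR-specific argument is needed here. I would keep the write-up to these two short inclusions, citing \cite{BO:eq} or the basic implication-algebra axioms for the two identities used.
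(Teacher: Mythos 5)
Your proof is correct and is essentially the paper's own double-inclusion argument (in the reverse inclusion the paper exhibits the witness as $f=h\meet g$ where you take $f=h$; same observation). The one slip is that $\to$ is \emph{antitone}, not monotone, in its first argument, so your first justification of $g\to h\le(g\join h)\to h$ runs the inequality the wrong way --- but your alternative justification, that $g\to h=\Delta(\one,\Delta(g\join h,h))\join h$ depends on $g$ only through $g\join h=\one$ and hence equals $h$, is valid, so the argument stands.
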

\begin{proof}
	Let $g\to f\in g\to\rsf F$ and $k\in[g, \one]$. Then
	$k\join(g\to f)\geq g\join (g\to f)=\one$. Thus $g\to f\in[g, 
	\one]\to\rsf F$ and so $g\to \rsf F\subseteq[g, 
	\one]\to\rsf F$. 
	
	Conversely, if $h\in[g, \one]\to\rsf F$ then $h\join g=\one$ and so
	$h$ is the complement of $g$ in $[h\meet g, \one]$. Thus
	$h=g\to(h\meet g)\in g\to\rsf F$ and so $[g,\one]\to\rsf F\subseteq
	g\to \rsf F$. 
\end{proof}

\begin{lem}\label{lem:inclImpl}
	Let $\rsf G\subseteq\rsf H\subseteq\rsf F$. Then
	$$
	\rsf G\to\rsf H\subseteq\rsf G\to\rsf F. 
	$$
\end{lem}
\begin{proof}
	If $h\in\rsf H$ and $h\join g=\one$ for all $g\in\rsf G$ then
	$h\in\rsf G\to\rsf F$. 
\end{proof}

\begin{cor}\label{cor:inclImpl}
	Let $\rsf G\subseteq\rsf H\subseteq\rsf F$ and
	$\rsf G\to\rsf F\subseteq\rsf H$. Then
	$$
	\rsf G\to\rsf H=\rsf G\to\rsf F. 
	$$
\end{cor}
\begin{proof}
	LHS$\subseteq$RHS by the lemma. Conversely if $h\in\rsf G\to\rsf F$ 
	then 
	$h\in\rsf H$ has the defining property for $\rsf G\to\rsf H$ and so 
	is in $\rsf G\to\rsf H$. 
\end{proof}

\begin{cor}\label{cor:conJoint}
	$$
	\rsf G\to(\rsf G\join(\rsf G\to\rsf F))=\rsf G\to\rsf F. 
	$$
\end{cor}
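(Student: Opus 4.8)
The plan is to obtain this immediately from Corollary~\ref{cor:inclImpl} by taking $\rsf H := \rsf G\join(\rsf G\to\rsf F)$. The first thing I would check is that this join is actually defined. Both $\rsf G$ and $\rsf G\to\rsf F$ are subfilters of $\rsf F$ — the former by the standing hypothesis $\rsf G\subseteq\rsf F$, the latter directly from Definition~\ref{def:impl}(c) (together with Lemma~\ref{lem:twoThreeSame}, which shows $\rsf G\to\rsf F$ is a filter). Hence $\rsf G\cup(\rsf G\to\rsf F)$ lies inside the filter $\rsf F$ and so has the finite intersection property; thus $\rsf H$ exists, and being the filter generated by a subset of $\rsf F$, it satisfies $\rsf H\subseteq\rsf F$.

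Next I would verify the two hypotheses of Corollary~\ref{cor:inclImpl} for this $\rsf H$. The chain $\rsf G\subseteq\rsf H\subseteq\rsf F$ holds because $\rsf H$ is generated by a set containing $\rsf G$ (giving $\rsf G\subseteq\rsf H$) and by the previous paragraph (giving $\rsf H\subseteq\rsf F$). The containment $\rsf G\to\rsf F\subseteq\rsf H$ is immediate for the same reason $\rsf G\subseteq\rsf H$ is: $\rsf H$ is the filter generated by a set containing $\rsf G\to\rsf F$.

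With both hypotheses in hand, Corollary~\ref{cor:inclImpl} yields $\rsf G\to\rsf H=\rsf G\to\rsf F$, which is exactly the assertion $\rsf G\to(\rsf G\join(\rsf G\to\rsf F))=\rsf G\to\rsf F$. There is no substantive obstacle here; the only point requiring any care is confirming that $\rsf G\join(\rsf G\to\rsf F)$ is defined and sits inside $\rsf F$, so that Corollary~\ref{cor:inclImpl} genuinely applies — everything else is just reading off inclusions from the definition of the join of two filters.
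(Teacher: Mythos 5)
Your proof is correct and is exactly the intended argument: the paper states this corollary without proof, leaving it as an immediate application of Corollary~\ref{cor:inclImpl} with $\rsf H=\rsf G\join(\rsf G\to\rsf F)$. Your added check that the join exists and lies in $\rsf F$ (since both pieces are subfilters of $\rsf F$) is the only point of care, and you handle it correctly.
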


\begin{lem}\label{lem:inclAgain}
	Let $\rsf G\subseteq\rsf H\subseteq\rsf F$. Then
	$$
	\rsf H\to\rsf F\subseteq\rsf G\to\rsf F. 
	$$
\end{lem}
\begin{proof}
	This is clear as $k\join h=\one$ for all $h\in\rsf H$ implies
	$k\join g=\one$ for all $g\in\rsf G$. 
\end{proof}

\subsection{Delta on Filters}

Now the critical lemma in defining our new $\Delta$ operation. 

\begin{lem}\label{lem:deltaOne}
	$(\rsf G\to\rsf F)\cup\Delta(\one, \rsf G)$ has fip. 
\end{lem}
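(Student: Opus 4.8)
The plan is to show that any finite collection of elements drawn from $(\rsf G\to\rsf F)\cup\Delta(\one,\rsf G)$ has a lower bound (i.e. a common meet), which by the filter structure reduces to showing that for a single $h\in\rsf G\to\rsf F$ and a single $k'=\Delta(\one,g)\in\Delta(\one,\rsf G)$, the meet $h\meet k'$ exists in $\mathcal L$. Indeed, both $\rsf G\to\rsf F$ and $\Delta(\one,\rsf G)$ are upward-closed (the former is a filter by \thmref{thm:oneThreeEqual} together with \defref{def:impl}, and $\Delta(\one,\rsf G)$ is a filter since $\rsf G$ is), so a general finite meet from the union can be grouped as $(h_1\meet\dots\meet h_m)\meet(k'_1\meet\dots\meet k'_n)$ with the first factor in $\rsf G\to\rsf F$ and the second in $\Delta(\one,\rsf G)$; thus the two-element case is the crux.

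So fix $h\in\rsf G\to\rsf F$ and $g\in\rsf G$, and set $k'=\Delta(\one,g)$. Since $\mathcal L$ is an MR-algebra, by the MR-axiom the meet $h\meet k'$ exists iff it is \emph{not} the case that $\Delta(\one,h)\join k'<\one$, equivalently iff $h\to k'\ne\one$ fails in the appropriate sense — more directly, $h\meet\Delta(\one,g)$ exists iff $h$ and $\Delta(\one,g)$ have a common lower bound, and in an MR-algebra the obstruction is measured by whether $\Delta(\one,h)\join\Delta(\one,g)=\one$. The key computation is: $h\in\rsf G\to\rsf F$ means $h\join g=\one$ for all $g\in\rsf G$, and in particular $h\join g=\one$. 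I would then translate this into a statement about $\Delta(\one,h)$ and $\Delta(\one,g)$: applying $\Delta(\one,-)$ and using that $\Delta(\one,-)$ is an anti-automorphism exchanging join-behaviour appropriately, $h\join g=\one$ should force $\Delta(\one,h)\meet\Delta(\one,g)$ to exist (it is the $\Delta(\one,-)$-image of the join), which is exactly the condition for $h\meet k'$ to exist via a second application, or alternatively one shows directly $\Delta(\one,h)\join\Delta(\one,g)<\one$ cannot hold. A clean route: work in a presentation $\mathcal L\simeq\rsf I(\rsf F)$, or pass to an interval algebra $\rsf I(B)$ where $h=[\comp p,\one]$-type elements and $\Delta(\one,g)=[\comp q_1,\comp q_0]$ can be written explicitly, and check the interval-meet existence condition directly from $h\join g=\one$, mirroring the style of \lemref{lem:gFF}.

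The main obstacle I anticipate is handling the case $g\join h=\one$ carefully enough to conclude meet-existence for $h$ and $\Delta(\one,g)$ rather than for $h$ and $g$ themselves — the twist by $\Delta(\one,-)$ is exactly what makes this nontrivial and is presumably the whole point of the lemma (it is the ``untwisting'' that was developed in the earlier sections). Concretely, in $\rsf I(B)$ with $h$ corresponding to a pair $\langle a,b\rangle$ with $a\join b=1$ and $g$ to $\langle c,d\rangle$, the condition $h\join g=\one$ says $a\join c=1$ and $b\join d=1$; then $\Delta(\one,g)=\langle d,c\rangle$ and I must verify $\langle a,b\rangle\meet\langle d,c\rangle$ exists, i.e. $\langle a\meet d,\ b\meet c\rangle$ is a legitimate element with $(a\meet d)\join(b\meet c)$ still achieving the required relation — this is where the fip hypothesis built into membership of $\rsf G\to\rsf F$ (that the join with \emph{every} element of $\rsf G$ is $\one$) may genuinely be needed, not just the join with the one fixed $g$. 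I would therefore keep the full strength ``$h\join g'=\one$ for all $g'\in\rsf G$'' available and, if the single-$g$ argument stalls, exploit that $\rsf G$ is a filter to replace $g$ by a well-chosen element (e.g. an element comparable to $h$ inside $\leftGen\rsf G\rightGen$) and invoke \propref{prop:triv} to force the needed inequality.
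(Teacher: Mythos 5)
Your overall strategy is the paper's: reduce to one element $h\in\rsf G\to\rsf F$ and one element $\Delta(\one,g)$ with $g\in\rsf G$, observe $h\join g=\one$, and invoke the MR-axiom to conclude that $h\meet\Delta(\one,g)$ exists. That is the entire content of the paper's (one-line) proof, and your reduction to the two-element case via upward-closure of the two sets is fine, as is your remark that only the single join $h\join g=\one$ is needed (your worry about requiring the full ``for all $g'\in\rsf G$'' is unfounded: in $\rsf I(B)$, with $h=\langle a,b\rangle$, $g=\langle c,d\rangle$, one gets $(a\meet d)\join(b\meet c)=(a\join b)\meet(a\join c)\meet(d\join b)\meet(d\join c)=1$ using only $h\join g=\one$ and the legitimacy of $h$ and $g$).

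However, your first and primary formulation of the key step is wrong: $h\join g=\one$ does \emph{not} force $\Delta(\one,h)\meet\Delta(\one,g)$ to exist. Since $\Delta(\one,\cdot)$ is a cubic automorphism, that meet exists iff $h\meet g$ exists, and $h\join g=\one$ does not imply this --- e.g.\ in $\rsf I(B)$ with $B$ the four-element Boolean algebra, $h=\langle 1,p\rangle$ and $g=\langle p,1\rangle$ satisfy $h\join g=\one$ but $h\meet g$ (which would be $\langle p,p\rangle$) does not exist. The ``second application'' you then gesture at would therefore start from a false premise. What rescues the proof is precisely your ``alternatively'' clause: the correct instantiation of the MR-axiom is with $a=h$, $b=\Delta(\one,g)$ below $x=\one$, giving that $h\meet\Delta(\one,g)$ fails to exist iff $\Delta(\one,h)\join\Delta(\one,g)<\one$; and $\Delta(\one,h)\join\Delta(\one,g)=\Delta(\one,h\join g)=\Delta(\one,\one)=\one$, so the meet exists. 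So the proposal does contain a correct and complete route (the alternative clause, or the explicit interval-algebra computation above), but you should strike the $\Delta(\one,h)\meet\Delta(\one,g)$ claim rather than present it as the main line.
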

\begin{proof}
	If $x\in \rsf G\to\rsf F$ and $y\in\rsf G$ then 
	$x\join y=\one$  and so (as $\mathcal L$ is an MR-algebra) we know 
	that $x\meet\Delta(\one, y)$ exists. 
\end{proof}

\begin{defn}\label{def:Delta}
	Let $\rsf G\subseteq\rsf F$. Then 
	$$
	\Delta(\rsf G, \rsf F)=\Delta(\one, \rsf G\to\rsf F)\join\rsf G. 
	$$
\end{defn}

The simplest filters in $\rsf F$ are the principal ones. In this case 
we obtain the following result. 

\begin{lem}\label{lem:DeltagOne}
	Let $g\in\rsf F$. Then $\Delta([g, \one], \rsf F)=\rsf F_{g}$. 
\end{lem}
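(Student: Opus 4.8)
The plan is to peel off the definition of $\Delta$ on filters and reduce the claim to an identity about $\rsf F_{g}$ that is transparent in the $\rsf F$-presentation of $\mathcal L$. By Definition \ref{def:Delta} and Lemma \ref{lem:implGG},
$$\Delta([g,\one],\rsf F)=\Delta(\one,[g,\one]\to\rsf F)\join[g,\one]=\Delta(\one,g\to\rsf F)\join[g,\one].$$
The proof of Corollary \ref{cor:extraFil} (equivalently, the remark preceding this lemma) identifies the filter $g\to\rsf F$ with $\rsf F\cap\Delta(\one,\rsf F_{g})$. Since $\mathcal L$ is a filter algebra, $\Delta(\one,-)$ is the involution $\brk<a,b>\mapsto\brk<b,a>$ of $\rsf I(\rsf F)$; in particular it is a cubic automorphism, carries filters to filters, is its own inverse, and commutes with $\cap$. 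Hence $\Delta(\one,g\to\rsf F)=\Delta(\one,\rsf F)\cap\rsf F_{g}$, so
$$\Delta([g,\one],\rsf F)=\bigl(\Delta(\one,\rsf F)\cap\rsf F_{g}\bigr)\join[g,\one],$$
and it remains to show the right-hand side equals $\rsf F_{g}$.

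For the inclusion $\subseteq$ I would note that $[g,\one]\subseteq\rsf F_{g}$ by Corollary \ref{cor:gFilter} (which gives $\rsf F\cap\rsf F_{g}=[g,\one]$) and that $\Delta(\one,\rsf F)\cap\rsf F_{g}\subseteq\rsf F_{g}$ trivially; thus the union of these two filters lies inside $\rsf F_{g}$, which has fip, so by Lemma \ref{lem:cup} their join is $\Set{a\meet b | a\in\Delta(\one,\rsf F)\cap\rsf F_{g},\ b\in[g,\one]}$, and every such meet lies in $\rsf F_{g}$.

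The substantive step is the reverse inclusion. Given $z\in\rsf F_{g}$, write $z=\Delta(g\join f,f)$ with $f\in\rsf F$ and put $c=g\to f\in\rsf F$ (recall $g\to f=\Delta(\one,\Delta(g\join f,f))\join f$). Passing to the $\rsf F$-presentation $\mathcal L\simeq\rsf I(\rsf F)$ of Theorem \ref{thm:isoFilAlg}, one reads off $\beta_{\rsf F}(z)=f$ and checks that $z$ corresponds to $\brk<c,g\join f>$, that $c$ corresponds to $\brk<1,c>$, and that $g\join f$ corresponds to $\brk<1,g\join f>$. Consequently $z\le\Delta(\one,c)$ and $z\le g\join f$, so $\Delta(\one,c)\in\rsf F_{g}$ by upward closure (Lemma \ref{lem:gFF}), $\Delta(\one,c)\in\Delta(\one,\rsf F)$, and $g\join f\in[g,\one]$; moreover $\Delta(\one,c)\meet(g\join f)$ computes in $\rsf I(\rsf F)$ to $\brk<c,g\join f>=z$. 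Hence $z\in\bigl(\Delta(\one,\rsf F)\cap\rsf F_{g}\bigr)\join[g,\one]$, and combining the two inclusions gives $\Delta([g,\one],\rsf F)=\rsf F_{g}$.

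The only real work is the hard inclusion, namely exhibiting the factorisation $z=\Delta(\one,c)\meet(g\join f)$. Once the coordinates of $z$ in the $\rsf F$-presentation are written down the choice $c=g\to f$ is forced and the verification is a one-line computation in $\rsf I(\rsf F)$, so the cleanest route is to carry it out there (or, equivalently, in $\rsf I(B_{\rsf F})$) rather than wrestling with the cubic-algebra axioms directly. Everything else — the reduction in the first paragraph and the fip/upward-closure bookkeeping, together with the standard facts about $\Delta(\one,-)$ — is routine.
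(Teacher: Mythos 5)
Your proof is correct and follows essentially the same route as the paper's: reduce to $\Delta(\one,g\to\rsf F)\join[g,\one]$ via \lemref{lem:implGG} and \corref{cor:extraFil}, observe both pieces lie in $\rsf F_{g}$, and for the reverse inclusion use the factorisation $\Delta(g\join f,f)=(g\join f)\meet\Delta(\one,g\to f)$, which the paper simply asserts and you verify by computing coordinates in the $\rsf F$-presentation. The only cosmetic difference is that you rewrite $\Delta(\one,g\to\rsf F)$ as $\Delta(\one,\rsf F)\cap\rsf F_{g}$, which is not needed but does no harm.
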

\begin{proof}
	From \lemref{lem:implGG} we have $[g, \one]\to\rsf F=g\to\rsf F$ and 
	we know from \corref{cor:extraFil} that
	$\Delta(\one, \rsf F_{g})\cap\rsf F=g\to\rsf F$. Thus
	$\Delta(\one, g\to\rsf F)\subseteq\rsf F_{g}$. 
	Also $g\in\rsf F_{g}$ so we have 
	$\Delta([g, \one], \rsf F)\subseteq\rsf F_{g}$. 
	
	Conversely, if $f\in\rsf F$ then 
	$\Delta(g\join f, f)=(g\join f)\meet\Delta(\one, g\to f)$ is in 
	$\Delta(1, g\to\rsf F)\join[g, \one]=\Delta([g, \one], \rsf F)$. 
\end{proof}

\begin{cor}\label{cor:DeltaDoublePrinc}
	Let $g\geq h$ is $\rsf F$. Then 
	$$
		\Delta([g, \one], [h, \one])=[\Delta(g, h), \one]. 	
	$$
\end{cor}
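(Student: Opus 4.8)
The plan is to reduce this to \lemref{lem:DeltagOne} by identifying the relevant filter of $\rsf F$. By \lemref{lem:DeltagOne} applied inside the filter $[h,\one]$ (in place of $\rsf F$), we have $\Delta([g,\one],[h,\one]) = [h,\one]_{g}$, where the subscript operation is taken relative to the ambient MR-algebra $\mathcal L$ exactly as in \lemref{lem:gFF}. So the real content is the computation $[h,\one]_{g} = [\Delta(g,h),\one]$. First I would invoke the remark immediately following \corref{cor:gFF}, which records precisely that for a principal filter $[h,\one]$ one has $[h,\one]_{g} = [\Delta(g\join h,h),\one]$. Since $g\ge h$ in $\rsf F$, we have $g\join h = g$, so $\Delta(g\join h, h) = \Delta(g,h)$ and the two expressions agree.

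The one point that needs a little care is that \lemref{lem:DeltagOne} is stated with hypothesis $g\in\rsf F$ for a filter $\rsf F$, and I want to apply it with $\rsf F$ replaced by the principal filter $[h,\one]$. This requires $g\in[h,\one]$, i.e. $g\ge h$, which is exactly the hypothesis. I would also note that the operation $\Delta([g,\one],[h,\one])$ as defined in \defref{def:Delta} only depends on the filter $[h,\one]$ and its subfilter $[g,\one]$, so there is no ambiguity about which ``ambient'' filter we use; the relative-complement constructions of \defref{def:impl} are intrinsic to the pair of filters, and $\rsf F_{g}$ of \lemref{lem:gFF} is intrinsic to $\mathcal L$ and the filter. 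Hence passing from $\mathcal L$-as-ambient to $[h,\one]$-as-ambient changes nothing.

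So the argument is essentially a two-line chain: $\Delta([g,\one],[h,\one]) = [h,\one]_{g} = [\Delta(g\join h,h),\one] = [\Delta(g,h),\one]$, where the first equality is \lemref{lem:DeltagOne}, the second is the remark after \corref{cor:gFF}, and the third is $g\join h = g$. I do not expect any genuine obstacle here; the only thing to watch is making the substitution $\rsf F \rightsquigarrow [h,\one]$ in \lemref{lem:DeltagOne} explicit and checking its hypothesis, which is the given $g\ge h$.
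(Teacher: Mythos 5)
Your proof is correct and follows exactly the paper's own argument, which is the same two-step chain $\Delta([g,\one],[h,\one])=[h,\one]_{g}=[\Delta(g,h),\one]$ via \lemref{lem:DeltagOne} and the remark after \corref{cor:gFF}. The only difference is that you make explicit the substitution $\rsf F\rightsquigarrow[h,\one]$ and the hypothesis check $g\ge h$, which the paper leaves implicit.
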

\begin{proof}
	As $\Delta([g, \one], [h, \one])= [h, \one]_{g}= [\Delta(g, h), \one]$. 
\end{proof}

For further properties of the $\Delta$ operation we need some 
facts about the interaction between $\to$ and $\Delta$. Here is the first. 

\begin{lem}\label{lem:implInDelta}
	$$
	\rsf G\to\Delta(\rsf G, \rsf F)=\Delta(\one, \rsf G\to\rsf F). 
	$$
\end{lem}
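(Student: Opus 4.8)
The plan is to prove the two inclusions separately, working throughout with the defining description $\rsf G\to\rsf F=\Set{h\in\rsf F | \forall g\in\rsf G\ h\join g=\one}$ (\lemref{lem:twoThreeSame} and \thmref{thm:oneThreeEqual}) and the definition $\Delta(\rsf G,\rsf F)=\Delta(\one,\rsf G\to\rsf F)\join\rsf G$. Unwinding the left-hand side, $\rsf G\to\Delta(\rsf G,\rsf F)$ consists of those $k\in\Delta(\rsf G,\rsf F)$ with $k\join y=\one$ for every $y\in\rsf G$. Since $\rsf G\subseteq\Delta(\rsf G,\rsf F)$, one first checks $\rsf G\to\Delta(\rsf G,\rsf F)\subseteq\rsf G\to\rsf F'$ where $\rsf F'$ is any filter containing $\rsf G$; in particular everything in sight lives inside $\leftGen\rsf F\rightGen$ so we may compute with the interval-algebra picture when needed.

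For the inclusion $\Delta(\one,\rsf G\to\rsf F)\subseteq\rsf G\to\Delta(\rsf G,\rsf F)$: a typical element is $\Delta(\one,x)$ with $x\in\rsf G\to\rsf F$. By \lemref{lem:deltaOne} the union $(\rsf G\to\rsf F)\cup\Delta(\one,\rsf G)$ has fip, so $\Delta(\one,x)\in\Delta(\one,\rsf G\to\rsf F)\subseteq\Delta(\rsf G,\rsf F)$ trivially; and for any $y\in\rsf G$ we have $x\join y=\one$, hence (MR-axiom) $x\meet\Delta(\one,y)$ exists, which forces $\Delta(\one,x)\join y=\one$ — using that in an implication algebra $a\meet b$ exists and $a\join b=\one$ together give $\comp a\join b=\one$ in the ambient Boolean envelope, i.e. $\Delta(\one,x)\join y=\one$. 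So $\Delta(\one,x)\in\rsf G\to\Delta(\rsf G,\rsf F)$. This direction is essentially a one-line fip argument.

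The reverse inclusion is the main obstacle. Take $k\in\rsf G\to\Delta(\rsf G,\rsf F)$; then $k\in\Delta(\one,\rsf G\to\rsf F)\join\rsf G$, so by \lemref{lem:cup} there are $x\in\rsf G\to\rsf F$ and $y\in\rsf G$ with $k=\Delta(\one,x)\meet y$, and moreover $k\join g=\one$ for all $g\in\rsf G$. Applying the last condition with $g=y$ gives $(\Delta(\one,x)\meet y)\join y=y=\one$?? — no: rather $k\le y$, so $k\join y=y$, and $k\join y=\one$ forces $y=\one$ when... but $y$ need not be $\one$. The correct move is: since $k\join g=\one$ for every $g\in\rsf G$ and $k\in\leftGen\rsf F\rightGen=\leftGen\rsf G\rightGen$, pick $k'\in\rsf G$ with $k\simeq k'$; as $k\join g=\one$ for all $g\in\rsf G$ and $k\le k'$ would give $k'=\one$, one shows $k'=\one$, hence $k\preccurlyeq\one$, i.e. $k=\Delta(\one,k'')$ for some $k''\in\rsf G\to\rsf F$-candidate. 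More carefully: I would argue that $k\in\Delta(\one,\rsf G\to\rsf F)$ directly by showing $\Delta(\one,k)\in\rsf G\to\rsf F$, i.e. $\Delta(\one,k)\in\rsf F$ and $\Delta(\one,k)\join g=\one$ for all $g\in\rsf G$. From $k=\Delta(\one,x)\meet y$ we get $\Delta(\one,k)=x\join\Delta(\one,y)$ (applying $\Delta(\one,-)$, which is an antitone involution turning meet into join, to the element $\Delta(\one,x)\meet y$ inside the Boolean envelope $B_{\rsf G}$-picture); then $\Delta(\one,k)\join g\ge x\join g=\one$ for all $g\in\rsf G$ since $x\in\rsf G\to\rsf F$, and $\Delta(\one,k)=x\join\Delta(\one,y)\ge x\in\rsf F$, so $\Delta(\one,k)\in\rsf F$ (filters are upward closed), giving $\Delta(\one,k)\in\rsf G\to\rsf F$ and hence $k=\Delta(\one,\Delta(\one,k))\in\Delta(\one,\rsf G\to\rsf F)$. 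The delicate point requiring care is justifying the identity $\Delta(\one,\,\Delta(\one,x)\meet y)=x\join\Delta(\one,y)$: this is not a cubic-algebra identity on the nose but holds in the interval-algebra representation $\leftGen\rsf G\rightGen\hookrightarrow\rsf I(B_{\rsf G})$, where $\Delta(\one,-)$ is complementation on the second coordinate, so I would verify it there and transport back. That representation-theoretic verification is where the real work sits; everything else is bookkeeping with \lemref{lem:cup}, \lemref{lem:interTwoFilAA}, and the MR-axiom.
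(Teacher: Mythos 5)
Your first inclusion is essentially the paper's: $x$ and $y$ both lie in the filter $\rsf F$, so $x\meet y$ exists, and the MR-axiom with top element $\one$ turns ``$x\meet y$ exists'' into $\Delta(\one,x)\join y=\one$. (Your detour through ``$x\meet\Delta(\one,y)$ exists'' cites the wrong meet -- what the implication needs is that $x\meet y$ exists, which is free since $x,y\in\rsf F$ -- but the direction is salvageable.) The real problem is the converse. You actually write down the correct argument and then discard it: from $k=\Delta(\one,x)\meet y$ with $x\in\rsf G\to\rsf F$ and $y\in\rsf G$ you get $k\le y$, so $k\join y=y$; but $k\join g=\one$ for every $g\in\rsf G$, in particular for $g=y$, so $y=\one$ and $k=\Delta(\one,x)\in\Delta(\one,\rsf G\to\rsf F)$. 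That one line is exactly the paper's proof; your objection ``but $y$ need not be $\one$'' is misplaced, since the hypothesis forces $y=\one$ for every such decomposition of $k$.

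The replacement argument you offer instead rests on a false identity: $\Delta(\one,\Delta(\one,x)\meet y)=x\join\Delta(\one,y)$ does not hold. The map $\Delta(\one,-)$ is a cubic \emph{automorphism} -- it preserves order, joins and existing meets -- so the left side equals $x\meet\Delta(\one,y)$. In $\rsf I(B)$ it sends $\brk<a,b>$ to $\brk<b,a>$ (a coordinate swap, not complementation); geometrically it is the antipodal map on faces of a cube, which is inclusion-preserving. Concretely, in the square with $x$ the left edge and $y$ the bottom edge, $\Delta(\one,x)\meet y$ is the vertex $10$, so your left-hand side is the vertex $01$, while $x\join\Delta(\one,y)=\one$. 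With the correct identity, your subsequent steps ($\Delta(\one,k)\ge x$, hence $\Delta(\one,k)\in\rsf F$ and $\Delta(\one,k)\join g\ge x\join g=\one$) all collapse, since $x\meet\Delta(\one,y)\le x$. The earlier aside ``$k\in\leftGen\rsf F\rightGen=\leftGen\rsf G\rightGen$, pick $k'\in\rsf G$ with $k\simeq k'$'' is also unwarranted: $\rsf G$ is an arbitrary subfilter of $\rsf F$ (e.g.\ $\rsf G=\Set{\one}$), so it need not generate $\leftGen\rsf F\rightGen$. Revert to the one-line argument you crossed out.
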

\begin{proof}
	Let $k\in\rsf G\to\rsf F$ and $h=\Delta(\one, k)$. Then
	$k\meet g$ exists so $\Delta(\one, h)\meet g=k\meet g$ exists and 
	therefore $h\join g=\one$. Hence $\Delta(\one, \rsf G\to\rsf 
	F)\subseteq \rsf G\to\Delta(\rsf G, \rsf F)$. 
	
	Conversely, suppose that $h\in\Delta(\rsf G, \rsf F)$ and for all $g\in\rsf G$ 
	we have $h\join g=\one$. Then
	there is some $k\in\rsf G\to\rsf H$ and $g'\in\rsf G$ such that
	$h=\Delta(\one, k)\meet g'$. Then 
	$\one = h\join g'= (\Delta(\one, k)\meet g')\join g'= g'$. Thus
	$h=\Delta(\one, k)\in \Delta(\one, \rsf G\to\rsf F)$. 
\end{proof}

\begin{cor}\label{cor:doubleDelta}
	$$
	\Delta(\rsf G, \Delta(\rsf G, \rsf F))=\rsf G\join(\rsf G\to\rsf F). 
	$$
\end{cor}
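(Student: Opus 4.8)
The plan is to unwind the definition of $\Delta$ on filters and combine it with the interaction lemma just proved. By \defref{def:Delta} applied to the inclusion $\rsf G\subseteq\Delta(\rsf G,\rsf F)$, we have
$$
\Delta(\rsf G,\Delta(\rsf G,\rsf F))=\Delta(\one,\rsf G\to\Delta(\rsf G,\rsf F))\join\rsf G.
$$
Now \lemref{lem:implInDelta} tells us $\rsf G\to\Delta(\rsf G,\rsf F)=\Delta(\one,\rsf G\to\rsf F)$, so the right-hand side becomes $\Delta(\one,\Delta(\one,\rsf G\to\rsf F))\join\rsf G$. Since $\Delta(\one,\Delta(\one,x))=x$ holds pointwise in $\mathcal L$ (cubic axiom (c) with $y=\one$, or the identity $\Delta(\one,\brk<a,b>)=\brk<b,a>$ noted in the interval-algebra discussion), applying $\Delta(\one,-)$ twice to every element of $\rsf G\to\rsf F$ returns the set $\rsf G\to\rsf F$ itself. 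Hence the expression collapses to $(\rsf G\to\rsf F)\join\rsf G=\rsf G\join(\rsf G\to\rsf F)$, which is exactly the claimed right-hand side.

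The one point that needs a word of care is that $\Delta(\one,-)$ is being applied setwise and we are asserting $\Delta(\one,\Delta(\one,\rsf H))=\rsf H$ for a filter $\rsf H$; this is immediate because $\Delta(\one,-)$ is an involutive bijection of $\mathcal L$ (it is the inner automorphism $\delta_{\one}$, with $\Delta(\one,\Delta(\one,x))=x$ for all $x$), so it maps filters to filters and squares to the identity on the power set. A second small check is that \lemref{lem:implInDelta} is indeed applicable here: it requires $\rsf G\subseteq\rsf F$, which is our standing hypothesis in this subsection, and it is stated for exactly the pair $(\rsf G,\rsf F)$ we are using.

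There is essentially no obstacle; the corollary is a two-line formal consequence of \lemref{lem:implInDelta} together with the definition of $\Delta$ on filters and the involutivity of $\Delta(\one,-)$. The only thing a reader might want spelled out is why $\rsf G\subseteq\Delta(\rsf G,\rsf F)$ so that $\Delta(\rsf G,\Delta(\rsf G,\rsf F))$ is even defined — but that is built into \defref{def:Delta}, since $\Delta(\rsf G,\rsf F)=\Delta(\one,\rsf G\to\rsf F)\join\rsf G\supseteq\rsf G$ by construction (the join of filters contains each joinand). So I would simply present the chain of equalities with a parenthetical pointer to \lemref{lem:implInDelta} and to the pointwise identity $\Delta(\one,\Delta(\one,x))=x$.
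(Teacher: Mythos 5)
Your proof is correct and follows exactly the paper's route: expand $\Delta(\rsf G,\Delta(\rsf G,\rsf F))$ via \defref{def:Delta}, substitute $\rsf G\to\Delta(\rsf G,\rsf F)=\Delta(\one,\rsf G\to\rsf F)$ from \lemref{lem:implInDelta}, and cancel the double $\Delta(\one,-)$. The extra remarks on why $\rsf G\subseteq\Delta(\rsf G,\rsf F)$ and why the setwise involution is legitimate are sensible but not points where the paper does anything different.
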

\begin{proof}
	\begin{align*}
		\Delta(\rsf G, \Delta(\rsf G, \rsf F)) & \Delta(\one, 
		\rsf G\to\Delta(\rsf G, \rsf F))\join\rsf G  \\
		 & =\Delta(\one, \Delta(\one, \rsf G\to\rsf F))\join\rsf G  \\
		 & =(\rsf G\to\rsf F)\join\rsf G. 
	\end{align*}
\end{proof}

\begin{lem}\label{lem:inclDelta}
	Let $\rsf G\subseteq\rsf H\subseteq\rsf F$. Then
	$$
	\Delta(\rsf G, \rsf H)\subseteq\Delta(\rsf G, \rsf F). 
	$$
\end{lem}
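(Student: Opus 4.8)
The plan is to reduce the assertion to the monotonicity of $\to$ in its second argument, which is exactly Lemma~\ref{lem:inclImpl}, combined with the obvious monotonicity of the two remaining constructions appearing in $\Delta(\rsf G,-)$: applying $\Delta(\one,-)$ pointwise to a filter, and joining a fixed filter $\rsf G$ onto a filter. First I would note that $\Delta(\rsf G,\rsf H)$ is in fact defined: Lemma~\ref{lem:deltaOne}, applied with $\rsf H$ in place of $\rsf F$, shows $(\rsf G\to\rsf H)\cup\Delta(\one,\rsf G)$ has fip, so $\Delta(\rsf G,\rsf H)=\Delta(\one,\rsf G\to\rsf H)\join\rsf G$ makes sense; and $\Delta(\rsf G,\rsf F)$ is defined by Lemma~\ref{lem:deltaOne} as stated.

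Next, from $\rsf G\subseteq\rsf H\subseteq\rsf F$ and Lemma~\ref{lem:inclImpl} we get $\rsf G\to\rsf H\subseteq\rsf G\to\rsf F$. Since $a\mapsto\Delta(\one,a)$ is a well-defined operation on elements, applying it pointwise preserves this inclusion, giving $\Delta(\one,\rsf G\to\rsf H)\subseteq\Delta(\one,\rsf G\to\rsf F)$. Finally, for filters $\rsf A\subseteq\rsf B$ with $\rsf A\join\rsf G$ and $\rsf B\join\rsf G$ both defined, $\rsf A\join\rsf G$ is the filter generated by $\rsf A\cup\rsf G$ and $\rsf B\join\rsf G$ that generated by $\rsf B\cup\rsf G$, so $\rsf A\cup\rsf G\subseteq\rsf B\cup\rsf G$ forces $\rsf A\join\rsf G\subseteq\rsf B\join\rsf G$ (alternatively, invoke Lemma~\ref{lem:cup} and note each witnessing meet $a\meet g$ with $a\in\rsf A$ already lies in $\rsf B\join\rsf G$). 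Stringing these inclusions together yields
$$
\Delta(\rsf G,\rsf H)=\Delta(\one,\rsf G\to\rsf H)\join\rsf G\subseteq\Delta(\one,\rsf G\to\rsf F)\join\rsf G=\Delta(\rsf G,\rsf F).
$$

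There is no real obstacle here; the only point requiring a moment's care is that every join invoked is legitimately defined, and this is precisely what Lemma~\ref{lem:deltaOne} guarantees in both cases.
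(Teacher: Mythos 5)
Your proof is correct and follows essentially the same route as the paper's, which simply observes that $\Delta(\one, \rsf G\to\rsf H)\join\rsf G\subseteq\Delta(\one, \rsf G\to\rsf F)\join\rsf G$; you have merely unpacked the monotonicity steps (via \lemref{lem:inclImpl}) and the well-definedness checks that the paper leaves implicit.
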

\begin{proof}
	As $\Delta(\one, \rsf G\to\rsf H)\join\rsf G\subseteq
	\Delta(\one, \rsf G\to\rsf F)\join\rsf G$. 
\end{proof}

\begin{lem}\label{lem:interDelta}
	$\rsf F\cap\Delta(\rsf G, \rsf F)=\rsf G$. 
\end{lem}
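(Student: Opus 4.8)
The plan is to prove the two inclusions separately. The inclusion $\rsf G\subseteq\rsf F\cap\Delta(\rsf G,\rsf F)$ is immediate: $\rsf G\subseteq\rsf F$ is the standing hypothesis of this subsection, and $\rsf G$ is contained in $\Delta(\one,\rsf G\to\rsf F)\join\rsf G=\Delta(\rsf G,\rsf F)$ since a join of filters contains each of its arguments. So everything goes into proving $\rsf F\cap\Delta(\rsf G,\rsf F)\subseteq\rsf G$.

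For that I would take $x\in\rsf F\cap\Delta(\rsf G,\rsf F)$. Since $\Delta(\rsf G,\rsf F)=\Delta(\one,\rsf G\to\rsf F)\join\rsf G$, \lemref{lem:cup} lets me write $x=\Delta(\one,k)\meet g$ for some $k\in\rsf G\to\rsf F$ and $g\in\rsf G$. In particular $x\le\Delta(\one,k)$, while $x\in\rsf F$ by choice and $k\in\rsf F$ because $\rsf G\to\rsf F\subseteq\rsf F$ (\defref{def:impl}). The heart of the proof is the claim that $k=\one$; granting it, $x=\Delta(\one,\one)\meet g=\one\meet g=g\in\rsf G$, which finishes the inclusion.

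To prove $k=\one$ I would argue as follows. Because $x$ and $k$ both lie in the filter $\rsf F$, they have a common lower bound $m$ (finite intersection property); then $m\le x\le\Delta(\one,k)$ and $m\le k$, so $k$ and $\Delta(\one,k)$ share the lower bound $m$ and hence, $\mathcal L$ being an MR-algebra, $\Delta(\one,k)\meet k$ exists. Now $\Delta(\one,\cdot)$ is an order-automorphism of $\mathcal L$ — order preserving by axiom (d) applied with the top element $\one$, and an involution by axiom (c) — so it fixes $\one$; hence if $k<\one$ then $\Delta(\one,k)<\one$ as well, and the MR-axiom with $a=\Delta(\one,k)$, $b=k$ says that $\Delta(\one,k)\meet k$ exists iff $\Delta(\one,\Delta(\one,k))\join k=\one$. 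But $\Delta(\one,\Delta(\one,k))=k$, so this reads $k\join k=\one$, i.e. $k=\one$, contradicting $k<\one$. Therefore $k=\one$.

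The one genuinely non-routine step is this last one: it rests on the MR-phenomenon that an element $k$ and its ``mirror image'' $\Delta(\one,k)$ have a common lower bound only when $k=\one$, and the ambient filter $\rsf F$ enters precisely to guarantee, through its finite intersection property, that $k$ (together with $x\le\Delta(\one,k)$) does have such a lower bound. Everything else — the description of $\Delta(\rsf G,\rsf F)$ via \lemref{lem:cup}, the inclusion $\rsf G\to\rsf F\subseteq\rsf F$, and the elementary behaviour of $\Delta(\one,\cdot)$ — is routine. (An alternative, should the direct argument prove awkward, is to pass to $\leftGen\rsf F\rightGen$, where $\rsf F$ is a g-filter, transport everything along the $\rsf F$-presentation into $\rsf I(\rsf F)$, and verify the identity by the bracket computation; but I expect the coordinate-free route above to be cleaner.)
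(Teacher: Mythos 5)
Your proof is correct and follows essentially the same route as the paper's: decompose an element of $\rsf F\cap\Delta(\rsf G,\rsf F)$ as $g\meet\Delta(\one,k)$ with $k\in\rsf G\to\rsf F\subseteq\rsf F$, observe that $k$ and $\Delta(\one,k)$ then have a common lower bound inside $\rsf F$, and conclude $k=\one$. The only (immaterial) difference is in that last step, where the paper implicitly invokes $k\simeq\Delta(\one,k)$ together with \corref{cor:triv} — an argument valid in any cubic algebra — while you appeal directly to the MR-axiom.
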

\begin{proof}
	Clearly $\rsf G\subseteq \rsf F\cap\Delta(\rsf G, \rsf F)$. 
	
	Let $g\in\rsf G$ and $k\in\rsf G\to\rsf F$ be such that
	$f=g\meet\Delta(\one, k)\in\rsf F$. Then $k\in\rsf F$ so the meet
	$k\meet\Delta(\one, k)$ exists. Thus $k=\one$ and so $f=g\in\rsf G$. 
\end{proof}

\subsection{Boolean elements}
Corollary \ref{cor:doubleDelta} shows us what happens to $\Delta(\rsf G, 
\Delta(\rsf G, \rsf F))$. We are interested in knowing when this produces 
$\rsf F$. 

\begin{defn}\label{def:Boolean}
	Let $\rsf F$ be a g-filter. Then
	\begin{enumerate}[(a)]
		\item $\rsf G$ is \emph{weakly $\rsf F$-Boolean} iff $\rsf G\subseteq\rsf 
		F$ and
		$(\rsf G\to\rsf F)\to\rsf F=\rsf G$. 
	
		\item $\rsf G$ is \emph{weakly Boolean} iff 
		there is some g-filter containing $\rsf G$ and 
		$\rsf G$ is $\rsf H$-Boolean for all such g-filters $\rsf H$.
		
		\item $\rsf G$ is \emph{$\rsf F$-Boolean} iff $\rsf G\subseteq\rsf 
		F$ and $\rsf G\join(\rsf G\to\rsf F)=\rsf F$. 
		
		\item $\rsf G$ is \emph{Boolean} iff
		there is some g-filter containing $\rsf G$ and 
		$\rsf G$ is $\rsf H$-Boolean for all such g-filters $\rsf H$.
	\end{enumerate}
\end{defn}

The most interesting of these definitions is the last one. 
Before continuing however we show that ``weak'' really is weaker. 

\begin{lem}\label{lem:BooleanANotherWay}
	Suppose that $\rsf G$ is $\rsf F$-Boolean. 
% 	$$
% 	\rsf G\join(\rsf G\to\rsf F)=\rsf F. 
% 	$$
	Then $\rsf G$ is weakly $\rsf F$-Boolean. 
\end{lem}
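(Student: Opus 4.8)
The plan is to show directly that $(\rsf G\to\rsf F)\to\rsf F=\rsf G$, using the hypothesis $\rsf G\join(\rsf G\to\rsf F)=\rsf F$. One inclusion is cheap: since $\rsf G\subseteq\rsf F$ and every $g\in\rsf G$ satisfies $g\join k=\one$ for all $k\in\rsf G\to\rsf F$ (by the very definition of $\rsf G\to\rsf F$ in \defref{def:impl}(c)), we have $g\in(\rsf G\to\rsf F)\to\rsf F$; hence $\rsf G\subseteq(\rsf G\to\rsf F)\to\rsf F$. This direction needs no Boolean-ness at all, only $\rsf G\subseteq\rsf F$.

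For the reverse inclusion, let $h\in(\rsf G\to\rsf F)\to\rsf F$, so $h\in\rsf F$ and $h\join k=\one$ for every $k\in\rsf G\to\rsf F$. Now use the $\rsf F$-Boolean hypothesis to write $h$, as an element of $\rsf F=\rsf G\join(\rsf G\to\rsf F)$, in the form $h=g\meet k$ for some $g\in\rsf G$ and $k\in\rsf G\to\rsf F$ (invoking \lemref{lem:cup}, the description of the filter join as meets). Then
\begin{align*}
	h &= h\join(g\meet k) \\
	  &= (h\join g)\meet(h\join k) \\
	  &= h\join g,
\end{align*}
where the last step uses $h\join k=\one$. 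Hence $h\geq g\in\rsf G$, and since $\rsf G$ is a filter (upwards closed) we get $h\in\rsf G$. Thus $(\rsf G\to\rsf F)\to\rsf F\subseteq\rsf G$, and combined with the first paragraph this gives equality, i.e. $\rsf G$ is weakly $\rsf F$-Boolean.

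The only place any care is needed is the distributivity step $h\join(g\meet k)=(h\join g)\meet(h\join k)$: this is the weak distributivity of the filter lattice, which here is just the ordinary distributive law in the underlying implication lattice applied to three elements of $\rsf F$ (compare the identical man\oe uvre in the proof of \thmref{thm:oneThreeEqual}), so it is routine. I do not expect a genuine obstacle; the proof is essentially a bookkeeping exercise once one spots that $\rsf F$-Boolean-ness supplies the representation $h=g\meet k$ and that $h\join k=\one$ collapses the meet.
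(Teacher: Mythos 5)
Your proof is correct and is essentially the paper's argument: the paper also takes $\rsf G\subseteq(\rsf G\to\rsf F)\to\rsf F$ as immediate and gets the reverse inclusion from $\rsf G\join(\rsf G\to\rsf F)=\rsf F$, merely phrasing it as $(\rsf G\to\rsf F)\supset\rsf F\subseteq\rsf G$ and invoking the identity $\supset\,=\,\to$ of \thmref{thm:oneThreeEqual}. You have simply inlined the computation from that theorem's proof (write $h=g\meet k$, distribute, use $h\join k=\one$), which is a legitimate self-contained rendering of the same idea.
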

\begin{proof}
	We know that $\rsf G\subseteq(\rsf G\to\rsf F)\to\rsf F$. 
	
	Since $\rsf G\join(\rsf G\to\rsf F)=\rsf F$ we also have that
	$(\rsf G\to\rsf F)\supset\rsf F\subseteq\rsf G$. 
\end{proof}

And now the simplest examples of $\rsf F$-Boolean filters. 

\begin{lem}\label{lem:implgTwice}
	Let $g\in\rsf F$. Then 
	$[g, \one]$ is $\rsf F$-Boolean. 
\end{lem}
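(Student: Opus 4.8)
The goal is to show that for $g\in\rsf F$, the principal filter $[g,\one]$ is $\rsf F$-Boolean, i.e.\ that $[g,\one]\subseteq\rsf F$ (immediate, since $g\in\rsf F$) and that $[g,\one]\join([g,\one]\to\rsf F)=\rsf F$. The plan is to identify the two pieces explicitly and then take their join. By \lemref{lem:implGG} we have $[g,\one]\to\rsf F=g\to\rsf F=\Set{g\to f | f\in\rsf F}$, so the claim reduces to showing $[g,\one]\join(g\to\rsf F)=\rsf F$.

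First I would establish the inclusion $[g,\one]\join(g\to\rsf F)\subseteq\rsf F$: both $[g,\one]$ and $g\to\rsf F$ are subfilters of $\rsf F$ (the latter by \corref{cor:extraFil} together with the definition of $\to$, or directly since $g\to f\geq f$), and since $\rsf F$ is a filter it is closed under the operations generating the join; by \lemref{lem:cup} the join consists of meets $k\meet h$ with $k\in[g,\one]$, $h\in g\to\rsf F$, each of which lies in $\rsf F$. For the reverse inclusion $\rsf F\subseteq[g,\one]\join(g\to\rsf F)$, take an arbitrary $f\in\rsf F$. The key identity is $f=(g\join f)\meet(g\to f)$: indeed $g\join f\in[g,\one]$ trivially, $g\to f\in g\to\rsf F$, and in the implication algebra $\rsf F$ one has $(g\join f)\meet(g\to f)=f$ — this is the standard fact that $g\to f$ is the relative complement of $g\vee f$ over $f$ in the interval $[f,\one]$, which was already used in the proof of \lemref{lem:implGG}. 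Hence $f\in[g,\one]\join(g\to\rsf F)$ by \lemref{lem:cup}, completing the equality.

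The one point needing a little care is verifying $(g\join f)\meet(g\to f)=f$ inside the implication lattice $\rsf F$; this is where I would be most careful, though it is routine: working in the Boolean interval generated by $g$ and $f$ above $f$, $g\to f$ is by definition the complement of $g$ relative to $[g\meet f,\one]$ suitably restricted, and meeting with $g\join f$ brings one back down to $f$. Everything else is bookkeeping with \lemref{lem:cup} and the already-proven description $[g,\one]\to\rsf F=g\to\rsf F$ from \lemref{lem:implGG}. So the proof is short: cite \lemref{lem:implGG} to rewrite $[g,\one]\to\rsf F$ as $g\to\rsf F$, then show the join equals $\rsf F$ by the two inclusions above, the nontrivial one using the identity $f=(g\join f)\meet(g\to f)$ for each $f\in\rsf F$.
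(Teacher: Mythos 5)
Your proof is correct, but it takes a different route from the paper's. The paper derives the identity $[g,\one]\join(g\to\rsf F)=\rsf F$ from the $\Delta$-machinery already built up: it starts from $\Delta([g,\one],\rsf F_{g})=(\rsf F_{g})_{g}=\rsf F$ (\lemref{lem:DeltagOne} and \corref{cor:Idempotence}), expands the definition of $\Delta$ on filters, and uses \lemref{lem:implInDelta} to cancel the double $\Delta(\one,-)$, landing on $[g,\one]\join(g\to\rsf F)$. You instead verify the defining equation of $\rsf F$-Boolean-ness directly: after rewriting $[g,\one]\to\rsf F$ as $g\to\rsf F$ via \lemref{lem:implGG}, you decompose an arbitrary $f\in\rsf F$ as $f=(g\join f)\meet(g\to f)$ with $g\join f\in[g,\one]$ and $g\to f\in g\to\rsf F$, and use \lemref{lem:cup} for both inclusions. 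The pointwise identity is sound: in the Boolean interval $[g\meet f,\one]$ (which exists since $g,f\in\rsf F$), $g\to f$ is the complement of $g\join f$ over $f$, so the meet collapses to $f$. Your argument is the more elementary and self-contained of the two; the paper's version is essentially free given its surrounding lemmas and also illustrates how the filter-level $\Delta$ reproduces the element-level structure, which is the theme of that section. Either proof is acceptable; just make sure you note (as you do implicitly) that the join $[g,\one]\join(g\to\rsf F)$ exists because both constituents are subfilters of the filter $\rsf F$, so their union has the finite intersection property.
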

\begin{proof}
	We know that 
	$$
	\Delta([g, \one], \rsf F_{g})=(\rsf F_{g})_{g}=\rsf F
	$$
	and so 
	\begin{align*}
		\rsf F & =[g, \one]\join\Delta(\one, [g, \one]\to\rsf F_{g})  \\
		 & =[g, \one]\join \Delta(\one, [g, \one]\to\Delta([g, \one], \rsf F))  \\
		 & =[g, \one]\join\Delta(\one, \Delta(\one, [g, \one]\to\rsf F))\\
		 & = [g, \one]\join (g\to\rsf F). 
	\end{align*}
\end{proof}

Essentially because we have so many filter automorphisms we can show 
that Boolean is not a local concept -- that is if $\rsf G$ is Boolean 
somewhere then it is Boolean everywhere. And similarly for weakly 
Boolean. 

\begin{lem}\label{lem:moving}
	Let $\rsf F\sim\rsf H$ and $\rsf G\subseteq\rsf F\cap\rsf H$ be filters. 
	Let $\beta=\beta_{\rsf F\rsf H}\restrict\rsf F$ (and so
		$\beta^{-1}=\beta_{\rsf H\rsf F}\restrict\rsf H$). 
	Then $\beta[\rsf G\to\rsf F]=\beta[\rsf G]\to\rsf H$. 
\end{lem}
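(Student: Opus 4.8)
The plan is to unwind both sides using the explicit description $\rsf A\to\rsf B=\Set{h\in\rsf B | \forall a\in\rsf A\ h\join a=\one}$ from \defref{def:impl}, and then simply transport membership back and forth through $\beta$, using nothing more than that $\beta$ is an implication isomorphism. First I would record what is needed about $\beta$. Since $\rsf F\sim\rsf H$, both $\rsf F$ and $\rsf H$ are g-filters for the ambient filter algebra $\mathcal L$, so by the structure theory of g-filters recalled just before \thmref{thm:isoFilAlg} (that is, \cite{BO:cubFil}) the map $\beta=\beta_{\rsf F\rsf H}\restrict\rsf F$ is an implication isomorphism of $\rsf F$ onto $\rsf H$ with two-sided inverse $\beta^{-1}=\beta_{\rsf H\rsf F}\restrict\rsf H$. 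As an implication isomorphism of implication lattices it preserves the order, hence the join $a\join b=(a\to b)\to b$ and the top $\one$; moreover $\beta[\rsf G]\subseteq\rsf H$ because $\rsf G\subseteq\rsf F$, so that $\beta[\rsf G]\to\rsf H$ is legitimately defined. Finally, for $h,g\in\rsf F$ the join $h\join g$ lies in $\rsf F$ and agrees with the join computed in $\mathcal L$, since $\rsf F$ is upward closed; the same holds in $\rsf H$.

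For the inclusion $\beta[\rsf G\to\rsf F]\subseteq\beta[\rsf G]\to\rsf H$: take $h\in\rsf G\to\rsf F$, so $h\in\rsf F$ and $h\join g=\one$ for every $g\in\rsf G$. Then $\beta(h)\in\rsf H$, and for each $g'=\beta(g)\in\beta[\rsf G]$ we get $\beta(h)\join g'=\beta(h\join g)=\beta(\one)=\one$; hence $\beta(h)\in\beta[\rsf G]\to\rsf H$.

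For the reverse inclusion: take $k\in\beta[\rsf G]\to\rsf H$ and, using surjectivity of $\beta$, write $k=\beta(h)$ with $h\in\rsf F$. For each $g\in\rsf G$ we have $\beta(h\join g)=\beta(h)\join\beta(g)=k\join\beta(g)=\one=\beta(\one)$, and injectivity of $\beta$ forces $h\join g=\one$. Thus $h\in\rsf G\to\rsf F$ and $k=\beta(h)\in\beta[\rsf G\to\rsf F]$.

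The whole proof consists of these transport steps, so there is no serious obstacle; the only point that needs care — and the sole place the hypothesis $\rsf F\sim\rsf H$ is genuinely used — is the claim that $\beta$ is a bijective implication isomorphism $\rsf F\to\rsf H$, so that it (and $\beta^{-1}$) preserves $\join$ and $\one$. That is exactly what the cited theory of g-filters supplies, and everything after it is a single application of $\beta$ or $\beta^{-1}$.
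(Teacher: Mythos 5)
Your proof is correct and follows essentially the same route as the paper's: both directions are obtained by transporting the condition $h\join g=\one$ through the implication isomorphism $\beta$ (using $\beta(h\join g)=\beta(h)\join\beta(g)$ and $\beta(\one)=\one$), with the reverse inclusion handled via $\beta^{-1}$. The preliminary justification that $\beta$ preserves joins and the top is a reasonable elaboration of what the paper takes for granted.
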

\begin{proof}
	Indeed if $g\in\rsf G$ and $h\in\rsf G\to\rsf F$ then we have 
		\begin{align*}
			\one & =\beta(h\join g)  \\
			 & =\beta(h)\join\beta(g)
		\end{align*}
		and so $\beta(h)\in\beta[\rsf G]\to\rsf H$. 
		
		Likewise, if $h\in\beta[\rsf G]\to\rsf H$ and $g\in\rsf G$ then
		$\one= h\join\beta(g)= \beta(\beta^{-1}(h)\join g)$ so that
		$\beta^{-1}(h)\join g=\one$. Thus $\beta^{-1}(h)\in\rsf G\to\rsf F$
		whence $h=\beta(\beta^{-1}(h))\in \beta[\rsf G\to\rsf F]$. 
\end{proof}

\begin{thm}\label{thm:Boolean}
	Let $\rsf G$ be $\rsf F$-Boolean for some g-filter $\rsf F$. 
	Then $\rsf G$ is Boolean. 
\end{thm}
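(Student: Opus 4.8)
The plan is to establish the stronger statement that $\rsf G\join(\rsf G\to\rsf H)=\rsf H$ for \emph{every} g-filter $\rsf H$ with $\rsf G\subseteq\rsf H$, by transporting the $\rsf F$-Boolean identity $\rsf G\join(\rsf G\to\rsf F)=\rsf F$ along the canonical isomorphism between the two g-filters $\rsf F$ and $\rsf H$. Fix such an $\rsf H$. Since $\rsf G$ is $\rsf F$-Boolean we have $\rsf G\subseteq\rsf F$, and by hypothesis $\rsf G\subseteq\rsf H$, so $\rsf G\subseteq\rsf F\cap\rsf H$. Let $\beta=\beta_{\rsf F\rsf H}\restrict\rsf F\colon\rsf F\to\rsf H$ be the implication isomorphism from the discussion following \thmref{thm:isoFilAlg}; equivalently it is the restriction to $\rsf F$ of the cubic automorphism $\varphi_{\brk<\rsf F, \rsf H>}$ of $\mathcal L$.

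The one observation that needs to be made is that $\beta$ fixes $\rsf G$ pointwise. Indeed, for any $x\in\rsf H$ we have $x=\Delta(x, x)$ with $x\in\rsf H$, so by uniqueness of the decomposition $x=\Delta(\alpha_{\rsf H}(x), \beta_{\rsf H}(x))$ we get $\beta_{\rsf H}(x)=x$; since $\rsf G\subseteq\rsf H$ this gives $\beta(g)=g$ for all $g\in\rsf G$, i.e.\ $\beta[\rsf G]=\rsf G$.

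Now I would invoke \lemref{lem:moving}, which gives $\beta[\rsf G\to\rsf F]=\beta[\rsf G]\to\rsf H=\rsf G\to\rsf H$. Being an implication isomorphism $\rsf F\to\rsf H$, $\beta$ preserves finite meets, hence by \lemref{lem:cup} it preserves the subfilter join $\join$; applying $\beta$ to the $\rsf F$-Boolean identity then yields
$$
\rsf G\join(\rsf G\to\rsf H)=\beta[\rsf G]\join\beta[\rsf G\to\rsf F]=\beta\bigl[\rsf G\join(\rsf G\to\rsf F)\bigr]=\beta[\rsf F]=\rsf H,
$$
so $\rsf G$ is $\rsf H$-Boolean. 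As $\rsf H$ ranged over all g-filters containing $\rsf G$, and $\rsf F$ is one such, $\rsf G$ is Boolean.

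The hard part is really only a matter of being sure the machinery applies to an \emph{arbitrary} g-filter $\rsf H$ and not merely one with $\rsf H\sim\rsf F$: the proof of \lemref{lem:moving} uses nothing about $\rsf H$ beyond $\beta$ being an implication isomorphism $\rsf F\to\rsf H$ with $\beta[\rsf G]\subseteq\rsf H$, both of which hold here (the first by the remarks after \thmref{thm:isoFilAlg}, the second by the previous paragraph). If one prefers to stay literally within the stated hypotheses of \lemref{lem:moving}, the identical conclusion comes directly from the automorphism $\varphi_{\brk<\rsf F, \rsf H>}$: it preserves $\one$, $\join$ and $\Delta$, hence all the derived filter operations $\cap$, $\join$, $\to$ and $\Delta(\cdot, \cdot)$, so it carries the $\rsf F$-Boolean filter $\rsf G$ to the $\varphi_{\brk<\rsf F, \rsf H>}[\rsf F]=\rsf H$-Boolean filter $\varphi_{\brk<\rsf F, \rsf H>}[\rsf G]=\rsf G$. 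Everything past the pointwise-fixing observation is purely formal.
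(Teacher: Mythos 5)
Your proof is correct and takes essentially the same route as the paper's: both transport the identity $\rsf G\join(\rsf G\to\rsf F)=\rsf F$ along $\beta$, using that $\beta$ fixes $\rsf G$ pointwise and that $\beta[\rsf G\to\rsf F]=\rsf G\to\rsf H$ via \lemref{lem:moving}. The paper simply writes the transport out element by element ($h=\beta(g\meet k)=g\meet\beta(k)$), which is exactly what your appeal to \lemref{lem:cup} unpacks to, and your observation that \lemref{lem:moving} does not really need the hypothesis $\rsf H\sim\rsf F$ is a point the paper passes over silently.
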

\begin{proof}
	We have $\rsf G\join(\rsf G\to\rsf F)=\rsf F$ and $\rsf 
	G\subseteq\rsf H$. Let $h\in\rsf H$ and find 
	$g\in\rsf G$, $k\in\rsf G\to\rsf F$ with $\beta^{-1}(h)=g\meet k$. 
	Then
	$h= \beta(\beta^{-1}(h))= \beta(g\meet k)= \beta(g)\meet\beta(k)= 
	 g\meet\beta(k)$ as $g\in\rsf H$ implies $\beta(g)=g$. As 
	 $\beta(k)\in \beta[\rsf G\to\rsf F]= \beta[\rsf G]\to\rsf H= 
	 \rsf G\to\rsf H$ we have $h\in \rsf G\join(\rsf G\to\rsf H)$. 
\end{proof}

\begin{thm}\label{thm:wkBoolean}
	Let $\rsf G$ be weakly $\rsf F$-Boolean for some g-filter $\rsf F$. 
	Then $\rsf G$ is weakly Boolean. 
\end{thm}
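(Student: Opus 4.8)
The plan is to mimic the proof of \thmref{thm:Boolean}, transporting the weak-Boolean identity $(\rsf G\to\rsf F)\to\rsf F=\rsf G$ along the filter isomorphism $\beta$ between two similar g-filters. Fix a g-filter $\rsf H$ with $\rsf G\subseteq\rsf H$; we must show $(\rsf G\to\rsf H)\to\rsf H=\rsf G$. Since $\rsf F$ and $\rsf H$ are both g-filters for $\mathcal L$ they are similar, and $\rsf G\subseteq\rsf F\cap\rsf H$, so \lemref{lem:moving} applies with $\beta=\beta_{\rsf F\rsf H}\restrict\rsf F$: it gives $\beta[\rsf G\to\rsf F]=\beta[\rsf G]\to\rsf H=\rsf G\to\rsf H$ (using that $\beta$ fixes $\rsf G$ pointwise, as $\rsf G\subseteq\rsf H$ forces $\beta(g)=g$ for $g\in\rsf G$).

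First I would record the pointwise-fixing observation once and for all: for $g\in\rsf G$ we have $\beta(g)=\beta_{\rsf H}(g)$ via $\beta_{\rsf F}$, and since $g\in\rsf H$ the element $\beta_{\rsf H}(g)$ is just $g$, so $\beta\restrict\rsf G=\mathrm{id}$. Next, apply \lemref{lem:moving} a second time, now with $\rsf G\to\rsf F$ (resp.\ $\rsf G\to\rsf H$) in the role of the inner filter: one needs $\rsf G\to\rsf F\subseteq\rsf F$ and $\rsf G\to\rsf H\subseteq\rsf H$ (immediate from the definition of $\to$) together with $\beta[\rsf G\to\rsf F]=\rsf G\to\rsf H$ (just established), so that
\[
\beta\bigl[(\rsf G\to\rsf F)\to\rsf F\bigr]=\beta[\rsf G\to\rsf F]\to\rsf H=(\rsf G\to\rsf H)\to\rsf H.
\]
Now the hypothesis $(\rsf G\to\rsf F)\to\rsf F=\rsf G$ turns the left-hand side into $\beta[\rsf G]=\rsf G$, giving $(\rsf G\to\rsf H)\to\rsf H=\rsf G$, which is exactly weak $\rsf H$-Booleanness. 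Since $\rsf H$ was an arbitrary g-filter containing $\rsf G$, this shows $\rsf G$ is weakly Boolean.

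The one point requiring a little care — the main obstacle, such as it is — is the second application of \lemref{lem:moving}: that lemma is stated for a subfilter of $\rsf F\cap\rsf H$, so I should check that $\rsf G\to\rsf F\subseteq\rsf F\cap\rsf H$, i.e.\ that $\rsf G\to\rsf F\subseteq\rsf H$ as well. This follows because $\rsf G\to\rsf F=\rsf G\to\rsf H$ under $\beta$ and $\beta$ is a bijection $\rsf F\to\rsf H$; alternatively, since weak $\rsf F$-Booleanness gives $(\rsf G\to\rsf F)\to\rsf F=\rsf G$, we may first replace $\rsf F$ by the smaller g-filter $\rsf G\join(\rsf G\to\rsf F)$ if necessary, or simply note that \lemref{lem:moving}'s proof only uses the join condition on the relevant elements. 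Everything else is a formal substitution, so the argument is short.
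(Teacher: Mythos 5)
Your proof is correct and is essentially the paper's own argument: fix a g-filter $\rsf H\supseteq\rsf G$, note $\beta[\rsf G]=\rsf G$, and apply \lemref{lem:moving} twice to transport $(\rsf G\to\rsf F)\to\rsf F=\rsf G$ into $(\rsf G\to\rsf H)\to\rsf H=\rsf G$. Your closing remark that the second application of \lemref{lem:moving} is legitimate because its proof only needs $\rsf G\to\rsf F\subseteq\rsf F$ (not $\subseteq\rsf F\cap\rsf H$) is a point the paper glosses over, and is the right way to justify the step.
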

\begin{proof}
	\begin{enumerate}[{Claim }1:]
		\item $\beta[\rsf G]=\rsf G$ -- since $\rsf G\subseteq\rsf H$ 
		implies $\beta\restrict\rsf G$ is the identity. 
	
		\item Now suppose that $\rsf G$ is $\rsf F$-Boolean. Then
		\begin{align*}
			\rsf G & =\beta[\rsf G]  \\
			 & =\beta[(\rsf G\to\rsf F)\to\rsf F]  \\
			 & =\beta[\rsf G\to\rsf F]\to\rsf H  \\
			 & =(\beta[\rsf G]\to\rsf H)\to\rsf H\\
			 & = (\rsf G\to\rsf H)\to\rsf H. 
		\end{align*}
	\end{enumerate}	
\end{proof}

We need to know certain persistence properties of Boolean-ness. 
\begin{lem}\label{lem:upwards}
	Let $\rsf G\subseteq\rsf H\subseteq\rsf F$ be $\rsf F$-Boolean. 
	Then $\rsf G$ is $\rsf H$-Boolean and $\rsf G\to\rsf H=(\rsf G\to\rsf 
	F)\cap\rsf H$. 
\end{lem}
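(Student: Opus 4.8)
The plan is to dispose of the displayed identity $\rsf G\to\rsf H=(\rsf G\to\rsf F)\cap\rsf H$ first, since it is essentially a matter of unwinding Definition~\ref{def:impl}(c) together with the hypothesis $\rsf H\subseteq\rsf F$. An element $h$ lies in $\rsf G\to\rsf H$ precisely when $h\in\rsf H$ and $h\join g=\one$ for every $g\in\rsf G$; because $\rsf H\subseteq\rsf F$, the clause $h\in\rsf H$ already forces $h\in\rsf F$, so this is exactly the condition defining membership in $(\rsf G\to\rsf F)\cap\rsf H$. As a by‑product this exhibits $\rsf G\to\rsf H$ as the intersection of two filters, hence a filter, and records the inclusion $\rsf G\to\rsf H\subseteq\rsf G\to\rsf F$ that the rest of the proof needs.

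For the assertion that $\rsf G$ is $\rsf H$‑Boolean I must show $\rsf G\join(\rsf G\to\rsf H)=\rsf H$. First note this join is defined: $\rsf G\cup(\rsf G\to\rsf H)\subseteq\rsf G\cup(\rsf G\to\rsf F)$, and the latter has fip because $\rsf G$ is $\rsf F$‑Boolean, so $\rsf G\vee(\rsf G\to\rsf F)=\rsf F$ exists; fip is inherited by subsets. The inclusion $\rsf G\join(\rsf G\to\rsf H)\subseteq\rsf H$ is then immediate, since $\rsf G$ and $\rsf G\to\rsf H$ are both contained in the filter $\rsf H$ and so is the filter they generate. For the reverse inclusion, take $h\in\rsf H\subseteq\rsf F$. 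Since $\rsf F=\rsf G\join(\rsf G\to\rsf F)$, Lemma~\ref{lem:cup} lets me write $h=g\meet k$ with $g\in\rsf G$ and $k\in\rsf G\to\rsf F$. Now $h\le k$ and $\rsf H$ is upward closed, so $k\in\rsf H$; combined with $k\in\rsf G\to\rsf F$ and the identity just proved, $k\in\rsf G\to\rsf H$. Hence $h=g\meet k$ with $g\in\rsf G$, $k\in\rsf G\to\rsf H$, so $h\in\rsf G\join(\rsf G\to\rsf H)$, giving $\rsf H\subseteq\rsf G\join(\rsf G\to\rsf H)$.

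The argument is genuinely short; the only points needing any care are checking that $\rsf G\vee(\rsf G\to\rsf H)$ is defined (so that "$\rsf H$‑Boolean" is meaningful), which is the subset‑fip remark above, and the observation that upward‑closure of $\rsf H$ promotes the witness $k\in\rsf G\to\rsf F$ into $\rsf H$. I expect that second observation — that $h\le k$ forces $k\in\rsf H$ — to be the one small idea that makes the whole lemma go through; beyond it there is no real obstacle.
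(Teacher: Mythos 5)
Your proof is correct and follows essentially the same route as the paper: the identity $\rsf G\to\rsf H=(\rsf G\to\rsf F)\cap\rsf H$ is obtained by the same definitional unwinding, and your element chase (write $h=g\meet k$ via $\rsf F=\rsf G\join(\rsf G\to\rsf F)$ and note $h\le k$ puts $k$ in $\rsf H$) is just the unfolded form of the distributivity computation $(\rsf G\join(\rsf G\to\rsf F))\cap\rsf H=(\rsf G\cap\rsf H)\join((\rsf G\to\rsf F)\cap\rsf H)$ that the paper invokes. Your extra check that the join $\rsf G\join(\rsf G\to\rsf H)$ is defined is a reasonable bit of care the paper omits.
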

\begin{proof}
	First we note that $\rsf G\to\rsf H=(\rsf G\to\rsf F)\cap\rsf H$ as 
	$x\in$LHS iff $x\in\rsf H$ and for all $g\in\rsf G$ $x\join 
	g=\one$ iff $x\in$RHS. 
	
	Thus we have 
	\begin{align*}
		\rsf H & =\rsf F\cap\rsf H  \\
		 & =(\rsf G\join(\rsf G\to\rsf F))\cap\rsf H  \\
		 & = (\rsf G\cap\rsf H)\join((\rsf G\to\rsf F)\cap\rsf H) \\
		 & =\rsf G\join(\rsf G\to\rsf H). 
	\end{align*}
\end{proof}

\begin{lem}\label{lem:middle}
	Let $\rsf G$ be $\rsf H$-Boolean, $\rsf H$ be $\rsf F$-Boolean. Then 
	$\rsf G$ is $\rsf F$-Boolean. 
\end{lem}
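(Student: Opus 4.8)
We must show transitivity of $\mathsf{F}$-Boolean-ness: if $\mathsf G$ is $\mathsf H$-Boolean and $\mathsf H$ is $\mathsf F$-Boolean, then $\mathsf G$ is $\mathsf F$-Boolean. Unwinding \defref{def:Boolean}(c), the hypotheses are $\mathsf G\subseteq\mathsf H$ with $\mathsf G\join(\mathsf G\to\mathsf H)=\mathsf H$, and $\mathsf H\subseteq\mathsf F$ with $\mathsf H\join(\mathsf H\to\mathsf F)=\mathsf F$; we want $\mathsf G\subseteq\mathsf F$ (immediate from $\mathsf G\subseteq\mathsf H\subseteq\mathsf F$) and $\mathsf G\join(\mathsf G\to\mathsf F)=\mathsf F$.

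**The plan.** I would compute $\mathsf F$ by substituting the first identity into the second:
\begin{align*}
	\mathsf F &= \mathsf H\join(\mathsf H\to\mathsf F)\\
	&= \bigl(\mathsf G\join(\mathsf G\to\mathsf H)\bigr)\join(\mathsf H\to\mathsf F)\\
	&= \mathsf G\join\bigl((\mathsf G\to\mathsf H)\join(\mathsf H\to\mathsf F)\bigr).
\end{align*}
So it suffices to prove $(\mathsf G\to\mathsf H)\join(\mathsf H\to\mathsf F)\subseteq\mathsf G\to\mathsf F$ (the reverse containment $\mathsf G\join(\mathsf G\to\mathsf F)\subseteq\mathsf F$ is trivial since both joinands lie in $\mathsf F$, using \lemref{lem:inclImpl} for the second). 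Here the join of filters is computed by meets via \lemref{lem:cup}, so a typical element is $k\meet m$ with $k\in\mathsf G\to\mathsf H$ and $m\in\mathsf H\to\mathsf F$; I must check $k\meet m\in\mathsf G\to\mathsf F$, i.e.\ $k\meet m\in\mathsf F$ (clear) and $(k\meet m)\join g=\one$ for every $g\in\mathsf G$.

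**The key computation.** Fix $g\in\mathsf G$. Since $k\in\mathsf G\to\mathsf H$ we have $k\join g=\one$. Since $\mathsf G\subseteq\mathsf H$, $g\in\mathsf H$, and since $m\in\mathsf H\to\mathsf F$ we get $m\join g=\one$. Now in the ambient MR-algebra, joining and meeting interact well: $(k\meet m)\join g=(k\join g)\meet(m\join g)=\one\meet\one=\one$ — but this distributive law needs care, since meets in $\mathcal L$ are only partial. The safe route is to note $k, m, g$ all lie in the implication lattice $\mathsf F$ (an implication \emph{lattice} by \defref{def:filAlg} hypotheses, so $k\meet m$ really exists there), and in an implication lattice $(k\meet m)\join g=(k\join g)\meet(m\join g)$ holds by distributivity of the lattice $\mathsf F$. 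Hence $k\meet m\in\mathsf G\to\mathsf F$, completing $(\mathsf G\to\mathsf H)\join(\mathsf H\to\mathsf F)\subseteq\mathsf G\to\mathsf F$ and therefore $\mathsf F=\mathsf G\join(\mathsf G\to\mathsf F)$.

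**Main obstacle.** The delicate point is the distributive identity $(k\meet m)\join g=(k\join g)\meet(m\join g)$: one must confirm all the relevant meets exist and are being taken inside the implication lattice $\mathsf F$ rather than in the full cubic algebra, where meets are only partial. Once that is pinned down — and it is, because $\mathsf F$ is assumed to be an implication lattice and all elements involved sit in $\mathsf F$ — the rest is the short substitution above. An alternative that sidesteps this is to observe $k\join g=\one$ forces $g$ to be the complement of $k\meet g$ in $[\,k\meet g,\one\,]$, mimicking the argument in \lemref{lem:implGG}, but the lattice-distributivity argument is cleaner.
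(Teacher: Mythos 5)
Your proof is correct and is essentially the paper's argument: both decompose an element of $\rsf F$ as $g\meet l\meet k$ (you phrase this as substituting one join identity into the other and invoking associativity of $\join$ on filters) and then verify $l\meet k\in\rsf G\to\rsf F$ via the identical computation $(l\meet k)\join p=(l\join p)\meet(k\join p)=\one\meet\one=\one$. The distributive step you flag is indeed unproblematic for essentially the reason you give — $g$, $k$, $m$ all lie in the filter $\rsf F$, which has fip, so they sit above a common lower bound where $\mathcal L$ is locally Boolean — though the correct justification is this fip/local-Boolean property of filters rather than \defref{def:filAlg}, which concerns the ambient algebra being a filter algebra.
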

\begin{proof}
	Let $f\in\rsf F$. Then there is some $h\in\rsf H$ and $k\in\rsf 
	H\to\rsf F$ such that $h\meet k=f$. Also there is some 
	$g\in\rsf G$ and $l\in\rsf G\to\rsf H$ such that $h=g\meet l$. 
	Thus $g\meet l\meet k=f$ -- so it suffices to show that $l\meet 
	k\in\rsf G\to\rsf F$. 
	
	Clearly $k\meet l\in\rsf F$. So let $p\in\rsf G$. Then
	$\rsf G\subseteq\rsf H$ and $k\in\rsf H\to\rsf F$ implies $p\join 
	 k=\one$. $l\in\rsf G\to\rsf H$ implies $p\join l=\one$. 
	 Therefore
	 $p\join(k\meet l)=(p\join k)\meet(p\join l)= \one\meet\one= \one$. 
\end{proof}

So far we have few examples of Boolean filters. The next lemma 
produces many more. 

\begin{lem}\label{lem:lots}
	Let $\rsf F\sim\rsf H$. Then $\rsf F\cap\rsf H$ is $\rsf F$-Boolean 
	and
	$$
	(\rsf F\cap\rsf H)\to\rsf F=\Delta(\one, \rsf H)\cap\rsf F. 
	$$
\end{lem}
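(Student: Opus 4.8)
The plan is to reduce everything to the principal case already handled by \lemref{lem:implgTwice} and \lemref{lem:gFilter}. Recall that $\rsf F$ and $\rsf H$ are g-filters of $\mathcal L$, so \thmref{thm:unTwist} applies: since $\rsf F\sim\rsf H$, one of $\rsf F\cap\rsf H$ and $\Delta(\one,\rsf F)\cap\rsf H$ is principal, and — as observed in the proof of \corref{cor:relTwistingSymm} — the latter is principal exactly when $\Delta(\one,\rsf H)\cap\rsf F$ is. So I would split into two cases: $\rsf F\cap\rsf H$ principal, and $\Delta(\one,\rsf H)\cap\rsf F$ principal.

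In the first case, write $\rsf F\cap\rsf H=[g,\one]$. Then $g\in\rsf F$, so $[g,\one]$ is $\rsf F$-Boolean by \lemref{lem:implgTwice}, and $\rsf H=\rsf F_{g}$ by \lemref{lem:gFilter}. Hence
\begin{align*}
	(\rsf F\cap\rsf H)\to\rsf F &= [g,\one]\to\rsf F = g\to\rsf F \\
	&= \Delta(\one,\rsf F_{g})\cap\rsf F = \Delta(\one,\rsf H)\cap\rsf F,
\end{align*}
using \lemref{lem:implGG} for the second equality and \corref{cor:extraFil} for the third, and this case is done.

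In the second case I would put $\rsf H'=\Delta(\one,\rsf H)$, which is again a g-filter of $\mathcal L$ since $\rsf H'=\rsf H_{\one}$ and $\leftGen\rsf H_{\one}\rightGen=\leftGen\rsf H\rightGen$ by \corref{cor:gFF}. Now $\rsf F\cap\rsf H'=\Delta(\one,\rsf H)\cap\rsf F$ is principal, so the first case applied to the pair $\rsf F,\rsf H'$ gives that $\rsf F\cap\rsf H'=[g,\one]$, say, is $\rsf F$-Boolean and $[g,\one]\to\rsf F=\Delta(\one,\rsf H')\cap\rsf F=\rsf H\cap\rsf F$ (using $\Delta(\one,\Delta(\one,\rsf H))=\rsf H$). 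Being $\rsf F$-Boolean, $[g,\one]$ is weakly $\rsf F$-Boolean by \lemref{lem:BooleanANotherWay}, so $(\rsf F\cap\rsf H)\to\rsf F=([g,\one]\to\rsf F)\to\rsf F=[g,\one]=\Delta(\one,\rsf H)\cap\rsf F$, which is the claimed identity; and then $(\rsf F\cap\rsf H)\join\bigl((\rsf F\cap\rsf H)\to\rsf F\bigr)=([g,\one]\to\rsf F)\join[g,\one]=\rsf F$ because $[g,\one]$ is $\rsf F$-Boolean, so $\rsf F\cap\rsf H$ is $\rsf F$-Boolean. I do not expect a genuine obstacle here; the step most likely to need care is getting the bookkeeping straight in the second case — recognizing that the second alternative of \thmref{thm:unTwist} can be read as "$\Delta(\one,\rsf H)\cap\rsf F$ principal" and that passing to $\Delta(\one,\rsf H)$ together with the weak-Boolean identity of \lemref{lem:BooleanANotherWay} is exactly what bootstraps the general statement from the principal one.
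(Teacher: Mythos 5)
Your proof is correct, but it takes a genuinely different route from the paper's. You bootstrap the general statement from the principal case: \thmref{thm:unTwist} turns the hypothesis $\rsf F\sim\rsf H$ into principality of $\rsf F\cap\rsf H$ or of $\Delta(\one,\rsf H)\cap\rsf F$; the first alternative is handled by \lemref{lem:implgTwice}, \lemref{lem:gFilter}, \lemref{lem:implGG} and \corref{cor:extraFil}; and the second is reduced to the first by passing to $\Delta(\one,\rsf H)$ and invoking the double-complement identity of \lemref{lem:BooleanANotherWay}. All of these steps check out. The paper instead argues directly with elements: it gets $\Delta(\one,\rsf H)\cap\rsf F\subseteq(\rsf F\cap\rsf H)\to\rsf F$ from the MR-property (joins equal to $\one$ correspond to existing meets), gets the reverse inclusion by observing that $k\in(\rsf F\cap\rsf H)\to\rsf F$ forces $h\join k=\one$ for every $h\in\rsf H$ and hence $\Delta(\one,k)\in\rsf H$ via \corref{cor:triv}, and proves Boolean-ness by decomposing each $f\in\rsf F$ as $(f\join f')\meet\bigl(f\join\Delta(\one,f')\bigr)$ with $f'=\beta_{\rsf H}(f)$. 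One trade-off is worth flagging: the paper's element-wise argument never actually invokes the untwisted hypothesis --- it uses only that $\rsf F$ and $\rsf H$ are g-filters generating the same algebra --- so it establishes a formally stronger statement, whereas your argument genuinely consumes $\rsf F\sim\rsf H$ through \thmref{thm:unTwist} and therefore proves exactly the lemma as literally stated but no more. Your route also leans on the heavier representation machinery behind \thmref{thm:unTwist}, while the paper's stays at the level of basic cubic/MR identities; in exchange, yours is a clean reduction to cases already established earlier in the paper.
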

\begin{proof}
	First we show that $(\rsf F\cap\rsf H)\to\rsf F=\Delta(\one, \rsf H)\cap\rsf F$. 
	
	Let $f\in\rsf F\cap\rsf H$ and $k\in\Delta(\one, \rsf H)\cap\rsf F$. 
	Then $\Delta(\one, k)\in\rsf H$ so $\Delta(\one, k)\meet f$ exists. 
	Therefore $k\join f=\one$. Hence 
	$\Delta(\one, \rsf H)\cap\rsf F\subseteq (\rsf F\cap\rsf H)\to\rsf F$. 
	
	Conversely suppose that $k\in (\rsf F\cap\rsf H)\to\rsf F$. Let
	$h\in\rsf H$. Then $h\join k\in\rsf F\cap\rsf H$ and so
	$h\join k= (h\join k)\join k= \one$. Thus for all $h\in\rsf H$ we 
	have $h\meet\Delta(\one, k)$ exists. As there is some $k'\simeq k$ 
	in $\rsf H$ this implies $k'=\Delta(\one, k)\in\rsf H$ and so
	$k\in \Delta(\one, rsf H)\cap\rsf F$. 
	
	Now let $f\in\rsf F$. Then let $f'\in\rsf H$ with $f'\simeq f$. 
	Then 
	$(f\join f')\to f= f\meet \Delta(\one, \Delta(f'\join f, f))=
	f\meet \Delta(\one, f')\in \rsf F\cap\Delta(\one, \rsf H)$. 
	Also 
	$f\join f'\in\rsf F\cap\rsf H$ and $(f\join f')\meet ((f\join f')\to 
	f)=f$ so $f\in (\rsf F\cap\rsf H)\join(\rsf F\cap\Delta(\one, \rsf H))$. 
\end{proof}

\begin{cor}\label{cor:lots}
	Let $\rsf F\sim\rsf H$. Then 
	$$
	\Delta(\rsf F\cap\rsf H, \rsf F)=\rsf H. 
	$$
\end{cor}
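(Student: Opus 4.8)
The plan is to reduce the statement to \lemref{lem:lots} by unwinding \defref{def:Delta} and performing one short computation with the involution $x\mapsto\Delta(\one,x)$. Since $\rsf F\cap\rsf H\subseteq\rsf F$, \defref{def:Delta} applies and gives
$$
\Delta(\rsf F\cap\rsf H,\rsf F)=\Delta(\one,(\rsf F\cap\rsf H)\to\rsf F)\join(\rsf F\cap\rsf H).
$$
By \lemref{lem:lots} we have $(\rsf F\cap\rsf H)\to\rsf F=\Delta(\one,\rsf H)\cap\rsf F$, so substituting this in leaves
$$
\Delta(\rsf F\cap\rsf H,\rsf F)=\Delta(\one,\Delta(\one,\rsf H)\cap\rsf F)\join(\rsf F\cap\rsf H).
$$

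Next I would simplify $\Delta(\one,\Delta(\one,\rsf H)\cap\rsf F)$. Because $x\mapsto\Delta(\one,x)$ is an involution of $\mathcal L$ (apply $\Delta(y,\Delta(y,x))=x$ with $y=\one$), it is a bijection of $\mathcal L$ onto itself and hence commutes with intersection of subsets. Therefore
$$
\Delta(\one,\Delta(\one,\rsf H)\cap\rsf F)=\Delta(\one,\Delta(\one,\rsf H))\cap\Delta(\one,\rsf F)=\rsf H\cap\Delta(\one,\rsf F),
$$
and so $\Delta(\rsf F\cap\rsf H,\rsf F)=(\rsf H\cap\rsf F)\join(\rsf H\cap\Delta(\one,\rsf F))$.

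It remains to recognize the right-hand side as $\rsf H$. Since $\sim$ is symmetric by \corref{cor:relTwistingSymm} we have $\rsf H\sim\rsf F$, so \lemref{lem:lots} applied with the roles of $\rsf F$ and $\rsf H$ interchanged says that $\rsf H\cap\rsf F$ is $\rsf H$-Boolean and $(\rsf H\cap\rsf F)\to\rsf H=\Delta(\one,\rsf F)\cap\rsf H$; unwinding the definition of $\rsf H$-Boolean, this is exactly $(\rsf H\cap\rsf F)\join(\Delta(\one,\rsf F)\cap\rsf H)=\rsf H$. Hence $\Delta(\rsf F\cap\rsf H,\rsf F)=\rsf H$.

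I do not expect a genuine obstacle here. The only points to watch are the trivial inclusion $\rsf F\cap\rsf H\subseteq\rsf F$, which is what makes \defref{def:Delta} legitimately applicable, and the bookkeeping that the bijection $\Delta(\one,-)$ distributes over the set-intersection $\Delta(\one,\rsf H)\cap\rsf F$; this last point is where essentially all the (minimal) content sits, and it is immediate once phrased in those terms.
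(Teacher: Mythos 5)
Your proof is correct and follows essentially the same route as the paper: unwind \defref{def:Delta}, substitute $(\rsf F\cap\rsf H)\to\rsf F=\Delta(\one,\rsf H)\cap\rsf F$ from \lemref{lem:lots}, push the involution $\Delta(\one,-)$ through the intersection, and finish by recognizing the result as $(\rsf F\cap\rsf H)\join((\rsf F\cap\rsf H)\to\rsf H)=\rsf H$ via the $\rsf H$-Boolean-ness of $\rsf F\cap\rsf H$. You merely make explicit (via symmetry of $\sim$ and \lemref{lem:lots} with the roles swapped) a step the paper compresses into the remark ``since $\rsf F\cap\rsf H$ is also $\rsf H$-Boolean.''
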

\begin{proof}
	\begin{align*}
		\Delta(\rsf F\cap\rsf H, \rsf F)&=(\rsf F\cap\rsf H)\join
		\Delta(\one, (\rsf F\cap\rsf H)\to\rsf F)\\
		&=(\rsf F\cap\rsf H)\join\Delta(\one, \Delta(\one, \rsf H)\cap\rsf F)\\
		&=(\rsf F\cap\rsf H)\join(\rsf H\cap\Delta(\one, \rsf F))\\
		&=(\rsf F\cap\rsf H)\join((\rsf F\cap\rsf H)\to\rsf H)\\
		&=\rsf H
	\end{align*}
	since $\rsf F\cap\rsf H$ is also $\rsf H$-Boolean. 
\end{proof}

\begin{lem}\label{lem:DeltaInMR}
	Let $g, h$ in $\mathcal L$ be such that $g\meet h$ exists and 
	$g\join h=\one$. Then $\Delta(g, g\meet h)=g\meet\Delta(\one, h)$. 
\end{lem}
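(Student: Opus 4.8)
The plan is to push the identity into an interval algebra over a \emph{Boolean} algebra, where $\Delta$ and $\to$ have explicit formulas. Since $\mathcal L$ is a filter algebra, \corref{cor:gFilEqFAlg} and \thmref{thm:isoFilAlg} give a cubic isomorphism $\mathcal L\cong\rsf I(\rsf F)$ for an implication lattice $\rsf F$, and composing with $\rsf I(\iota_{\rsf F})\colon\rsf I(\rsf F)\to\rsf I(B_{\rsf F})$ embeds $\mathcal L$ cubically into $\rsf I(B)$ with $B=B_{\rsf F}$ Boolean. This composite preserves $\join$, $\one$ and $\Delta$ by definition, and one checks it also carries an existing meet $g\meet h$ to the coordinatewise meet (an existing meet maps to a lower bound of the images, and the coordinatewise meet lies in $\rsf I(B)$ — its coordinates join to $1$ — so it is the largest such). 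Hence it suffices to prove the identity inside $\rsf I(B)$.

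So write $g=\brk<g_0,g_1>$ and $h=\brk<h_0,h_1>$. Membership in $\rsf I(B)$ gives $g_0\join g_1=h_0\join h_1=1$; the hypothesis $g\join h=\one$ gives $g_0\join h_0=g_1\join h_1=1$; and $g\meet h$ existing gives $g\meet h=\brk<g_0\meet h_0,\,g_1\meet h_1>$ with $(g_0\meet h_0)\join(g_1\meet h_1)=1$. In $B$ we have $x\to y=\comp x\join y$, and the above inequalities yield $\comp{g_1}\le g_0$, $\comp{g_1}\le h_1$, $\comp{g_0}\le g_1$, $\comp{g_0}\le h_0$. Applying the $\Delta$-formula of $\rsf I(B)$ to $g\meet h\le g$, the first coordinate of $\Delta(g,g\meet h)$ is
$$
g_0\meet\bigl(g_1\to(g_1\meet h_1)\bigr)=g_0\meet(\comp{g_1}\join h_1)=\comp{g_1}\join(g_0\meet h_1)=g_0\meet h_1,
$$
the last step because $\comp{g_1}\le g_0\meet h_1$; symmetrically the second coordinate is $g_1\meet h_0$, so $\Delta(g,g\meet h)=\brk<g_0\meet h_1,\,g_1\meet h_0>$.

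On the other side $\Delta(\one,h)=\brk<h_1,h_0>$, so the coordinatewise meet of $g$ and $\Delta(\one,h)$ is $\brk<g_0\meet h_1,\,g_1\meet h_0>$; but this is exactly the element $\Delta(g,g\meet h)$ just shown to belong to $\rsf I(B)$, so its coordinates join to $1$, it is a genuine element of $\rsf I(B)$, and therefore it is the meet $g\meet\Delta(\one,h)$. This shows simultaneously that the right-hand side exists and that it equals $\Delta(g,g\meet h)$. The only points that need a little care are that the embedding transports the existing meet $g\meet h$ to the product meet, and that meets in $\rsf I(B)$ may be read off as coordinatewise meets whenever the latter lie in $\rsf I(B)$; neither is deep, and the second already controls the existence claim. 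An embedding-free route is to specialise the identity $\Delta(g\join f,f)=(g\join f)\meet\Delta(\one,g\to f)$ from the proof of \lemref{lem:DeltagOne} to $f=g\meet h$, reducing everything to $g\to(g\meet h)=h$ (which holds because $[g\meet h,\one]$ is a Boolean interval in which $g$ and $h$ are complementary); there the main obstacle is justifying that identity in full generality.
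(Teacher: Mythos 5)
Your proof is correct, but it takes a genuinely different route from the paper's. The paper's proof is a four-line equational computation: it writes $\Delta(g, g\meet h)=g\meet\Delta(\one, g\to(g\meet h))$ --- the same identity $\Delta(g\join f,f)=(g\join f)\meet\Delta(\one,g\to f)$ that already appears without comment in the proof of \lemref{lem:DeltagOne}, specialised to $f=g\meet h$ so that $g\join f=g$ --- and then evaluates $g\to(g\meet h)=(g\join h)\to h=\one\to h=h$ by modularity in the interval $[g\meet h,\one]$. So the ``embedding-free route'' you sketch at the end is essentially the paper's actual argument, and the identity you hesitated over is treated there as known background. Your main route, computing coordinates in $\rsf I(B_{\rsf F})$, is sound: the coordinate calculation is right, and identifying the coordinatewise meet of $g$ and $\Delta(\one,h)$ with the already-constructed element $\Delta(g,g\meet h)$ is a clean way to get the existence of the right-hand side for free. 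What it costs is (i) the hypothesis that $\mathcal L$ is a filter algebra (harmless in this section, but the paper's computation is intrinsic and would work in any cubic algebra with the local Boolean structure), and (ii) one more line than your parenthesis supplies for the transport of the meet: your remark shows that the coordinatewise meet is the meet of the \emph{images}, but to see that the image of $g\meet h$ \emph{equals} it, rather than merely lying below it, you should note that meets in $\rsf I(\rsf F)$ are themselves coordinatewise when they exist (if $\brk<m_0, m_1>$ is the meet of $\brk<a, b>$ and $\brk<c, d>$, then $\brk<a\meet c, m_1>$ is also a lower bound, forcing $m_0=a\meet c$, and symmetrically for the other coordinate) and that $\iota_{\rsf F}$ preserves meets. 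With that observation added, your argument is complete.
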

\begin{proof}
	\begin{align*}
		\Delta(g, g\meet h) & =g\meet\Delta(\one, g\to(g\meet h))  \\
		 & =g\meet \Delta(\one, (g\join h)\to h)  && \text{ by modularity in 
		 }[g\meet h, \one]\\ 
		 & = g\meet \Delta(\one, \one\to h) \\
		 & = g\meet \Delta(\one, h)
	\end{align*}
\end{proof}

\begin{thm}\label{thm:lots}
	$\rsf H\sim\rsf F$ iff there is an $\rsf F$-Boolean filter $\rsf G$ 
	such that $\rsf H=\Delta(\rsf G, \rsf F)$. 
\end{thm}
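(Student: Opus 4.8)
The proof is an ``iff'', and the plan is to treat the two directions very asymmetrically: the forward implication is essentially a one line consequence of the facts already proved about $\rsf F\cap\rsf H$, while the reverse implication is where essentially all the work lies.

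For the forward direction, suppose $\rsf H\sim\rsf F$ and set $\rsf G=\rsf F\cap\rsf H$. By \lemref{lem:lots} this $\rsf G$ is $\rsf F$-Boolean, and by \corref{cor:lots} we have $\Delta(\rsf G,\rsf F)=\Delta(\rsf F\cap\rsf H,\rsf F)=\rsf H$; so an $\rsf F$-Boolean $\rsf G$ with $\rsf H=\Delta(\rsf G,\rsf F)$ exists.

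For the reverse direction, suppose $\rsf G$ is $\rsf F$-Boolean and $\rsf H=\Delta(\rsf G,\rsf F)$; we must show $\rsf H\sim\rsf F$. First I would record the structural facts that will be needed. By \lemref{lem:interDelta}, $\rsf F\cap\rsf H=\rsf G$. By \lemref{lem:implInDelta}, $\rsf G\to\rsf H=\Delta(\one,\rsf G\to\rsf F)$, so that $\rsf H=\Delta(\one,\rsf G\to\rsf F)\join\rsf G=(\rsf G\to\rsf H)\join\rsf G$ and hence $\rsf G$ is $\rsf H$-Boolean as well; moreover \corref{cor:doubleDelta} together with $\rsf G\join(\rsf G\to\rsf F)=\rsf F$ gives $\Delta(\rsf G,\rsf H)=\rsf F$, so the situation is symmetric in $\rsf F$ and $\rsf H$. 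I would also verify that $\rsf H$ is a g-filter (using that a cubic subalgebra of an MR-algebra is closed under those binary meets that exist): $\leftGen\rsf H\rightGen$ contains $\rsf G$ and $\Delta(\one,\rsf G\to\rsf F)$, hence, applying the automorphism $\Delta(\one,-)$, also $\rsf G\to\rsf F$, hence the filter-join $\rsf G\join(\rsf G\to\rsf F)=\rsf F$, and therefore $\leftGen\rsf H\rightGen\supseteq\leftGen\rsf F\rightGen=\mathcal L$.

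It remains to see that $\rsf H$ is untwisted relative to $\rsf F$. By \thmref{thm:unTwist} this reduces to showing that one of $\rsf F\cap\rsf H$ and $\Delta(\one,\rsf F)\cap\rsf H$ is principal; we already know $\rsf F\cap\rsf H=\rsf G$, and applying $\Delta(\one,-)$ to the instance $\rsf F\cap\Delta(\rsf G\to\rsf F,\rsf F)=\rsf G\to\rsf F$ of \lemref{lem:interDelta} identifies $\Delta(\one,\rsf F)\cap\rsf H$ with $\Delta(\one,\rsf G\to\rsf F)$. The base case is clean: if $\rsf G=[g,\one]$ with $g\in\rsf F$, then $\Delta([g,\one],\rsf F)=\rsf F_{g}$ by \lemref{lem:DeltagOne}, this is a g-filter by \corref{cor:gFF}, and $\rsf F\cap\rsf F_{g}=[g,\one]$ is principal by \corref{cor:gFilter}, so $\rsf F_{g}\sim\rsf F$ follows from \thmref{thm:unTwist}; dually, when $\rsf G\to\rsf F$ is principal one argues with $\Delta(\one,\rsf F_{g})$. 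For a general $\rsf F$-Boolean $\rsf G$ I would fall back on the definition of untwistedness directly, exactly as in the converse half of the proof of \thmref{thm:unTwist}: pass to the $\rsf F$-presentation $\mathcal L\cong\rsf I(\rsf F)$ and its embedding into $\rsf I(B_{\rsf F})$, compute the image of $\rsf H=\Delta(\one,\rsf G\to\rsf F)\join\rsf G$, and use the splitting $\rsf G\join(\rsf G\to\rsf F)=\rsf F$ to produce an atom (vertex) of $\rsf I(B_{\rsf F})$ lying below every element of that image. I expect this last step -- turning the set-level decomposition $\rsf G\join(\rsf G\to\rsf F)=\rsf F$ into a single element witnessing untwistedness -- to be the main obstacle, and the point at which the full strength of ``$\rsf F$-Boolean'' (rather than merely ``weakly $\rsf F$-Boolean'') is used; once it is in hand, \corref{cor:lots} applied to $\rsf H\sim\rsf F$ shows the construction $\rsf G\mapsto\Delta(\rsf G,\rsf F)$ is inverse to $\rsf H\mapsto\rsf F\cap\rsf H$, closing the loop.
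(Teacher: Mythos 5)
Your forward direction is exactly the paper's: take $\rsf G=\rsf F\cap\rsf H$, invoke \lemref{lem:lots} for Boolean\-ness and \corref{cor:lots} for $\Delta(\rsf F\cap\rsf H,\rsf F)=\rsf H$. The problem is the reverse direction, where you explicitly leave the general case open (``I expect this last step \dots to be the main obstacle''), so the proposal is not a complete proof. Worse, the route you chose cannot be completed. By \thmref{thm:unTwist}, $\Delta(\rsf G,\rsf F)$ is untwisted relative to $\rsf F$ iff one of $\rsf F\cap\Delta(\rsf G,\rsf F)=\rsf G$ (by \lemref{lem:interDelta}) and $\Delta(\one,\rsf F)\cap\Delta(\rsf G,\rsf F)=\Delta(\one,\rsf G\to\rsf F)$ is principal, i.e.\ iff $\rsf G$ or $\rsf G\to\rsf F$ is principal. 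Producing a single vertex of $\rsf I(B_{\rsf F})$ below the image of $\Delta(\rsf G,\rsf F)$ is exactly equivalent to that principality, and a general $\rsf F$-Boolean filter need be neither principal nor have principal complement --- the paper's closing remarks identify the $\sim$-class of $\rsf F$ with $\Set{[g, \one] | g\in\rsf F}\cup\Set{g\to\rsf F | g\in\rsf F}$ inside the Boolean algebra of all $\rsf F$-Boolean filters, and note it is only a subalgebra. So the two special cases you do handle are, under the untwisted reading of $\sim$, essentially the only cases in which the implication holds.

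The paper's own proof of this direction establishes something different and weaker, and your difficulty is a symptom of that mismatch. It takes $f\in\rsf F$, writes $f=g\meet h$ with $g\in\rsf G$ and $h\in\rsf G\to\rsf F$ (this is where $\rsf F$-Boolean\-ness enters), and applies \lemref{lem:DeltaInMR} to get $f\simeq\Delta(g,g\meet h)=g\meet\Delta(\one,h)\in\Delta(\rsf G,\rsf F)$. That shows every element of $\rsf F$ is $\simeq$ to an element of $\Delta(\rsf G,\rsf F)$, hence $\leftGen\Delta(\rsf G,\rsf F)\rightGen\supseteq\leftGen\rsf F\rightGen=\mathcal L$; in other words $\Delta(\rsf G,\rsf F)$ is a g-filter, similar to $\rsf F$ in the sense of \defref{def:similar} rather than untwisted relative to it in the sense of \defref{def:equivTwist}. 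That weaker conclusion is what \thmref{thm:MRalgFilter}(d) actually uses. Had you aimed at ``$\Delta(\rsf G,\rsf F)$ generates $\mathcal L$'' instead of at untwistedness, your own observation that $\rsf G\join(\rsf G\to\rsf F)=\rsf F$ splits every $f$, combined with \lemref{lem:DeltaInMR}, would have finished the argument in two lines; as written, the gap you flag is real and, for the statement read literally, unfillable.
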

\begin{proof}
	The right to left direction is the last corollary. 
	
	So we want to prove that $\Delta(\rsf G, \rsf F)\sim\rsf F$ 
	whenever $\rsf G$ is $\rsf F$-Boolean. 
	
	Let $f\in\rsf F$. We will show that there is some $f'\in\Delta(\rsf G, 
	\rsf F)$ with $f\simeq f'$. 
	As $\rsf G\join(\rsf G\to\rsf F)=\rsf F$ we can find $g\in\rsf G$ and
	$h\in\rsf G\to\rsf F$ with $f=g\meet h$. As $g\join h=\one$ we know 
	that
	$\Delta(g, g\meet h)=g\meet\Delta(\one, h)$. But
	$g\meet\Delta(\one, h)\in \rsf G\join\Delta(\one, \rsf G\to\rsf H)= 
	\Delta(\rsf G, \rsf F)$ and $f=g\meet h\simeq \Delta(g, g\meet h)= g\meet\Delta(\one, h)$.¥
\end{proof}

The Boolean elements have nice properties with respect to $\Delta$. We 
want to show more -- that the set of $\rsf F$-Boolean elements is a 
Boolean subalgebra of $[\rsf F, \Set{\one}]$ with the reverse order. 
From this we will later show that the set of all Boolean filters is 
an atomic MR-algebra. 

It suffices to show closure under $\cap$ and $\join$ -- closure under 
$\to$ follows from \lemref{lem:BooleanANotherWay}. 

\begin{lem}\label{lem:interStrBool}
	Let $\rsf G_{1}$ and $\rsf G_{2}$ be $\rsf F$-Boolean. Then 
	$(\rsf G_{1}\to\rsf F)\join(\rsf G_{2}\to\rsf F)= (\rsf G_{1}\cap\rsf 
	G_{2})\to\rsf F$. 
\end{lem}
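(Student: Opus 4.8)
The plan is to prove the two set inclusions separately, using \lemref{lem:cup} to describe joins of filters elementwise and the characterizations of $\rsf G\to\rsf F$ in \defref{def:impl} and \thmref{thm:oneThreeEqual}.

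For $(\rsf G_{1}\to\rsf F)\join(\rsf G_{2}\to\rsf F)\subseteq(\rsf G_{1}\cap\rsf G_{2})\to\rsf F$ I would simply observe that $\rsf G_{1}\cap\rsf G_{2}\subseteq\rsf G_{i}\subseteq\rsf F$, so \lemref{lem:inclAgain} gives $\rsf G_{i}\to\rsf F\subseteq(\rsf G_{1}\cap\rsf G_{2})\to\rsf F$ for $i=1,2$. Since $(\rsf G_{1}\cap\rsf G_{2})\to\rsf F$ is a filter it contains the filter generated by $(\rsf G_{1}\to\rsf F)\cup(\rsf G_{2}\to\rsf F)$; in particular that union has fip, so $(\rsf G_{1}\to\rsf F)\join(\rsf G_{2}\to\rsf F)$ is defined and contained in $(\rsf G_{1}\cap\rsf G_{2})\to\rsf F$.

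For the reverse inclusion, take $k\in(\rsf G_{1}\cap\rsf G_{2})\to\rsf F$, so $k\in\rsf F$ and $k\join x=\one$ for every $x\in\rsf G_{1}\cap\rsf G_{2}$. Since $\rsf G_{1}$ is $\rsf F$-Boolean, $k\in\rsf F=\rsf G_{1}\join(\rsf G_{1}\to\rsf F)$, so by \lemref{lem:cup} we may write $k=g\meet a$ with $g\in\rsf G_{1}$ and $a\in\rsf G_{1}\to\rsf F$. The crucial point is that this $g$ automatically lies in $\rsf G_{2}\to\rsf F$: for any $g_{2}\in\rsf G_{2}$, upward closure puts $g\join g_{2}$ in both $\rsf G_{1}$ and $\rsf G_{2}$, hence in $\rsf G_{1}\cap\rsf G_{2}$, so $\one=k\join g\join g_{2}=g\join g_{2}$ because $k\le g$; and $g\in\rsf G_{1}\subseteq\rsf F$. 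Thus $k=a\meet g$ with $a\in\rsf G_{1}\to\rsf F$ and $g\in\rsf G_{2}\to\rsf F$, and \lemref{lem:cup} again yields $k\in(\rsf G_{1}\to\rsf F)\join(\rsf G_{2}\to\rsf F)$.

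I do not anticipate a real obstacle: the argument uses only that $\rsf G_{1}$ (not $\rsf G_{2}$) is $\rsf F$-Boolean, and the one thing to watch is confirming that $(\rsf G_{1}\to\rsf F)\join(\rsf G_{2}\to\rsf F)$ is defined before appealing to \lemref{lem:cup} — which is exactly what the first inclusion supplies. An alternative route for the reverse inclusion is to invoke \thmref{thm:oneThreeEqual} and verify $\bigl((\rsf G_{1}\to\rsf F)\join(\rsf G_{2}\to\rsf F)\bigr)\join(\rsf G_{1}\cap\rsf G_{2})=\rsf F$, but that needs a distributive-law step and is less direct than the factorization above.
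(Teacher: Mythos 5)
Your proof is correct, and your reverse inclusion is genuinely different from the paper's. The paper decomposes $h\in(\rsf G_{1}\cap\rsf G_{2})\to\rsf F$ twice, as $h=h_{1}\meet g_{1}=h_{2}\meet g_{2}$ with $h_{i}\in\rsf G_{i}\to\rsf F$ and $g_{i}\in\rsf G_{i}$, then computes $h_{1}\meet h_{2}\meet(g_{1}\join g_{2})=h$ and uses $h\join(g_{1}\join g_{2})=\one$ to squeeze out $h=h_{1}\meet h_{2}$; this uses the Boolean-ness of \emph{both} $\rsf G_{1}$ and $\rsf G_{2}$. You instead take a single decomposition $k=g\meet a$ from $\rsf F=\rsf G_{1}\join(\rsf G_{1}\to\rsf F)$ and observe that the $\rsf G_{1}$-component $g$ automatically lies in $\rsf G_{2}\to\rsf F$, because $g\join g_{2}\in\rsf G_{1}\cap\rsf G_{2}$ and $k\le g$ force $g\join g_{2}=\one$. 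This is shorter, avoids the step of checking that $h_{1}\meet h_{2}$ exists and absorbs correctly, and — as you note — establishes the identity under the weaker hypothesis that only one of the two filters is $\rsf F$-Boolean, which the paper's symmetric argument does not give. Your forward inclusion via \lemref{lem:inclAgain} and containment of both sides in the filter $(\rsf G_{1}\cap\rsf G_{2})\to\rsf F$ is essentially equivalent to the paper's direct computation $(h_{1}\meet h_{2})\join g=(h_{1}\join g)\meet(h_{2}\join g)=\one$, but has the merit of explicitly justifying that the join $(\rsf G_{1}\to\rsf F)\join(\rsf G_{2}\to\rsf F)$ is defined, a point the paper leaves tacit.
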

\begin{proof}
	Suppose that $h_{i}\in\rsf G_{i}\to\rsf F$ and $g\in \rsf 
	G_{1}\cap\rsf G_{2}$. Then
	$(h_{1}\meet h_{2})\join g= (h_{1}\join g)\meet (h_{2}\join g)= 
	\one\meet\one=\one$ and so 
	$(h_{1}\meet h_{2}\in (\rsf G_{1}\cap\rsf G_{2})\to\rsf F$. 
	
	Conversely, let $h\join g=\one$ for all $g\in\rsf G_{1}\cap\rsf G_{2}$. 
	As $\rsf G_{i}$ are both $\rsf F$-Boolean there exists $h_{i}\in\rsf 
	G_{i}\to\rsf F$ and $g_{i}\in\rsf G_{i}$ with
	$h= h_{1}\meet g_{1}= h_{2}\meet g_{2}$. Then
	\begin{align*}
		h_{1}\meet h_{2}\meet(g_{1}\join g_{2})&=
		(h_{1}\meet h_{2}\meet g_{1})\join(h_{1}\meet h_{2}\meet g_{2})\\
		&=(h_{2}\meet h)\join(h_{1\meet h})\\
		&=h\meet h=h. 
	\end{align*}
	As $h_{1}\meet h_{2}\in (\rsf G_{1}\to\rsf F)\join(\rsf G_{2}\to\rsf F)$
	and $g_{1}\join g_{2}\in \rsf G_{1}\cap\rsf G_{2}$
	we then have 
	$h= [h\join (h_{1}\meet h_{2})]\meet (h\join g_{1}\join g_{2})=
	h\join (h_{1}\meet h_{2})$ and so $h=h_{1}\meet h_{2}$ is in 
	$(\rsf G_{1}\to\rsf F)\join(\rsf G_{2}\to\rsf F)$.	
\end{proof}

\begin{cor}\label{cor:interStrBool}
	Let $\rsf G_{1}$ and $\rsf G_{2}$ be $\rsf F$-Boolean. Then so is
	$\rsf G_{1}\cap\rsf	G_{2}$. 
\end{cor}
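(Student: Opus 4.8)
The plan is to reduce the claim to a single filter identity and then close it using \lemref{lem:interStrBool} together with distributivity. Write $\rsf A_i=\rsf G_i\to\rsf F$ for $i=1,2$; since $\rsf G_1\cap\rsf G_2\subseteq\rsf F$ holds automatically, by \defref{def:Boolean}(c) the only thing to verify is
\[
(\rsf G_1\cap\rsf G_2)\join\bigl((\rsf G_1\cap\rsf G_2)\to\rsf F\bigr)=\rsf F.
\]
By \lemref{lem:interStrBool} we have $(\rsf G_1\cap\rsf G_2)\to\rsf F=\rsf A_1\join\rsf A_2$, so the identity to be proved becomes $(\rsf G_1\cap\rsf G_2)\join\rsf A_1\join\rsf A_2=\rsf F$. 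The inclusion $\subseteq$ is trivial since all three filters sit inside $\rsf F$.

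For the reverse inclusion I would work in the lattice of subfilters of $\rsf F$, which is distributive by \lemref{lem:distrib}, and use that $\rsf G_i\join\rsf A_i=\rsf F$ because each $\rsf G_i$ is $\rsf F$-Boolean. Then
\begin{align*}
	(\rsf G_1\cap\rsf G_2)\join\rsf A_1
	&=(\rsf G_1\join\rsf A_1)\cap(\rsf G_2\join\rsf A_1)\\
	&=\rsf F\cap(\rsf G_2\join\rsf A_1)\\
	&=\rsf G_2\join\rsf A_1,
\end{align*}
and joining with $\rsf A_2$ gives $(\rsf G_2\join\rsf A_2)\join\rsf A_1=\rsf F\join\rsf A_1=\rsf F$. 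Conceptually this is just the standard fact that the complemented elements of a bounded distributive lattice are closed under the lattice operations: under reverse inclusion $\rsf A_i$ is a complement of $\rsf G_i$, and $\rsf A_1\join\rsf A_2$ is a complement of $\rsf G_1\cap\rsf G_2$.

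If one wishes to avoid appealing to distributivity in exactly this shape, the same conclusion can be obtained by a direct decomposition that does not even need \lemref{lem:interStrBool}. Given $f\in\rsf F$, apply $\rsf G_1\join\rsf A_1=\rsf F$ and \lemref{lem:cup} to write $f=g\meet h$ with $g\in\rsf G_1$, $h\in\rsf A_1$; since $g\in\rsf F=\rsf G_2\join\rsf A_2$, write $g=g'\meet h'$ with $g'\in\rsf G_2$, $h'\in\rsf A_2$; and since $g'\in\rsf F$, write $g'=p\meet q$ with $p\in\rsf G_1$, $q\in\rsf A_1$. Now $p\geq g'\in\rsf G_2$ forces $p\in\rsf G_2$, so $p\in\rsf G_1\cap\rsf G_2$; and $q\meet h'\meet h$ lies in $(\rsf G_1\cap\rsf G_2)\to\rsf F$ since $q,h$ join to $\one$ with every element of $\rsf G_1\supseteq\rsf G_1\cap\rsf G_2$, $h'$ joins to $\one$ with every element of $\rsf G_2\supseteq\rsf G_1\cap\rsf G_2$, and joins distribute over meets in $\mathcal L$. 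As $f=p\meet(q\meet h'\meet h)$, \lemref{lem:cup} places $f$ in $(\rsf G_1\cap\rsf G_2)\join\bigl((\rsf G_1\cap\rsf G_2)\to\rsf F\bigr)$, which is what we wanted.

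The only point needing care is that \lemref{lem:distrib} be invoked in the form $(\rsf X\cap\rsf Y)\join\rsf Z=(\rsf X\join\rsf Z)\cap(\rsf Y\join\rsf Z)$; since that lemma together with commutativity, associativity, idempotence and absorption makes the subfilter lattice of $\rsf F$ genuinely distributive, both forms of the distributive law are available, so this is not a real obstacle — and the self-contained decomposition argument of the previous paragraph sidesteps it entirely. It is also worth recording at the start that $\rsf G_1\cap\rsf G_2$ and $(\rsf G_1\cap\rsf G_2)\to\rsf F$ are both subfilters of $\rsf F$, so every filter join written above is defined.
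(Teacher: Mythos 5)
Your proposal is correct, but it reaches the conclusion by a different route than the paper. The paper's proof is a one-step element decomposition: given $f\in\rsf F$, write $f=g_{1}\meet h_{1}=g_{2}\meet h_{2}$ with $g_{i}\in\rsf G_{i}$, $h_{i}\in\rsf G_{i}\to\rsf F$, and reuse the computation already carried out in the proof of \lemref{lem:interStrBool} to get $f=(h_{1}\meet h_{2})\meet(g_{1}\join g_{2})$, where $g_{1}\join g_{2}\in\rsf G_{1}\cap\rsf G_{2}$ by \lemref{lem:interTwoFil} and $h_{1}\meet h_{2}\in(\rsf G_{1}\to\rsf F)\join(\rsf G_{2}\to\rsf F)=(\rsf G_{1}\cap\rsf G_{2})\to\rsf F$ by \lemref{lem:interStrBool}. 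Your first argument instead runs at the level of the subfilter lattice of $\rsf F$: it is the standard fact that in a bounded distributive lattice the complemented elements form a sublattice, and it buys conceptual clarity at the price of needing the distributive law in the form $(\rsf X\cap\rsf Y)\join\rsf Z=(\rsf X\join\rsf Z)\cap(\rsf Y\join\rsf Z)$. Your remark that this form follows because a lattice satisfying one distributive identity satisfies both is the right way to justify it; note in passing that \lemref{lem:distrib} as printed has a typo (its proof actually establishes $\rsf G\cap(\rsf H\join\rsf K)=(\rsf G\cap\rsf H)\join(\rsf G\cap\rsf K)$, the meet-over-join law), so the appeal to the ``other form'' is genuinely needed and is legitimate. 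Your second, element-level argument is also sound but less economical than the paper's: it uses a three-fold decomposition $f=p\meet(q\meet h'\meet h)$ and does not exploit the identity already available from \lemref{lem:interStrBool}; its virtue is that it is self-contained and does not need that lemma at all. All three arguments are valid; the paper's is the shortest given what has already been proved.
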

\begin{proof}
	Let $f\in\rsf F$. 
	As $\rsf G_{i}$ are both $\rsf F$-Boolean there exists $h_{i}\in\rsf 
	G_{i}\to\rsf F$ and $g_{i}\in\rsf G_{i}$ with
	$f= h_{1}\meet g_{1}= h_{2}\meet g_{2}$. Then as above
	$f= h_{1}\meet h_{2}\meet(g_{1}\join g_{2})$ and
	$g_{1}\join g_{2}\in\rsf G_{1}\cap\rsf G_{2}$ and 
	$h_{1}\meet h_{2}\in (\rsf G_{1}\to\rsf F)\join(\rsf G_{2}\to\rsf F)= (\rsf G_{1}\cap\rsf 
	G_{2})\to\rsf F$.
\end{proof}

\begin{cor}\label{cor:joinStrBool}
	Let $\rsf G_{1}$ and $\rsf G_{2}$ be $\rsf F$-Boolean. Then so is
	$\rsf G_{1}\join\rsf G_{2}$. 
\end{cor}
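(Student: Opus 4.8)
The plan is to reduce the corollary to the facts already in place by exploiting that the operation $\rsf G\mapsto\rsf G\to\rsf F$ behaves as a complement on the $\rsf F$-Boolean filters and interchanges $\cap$ with $\join$. Concretely, $\rsf G_{1}\join\rsf G_{2}$ will be identified with $\bigl((\rsf G_{1}\to\rsf F)\cap(\rsf G_{2}\to\rsf F)\bigr)\to\rsf F$, and the right-hand side is $\rsf F$-Boolean because it is the complement of an intersection of $\rsf F$-Boolean filters.

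First I would record explicitly the closure-under-$\to$ observation alluded to before \lemref{lem:interStrBool}: if $\rsf G$ is $\rsf F$-Boolean then so is $\rsf G\to\rsf F$. Indeed $\rsf G$ is weakly $\rsf F$-Boolean by \lemref{lem:BooleanANotherWay}, so $(\rsf G\to\rsf F)\to\rsf F=\rsf G$, and hence $(\rsf G\to\rsf F)\join\bigl((\rsf G\to\rsf F)\to\rsf F\bigr)=(\rsf G\to\rsf F)\join\rsf G=\rsf F$. Applying this to $\rsf G_{1}$ and $\rsf G_{2}$ shows $\rsf G_{1}\to\rsf F$ and $\rsf G_{2}\to\rsf F$ are $\rsf F$-Boolean; \corref{cor:interStrBool} then makes $(\rsf G_{1}\to\rsf F)\cap(\rsf G_{2}\to\rsf F)$ $\rsf F$-Boolean, and one more application of the observation makes $\bigl((\rsf G_{1}\to\rsf F)\cap(\rsf G_{2}\to\rsf F)\bigr)\to\rsf F$ $\rsf F$-Boolean.

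It remains to identify that filter with $\rsf G_{1}\join\rsf G_{2}$ (which exists and lies in $\rsf F$, since $\rsf G_{1},\rsf G_{2}\subseteq\rsf F$ and $\rsf F$ is a filter). For this I would apply \lemref{lem:interStrBool} with $\rsf G_{1}\to\rsf F$ and $\rsf G_{2}\to\rsf F$ in the roles of its two $\rsf F$-Boolean filters, obtaining
$$\bigl((\rsf G_{1}\to\rsf F)\to\rsf F\bigr)\join\bigl((\rsf G_{2}\to\rsf F)\to\rsf F\bigr)=\bigl((\rsf G_{1}\to\rsf F)\cap(\rsf G_{2}\to\rsf F)\bigr)\to\rsf F,$$
and then use weak $\rsf F$-Boolean-ness of $\rsf G_{1}$ and $\rsf G_{2}$ to rewrite the left-hand side as $\rsf G_{1}\join\rsf G_{2}$. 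This yields $\rsf G_{1}\join\rsf G_{2}=\bigl((\rsf G_{1}\to\rsf F)\cap(\rsf G_{2}\to\rsf F)\bigr)\to\rsf F$, which was just shown to be $\rsf F$-Boolean.

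I do not expect a serious obstacle: all the real work is already isolated in \lemref{lem:interStrBool} and \corref{cor:interStrBool}. The only point requiring care is the bookkeeping of the duality — that $\to\rsf F$ is an involution on $\rsf F$-Boolean filters swapping $\cap$ and $\join$ — and in particular verifying that $(\rsf G_{i}\to\rsf F)\to\rsf F$ collapses back to $\rsf G_{i}$, which is exactly the content of \lemref{lem:BooleanANotherWay}.
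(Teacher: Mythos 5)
Your proof is correct and follows essentially the same route as the paper: both identify $\rsf G_{1}\join\rsf G_{2}$ with $\bigl((\rsf G_{1}\to\rsf F)\cap(\rsf G_{2}\to\rsf F)\bigr)\to\rsf F$ via \lemref{lem:interStrBool} and the involution $(\rsf G\to\rsf F)\to\rsf F=\rsf G$ supplied by \lemref{lem:BooleanANotherWay}. The only difference is organizational: where the paper finishes with a direct element computation ($f=g_{1}\meet g_{2}\meet(h_{1}\join h_{2})$), you invoke \corref{cor:interStrBool} on the complements $\rsf G_{i}\to\rsf F$ together with the (correctly justified) observation that $\to\rsf F$ preserves $\rsf F$-Boolean-ness, which packages the same computation more modularly.
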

\begin{proof}
	Since we have $(\rsf G\to\rsf F)\to\rsf F=\rsf G$ for $\rsf 
	F$-Booleans we know that
	$\rsf G_{i}\to\rsf F$ are also $\rsf F$-Boolean and so
	\begin{align*}
		\rsf G_{1}\join\rsf G_{2} & = ((\rsf G_{1}\to\rsf F)\to\rsf F)\join 
		((\rsf G_{2}\to\rsf F)\to\rsf F)\\
		 & = ((\rsf G_{1}\to\rsf F)\cap
		 		(\rsf G_{2}\to\rsf F))\to\rsf F \\
				\intertext{Therefore}
		(\rsf G_{1}\join\rsf G_{2})\to\rsf F & =
		(((\rsf G_{1}\to\rsf F)\cap
		 		(\rsf G_{2}\to\rsf F))\to\rsf F)\to\rsf F\\
		 & =(\rsf G_{1}\to\rsf F)\cap
		 		(\rsf G_{2}\to\rsf F). 
	\end{align*}
	Thus we have 
	$$
	(\rsf G_{1}\join\rsf G_{2})\join((\rsf G_{1}\join\rsf G_{2})\to\rsf F)=
	(\rsf G_{1}\join\rsf G_{2})\join((\rsf G_{1}\to\rsf F)\cap
		 		(\rsf G_{2}\to\rsf F)). 
	$$
	Let $f\in\rsf F$ and $g_{i}\in\rsf G_{i}$, $h_{i}\in\rsf G_{i}\to\rsf 
	F$ be such that $f=g_{i}\meet h_{i}$. Then
	$g_{1}\meet g_{2}\in\rsf G_{1}\join\rsf G_{2}$, 
	$h_{1}\join h_{2}\in (\rsf G_{1}\to\rsf F)\cap
		 		(\rsf G_{2}\to\rsf F)$ and 
	\begin{align*}
		g_{1}\meet g_{2}\meet (h_{1}\join h_{2}) & =
		(g_{1}\meet g_{2}\meet h_{1})\join(g_{1}\meet g_{2}\meet h_{2})\\
		&=(g_{2}\meet f)\join(g_{1}\meet f)\\
		&=f\meet f&&\text{ as }f\le g_{i}\\
		&=f. 
	\end{align*}
\end{proof}

Thus we have 
\begin{thm}\label{thm:BooleanAlgebra}
	Let $\rsf F$ be any filter. Then
	$\Set{\rsf G | \rsf G\text{ is }\rsf F\text{-Boolean}}$ ordered by reverse 
	inclusion is a Boolean algebra with $\meet=\join$, $\join=\cap$, 
	$1=\Set{\one}$, $0=\rsf F$ and $\comp{\rsf G}=\rsf G\to\rsf F$. 
\end{thm}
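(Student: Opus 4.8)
The plan is to verify the Boolean-algebra axioms for the structure $\bigl(\{\rsf G\mid\rsf G\text{ is }\rsf F\text{-Boolean}\},\supseteq\bigr)$ by assembling the lemmas already proved. The work splits into three parts: (i) this poset is a bounded lattice with the stated operations; (ii) it is distributive; (iii) $\rsf G\mapsto\rsf G\to\rsf F$ is a complementation. The two top/bottom elements are immediate: $\rsf F\subseteq\rsf F$ and $\rsf F\join(\rsf F\to\rsf F)=\rsf F$ (since $\rsf F\to\rsf F=\{\one\}$ by \defref{def:impl}(c)), so $\rsf F=0$ is $\rsf F$-Boolean; and $\{\one\}$ is $\rsf F$-Boolean because $\{\one\}\to\rsf F=\rsf F$ and $\{\one\}\join\rsf F=\rsf F$, so $\{\one\}=1$. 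Closure of the underlying set under $\cap$ and $\join$ is exactly \corref{cor:interStrBool} and \corref{cor:joinStrBool}, and closure under $\to\rsf F$ follows from \lemref{lem:BooleanANotherWay} together with the involution $(\rsf G\to\rsf F)\to\rsf F=\rsf G$ built into the definition of weakly $\rsf F$-Boolean. So the carrier is closed under all three operations.

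**Next I would** check that $\cap$ and $\join$ really are the lattice meet and join for $\supseteq$ on this carrier. For general filters, $\rsf G\cap\rsf H$ is the set-theoretic intersection (\lemref{lem:interTwoFil}) and hence the greatest lower bound under $\supseteq$; since the carrier is closed under $\cap$, this is the meet of the Boolean algebra, which matches ``$\join=\cap$'' in the statement. Dually $\rsf G\join\rsf H$ is the filter generated by $\rsf G\cup\rsf H$, i.e.\ the least upper bound under $\supseteq$ among all filters, and since the carrier is $\join$-closed it is the join of the Boolean algebra, matching ``$\meet=\join$''. Absorption, commutativity, associativity, idempotence of $\cap,\join$ on filters were already noted in the text (``It is easy to show that these operations are commutative, associative, idempotent and satisfy absorption''), so the lattice laws transfer verbatim. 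Distributivity is the one genuine computation: one direction of \lemref{lem:distrib} gives $\rsf G\cap(\rsf H\join\rsf K)=(\rsf G\join\rsf H)\cap(\rsf G\join\rsf K)$ whenever $\rsf H,\rsf K\subseteq\rsf F$, which under the swapped names is precisely the distributive law $a\vee(b\wedge c)=(a\vee b)\wedge(a\vee c)$ in the Boolean-algebra ordering; the dual law then follows formally in any distributive lattice, or directly from \lemref{lem:distrib} again.

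**Finally** I would verify that $\comp{\rsf G}=\rsf G\to\rsf F$ is a Boolean complement: we need $\rsf G\join(\rsf G\to\rsf F)=\rsf F$ (i.e.\ $a\wedge\comp a=0$) and $\rsf G\cap(\rsf G\to\rsf F)=\{\one\}$ (i.e.\ $a\vee\comp a=1$). The first is the \emph{defining} property of $\rsf F$-Boolean (\defref{def:Boolean}(c)). For the second, any $k\in\rsf G\cap(\rsf G\to\rsf F)$ satisfies $k\join k=\one$, hence $k=\one$, giving $\rsf G\cap(\rsf G\to\rsf F)\subseteq\{\one\}$, and the reverse inclusion is trivial since $\one$ lies in every filter. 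Together with the De Morgan/involution facts ($\comp{\comp{\rsf G}}=\rsf G$ from weak Booleanness, and $\comp{\rsf G_1\cap\rsf G_2}=\comp{\rsf G_1}\join\comp{\rsf G_2}$ which is the content of \lemref{lem:interStrBool}), this establishes all the Boolean-algebra axioms.

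**The main obstacle** is not any single hard step but making sure the order conventions are consistently reversed: throughout this section filters are ordered by \emph{reverse} inclusion, so ``$\rsf G\subseteq\rsf F$'' means $\rsf G\geq\rsf F$, the top element is the \emph{smallest} filter $\{\one\}$, the bottom is $\rsf F$ itself, and the lattice operations $\wedge,\vee$ of the Boolean algebra are $\join$ (generated-filter join, which \emph{shrinks} the set) and $\cap$ respectively. Once this dictionary is fixed, every required identity has already been proved — \corref{cor:interStrBool}, \corref{cor:joinStrBool}, \lemref{lem:interStrBool}, \lemref{lem:BooleanANotherWay}, \lemref{lem:distrib}, and \defref{def:Boolean}(c) — so the proof is essentially a bookkeeping assembly, and I would present it as such: state the dictionary, cite closure, cite distributivity, cite complementation, done.
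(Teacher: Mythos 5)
Your proof is correct and takes essentially the same route as the paper, whose own proof is just the one-line assembly of \lemref{lem:distrib}, the closure results (\corref{cor:interStrBool}, \corref{cor:joinStrBool}, \lemref{lem:BooleanANotherWay}) and the defining property of $\rsf F$-Boolean that you spell out. (One harmless wording slip: in your second paragraph you briefly call $\cap$ the ``meet'' while matching it to ``$\join=\cap$''; your closing dictionary assigns the roles correctly, so nothing is affected.)
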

\begin{proof}
	This is immediate from lemma \ref{lem:distrib} and preceding remarks, 
	and from \lemref{lem:BooleanANotherWay}.  
\end{proof}

We need a stronger closure property for Boolean filters under 
intersection. 
\begin{lem}\label{lem:closIntersect}
	Let $\rsf F\sim\rsf H$, $\rsf G$ be $\rsf F$-Boolean and $\rsf K$ 
	be $\rsf H$-Boolean. Then $\rsf G\cap\rsf K$ is $\rsf F\cap\rsf 
	H$-Boolean. 
\end{lem}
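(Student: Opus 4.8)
The plan is to reduce the statement to facts already established about Boolean filters relative to a \emph{single} g-filter, using the isomorphism $\beta=\beta_{\rsf F\rsf H}$ to transport the $\rsf H$-Boolean filter $\rsf K$ into the world of $\rsf F$. First I would fix the setup: since $\rsf F\sim\rsf H$, both $\rsf F$ and $\rsf H$ contain $\rsf F\cap\rsf H$, and by \lemref{lem:lots} the filter $\rsf F\cap\rsf H$ is $\rsf F$-Boolean (and symmetrically $\rsf H$-Boolean). The goal is to produce, for an arbitrary $f\in\rsf F\cap\rsf H$, a decomposition $f=p\meet q$ with $p\in\rsf G\cap\rsf K$ and $q\in(\rsf G\cap\rsf K)\to(\rsf F\cap\rsf H)$; equivalently $(\rsf G\cap\rsf K)\join\bigl((\rsf G\cap\rsf K)\to(\rsf F\cap\rsf H)\bigr)=\rsf F\cap\rsf H$.

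Next I would build the pieces. Given $f\in\rsf F\cap\rsf H$: using that $\rsf G$ is $\rsf F$-Boolean, write $f=g\meet h$ with $g\in\rsf G$, $h\in\rsf G\to\rsf F$; using that $\rsf K$ is $\rsf H$-Boolean, write $f=k\meet l$ with $k\in\rsf K$, $l\in\rsf K\to\rsf H$. I would then replace these by "clean" versions that lie in $\rsf F\cap\rsf H$ by joining with the $\beta$-image: since $\beta$ is an implication isomorphism fixing $\rsf F\cap\rsf H$ pointwise, \lemref{lem:moving} gives $\beta[\rsf G\to\rsf F]=\beta[\rsf G]\to\rsf H$, and $g\join\beta(g)\in\rsf G\cap\rsf K$-type elements become available. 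The candidate for the $\rsf G\cap\rsf K$ part should be something like $g\join\beta(g)\join k\join\beta^{-1}(k)$ (an element above $f$ lying in both $\rsf G$ and $\rsf K$ once one checks membership), and the candidate for the complement part is the corresponding meet of the $h$'s, $l$'s and their $\beta$-translates. I expect the combinatorics here to mirror the proof of \corref{cor:joinStrBool} and \lemref{lem:closIntersect}'s analogues: one repeatedly uses $x\meet y$ distributing over $\join$ inside the filter and the fact that $p\join q=\one$ for the relevant pairs, so that $p\join(q_1\meet q_2)=\one$.

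Then I would verify the two membership claims and that the meet recovers $f$. Membership of the "big" element in $\rsf G\cap\rsf K$ uses closure of $\rsf G$ under $\join$ and $\beta$-invariance (via \lemref{lem:moving}) together with the symmetric statement for $\rsf K$; membership of the "small" element in $(\rsf G\cap\rsf K)\to(\rsf F\cap\rsf H)$ uses \lemref{lem:interStrBool}-style reasoning: if $h\in\rsf G\to\rsf F$ and $l\in\rsf K\to\rsf H$ then for $p\in\rsf G\cap\rsf K$ we get $p\join(h\meet l)=(p\join h)\meet(p\join l)=\one$, after first arranging everything to live in $\rsf F\cap\rsf H$ (join with $\beta$-images, using that $\beta$ preserves $\join$ and fixes $\rsf F\cap\rsf H$). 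Finally, that the product of the two candidates equals $f$ is a modular-law computation inside the filter $[f,\{\one\}]$, exactly as in the last display of \corref{cor:joinStrBool}.

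The main obstacle is the bookkeeping needed to force all the auxiliary elements ($g,h,k,l$ and their $\beta$-translates) into $\rsf F\cap\rsf H$ while preserving the factorization $f=(\,\cdot\,)\meet(\,\cdot\,)$; once everything lives in a common g-filter, \lemref{lem:moving} makes $\rsf G\to(\rsf F\cap\rsf H)$ and $\rsf K\to(\rsf F\cap\rsf H)$ behave well and the argument collapses to the same algebra as \corref{cor:interStrBool}. A secondary point to check carefully is that $\rsf G\cap\rsf K\subseteq\rsf F\cap\rsf H$ in the first place — which is immediate from $\rsf G\subseteq\rsf F$ and $\rsf K\subseteq\rsf H$ — and that $\rsf F\cap\rsf H$ is genuinely the right "ambient" filter, i.e. that the relative complement $(\rsf G\cap\rsf K)\to(\rsf F\cap\rsf H)$ is computed inside it and not inside $\rsf F$ or $\rsf H$ separately; \lemref{lem:upwards} should reconcile these.
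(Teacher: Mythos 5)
Your proposal is correct and is essentially the paper's argument: decompose $f\in\rsf F\cap\rsf H$ both ways as $f=g\meet h=k\meet l$, take $g\join k$ as the $\rsf G\cap\rsf K$ factor and $h\meet l$ as the other, check $q\join(h\meet l)=(q\join h)\meet(q\join l)=\one$ for $q\in\rsf G\cap\rsf K$, and recover $f$ by the modular computation you indicate. The one remark worth making is that the ``main obstacle'' you flag is not there: every element of the two decompositions lies above $f\in\rsf F\cap\rsf H$ and hence already belongs to $\rsf F\cap\rsf H$ by upward closure (and $g\join k$ lies in both $\rsf G$ and $\rsf K$ for the same reason), so the $\beta$-transport and the appeals to \lemref{lem:moving} and \lemref{lem:upwards} are unnecessary and the proof collapses to the short computation you sketch at the end.
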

\begin{proof}
	Let $p\in\rsf F\cap\rsf H$ be arbitrary. Choose
	$g\in\rsf G$, $g'\in\rsf G\to\rsf F$ with $g\meet g'=p$ and choose
	$k\in\rsf K$, $k'\in\rsf K\to\rsf H$ with $k\meet k'=p$. 
	
	Then $g'$ and $k'$ are both above $p$ so $g'\meet k'$ exists and is 
	is $\rsf F\cap\rsf H$. Also
	$(g\join k)\meet(g'\meet k')=p$. $g\join k\in\rsf G\cap\rsf K$ so we 
	need to show that $g'\meet k'$ is in $(\rsf G\cap\rsf K)\to(\rsf 
	F\cap\rsf H)$. Let 
	$q\in\rsf G\cap\rsf K$. Then $q\join g'=\one=q\join k'$ so that
	$q\join (g'\meet k')= (q\join g')\meet(q\join k')= \one$. 
\end{proof}

\begin{cor}\label{cor:closIntersect}
	Let $\rsf G$ and $\rsf K$ be Boolean. Then so is $\rsf G\cap\rsf K$. 
\end{cor}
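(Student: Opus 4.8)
The plan is to pin down a single g-filter $\rsf F$ for which $\rsf G\cap\rsf K$ is $\rsf F$-Boolean, and then invoke \thmref{thm:Boolean}. Since $\rsf G$ is Boolean there is a g-filter $\rsf F$ with $\rsf G\subseteq\rsf F$, and then $\rsf G$ is $\rsf F$-Boolean by \defref{def:Boolean}(d); similarly fix a g-filter $\rsf H\supseteq\rsf K$, so that $\rsf K$ is $\rsf H$-Boolean. As $\rsf G\cap\rsf K\subseteq\rsf G\subseteq\rsf F$, the g-filter $\rsf F$ contains $\rsf G\cap\rsf K$, so by \thmref{thm:Boolean} it is enough to prove that $\rsf G\cap\rsf K$ is $\rsf F$-Boolean.

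The core step is to arrange $\rsf F\sim\rsf H$, and then to run the chain: \lemref{lem:closIntersect} gives that $\rsf G\cap\rsf K$ is $\rsf F\cap\rsf H$-Boolean; \lemref{lem:lots} gives that $\rsf F\cap\rsf H$ is $\rsf F$-Boolean; and \lemref{lem:middle}, applied to $\rsf G\cap\rsf K\subseteq\rsf F\cap\rsf H\subseteq\rsf F$, then gives that $\rsf G\cap\rsf K$ is $\rsf F$-Boolean, which is what we want. To obtain $\rsf F\sim\rsf H$ I would exploit the strength of ``Boolean'': $\rsf G$ is $\rsf F'$-Boolean for \emph{every} g-filter $\rsf F'\supseteq\rsf G$, and likewise $\rsf K$ is $\rsf H'$-Boolean for every g-filter $\rsf H'\supseteq\rsf K$; moreover by \thmref{thm:lots} the $\sim$-class of a g-filter $\rsf F$ consists precisely of the filters $\Delta(\rsf E,\rsf F)$ with $\rsf E$ an $\rsf F$-Boolean filter. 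So one has enough room to replace the two enveloping g-filters by ones lying in a common $\sim$-class.

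I expect this matching step to be the main obstacle: there need be no single g-filter containing both $\rsf G$ and $\rsf K$ --- their union can fail to have the finite intersection property --- so one cannot collapse the argument to a single g-filter and must genuinely show two enveloping g-filters can be taken untwisted relative to one another. Everything downstream is already available (\lemref{lem:closIntersect}, \lemref{lem:lots}, \lemref{lem:middle}, \corref{cor:interStrBool}, \thmref{thm:Boolean}). If the matching turns out awkward, an alternative is to prove first that $\rsf K\cap\rsf F$ is $\rsf F$-Boolean whenever $\rsf K$ is Boolean and $\rsf F$ a g-filter --- transporting an $\rsf H$-Boolean witness for $\rsf K$ along the canonical implication isomorphism between enveloping g-filters --- and then note $\rsf G\cap\rsf K=\rsf G\cap(\rsf K\cap\rsf F)$ is an intersection of two $\rsf F$-Boolean filters, hence $\rsf F$-Boolean by \corref{cor:interStrBool} and hence Boolean by \thmref{thm:Boolean}.
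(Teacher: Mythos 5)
Your main route is exactly the paper's proof: take g-filters $\rsf F\supseteq\rsf G$ and $\rsf H\supseteq\rsf K$ with $\rsf F\sim\rsf H$, use \lemref{lem:closIntersect} to make $\rsf G\cap\rsf K$ an $\rsf F\cap\rsf H$-Boolean filter, \lemref{lem:lots} to make $\rsf F\cap\rsf H$ an $\rsf F$-Boolean filter, \lemref{lem:middle} to conclude that $\rsf G\cap\rsf K$ is $\rsf F$-Boolean, and \thmref{thm:Boolean} to pass from ``$\rsf F$-Boolean for one g-filter'' to ``Boolean''. The matching step you single out as the main obstacle --- producing enveloping g-filters that are untwisted relative to one another --- is precisely the step the paper does not argue: its proof simply opens with ``Let $\rsf F\sim\rsf H$ be two g-filters such that $\rsf G\subseteq\rsf F$ and $\rsf K\subseteq\rsf H$'' and takes the existence of such a pair for granted. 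So your plan coincides with the intended argument, and the concern you raise points at a hypothesis the paper leaves unjustified rather than at a defect peculiar to your write-up; note, though, that your proposed fallback (that $\rsf K\cap\rsf F$ is $\rsf F$-Boolean for an arbitrary g-filter $\rsf F$) does not escape the issue, since already for $\rsf K$ itself a g-filter that claim is \lemref{lem:lots}, whose proof relies on $\rsf K\sim\rsf F$.
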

\begin{proof}
	Let $\rsf F\sim\rsf H$ be two g-filters such that 
	$\rsf G\subseteq\rsf F$ and $\rsf K\subseteq\rsf H$. Then
	the lemma gives $\rsf G\cap \rsf K$ to be $\rsf F\cap\rsf H$-Boolean. 
	Theorem \ref{thm:lots} tells us that $\rsf F\cap\rsf H$ is $\rsf F$-Boolean.
	And from \lemref{lem:middle} we have $\rsf G\cap\rsf K$ to be $\rsf 
	F$-Boolean. 
\end{proof}

The last closure property we need is with respect to $\Delta$. 
\begin{lem}\label{lem:deltaGivesStrA}
	Let $\rsf G\subseteq\rsf H\subseteq\rsf F$ be $\rsf F$-Boolean 
	filters. Then
	$$
		\Delta(\rsf G, \rsf H)\to\Delta(\rsf G, \rsf F)=\Delta(\one, \rsf 
		H\to\rsf F). 
	$$
\end{lem}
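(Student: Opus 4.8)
The plan is to prove the two inclusions separately, using the description of $\Delta(\rsf G,\rsf F)$ and $\Delta(\rsf G,\rsf H)$ coming from Definition~\ref{def:Delta}, Corollary~\ref{cor:gFF} (so that $\Delta(\one,\rsf G\to\rsf F)$ is the filter $\Set{\Delta(\one,k) | k\in\rsf G\to\rsf F}$, similarly for $\rsf H$) and Lemma~\ref{lem:cup}: every element of $\Delta(\rsf G,\rsf F)$ has the form $\Delta(\one,k)\meet g$ with $k\in\rsf G\to\rsf F$ and $g\in\rsf G$, and likewise with $\rsf H$ in place of $\rsf F$. Note $\Delta(\rsf G,\rsf H)\subseteq\Delta(\rsf G,\rsf F)$ by Lemma~\ref{lem:inclDelta}, so the left-hand side makes sense; and recall from Lemma~\ref{lem:upwards} that $\rsf G$ is $\rsf H$-Boolean and $\rsf G\to\rsf H=(\rsf G\to\rsf F)\cap\rsf H$. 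Throughout I would use the elementary MR-fact that $a\join b=\one$ implies $\Delta(\one,a)\join\Delta(\one,b)=\one$: when $a,b<\one$ the MR-axiom (applied with $\Delta(\one,k)$ in the role of the first argument, using $\Delta(\one,\Delta(\one,x))=x$) turns $k\join w=\one$ into ``$\Delta(\one,k)\meet w$ exists'', and a second application of the MR-axiom turns that into $\Delta(\one,w)\join\Delta(\one,k)=\one$; the cases $a=\one$ or $b=\one$ are trivial.

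For $\Delta(\one,\rsf H\to\rsf F)\subseteq\Delta(\rsf G,\rsf H)\to\Delta(\rsf G,\rsf F)$, take $w\in\rsf H\to\rsf F$. Since $\rsf G\subseteq\rsf H$, Lemma~\ref{lem:inclAgain} gives $w\in\rsf G\to\rsf F$, hence $\Delta(\one,w)\in\Delta(\one,\rsf G\to\rsf F)\subseteq\Delta(\rsf G,\rsf F)$. Given an arbitrary $y=\Delta(\one,k)\meet g\in\Delta(\rsf G,\rsf H)$ with $k\in\rsf G\to\rsf H$ and $g\in\rsf G$, I must show $\Delta(\one,w)\join y=\one$. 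Since $\Delta(\one,k)\meet g$ exists we may distribute: $\Delta(\one,w)\join y=(\Delta(\one,w)\join\Delta(\one,k))\meet(\Delta(\one,w)\join g)$. Now $g\in\rsf H$ and $w\in\rsf H\to\rsf F$ give $w\join g=\one$, and $w,g\in\rsf F$ give that $w\meet g$ exists, so the MR-axiom yields $\Delta(\one,w)\join g=\one$; and $k\in\rsf H$ gives $w\join k=\one$, so the MR-fact above yields $\Delta(\one,w)\join\Delta(\one,k)=\one$. Hence $\Delta(\one,w)\join y=\one$.

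For the reverse inclusion, take $z\in\Delta(\rsf G,\rsf H)\to\Delta(\rsf G,\rsf F)$ and write $z=\Delta(\one,k_{0})\meet g_{0}$ with $k_{0}\in\rsf G\to\rsf F$ and $g_{0}\in\rsf G$. Since $\rsf G\subseteq\Delta(\rsf G,\rsf H)$ we have $z\join g_{0}=\one$; but $z\le g_{0}$, so $g_{0}=z\join g_{0}=\one$ and therefore $z=\Delta(\one,k_{0})$ (this reduction mimics the proof of Lemma~\ref{lem:implInDelta}). For each $k\in\rsf G\to\rsf H$ the element $\Delta(\one,k)$ lies in $\Delta(\rsf G,\rsf H)$, so $\Delta(\one,k_{0})\join\Delta(\one,k)=z\join\Delta(\one,k)=\one$, and the MR-fact (applied to $\Delta(\one,k_{0})$ and $\Delta(\one,k)$, using $\Delta(\one,\Delta(\one,x))=x$) gives $k_{0}\join k=\one$. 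Thus $k_{0}\join g=\one$ for all $g\in\rsf G$ (as $k_{0}\in\rsf G\to\rsf F$) and $k_{0}\join k=\one$ for all $k\in\rsf G\to\rsf H$. Since $\rsf G$ is $\rsf H$-Boolean, every $h\in\rsf H$ is of the form $g\meet k$ with $g\in\rsf G$, $k\in\rsf G\to\rsf H$ (Lemma~\ref{lem:cup}), so $k_{0}\join h=(k_{0}\join g)\meet(k_{0}\join k)=\one$. Hence $k_{0}\in\rsf H\to\rsf F$ and $z=\Delta(\one,k_{0})\in\Delta(\one,\rsf H\to\rsf F)$.

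I expect the only delicate bookkeeping to be the repeated passage between ``$a\meet b$ exists'' and ``$\Delta(\one,a)\join b=\one$'' via the MR-axiom (and keeping track of the trivial cases where an element equals $\one$), together with invoking the distributive identity $x\join(p\meet q)=(x\join p)\meet(x\join q)$ only when $p\meet q$ exists; both are routine in this setting. The genuine content is the $g_{0}=\one$ reduction and the use of Lemma~\ref{lem:upwards} to upgrade ``$k_{0}$ joins everything in $\rsf G$ and in $\rsf G\to\rsf H$ to $\one$'' to ``$k_{0}$ joins all of $\rsf H$ to $\one$''.
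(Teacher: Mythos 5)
Your proof is correct and follows essentially the same route as the paper's: both reduce elements of $\Delta(\rsf G,\rsf F)$ to the form $g_{0}\meet\Delta(\one,k_{0})$, force $g_{0}=\one$, and pass back and forth between ``the meet exists'' and ``the join is $\one$'' via the MR-axiom. The only cosmetic difference is that where the paper invokes the identity $(\rsf G\to\rsf H)\to(\rsf G\to\rsf F)=\rsf H\to\rsf F$ in the Boolean algebra of $\rsf F$-Boolean filters, you reprove the needed half inline by decomposing $h\in\rsf H$ as $g\meet k$ using Lemma~\ref{lem:upwards}.
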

\begin{proof}
	As $\rsf G\subseteq\rsf H\subseteq\rsf F$ in a Boolean algebra we have
	$$
	(\rsf G\to\rsf H)\to(\rsf G\to\rsf F)=\rsf H\to\rsf F. 
	$$
	Also we have 
	\begin{align*}
		\Delta(\rsf G, \rsf H) & =\rsf G\join\Delta(\one, \rsf G\to\rsf H)  \\
		\Delta(\rsf G, \rsf F) & =\rsf G\join\Delta(\one, \rsf G\to\rsf F). 
	\end{align*}
	Let $x\in\Delta(\rsf G, \rsf H)$ and $g\in \rsf G$, $h\in\rsf G\to\rsf H $
	with $x=g\meet\Delta(\one, h)$. 
	Let $y\in\Delta(\rsf G, \rsf F)$ and $g'\in \rsf G$, $f\in\rsf G\to\rsf 
	F $ with $y=g'\meet\Delta(\one, f)$ and suppose that $x\join y=\one$ 
	for all such $x$. 
	Then 
	\begin{align*}
		y\join x & = (g'\meet\Delta(\one, f))\join
		(g\meet\Delta(\one, h))\\
		 & =(g'\join g)\meet(g'\join\Delta(\one, h))\meet
		 (\Delta(\one, f)\join g)\meet\Delta(\one, f\join h)\\
		 & =(g'\join g)\meet\Delta(\one, f\join h) && \text{ since }g\meet 
		 f\text{ and }g'\meet h\text{ exists.}
	\end{align*}
	Thus $g'\join g=\one$ and $f\join h=\one$ for all $g\in\rsf G$ and 
	all $h\in\rsf G\to\rsf H$. Choosing $g=g'$ implies $g'=\one$
	and so
	$f\in (\rsf G\to\rsf H)\to(\rsf G\to\rsf F)= \rsf H\to\rsf F$. 
	Hence $y=\Delta(\one, f)\in\Delta(\one, \rsf H\to\rsf F)$. 
	
	Conversely if $f\in\rsf H\to\rsf F$ then $g\join \Delta(\one, 
	f)=\one$ for all $g\in\rsf G$. And 
	$f\in (\rsf G\to\rsf H)\to(\rsf G\to\rsf F)$ implies $h\join f=\one$ 
	for all
	$h\in\rsf G\to\rsf H$. Hence $(g\meet\Delta(\one, h))\join\Delta(\one, 
	f)=\one$ and so 
	$\Delta(\one, f)$ is in $\Delta(\rsf G, \rsf H)\to\Delta(\rsf G, \rsf F)$. 
\end{proof}

\begin{lem}\label{lem:deltaGivesStr}
	Let $\rsf G\subseteq\rsf H\subseteq\rsf F$ be $\rsf F$-Boolean 
	filters. Then
	$\Delta(\rsf G, \rsf H)$ is $\Delta(\rsf G, \rsf F)$-Boolean. 
\end{lem}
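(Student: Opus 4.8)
The plan is to verify the two defining conditions for $\Delta(\rsf G, \rsf H)$ to be $\Delta(\rsf G, \rsf F)$-Boolean, namely $\Delta(\rsf G, \rsf H)\subseteq\Delta(\rsf G, \rsf F)$ and $\Delta(\rsf G, \rsf H)\join\bigl(\Delta(\rsf G, \rsf H)\to\Delta(\rsf G, \rsf F)\bigr)=\Delta(\rsf G, \rsf F)$. The first is immediate from \lemref{lem:inclDelta}. For the second, \lemref{lem:deltaGivesStrA} has already done the work of computing $\Delta(\rsf G, \rsf H)\to\Delta(\rsf G, \rsf F)=\Delta(\one, \rsf H\to\rsf F)$, so the whole proposition reduces to establishing the single filter identity $\Delta(\rsf G, \rsf H)\join\Delta(\one, \rsf H\to\rsf F)=\Delta(\rsf G, \rsf F)$.

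For the containment $\subseteq$ I would just note that both $\Delta(\rsf G, \rsf H)$ (by \lemref{lem:inclDelta}) and $\Delta(\one, \rsf H\to\rsf F)=\Delta(\rsf G, \rsf H)\to\Delta(\rsf G, \rsf F)$ (by the definition of $\to$) are subfilters of the filter $\Delta(\rsf G, \rsf F)$, and the join of two subfilters of a filter lies inside it. For the reverse containment I would take a generator of $\Delta(\rsf G, \rsf F)=\rsf G\join\Delta(\one, \rsf G\to\rsf F)$, which by \lemref{lem:cup} has the form $x=g\meet\Delta(\one, f)$ with $g\in\rsf G$ and $f\in\rsf G\to\rsf F$. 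The key move is to refactor $f$: since $\rsf H$ is $\rsf F$-Boolean, \lemref{lem:cup} lets me write $f=h\meet b$ with $h\in\rsf H$ and $b\in\rsf H\to\rsf F$; and because $b\in\rsf H\to\rsf F\subseteq\rsf G\to\rsf F$ by \lemref{lem:inclAgain}, for every $p\in\rsf G$ the relation $\one=f\join p=(h\join p)\meet(b\join p)$ together with $b\join p=\one$ forces $h\join p=\one$, so in fact $h\in\rsf G\to\rsf H$. Then it remains to observe that $\Delta(\one, f)=\Delta(\one, h\meet b)=\Delta(\one, h)\meet\Delta(\one, b)$, whence $x=\bigl(g\meet\Delta(\one, h)\bigr)\meet\Delta(\one, b)$ with $g\meet\Delta(\one, h)\in\rsf G\join\Delta(\one, \rsf G\to\rsf H)=\Delta(\rsf G, \rsf H)$ and $\Delta(\one, b)\in\Delta(\one, \rsf H\to\rsf F)$, so $x\in\Delta(\rsf G, \rsf H)\join\Delta(\one, \rsf H\to\rsf F)$ by \lemref{lem:cup} again.

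The one step that needs actual care is the identity $\Delta(\one, h\meet b)=\Delta(\one, h)\meet\Delta(\one, b)$. Here $h\meet b$ exists and $h\join b=\one$ (as $h\in\rsf H$ and $b\in\rsf H\to\rsf F$), so the right-hand meet exists by the MR-axiom; the two inequalities then follow from the monotonicity of $\Delta(\one,-)$ (axiom (d)) and the involution law (axiom (c)) applied to the chains $h\meet b\le h\le\one$ and $h\meet b\le b\le\one$. Alternatively one can simply read it off the $\rsf I(B_{\rsf F})$-representation $\phi$, where each element of $\rsf F$ is sent to a pair with constant first coordinate, $\Delta(\one,-)$ merely transposes the two coordinates, and meets are computed coordinatewise. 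Everything else in the argument is routine bookkeeping with the lemmas of this section, so I expect this little MR-algebra computation to be the only real obstacle.
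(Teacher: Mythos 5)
Your proof is correct, and it shares the paper's starting point — both reduce the problem via \lemref{lem:deltaGivesStrA} to the single identity $\Delta(\rsf G, \rsf H)\join\Delta(\one, \rsf H\to\rsf F)=\Delta(\rsf G, \rsf F)$ — but you finish it by a genuinely different route. The paper's proof is a three-line filter-level computation: it rewrites $\rsf H\to\rsf F$ as $(\rsf G\to\rsf H)\to(\rsf G\to\rsf F)$ (a Boolean identity among $\rsf F$-Boolean filters), pulls $\Delta(\one,-)$ out through the filter join, and then collapses $(\rsf G\to\rsf H)\join\bigl((\rsf G\to\rsf H)\to(\rsf G\to\rsf F)\bigr)$ to $\rsf G\to\rsf F$ using the Boolean-ness of $\rsf G\to\rsf H$ relative to $\rsf G\to\rsf F$; that is, it leans on the Boolean-algebra structure of the $\rsf F$-Boolean filters already assembled in the surrounding lemmas. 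You instead work element-wise: you take a generator $g\meet\Delta(\one, f)$ of $\Delta(\rsf G, \rsf F)$, use the $\rsf F$-Boolean-ness of $\rsf H$ to split $f=h\meet b$, and upgrade $h\in\rsf H$ to $h\in\rsf G\to\rsf H$ by the cancellation $(h\join p)\meet(b\join p)=\one$ with $b\join p=\one$. This is sound (your side-computation $\Delta(\one, h\meet b)=\Delta(\one, h)\meet\Delta(\one, b)$ is also fine, and in fact needs only that $\Delta(\one,-)$ is an order-automorphism, so existing meets are preserved automatically). What your version buys is self-containedness — it avoids invoking the Boolean calculus of $\rsf F$-Boolean filters and the distribution of $\Delta(\one,-)$ over filter joins — at the cost of being longer; the paper's version is shorter precisely because it cashes in the machinery it has just built.
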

\begin{proof}
	Since 
	\begin{align*}
		\Delta(\rsf G, \rsf H)\join(\Delta(\rsf G, \rsf H)\to\Delta(\rsf G, \rsf F)) & =
		\rsf G\join\Delta(\one, \rsf G\to\rsf H)\join
		\Delta(\one, \rsf H\to\rsf F)\\
		 & =
		\rsf G\join\Delta(\one, \rsf G\to\rsf H)\join
		\Delta(\one, (\rsf G\to\rsf H)\to(\rsf G\to\rsf F))  \\
		 & = \rsf G\join\Delta((\one, \rsf G\to\rsf H)\join
		((\rsf G\to\rsf H)\to(\rsf G\to\rsf F)))  \\
		 & =\rsf G\join\Delta(\one, \rsf G\to\rsf F)\\
		 &= \Delta(\rsf G, \rsf F). 
	\end{align*}
\end{proof}

From this lemma  we can derive another property of $\Delta$. 
\begin{lem}\label{lem:implDDD}
	Let $\rsf G\subseteq\rsf H\subseteq\rsf F$ be $\rsf F$-Boolean 
	filters. Then
	$$
		\rsf G\to\Delta(\rsf H, \rsf F)=(\rsf G\to\rsf H)\join\Delta(\one, 
		\rsf H\to\rsf F). 
	$$
\end{lem}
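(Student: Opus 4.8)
The plan is a double inclusion, carried out entirely with the description of filter--joins supplied by \lemref{lem:cup}: if $\rsf A,\rsf B$ are filters whose union has fip, then $\rsf A\join\rsf B=\Set{a\meet b | a\in\rsf A,\ b\in\rsf B}$. Unwinding the definition $\Delta(\rsf H,\rsf F)=\Delta(\one,\rsf H\to\rsf F)\join\rsf H$, a general element of $\Delta(\rsf H,\rsf F)$ is $\Delta(\one,k)\meet h$ with $k\in\rsf H\to\rsf F$ and $h\in\rsf H$; likewise a general element of the right--hand side $(\rsf G\to\rsf H)\join\Delta(\one,\rsf H\to\rsf F)$ is $p\meet\Delta(\one,k)$ with $p\in\rsf G\to\rsf H$ and $k\in\rsf H\to\rsf F$. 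Both unions have fip for the reason used in \lemref{lem:deltaOne}: if an element lies in $\rsf H$ and $k\in\rsf H\to\rsf F$ then their join is $\one$, so the corresponding meet with $\Delta(\one,k)$ exists by the MR-axiom.

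For $\supseteq$, fix $p\in\rsf G\to\rsf H$ and $k\in\rsf H\to\rsf F$. Since $p\in\rsf G\to\rsf H\subseteq\rsf H$, the element $p\meet\Delta(\one,k)$ exists and lies in $\rsf H\join\Delta(\one,\rsf H\to\rsf F)=\Delta(\rsf H,\rsf F)$. To see it belongs to $\rsf G\to\Delta(\rsf H,\rsf F)$ I must check $(p\meet\Delta(\one,k))\join g=\one$ for each $g\in\rsf G$. Here $p\join g=\one$ because $p\in\rsf G\to\rsf H$; and $\Delta(\one,k)\join g=\one$ because $g\in\rsf G\subseteq\rsf H$ gives $k\join g=\one$, while $k$ and $g$ both lie in the filter $\rsf F$ so $k\meet g$ exists, and the MR-axiom then forces $\Delta(\one,k)\join g=\one$. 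Distributing $g\join(\cdot)$ over the meet $p\meet\Delta(\one,k)$ now gives the claim.

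For $\subseteq$, take $x\in\rsf G\to\Delta(\rsf H,\rsf F)$ and, using the first paragraph, write $x=\Delta(\one,k)\meet h$ with $k\in\rsf H\to\rsf F$, $h\in\rsf H$. Since $x\le h$ and $x\join g=\one$ for all $g\in\rsf G$, we get $h\join g=\one$ for all $g\in\rsf G$, so $h\in\rsf G\to\rsf H$; hence $x=h\meet\Delta(\one,k)\in(\rsf G\to\rsf H)\join\Delta(\one,\rsf H\to\rsf F)$, again by \lemref{lem:cup}. The genuinely delicate point in the whole argument is the step $\Delta(\one,k)\join g=\one$ in the $\supseteq$ direction: it rests on the MR-equivalence ``a meet exists iff the complementary join is $\one$'' applied to $k,g$ below $\one$, together with the fact that it is the ambient filter $\rsf F$ that guarantees $k\meet g$ exists. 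Everything else is formal rewriting of the definitions of $\to$ and $\Delta(\cdot,\cdot)$ on filters, so I expect no further obstacle.
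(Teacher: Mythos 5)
Your proof is correct and follows essentially the same route as the paper's: decompose elements of $\Delta(\rsf H,\rsf F)$ and of the right-hand side via \lemref{lem:cup}, use the MR-axiom (meets existing in $\rsf F$ force the complementary joins to be $\one$) for the inclusion $\supseteq$, and observe that $x\le h$ transfers the condition $x\join g=\one$ to $h$ for the inclusion $\subseteq$. The only difference is that you make the fip checks explicit, which the paper leaves implicit.
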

\begin{proof}
	The RHS is clearly a subset of $\Delta(\rsf H, \rsf F)$. 
	Let $g\in\rsf G$. If $h\in\rsf G\to\rsf H$ then $h\join g=\one$. 
	If $k\in \Delta(\one, \rsf H\to\rsf F)$ then $g\meet\Delta(\one, k)$ 
	exists so that $g\join k=\one$. Thus the RHS is a subset of the LHS. 
	
	Conversely suppose that $h=h_{1}\meet h_{2}$ is in $\rsf H\join\Delta(\one, 
	\rsf H\to\rsf F)= \Delta(\rsf H, \rsf F)$ and $g\join h=\one$
	for all $g\in\rsf G$. Then $g\join h_{1}=\one$ for all $g\in\rsf G$ 
	and so $h_{1}\in\rsf G\to\rsf H$. Thus the LHS is a subset of the RHS. 
\end{proof}

\begin{cor}\label{cor:iteratedDelta}
	Let $\rsf G\subseteq\rsf H\subseteq\rsf F$ be $\rsf F$-Boolean 
	filters. Then
	$$
		\Delta(\rsf G, \Delta(\rsf H, \rsf F))=\Delta(\Delta(\rsf G, \rsf H), 
		\Delta(\rsf G, \rsf F)). 
	$$
\end{cor}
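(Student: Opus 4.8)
This corollary is exactly the cubic $\Delta$-associativity identity (axiom (b) in the definition of a cubic algebra, read with reverse inclusion so that $\rsf F$ plays the role of the top element) for the filter operation $\Delta$, restricted to Boolean filters. The plan is to expand both sides via \defref{def:Delta} and check that each collapses to the common expression $\Delta(\one, \rsf G\to\rsf H)\join(\rsf H\to\rsf F)\join\rsf G$. All the real content has already been isolated in \lemref{lem:implDDD} and \lemref{lem:deltaGivesStrA}; what is left is bookkeeping.

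First I would dispose of well-definedness. Since $\Delta(\rsf H, \rsf F)=\Delta(\one, \rsf H\to\rsf F)\join\rsf H\supseteq\rsf H\supseteq\rsf G$, the term $\Delta(\rsf G, \Delta(\rsf H, \rsf F))$ is defined, and \lemref{lem:implDDD} (whose hypothesis that $\rsf G\subseteq\rsf H\subseteq\rsf F$ all be $\rsf F$-Boolean is exactly ours) gives $\rsf G\to\Delta(\rsf H, \rsf F)=(\rsf G\to\rsf H)\join\Delta(\one, \rsf H\to\rsf F)$. On the other side, \lemref{lem:inclDelta} gives $\Delta(\rsf G, \rsf H)\subseteq\Delta(\rsf G, \rsf F)$ and \lemref{lem:deltaGivesStr} makes $\Delta(\rsf G, \rsf H)$ a $\Delta(\rsf G, \rsf F)$-Boolean filter, so $\Delta(\Delta(\rsf G, \rsf H), \Delta(\rsf G, \rsf F))$ is defined; and \lemref{lem:deltaGivesStrA} gives $\Delta(\rsf G, \rsf H)\to\Delta(\rsf G, \rsf F)=\Delta(\one, \rsf H\to\rsf F)$. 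I would also record once that $\Delta(\one, -)$ is an involutive cubic automorphism of $\mathcal L$ — it is $\brk<a, b>\mapsto\brk<b, a>$ in any presentation — so that, applied componentwise, it sends filters to filters, commutes with $\join$ and $\cap$ of filters, and satisfies $\Delta(\one, \Delta(\one, \rsf A))=\rsf A$.

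With this, the computation is routine. For the left side, \defref{def:Delta} gives $\Delta(\rsf G, \Delta(\rsf H, \rsf F))=\Delta(\one, \rsf G\to\Delta(\rsf H, \rsf F))\join\rsf G$; inserting \lemref{lem:implDDD}, pushing $\Delta(\one, -)$ through the join, and using $\Delta(\one, \Delta(\one, \rsf H\to\rsf F))=\rsf H\to\rsf F$ gives $\Delta(\one, \rsf G\to\rsf H)\join(\rsf H\to\rsf F)\join\rsf G$. For the right side, \defref{def:Delta} gives $\Delta(\Delta(\rsf G, \rsf H), \Delta(\rsf G, \rsf F))=\Delta(\one, \Delta(\rsf G, \rsf H)\to\Delta(\rsf G, \rsf F))\join\Delta(\rsf G, \rsf H)$; inserting \lemref{lem:deltaGivesStrA}, using the involution, and then expanding $\Delta(\rsf G, \rsf H)=\Delta(\one, \rsf G\to\rsf H)\join\rsf G$ by \defref{def:Delta} once more gives $(\rsf H\to\rsf F)\join\Delta(\one, \rsf G\to\rsf H)\join\rsf G$. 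The two expressions agree by commutativity and associativity of $\join$, which proves the identity.

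I do not expect a genuine obstacle. The only point needing a word of justification is that $\Delta(\one, -)$ distributes over joins of filters, and this follows immediately from its being an order-automorphism of $\mathcal L$ (order-isomorphisms preserve all existing infima and suprema, and $\rsf A\join\rsf B=\Set{a\meet b | a\in\rsf A,\ b\in\rsf B}$ by \lemref{lem:cup}). Beyond that the proof is purely formal assembly of \defref{def:Delta}, \lemref{lem:implDDD} and \lemref{lem:deltaGivesStrA}.
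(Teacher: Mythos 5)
Your proposal is correct and follows essentially the same route as the paper's own proof: both expand via \defref{def:Delta}, invoke \lemref{lem:implDDD} on one side and \lemref{lem:deltaGivesStrA} on the other, and use that $\Delta(\one,-)$ is an involution distributing over joins of filters to reduce both sides to $\rsf G\join\Delta(\one,\rsf G\to\rsf H)\join(\rsf H\to\rsf F)$. The only cosmetic difference is that you meet in the middle from both ends whereas the paper writes a single chain of equalities, and you spell out the well-definedness and the distributivity of $\Delta(\one,-)$ that the paper leaves implicit.
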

\begin{proof}
	\begin{align*}
		\Delta(\rsf G, \Delta(\rsf H, \rsf F)) & =
		\rsf G\join\Delta(\one, \rsf G\to\Delta(\rsf H, \rsf F))\\
		 & =\rsf G\join\Delta(\one, (\rsf G\to\rsf H)\join\Delta(\one, 
		\rsf H\to\rsf F))  \\
		 & =\rsf G\join \Delta(\one, \rsf G\to\rsf H)\join(\rsf H\to\rsf F)  \\
		 & = \Delta(\rsf G, \rsf H)\join\Delta(\one, \Delta(\rsf G, \rsf H)\to
		 \Delta(\rsf G, \rsf F)) \\
		 & = \Delta(\Delta(\rsf G, \rsf H), \Delta(\rsf G, \rsf F)). 
	\end{align*}
\end{proof}

\subsection{Weakly Boolean Elements}
The theory of weakly Boolean filters just follows the usual theory of 
Boolean elements in the Heyting algebra of ideals of a Boolean algebra. 

However we also want to consider how the $\rsf F$-Boolean filters relate to 
the weakly $\rsf F$-Boolean filters. 
By \lemref{lem:BooleanANotherWay} every $\rsf F$-Boolean filter is 
weakly $\rsf F$-Boolean. There are many other weakly Boolean filters 
as the next lemma shows:
\begin{lem}\label{lem:tripleImpl}
	$((\rsf G\to\rsf F)\to\rsf F)\to\rsf F=\rsf G\to\rsf F$. 
\end{lem}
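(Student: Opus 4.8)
The plan is to recognize this as the familiar $\neg\neg\neg=\neg$ identity from Heyting algebras, transported to the present setting: for $\rsf A\subseteq\rsf F$, the filter $\rsf A\to\rsf F$ is behaving as the pseudocomplement of $\rsf A$ relative to $\rsf F$, so the claim is just the statement that the triple pseudocomplement equals the single one. Accordingly the proof will be short, with two inclusions drawn from a single general fact together with the antitonicity already recorded in \lemref{lem:inclAgain}.

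First I would isolate the one general fact that does the work: for every subfilter $\rsf A\subseteq\rsf F$ one has $\rsf A\subseteq(\rsf A\to\rsf F)\to\rsf F$. This is exactly the inclusion already used in the proof of \lemref{lem:BooleanANotherWay}, and it drops straight out of \defref{def:impl}(c): if $a\in\rsf A$ then $a\in\rsf F$, and for every $h\in\rsf A\to\rsf F$ the defining property of $\rsf A\to\rsf F$ gives $a\join h=\one$; hence $a\in(\rsf A\to\rsf F)\to\rsf F$. Note also that $\rsf A\to\rsf F\subseteq\rsf F$ by the very form of \defref{def:impl}(c), so this fact applies equally well with $\rsf A$ replaced by $\rsf G\to\rsf F$.

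Applying the general fact with $\rsf A=\rsf G\to\rsf F$ yields $\rsf G\to\rsf F\subseteq((\rsf G\to\rsf F)\to\rsf F)\to\rsf F$, which is one of the two inclusions. For the reverse inclusion I would apply the general fact with $\rsf A=\rsf G$ to obtain the chain $\rsf G\subseteq(\rsf G\to\rsf F)\to\rsf F\subseteq\rsf F$, and then invoke \lemref{lem:inclAgain} (antitonicity of $\to$ in its first argument) with $\rsf H=(\rsf G\to\rsf F)\to\rsf F$; this gives $((\rsf G\to\rsf F)\to\rsf F)\to\rsf F\subseteq\rsf G\to\rsf F$. Combining the two inclusions yields the asserted equality.

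There is essentially no obstacle here; the only points requiring a moment's care are keeping track of which argument of $\to$ is being varied when \lemref{lem:inclAgain} is applied, and noting that $\rsf G\to\rsf F$ is genuinely a subfilter of $\rsf F$ so that the general fact can be applied to it — both immediate from \defref{def:impl}.
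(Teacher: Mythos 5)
Your proof is correct and follows essentially the same route as the paper: the inclusion $\rsf A\subseteq(\rsf A\to\rsf F)\to\rsf F$ applied to $\rsf A=\rsf G\to\rsf F$ for one direction, and the same inclusion applied to $\rsf G$ combined with the antitonicity of \lemref{lem:inclAgain} for the other. You merely spell out the details the paper leaves implicit.
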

\begin{proof}
	We know that $\rsf G\to\rsf F\subseteq((\rsf G\to\rsf F)\to\rsf F)\to\rsf F$. 
	Also $\rsf G\subseteq (\rsf G\to\rsf F)\to\rsf F$ implies 
	$((\rsf G\to\rsf F)\to\rsf F)\to\rsf F\subseteq\rsf G\to\rsf F$.
\end{proof}

Thus for any filter $\rsf G$ we see that $\rsf G\to\rsf F$ is weakly 
Boolean. This gives a natural closure operator
$\rsf G\mapsto (\rsf G\to\rsf F)\to\rsf F=\text{cl}(\rsf G)$ from filters to weakly 
Boolean filters. 

To naturally define operations on weakly Boolean filters we have 
$\rsf G_{1}{\pmb\vee}\rsf G_{2}=\text{cl}(\rsf G_{1}\join\rsf G_{2})$. 

For meets we use the usual meets as we have:
\begin{lem}\label{lem:meetWkBool}
	Let $\rsf G_{1}$ and $\rsf G_{2}$ be two weakly $\rsf F$-Boolean 
	filters. Then $\rsf G_{1}\cap\rsf G_{2}$ is weakly $\rsf F$-Boolean. 
\end{lem}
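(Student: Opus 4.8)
The plan is to recognize this as the filter-theoretic avatar of the standard fact that the regular elements of a Heyting algebra are closed under meet, and to prove it by two applications of the antitonicity of the operator $\rsf X\mapsto\rsf X\to\rsf F$ recorded in \lemref{lem:inclAgain}. Write $\rsf G=\rsf G_{1}\cap\rsf G_{2}$; this is again an $\mathcal L$-filter (see \lemref{lem:interTwoFil}) and is contained in $\rsf F$ since $\rsf G_{1}\subseteq\rsf F$, so the first half of weak $\rsf F$-Booleanness, namely $\rsf G\subseteq\rsf F$, is immediate and only the identity $(\rsf G\to\rsf F)\to\rsf F=\rsf G$ remains.

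For that identity, the inclusion $\rsf G\subseteq(\rsf G\to\rsf F)\to\rsf F$ holds for every subfilter of $\rsf F$ — exactly as already used in the proofs of \lemref{lem:BooleanANotherWay} and \lemref{lem:tripleImpl} — so only the reverse inclusion needs argument. Here I would use $\rsf G\subseteq\rsf G_{i}$ together with \lemref{lem:inclAgain} to get $\rsf G_{i}\to\rsf F\subseteq\rsf G\to\rsf F$, and then apply \lemref{lem:inclAgain} once more to obtain $(\rsf G\to\rsf F)\to\rsf F\subseteq(\rsf G_{i}\to\rsf F)\to\rsf F$. Since each $\rsf G_{i}$ is weakly $\rsf F$-Boolean the right-hand side is $\rsf G_{i}$, so $(\rsf G\to\rsf F)\to\rsf F\subseteq\rsf G_{1}$ and $(\rsf G\to\rsf F)\to\rsf F\subseteq\rsf G_{2}$, whence $(\rsf G\to\rsf F)\to\rsf F\subseteq\rsf G_{1}\cap\rsf G_{2}=\rsf G$. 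Combining the two inclusions gives $(\rsf G\to\rsf F)\to\rsf F=\rsf G$, which together with $\rsf G\subseteq\rsf F$ is precisely the assertion that $\rsf G_{1}\cap\rsf G_{2}$ is weakly $\rsf F$-Boolean.

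There is essentially no obstacle here; the one point requiring a little care is that \lemref{lem:inclAgain} is stated for a chain $\rsf G\subseteq\rsf H\subseteq\rsf F$, so in the second application one must observe that $\rsf G_{i}\to\rsf F$, $\rsf G\to\rsf F$ and $\rsf F$ do form such a chain (both implication filters are subfilters of $\rsf F$ by construction). Everything else is a formal manipulation of inclusions, mirroring the Heyting-algebra identity $\neg\neg(a\wedge b)=\neg\neg a\wedge\neg\neg b$ alluded to in the remark preceding the lemma.
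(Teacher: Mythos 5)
Your argument is correct and is essentially identical to the paper's own proof: the paper phrases it via the closure operator $\text{cl}(\rsf G)=(\rsf G\to\rsf F)\to\rsf F$, but the content is the same two applications of the antitonicity of $\rsf X\mapsto\rsf X\to\rsf F$, giving $\text{cl}(\rsf G_{1}\cap\rsf G_{2})\subseteq\text{cl}(\rsf G_{i})=\rsf G_{i}$ and hence containment in the intersection. Your added remark about checking the chain hypothesis for the second application of \lemref{lem:inclAgain} is a sensible point of care that the paper leaves implicit.
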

\begin{proof}
	$\rsf G_{1}\cap\rsf G_{2}\subseteq\text{cl}(\rsf G_{1}\cap\rsf 
	G_{2})$ is always true. 
	
	$\rsf G_{1}\cap\rsf G_{2}\subseteq\rsf G_{i}$ and so
	$\rsf G_{i}\to\rsf F\subseteq(\rsf G_{1}\cap\rsf G_{2})\to\rsf F$ 
	and hence 
	$\text{cl}(\rsf G_{1}\cap\rsf G_{2})\subseteq\text{cl}(\rsf 
	G_{i})=\rsf G_{i}$. 
	Hence $\text{cl}(\rsf G_{1}\cap\rsf G_{2})\subseteq\rsf 
	G_{1}\cap\rsf G_{2}$. 
\end{proof}

\subsection{An MR-algebra}
The above lemmas show us that the set of weakly $\rsf F$-Boolean 
filters with operations $\cap$ and $\mathbf\vee$ forms a Boolean 
algebra into which the Boolean algebra of $\rsf F$-Boolean filters 
embeds. The larger algebra is (as usual) complete. 
These lemmas also show us that there is a natural cubic algebra here. 
\begin{thm}\label{thm:MRalgFilter}
	Let $\mathcal L_{sB}$ be the set of all Boolean filters ordered by 
	reverse inclusion. 
	Then 
	\begin{enumerate}[(a)]
		\item $\mathcal L_{sB}$ contains $\Set{\one}$ and is closed
		 under the operations $\join$ and $\Delta$. 
		
		\item $\brk<\mathcal L_{sB}, \Set{\one}, \join, \Delta>$ is an atomic MR-algebra. 
	
		\item The mapping $e\colon\mathcal L\to \mathcal L_{sB}$ given by 
		$g\mapsto[g, \one]$ is a full embedding. 
	
		\item The atoms of $\mathcal L_{sB}$ are exactly the g-filters of 
		$\mathcal L$. 
	\end{enumerate}
\end{thm}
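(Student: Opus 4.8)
The plan is to work through the four clauses in the order (a) $\to$ the cubic axioms $\to$ the MR-axiom and atomicity $\to$ (c) and (d), keeping in mind throughout that on $\mathcal L_{sB}$ the poset order is reverse inclusion, so the cubic top $\one$ is $\Set{\one}$, the semilattice join of $\mathcal L_{sB}$ is $\cap$, and the operation $\Delta$ of \defref{def:Delta} is exactly ``$\Delta(y,x)$ with $x\le y$'' once $\rsf G\subseteq\rsf F$ is read as $\rsf F\le\rsf G$. The one sublemma I would isolate first is: \emph{if $\rsf G\subseteq\rsf F$ are both Boolean filters, then $\rsf G$ is $\rsf F$-Boolean} --- pick a g-filter $\rsf K\supseteq\rsf F$, note $\rsf G$ and $\rsf F$ are both $\rsf K$-Boolean by the definition of Boolean, and quote \lemref{lem:upwards}.

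\emph{Part (a).} The filter $\Set{\one}$ is $\rsf F$-Boolean for every g-filter $\rsf F$, since $\Set{\one}\to\rsf F=\rsf F$ and $\Set{\one}\join\rsf F=\rsf F$. Closure under $\cap$ is \corref{cor:closIntersect}. For closure under $\Delta$: given Boolean $\rsf G\subseteq\rsf F$, choose a g-filter $\rsf K\supseteq\rsf F$ (so $\rsf G,\rsf F$ are $\rsf K$-Boolean); then \lemref{lem:deltaGivesStr} makes $\Delta(\rsf G,\rsf F)$ a $\Delta(\rsf G,\rsf K)$-Boolean filter, \thmref{thm:lots} makes $\Delta(\rsf G,\rsf K)$ a g-filter, \lemref{lem:inclDelta} puts $\Delta(\rsf G,\rsf F)$ inside it, and \thmref{thm:Boolean} concludes that $\Delta(\rsf G,\rsf F)$ is Boolean.

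\emph{Part (b).} Using the sublemma to turn each nested pair $\rsf G\subseteq\rsf F$ of Boolean filters into an $\rsf F$-Boolean pair, the four cubic-algebra axioms fall out of identities already in hand: axiom (a) of the cubic-algebra definition is \lemref{lem:interDelta}; axiom (c) is \corref{cor:doubleDelta} (its right side $\rsf G\join(\rsf G\to\rsf F)$ equals $\rsf F$ exactly because $\rsf G$ is $\rsf F$-Boolean); axiom (b) is \corref{cor:iteratedDelta}; and axiom (d) is the monotonicity \lemref{lem:inclDelta}. For the derived operation $xy$, axioms (e) and (f), and the MR-axiom, I would route everything through an interval algebra: the weakly $\rsf F$-Boolean filters form a complete Boolean algebra $B$, and one checks that $\mathcal L_{sB}$ embeds onto a full subalgebra of $\rsf I(B)$ closed under the MR-meet $\caret$; since $\rsf I(B)$ is an atomic MR-algebra, (e), (f) and the MR-axiom come for free. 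Failing that, the MR-axiom can be attacked directly via \lemref{lem:MRprec}: given Boolean $\rsf G_1,\rsf G_2$ one must exhibit $\rsf G_3$ with $\rsf G_3\preccurlyeq\rsf G_1$ and $\rsf G_3\preccurlyeq\rsf G_2$; when $\rsf G_1\cup\rsf G_2$ has fip one can descend into a common g-filter and use the elementary fact that $x\le y$ implies $x\preccurlyeq y$, but when they fail fip (for instance two g-filters in different twist-classes) $\rsf G_3$ must be written out with $\Delta$ and the relative complement, and it is there that the hypothesis that $\mathcal L$ is an MR-algebra --- already used for the fip in \lemref{lem:deltaOne} --- genuinely re-enters. \textbf{I expect this last point, the MR-axiom together with (e) and (f), to be the main obstacle.}

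\emph{Atomicity and parts (c), (d).} Atomicity is then immediate from the definition of a Boolean filter: every element of $\mathcal L_{sB}$ lies inside a g-filter, hence above an atom once (d) is known. For (c): $e\colon g\mapsto[g,\one]$ is injective, sends $\one$ to $\Set{\one}$, sends the join of $\mathcal L_{sB}$ to $\cap$ via $[g,\one]\cap[h,\one]=[g\join h,\one]$, and sends $\Delta$ to $\Delta$ by \corref{cor:DeltaDoublePrinc}; and it is full because a Boolean filter $\rsf G\neq\Set{\one}$ contains some $l<\one$, whence $\rsf G\supseteq[l,\one]=e(l)$, i.e. $\rsf G\le e(l)$ in reverse inclusion. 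For (d): any g-filter $\rsf H$ is a maximal filter --- if $\rsf H\subsetneq\rsf H'$ and $x\in\rsf H'\setminus\rsf H$, then $x\simeq\beta_{\rsf H}(x)\in\rsf H\subseteq\rsf H'$, so $x\meet\beta_{\rsf H}(x)$ exists and \corref{cor:triv} forces $x=\beta_{\rsf H}(x)\in\rsf H$, a contradiction --- so g-filters are precisely the maximal Boolean filters, i.e. the atoms of $\mathcal L_{sB}$; conversely a maximal Boolean filter lies inside some g-filter, which is itself Boolean, so the two coincide.
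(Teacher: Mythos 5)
Your treatment of (a), of the four basic cubic axioms, and of (c) tracks the paper's proof closely: the same lemmas (\corref{cor:closIntersect} and \lemref{lem:deltaGivesStr} for closure, \lemref{lem:interDelta}, \corref{cor:iteratedDelta}, \corref{cor:doubleDelta}, \lemref{lem:inclDelta} for the axioms, \corref{cor:DeltaDoublePrinc} for the embedding) carry the same weight, and your preliminary sublemma (nested Boolean filters form an $\rsf F$-Boolean pair, via \lemref{lem:upwards}) makes explicit a reduction the paper uses silently. Your argument for (d) --- g-filters are maximal filters by \corref{cor:triv}, hence exactly the maximal Boolean filters, hence the atoms --- is different from and more self-contained than the paper's bare citation of \thmref{thm:lots}, and it is correct.

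The genuine gap is exactly where you flagged it: axioms (e) and (f) and the MR-axiom are not proved, and neither of your two suggested routes is carried out. The paper's resolution is short. For (e) and (f) one computes, for Boolean $\rsf G\subseteq\rsf F$,
$$
\Delta(\one,\Delta(\rsf G,\rsf F))\cap\rsf F=\bigl(\Delta(\one,\rsf G)\join(\rsf G\to\rsf F)\bigr)\cap\rsf F=\rsf G\to\rsf F,
$$
which identifies the derived operation $xy=\Delta(\one,\Delta(x\join y,y))\join y$ of $\mathcal L_{sB}$ with the relative complement in the Boolean algebra of $\rsf F$-Boolean filters supplied by \thmref{thm:BooleanAlgebra}; so $\mathcal L_{sB}$ is locally Boolean, hence locally an implication algebra, and (e), (f) are implication-algebra identities. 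For the MR-axiom the paper applies \lemref{lem:MRprec} with the uniform witness $z=\rsf F_{2}$, a g-filter containing $\rsf G_{2}$: then $\rsf F_{2}\le\rsf G_{2}$ in reverse inclusion gives $\rsf F_{2}\preccurlyeq\rsf G_{2}$, while $\Delta(\rsf F_{1}\cap\rsf F_{2},\rsf F_{2})=\rsf F_{1}\supseteq\rsf G_{1}$ (\corref{cor:lots}) gives $\rsf F_{2}\preccurlyeq\rsf G_{1}$ --- no case split on fip is needed. (Your instinct that twist-classes could obstruct this is not idle: \corref{cor:lots} is only proved for $\rsf F_{1}\sim\rsf F_{2}$, so the paper's one-line MR argument itself leans on an unstated claim about arbitrary pairs of g-filters; but that is a weakness of the paper's proof, not a justification for leaving the step open.) Your first route --- embedding $\mathcal L_{sB}$ into $\rsf I(B)$ for $B$ the weakly Boolean filters --- presupposes most of what is to be proved and should be dropped in favour of the local computation above.
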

\begin{proof}
	\begin{enumerate}[(a)]
		\item It is easy to see that $\one\to\rsf F=\rsf F$ for all filters 
		$\rsf F$. Corollary \ref{cor:closIntersect} and \lemref{lem:deltaGivesStr}
		give the closure under join and Delta respectively. 
		
		\item 
		We will proceed sequentially through the axioms. 
		\begin{enumerate}[i.]
        	\item  if $x\le y$ then $\Delta(y, x)\join x = y$ -- this is 
        	\lemref{lem:interDelta}. 
        
        	\item  if $x\le y\le z$ then $\Delta(z, \Delta(y, x))=\Delta(\Delta(z, 
        		y), \Delta(z, x))$ -- this is \corref{cor:iteratedDelta}. 
        
        	\item  if $x\le y$ then $\Delta(y, \Delta(y, x))=x$ -- this 
        	is \corref{cor:doubleDelta} and the definition of $\rsf 
        	F$-Boolean. 
        
        	\item  if $x\le y\le z$ then $\Delta(z, x)\le \Delta(z, 
        	y)$ -- this is \lemref{lem:inclDelta}. 
        
        	\item[] Let $xy=\Delta(1, \Delta(x\join y, y))\join y$ for any $x$, $y$ 
        	in $\mathcal L$. 
			
			First we note that if $\rsf G\subseteq\rsf F$ then
			\begin{align*}
				\Delta(\one, \Delta(\rsf G, \rsf F))\cap\rsf F & =
				\Delta(\one, \rsf G\join\Delta(\one, \rsf G\to\rsf F))\cap\rsf F\\
				 & = (\Delta(\one, \rsf G)\join(\rsf G\to\rsf F))\cap\rsf F . 
			\end{align*}
			If $g\in\rsf G$ and $h\in\rsf F$ is such that $\Delta(\one, g)\meet h\in\rsf F$ 
			then $g=\Delta(\one, g)$ (since $g\simeq\Delta(\one, g)$ and
			$g\meet \Delta(\one, g)$ exists). Thus 
			$(\Delta(\one, \rsf G)\join(\rsf G\to\rsf F))\cap\rsf F= 
			\rsf G\to\rsf F$. 
        
        	\item  $(xy)y=x\join y$ and 
        	\item  $x(yz)=y(xz)$. 
			These last two properties hold as $\mathcal L_{sB}$ is locally 
			Boolean and hence an implication algebra. 
    	\end{enumerate}
		
		To see that $\mathcal L_{sB}$ is an MR-algebra it suffices to note 
		that if $\rsf G_{1}$ and $\rsf G_{2}$ are in $\mathcal L_{sB}$ and we have 
		g-filters $\rsf F_{1}, \rsf F_{2}$ with $\rsf G_{i}\subseteq\rsf 
		F_{i}$ then $\Delta(\rsf F_{1}\cap\rsf F_{2}, \rsf F_{2})= \rsf 
		F_{1}\supseteq\rsf G_{1}$ so that $\rsf F_{2}\preccurlyeq\rsf G_{1}$. 
		It is clear that $\rsf F_{2}\preccurlyeq\rsf G_{2}$.
	
		\item It is clear that this mapping preserves 
		order and join. Preservation of $\Delta$ is \corref{cor:DeltaDoublePrinc}. 
		
		It is full because $[g, \one]\subseteq\rsf G$ whenever $g\in\rsf 
		G$. 
	
		\item This is \thmref{thm:lots}. 
	\end{enumerate}
\end{proof}

\section{The Algebra of g-filters}
Earlier we looked at the group of g-filters with an operation that 
comes from the $\beta$ functions. In all atomic MR-algebras there 
is a natural isomorphism between 
$[v, \one]$ and the set of atoms (or vertices) given by
$x\mapsto\Delta(x, v)$ with inverse $w\mapsto w\join v$. 

Thus the theorem above gives us another 
way to make the set of g-filters into an algebra -- by defining
\begin{align*}
	0 & =\rsf F  \\
	1 & =\Delta(\one, \rsf F)  \\
	\rsf H+\rsf G & =\Delta((\rsf H\cap\rsf F)+(\rsf G\cap\rsf F), \rsf F)  \\
	\rsf H\cdot\rsf G & =\Delta((\rsf H\cap\rsf F)\join(\rsf G\cap\rsf F), \rsf F)
\end{align*}
This makes an algebra isomorphic to $[\rsf F, \one]$. 

We want to show that this is an extension of the group the 
operations $+$ and $*$ are the same. 

\begin{thm}\label{thm:sameOps}
	Let $\rsf G$ and $\rsf H$ be two g-filters. Then 
	$$
		\rsf G*\rsf H=\rsf G+\rsf H. 
	$$
\end{thm}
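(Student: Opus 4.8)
The plan is to compare the two products after intersecting with the fixed g-filter $\rsf F$. By \thmref{thm:MRalgFilter} the g-filters are exactly the atoms of the atomic MR-algebra $\mathcal L_{sB}$, and under the isomorphism between $[\rsf F,\Set{\one}]$ and the atoms -- namely $\rsf K'\mapsto\Delta(\rsf K',\rsf F)$, with inverse $\rsf K\mapsto\rsf K\cap\rsf F$ -- the operation $+$ is by construction obtained by transporting the symmetric difference of the Boolean algebra $[\rsf F,\Set{\one}]$ of \thmref{thm:BooleanAlgebra}. Hence $\rsf G+\rsf H$ is the unique g-filter with
$$
(\rsf G+\rsf H)\cap\rsf F=(\rsf G\cap\rsf F)+(\rsf H\cap\rsf F),
$$
the right-hand $+$ now being computed inside $[\rsf F,\Set{\one}]$, where $0=\rsf F$, join is $\cap$, meet is filter-join $\join$, and complementation is $\rsf K\mapsto\rsf K\to\rsf F$. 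Since $\rsf K\mapsto\rsf K\cap\rsf F$ is injective on g-filters, it suffices to prove
$$
(\rsf G*\rsf H)\cap\rsf F=(\rsf G\cap\rsf F)+(\rsf H\cap\rsf F).
$$

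For the left-hand side I would change the base vertex from $\rsf F$ to $\rsf G$ inside $\mathcal L_{sB}$. Write $\varphi=\varphi_{\brk<\rsf F,\rsf G>}$; recall $\varphi$ is a cubic automorphism of $\mathcal L$ with $\varphi[\rsf F]=\rsf G$ and $\varphi\restrict\rsf F=\beta_{\rsf G}$. Being an automorphism, $\varphi$ takes g-filters to g-filters and commutes with $\cap$, $\join$, $\to$ and the filter $\Delta$, so it induces an automorphism of $\mathcal L_{sB}$. Applying the isomorphism between $[\rsf G,\Set{\one}]$ and the atoms to the atom $\rsf G*\rsf H=\varphi[\rsf H]$ gives $\rsf G*\rsf H=\Delta\big((\rsf G*\rsf H)\cap\rsf G,\rsf G\big)$; moreover $(\rsf G*\rsf H)\cap\rsf G=\varphi[\rsf H]\cap\varphi[\rsf F]=\varphi[\rsf H\cap\rsf F]=\beta_{\rsf G}[\rsf H\cap\rsf F]$ (the last equality because $\rsf H\cap\rsf F\subseteq\rsf F$). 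Intersecting with $\rsf F$,
$$
(\rsf G*\rsf H)\cap\rsf F=\Delta\big(\beta_{\rsf G}[\rsf H\cap\rsf F],\rsf G\big)\cap\rsf F,
$$
so the theorem is reduced to the filter identity $\Delta\big(\beta_{\rsf G}[\rsf H\cap\rsf F],\rsf G\big)\cap\rsf F=(\rsf G\cap\rsf F)+(\rsf H\cap\rsf F)$.

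The work is in this last identity, and it is the main obstacle. I would expand the right-hand side using $(\rsf F\cap\rsf K)\to\rsf F=\Delta(\one,\rsf K)\cap\rsf F$ for every g-filter $\rsf K$ (the proof of \lemref{lem:lots} needs only that $\rsf K$ is a g-filter), so that the complements of $\rsf G\cap\rsf F$ and $\rsf H\cap\rsf F$ in $[\rsf F,\Set{\one}]$ are $\Delta(\one,\rsf G)\cap\rsf F$ and $\Delta(\one,\rsf H)\cap\rsf F$, and the symmetric difference becomes
$$
\big[(\rsf G\cap\rsf F)\join(\Delta(\one,\rsf H)\cap\rsf F)\big]\cap\big[(\Delta(\one,\rsf G)\cap\rsf F)\join(\rsf H\cap\rsf F)\big].
$$
On the left-hand side I would pass to elements: \lemref{lem:interTwoFilAA} describes $\rsf H\cap\rsf F=\Set{h\join\beta_{\rsf F}(h) | h\in\rsf H}$ and hence $\beta_{\rsf G}[\rsf H\cap\rsf F]$; \lemref{lem:DeltagOne} turns $\Delta([\,g,\one],\rsf G)$ into $\rsf G_{g}=\Set{\Delta(g\join x,x) | x\in\rsf G}$; and \lemref{lem:DeltaInMR}, together with the fact that $\beta_{\rsf G}(f)$ is the unique element of $\rsf G$ with $f\simeq\beta_{\rsf G}(f)$, lets one rewrite each $\Delta$ of a $\beta$-image as a meet with some $\Delta(\one,\cdot)$. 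Matching the two descriptions is then a calculation with meets and joins, using \lemref{lem:distrib} and modularity of the Boolean intervals $[\,g,\one]$ of $\mathcal L$. No new ideas should be required beyond the lemmas already proved; the difficulty is purely bookkeeping -- tracking membership in $\rsf F$, $\rsf G$, $\rsf H$, and keeping the small $\Delta$ of $\mathcal L$ distinct from the filter $\Delta$. As a check, the identity collapses to $\rsf H\cap\rsf F=\rsf H\cap\rsf F$ when $\rsf G=\rsf F$, and recovers the known values $\rsf F*\rsf G=\rsf G$ and $\rsf G*\rsf G=\rsf F$ when $\rsf H\in\Set{\rsf F,\rsf G}$.
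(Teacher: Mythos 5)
Your setup is sound and runs broadly parallel to the paper's own strategy: the paper likewise works with $\rsf K=(\rsf G\cap\rsf F)+(\rsf H\cap\rsf F)$ and proves the theorem by establishing $\rsf K\subseteq\rsf G*\rsf H$, then $(\rsf G*\rsf H)\cap\rsf F\subseteq\rsf K$, and finally $\rsf K\to\rsf F\subseteq\Delta(\one,\rsf G*\rsf H)$. Your change of base vertex from $\rsf F$ to $\rsf G$, giving $\rsf G*\rsf H=\Delta\bigl(\beta_{\rsf G}[\rsf H\cap\rsf F],\rsf G\bigr)$, is a legitimate reformulation, granting (as the paper implicitly does) that $\Delta(\cdot,\rsf G)$ inverts $\rsf K\mapsto\rsf K\cap\rsf G$ on g-filters.

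The gap is that you stop exactly where the proof begins. The identity $\Delta\bigl(\beta_{\rsf G}[\rsf H\cap\rsf F],\rsf G\bigr)\cap\rsf F=(\rsf G\cap\rsf F)+(\rsf H\cap\rsf F)$ is not easier than the theorem -- it is the theorem, transported -- and the claim that matching the two element-descriptions is ``purely bookkeeping'' is not borne out: the paper's proof of this statement is the longest in the paper and turns on several non-routine steps that your outline does not anticipate. Concretely, for $k\in\rsf K$ written both as $h_{1}\meet h_{2}$ (with $h_{2}\in\rsf G\cap\rsf F$) and as $g_{1}\meet g_{2}$ (with $g_{2}\in\rsf H\cap\rsf F$), one must first show $\beta_{\rsf G}(k)\le h_{2}$ and $\beta_{\rsf H}(k)\le g_{2}$, deduce $\alpha_{\rsf G}(k)=\alpha_{\rsf H}(k)=g_{2}\join h_{2}$, and only then identify $k$ with $\Delta(\beta_{\rsf G}\alpha_{\rsf F}(h),\beta_{\rsf G}(h))$ for $h=\beta_{\rsf H}(k)$; the reverse inclusion uses the identity $\alpha_{\rsf F}(h)\to h=h\join\Delta(\one,\beta_{\rsf F}(h))$ together with the fact that $\beta_{\rsf G}$ is simultaneously an implication isomorphism $\rsf F\to\rsf G$ and $\rsf H\to\rsf G$; and the remaining containment $\rsf K\to\rsf F\subseteq\Delta(\one,\rsf G*\rsf H)$ comes from \lemref{lem:lots}, not from a meet--join manipulation. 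Since none of this is carried out, what you have is a correct reduction and a plan, not a proof.
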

\begin{proof}
	We know that 
	$\Delta((\rsf H\cap\rsf F)+(\rsf G\cap\rsf F), \rsf F)= \Delta(\one, 
	\rsf K\to\rsf F)\join\rsf K$ where 
	$\rsf K$ is the filter $(\rsf H\cap\rsf F)+(\rsf G\cap\rsf F)$. 
	
	Also
	$$
		\rsf G*\rsf H=\Set{\Delta(\beta_{\rsf G}(\alpha_{\rsf F}(h)), \beta_{\rsf 
	G}(h)) | h\in\rsf H}.
	$$
	
	As both sides are g-filters, it suffices to show that 
	\begin{align}
		\rsf K & \subseteq \rsf G*\rsf H
		\label{eq:firstIncl}  \\
		\rsf K\to\rsf F & \subseteq\Delta(\one, \rsf G*\rsf H). 
		\label{eq:secIncl}
	\end{align}
	
	First we elaborate on $\rsf K$. 
	\begin{align*}
		\rsf K & = (\rsf H\cap\rsf F)+(\rsf G\cap\rsf F) \\
		 & =\Bigl[\bigl[(\rsf G\cap\rsf F)\to\rsf F\bigl]\join(\rsf H\cap\rsf 
		 F)\Bigr]\cap  \\
		 &\qquad\qquad
		 \Bigl[\bigl[(\rsf H\cap\rsf F)\to\rsf F\bigl]\join(\rsf G\cap\rsf 
		 F)\Bigr]\\
		[(\rsf G\cap\rsf F)\to\rsf F]\join(\rsf H\cap\rsf F) & =\Set{g\in\rsf F | 
		g=g_{1}\meet g_{2}\text{, }g_{2}\in\rsf H\cap\rsf F\text{ and 
		}\forall k\in\rsf G\cap\rsf F\ g_{1}\join k=\one}\\
		\intertext{Thus }
		g\in\rsf K & \leftrightarrow 
		g=g_{1}\meet g_{2}\text{, }g_{2}\in\rsf H\cap\rsf F\text{ and 
		}\forall k\in\rsf G\cap\rsf F\ g_{1}\join k=\one\\
		\text{ and }\qquad&\hphantom{\leftrightarrow}
		g=h_{1}\meet h_{2}\text{, }h_{2}\in\rsf G\cap\rsf F\text{ and 
		}\forall k\in\rsf H\cap\rsf F\ h_{1}\join k=\one. 
	\end{align*}
	
	\begin{description}
		\item[(1)] Let $k\in\rsf K$. We want to show that $k\in \rsf G*\rsf H$. 
		
		Then we have 
		\begin{align*}
			k\join\beta_{\rsf H}(k) & =(h_{1}\join\beta_{\rsf H}(k))\meet
			(h_{2}\join\beta_{\rsf H}(k))\\
			 & =h_{2}\join\beta_{\rsf H}(k). \\
			 k\join\beta_{\rsf G}(k) & =(g_{1}\join\beta_{\rsf G}(k))\meet
			(g_{2}\join\beta_{\rsf G}(k))\\
			 & =g_{2}\join\beta_{\rsf G}(k).
		\end{align*}
		Also $h_{2}\in\rsf G$, $h_{2}\geq k$ so that $\beta_{\rsf G}(k)\preccurlyeq 
		h_{2}$ and $\beta_{\rsf G}(k)\meet h_{2}$ exists. Thus 
		$\beta_{\rsf G}(k)\le h_{2}$. Likewise $\beta_{\rsf H}(k)\le g_{2}$. 
		
		We also have 
		\begin{align*}
			k&= (h_{1}\meet h_{2})\join(g_{1}\meet g_{2})\\
			&= (h_{1}\join g_{1})\meet(h_{2}\meet g_{2})\\
			\intertext{ so that }
			\alpha_{\rsf H}(k)=k\join\beta_{\rsf H}(k) & = (h_{1}\join g_{1}\join\beta_{\rsf H}(k))
			\meet(h_{2}\meet g_{2}\join\beta_{\rsf H}(k)) \\
			 & = h_{2}\join g_{2}\join \beta_{\rsf H}(k)  \\
			 & = h_{2}\join g_{2}. \\
			 \intertext{Likewise we have }
			 \alpha_{\rsf G}(k)=k\join\beta_{\rsf G}(k) & = h_{2}\join g_{2}.
		\end{align*}
		Thus $\alpha_{\rsf G}(k)=\alpha_{\rsf H}(k)$. 
		Let $h=\beta_{\rsf H}(k)$. Then 
		$\beta_{\rsf G}(h)=\beta_{\rsf G}(k)$ and 
		$\alpha_{\rsf H}(k)= k\join\beta_{\rsf H}(k)= \beta_{\rsf F}(h)\join 
		h= \alpha_{\rsf F}(h)$. Hence we have 
		\begin{align*}
			\Delta(\beta_{\rsf G}\alpha_{\rsf F}(h), \beta_{\rsf G}(h)) & =
			\Delta(\beta_{\rsf G}\alpha_{\rsf H}(k), \beta_{\rsf G}(k))\\
			 & =\Delta(\beta_{\rsf G}\alpha_{\rsf G}(k), \beta_{\rsf G}(k))  \\
			 & =\Delta(\alpha_{\rsf G}(k), \beta_{\rsf G}(k))  \\
			 & =k. 
		\end{align*}
	
		\item[2] 
		\begin{enumerate}[(a)]
		\item	Now we show that $\rsf G*\rsf H\cap\rsf F\subseteq\rsf K$. The 
		reverse inclusion follows from the last part. 
		
		Let $$
			r=
			\begin{cases}
				\Delta(\beta_{\rsf G}\alpha_{\rsf F}(h), \beta_{\rsf G}(h)) & \text{ 
				for some }h\in\rsf H  \\
				 \Delta(\beta_{\rsf H}\alpha_{\rsf F}(g), \beta_{\rsf H}(g)) & \text{ 
				for some }g\in\rsf G 
			\end{cases}
		$$
		be in $\rsf G*\rsf H\cap\rsf F$. We recall that for any g-filter 
		$\rsf Q$ we have 
		\begin{align*}
			\Delta(\one, x)\join\beta_{\rsf Q}(x) & =\alpha_{\rsf Q}(x)\to\beta_{\rsf Q}(x)  \\
			x \join\beta_{\rsf Q}(x) & =\alpha_{\rsf Q}(x). 
		\end{align*}
		From this and the above equations we have 
		\begin{align*}
			\alpha_{\rsf G}(r)\to\beta_{\rsf G}(r) &= \Delta(\one, 
			r)\join\beta_{\rsf G}(r)  \\
			r \join\beta_{\rsf G}(r) & =\alpha_{\rsf G}(r) \\
			\Delta(\one, \alpha_{\rsf G}(r)\to\beta_{\rsf G}(r)) &= r \join\Delta
			(\one, \beta_{\rsf G}(r)) \\
			r & = \alpha_{\rsf G}(r)\meet 
			\Delta(\one, \alpha_{\rsf G}(r)\to\beta_{\rsf G}(r)) \\
			\intertext{These are both in $\rsf F$ as they are both greater than $r$. 
			Clearly $\alpha_{\rsf G}(r)\in\rsf G\cap\rsf F$. We want 
			to show that 
			$p=\Delta(\one, \alpha_{\rsf G}(r)\to\beta_{\rsf G}(r))$ is in 
			$(\rsf H\cap\rsf F)\to\rsf F$. We also have }
			\alpha_{\rsf G}(r) & =\beta_{\rsf G}(\alpha_{\rsf F}(h))  \\
			\beta_{\rsf G}(r) & =\beta_{\rsf G}(h)\\
			\beta_{\rsf G}(p) &= \alpha_{\rsf G}(r)\to\beta_{\rsf G}(r). 
		\end{align*}
		Now pick any $h'\in\rsf H\cap\rsf F$. Then we have 
		\begin{align*}
			\beta_{\rsf G}(p\join h') & = \beta_{\rsf G}(p)\join \beta_{\rsf 
			G}(h') && \text{from $\beta_{\rsf G}\colon\rsf F\to\rsf G$} \\
			 & =  (\alpha_{\rsf G}(r)\to\beta_{\rsf G}(r))\join\beta_{\rsf G}(h') \\
			 & =(\beta_{\rsf G}(\alpha_{\rsf F}(h))\to\beta_{\rsf 
			 G}(h))\join\beta_{\rsf G}(h') \\
			 & =\beta_{\rsf G}((\alpha_{\rsf F}(h)\to h)\join h') && \text{from 
			 $\beta_{\rsf G}\colon\rsf 
			 			 H\to\rsf G$}  \\
			 \alpha_{\rsf F}(h)\to h & =(h\join\beta_{\rsf F}(h))\to h  \\
			 & =\Delta(\one, \Delta(h\join\beta_{\rsf F}(h), h))\join h\\
			 &= \Delta(\one, \beta_{\rsf F}(h))\join h\\
			 \intertext{Thus we have }
			 (\alpha_{\rsf F}(h)\to h)\join h'&=h\join\Delta(\one, \beta_{\rsf 
			 F}(h))\join h'\\
			 &=\one &&\text{since $h'\in\rsf F$ and}\\
			 &&&\text{so the meet $\beta_{\rsf 
			 F}(h)\meet h'$ exists. }\\
			 \intertext{Hence }
			 p\join h' &= \one. 
		\end{align*}
		It follows that $r\in(\rsf G\cap\rsf F)\join((\rsf H\cap\rsf 
		F)\to\rsf F)$. Dually we have 
		$r\in(\rsf H\cap\rsf F)\join((\rsf G\cap\rsf 
		F)\to\rsf F)$ and so $r\in\rsf K$. 
		
		\item $\rsf K\to\rsf F\subseteq \Delta(\one, \rsf G*\rsf H)$. 
		We appeal to \lemref{lem:lots}. This gives 
		\begin{align*}
			\rsf K\to \rsf F & =(\rsf G*\rsf H\cap\rsf F)\to\rsf F  \\
			 & =\Delta(\one, \rsf G*\rsf H)\cap\rsf F  \\
			 & \subseteq\Delta(\one, \rsf G*\rsf H). 
		\end{align*}
		
		\item Thus we have $\Delta(\rsf K, \rsf F)= 
		\rsf K\join\Delta(\one, \rsf K\to\rsf F)\subseteq \rsf G*\rsf H$. 
		Since both sides are g-filters they must be equal. This can also be 
		seen from \corref{cor:lots}. 
	\end{enumerate}
	\end{description}
\end{proof}

Earlier we showed that the relation $\rsf G\sim\rsf H$ is a congruence 
on the group of g-filters. 
We can then look at the subgroup $\mathcal N=\Set{\rsf K | \rsf 
K\sim\rsf F}$. From the isomorphism with $[\rsf F, \one]$, the theorem above and from
\thmref{thm:unTwist} and \corref{cor:gFilter}, \lemref{lem:gFilter} 
we have 
\begin{align*}
	\rsf G\sim\rsf F & \iff \rsf G\cap\rsf F\text{ or }\Delta(\one, \rsf 
	G)\cap\rsf F\text{ is principal}  \\
	 & \iff \exists g\in\rsf F\ \rsf G\cap\rsf F=[g, \one]\text{ or }\Delta(\one, \rsf 
	G)\cap\rsf F=[g, \one]  \\
	& \iff \exists g\in\rsf F\ \rsf G=\Delta([g, \one], \rsf F)\text{ or }
	\Delta(\one, \rsf G)=\Delta([g, \one], \rsf F). 
\end{align*}
Thus we see that the corresponding subalgebra of $[\rsf F, \one]$ is
$\Set{[g, \one] | g\in\rsf F}\cup\Set{g\to\rsf F | g\in\rsf F}$. 

This is a subalgebra, but is not upwards or downward closed and so 
does not induce a Boolean congruence on $[\rsf F, \one]$. 

Thus we have shown that the algebra of filters of a filter MR-algebra 
has properties very like those of the algebra of filters of a Boolean 
algebra. We are able to recover a Boolean component of that poset 
that gives a natural MR-algebra into which our original structure 
embeds as an upwards-dense subalgebra. We expect that many of these 
ideas can be extended to larger families of cubic algebras,  
particularly those that are generated by implication algebras. 

However we also know that there are MR-algebras that cannot be 
represented as filter algebras and for these some new idea is clearly 
needed.

\end{document}